\begin{document}
	
\title{From category $\cO^\infty$ to locally analytic representations}
\author{Shishir Agrawal}
\address{Colorado College, Department of Mathematics $\&$ Computer Science, 14 East Cache la Poudre St., Colorado Springs, CO, 80903 }
\email{sagrawal@coloradocollege.edu}
\author{Matthias Strauch}
\address{Indiana University, Department of Mathematics, Rawles Hall, Bloomington, IN 47405, U.S.A.}
\email{mstrauch@indiana.edu}
	

\begin{abstract}
Let $G$ be a $p$-adic reductive group and $\mathfrak{g}$ its Lie algebra. We construct a functor from the extension closure of the Bernstein-Gelfand-Gelfand category $\mathcal{O}$ associated to $\mathfrak{g}$ into the category of locally analytic representations of $G$, thereby expanding on an earlier construction of Orlik-Strauch. A key role in this new construction is played by $p$-adic logarithms on tori. This functor is shown to be exact with image in the subcategory of admissible representations in the sense of Schneider and Teitelbaum. En route, we establish some basic results in the theory of modules over distribution algebras and related subalgebras, such as a tensor-hom adjunction formula. We also relate our constructions to certain representations constructed by Breuil and Schraen in the context of the $p$-adic Langlands program. 
\end{abstract}

\maketitle
\tableofcontents

\section*{Introduction}

Let $F$ be a finite extension of $\Qp$, and let $\bbG \supseteq \bbP \supseteq \bbB \supseteq \bbT$ be a connected split reductive group over $F$, a parabolic subgroup, a Borel subgroup, and a maximal torus, respectively. Denote by $\frg, \frp$, $\frb$, and $\frt$ their respective Lie algebras, and let $G, P, B$, and $T$ be their groups of $F$-points, regarded as locally $F$-analytic groups. Inside the Bernstein-Gelfand-Gelfand category $\cO$ \cite{BGG2}, one has the category $\cO^{\frp}_\alg$ the category of finitely generated $\frg$-modules which are locally finite dimensional over $\frp$ and on which $\frt$ acts semisimply with algebraic weights. The paper \cite{OrlikStrauchJH} introduced exact functors $\cF^G_P$ from $\cO^{\frp}_\alg$ to the category of admissible locally analytic representations of $G$. 

As is well-known, the category $\cO$ is not stable under extensions in the category of all $\frg$-modules. Its extension closure is often denoted $\cO^\infty$ and is sometimes called the ``thick category $\cO$'' \cite{SoergelDiploma,CoulembierMazorchuckIII}. Explicitly, objects in this category are still finitely generated over $\frg$ and the action of $\frb$ is still locally finite, but the action of $\frt$ is not necessarily semisimple (cf. \cref{cat-o-infinity}). Correspondingly, the subcategory $\cO^{\frp}_\alg$ is also not stable under extensions, and we denote by $\cO^{\frp,\infty}_\alg$ its extension closure. The object of this paper is to extend the functors $\cF^G_P$ from $\cO^{\frp}_\alg$ to its extension closure $\cO^{\frp,\infty}_\alg$. In fact, we construct a family of extensions, indexed by ``logarithms'' on $T$ (i.e., homomorphisms $T \to \frt$ which invert the exponential $\exp : \frt \dashedarrow T$, cf. \cref{log-definition}). 

In \cite{OrlikStrauchJH}, the strategy behind defining the functor $\cF^G_P$ on category $\cO^{\frp}_\alg$ is as follows. We first set 
\[\vcF^G_P(M) = D(G) \otimes_{D(\frg,P)} M, \]
where $D(G)$ is the locally analytic distribution algebra \cite{ST02b,ST03} and $D(\frg,P)$ is the subring of $D(G)$ generated by $U(\frg)$ and the distribution algebra $D(P)$ of $P$. The key point here is that the action of $\frp$ on $M$ lifts functorially to an action of $P$, and thus gives rise to a structure of a $D(P)$-module on $M$ which is compatible with the structure of a $\frg$-module. It turns out that $\vcF^G_P(M)$ is a coadmissible $D(G)$-module, and we define $\cF^G_P(M)$ to be its continuous dual. 

To extend this functor to $\cO^{\frp,\infty}_\alg$, we must first construct a functorial lift of the action of $\frp$ to an action of $P$, which in turn depends on the choice of a logarithm $\log : T \to \frt$ . Having made this choice, we can lift a nilpotent action $\phi : \frt \to \End(V)$ on a finite dimensional vector space $V$ to an action of $T$ that is given by the composite $\exp \circ\, \phi \circ \log$ (cf. \cref{lifting-nilpotent}). By twisting this action appropriately for other generalized weight spaces, we can lift arbitrary algebraic actions of $\frt$ on finite dimensional vector spaces to actions of $T$. It turns out that this lifted action is suitably natural and yields an action of $P$ (cf. \cref{lifting-general}). We thus obtain a $D(\frg,P)$-module structure on any $M \in \cO^{\frp,\infty}_\alg$ which depends on the choice of logarithm $\log : T \to \frt$. We denote $M$ equipped with this $D(\frg,P)$-module structure by $\Lift(M, \log)$. 

\begin{utheorem}
Suppose $M \in \cO^{\frp,\infty}_\alg$ and $V$ is a smooth strongly admissible representation of $P$.\footnote{In other words, $V$, when regarded as a $P_0$-representation for some compact open subgroup $P_0 \subseteq P$, embeds into $C^\infty(P_0)^{\oplus m}$ for some $m \geq 0$ (cf. \cite[p. 115]{ST01a} or \cite[p. 453]{ST02b}).}
Define
\[ \vcF^G_P(M, V) = \vcF^G_{P,\log}(M, V) = D(G) \otimes_{D(\frg,P)} \Big(\Lift(M, \log) \otimes V'\Big) \]
and set $\cF^G_P(M,V) = \vF^G_P(M,V)'$. 
\begin{enumerate}[(i)]
\item (\Cref{coadmissibility}) $\cF^G_P(M,V)$ is an admissible representation of $G$. 
\item (\Cref{exactness}) The bifunctor $(M, V) \mapsto \cF^G_P(M,V)$ is exact in both arguments. 
\item (\Cref{PQthm}) If $\bbQ$ is a parabolic subgroup containing $\bbP$, $M \in \cO^{\frq,\infty}_\alg$ and $V$ is a strongly admissible smooth representation of $P$, then there is an isomorphism of $D(G)$-modules 
\[ \vcF^G_P(M,V) = \vcF^G_Q(M,\ind^Q_P(V)). \]
where $\ind^P_Q(V)$ is the smooth induction of $V$ from $P$ to $Q$. 
\end{enumerate}
\end{utheorem}

While our construction generalizes the work done in \cite{OrlikStrauchJH}, we here build the theory from the ground up and do not rely on arguments given in \cite{OrlikStrauchJH}. Indeed, the proofs of the main results are based on new techniques in Schneider-Teitelbaum's theory of $D(G)$-modules which we develop especially in the part II of this paper. 

Locally analytic representations associated to objects in the category $\cO^{\frp,\infty}_\alg$ via our functor appear naturally in the $p$-adic Langlands program (cf. \cref{examples}). In \cite{BreuilInv} and \cite{SchraenGL3}, the authors construct locally analytic representations of $\GL_2(\Qp)$ and $\GL_3(\Qp)$, respectively, which are related to semistable Galois representations of dimension 2 and 3, respectively, which are not potentially crystalline. Indeed, if $V$ is such a 2-dimensional Galois representation, the representation $\Sigma(k,\sL)$ of \cite{BreuilInv} appears as a subrepresentation of the space of locally analytic vectors $\Pi(V)^\an$ of Colmez's $p$-adic Banach space representation $\Pi(V)$. This $\Sigma(k,\sL)$ is in turn a subquotient of a representation which is in the image of our functor $\cF^{\GL_2(\Qp)}_{B,\log}$, where the logarithm $\log$ used depends on the $\sL$-invariant of $V$. Similarly, the representation $\Sigma(\lambda,\sL,\sL')$ of \cite{SchraenGL3} is a subquotient of a representation in the image of our functor $\cF^{\GL_3(\Qp)}_{B,\log}$, where, again, the logarithm $\log$ used depends on the pair of $\sL$-inariants $(\sL,\sL')$ of a certain semistable Galois representation. 

Extending the functors $\cF^G_P$ from category $\cO^\frp_\alg$ to its extension closure $\cO^{\frp,\infty}_\alg$ is necessary if one wants to work with these categories and functors in the setting of derived categories. On the side of locally analytic representations, the paper \cite{ST05} considers the bounded derived category $D^b_{\cC_G}(D(G))$ of complexes of $D(G)$-modules whose cohomology objects are coadmissible. The natural pendant to this category on the side of Lie algebra representations is then the bounded derived category $D^b_{\cO^{\frp,\infty}_\alg}(U(\frg))$ of complexes of $U(\frg)$-modules whose cohomology objects are in category $\cO^{\frp,\infty}_\alg$. Here, in order for this category to be a triangulated category, it is indeed necessary to work with $\cO^{\frp,\infty}_\alg$, and not only with $\cO^\frp_\alg$, as the latter category is not closed under extensions. Moreover, by \cite{CoulembierMazorchuckIII}, the natural functor $D^b(\cO^{\frp,\infty}_\alg) \ra D^b_{\cO^{\frp,\infty}_\alg}(U(\frg))$ is an equivalence of categories, which makes it possible to define the functors $\vcF^G_P$ on $D^b_{\cO^{\frp,\infty}_\alg}(U(\frg))$. The study of the functors $\vcF^G_P$ in the setting of derived categories will be taken up in forthcoming work.

\subsection*{Structure of the paper}

The main constructions and results described above are developed in part I of this paper. The proofs draw occasionally on work carried out in part II. On the other hand, part II does not rely on part I. It contains several results about modules over distribution algebras and certain subalgebras which may be of independent interest. 

\subsection*{Acknowledgments}

We would like to thank Simon Wadsley and the anonymous referee for carefully reading earlier versions of this paper and for their helpful questions and remarks. 

\subsection*{Notation}

Modules are assumed to be left modules unless otherwise specified. 

Let $F$ denote a finite extension of $\Q_p$, and $E$ a finite extension of $F$. If $X$ is an object over $F$ (vector space, algebra, etc), we denote by $X_E$ its base change to $E$. Vector spaces, tensor products, homs, etc, are assumed to be over $E$ unless otherwise specified. If $X$ is an object of a category with a faithful functor into vector spaces, we say that $X$ is \emph{locally finite dimensional} if it is a colimit of its finite dimensional subobjects. 

If $\frg$ is a Lie algebra over $F$, we write $U(\frg)$ in place of $U(\frg_E)$. Also, a \emph{$\frg$-module} is an $E$-vector space $M$ equipped with a Lie algebra homomorphism $\phi_M : \frg \to \End_E(M)$ (in other words, we use the term \emph{$\frg$-module} to refer to what might more accurately be called a $\frg_E$-module). For $m \in M$ and $x \in \frg$, we sometimes write $x \bdot m$ in place of $\phi_M(x)(m)$. Also, when $M$ is clear from context, we sometimes drop the subscript from $\phi_M$ and simply write $\phi$ instead. We let $\Mod_\frg$ denote the $E$-linear category of $\frg$-modules. We write $\Mod_\frg^*$ for various $*$ to denote various full subcategories of $\Mod_\frg$, as introduced below. 

We use upper case letters ($G, P$, etc) to denote locally analytic groups over $F$, and we use corresponding lower case fraktur letters ($\frg, \frp$, etc) to denote their Lie algebras. If $G$ is a locally analytic group, we write $\Rep_G^\an$ for the category of locally analytic representations of $G$ with coefficients in $E$. We write $C^\an(G), C^\infty(G)$, and $D(G)$ for the algebras of analytic functions, smooth functions, and analytic distributions on $G$ with coefficients in $E$. 

We denote blackboard bold letters ($\bbG, \bbP$, etc) to denote algebraic groups over $F$. In this case, we then use the corresponding upper case letter ($G, P$, etc) to denote the group of $F$-points of the algebraic group regarded as a locally analytic group, and the corresponding lower case fraktur letter ($\frg, \frp$, etc) to denote its Lie algebra. 

\part*{Part I}

\section{Preliminaries}

For the sake of completness, in this section we recall some well-known definitions and results for which we did not have an adequate reference. 

\subsection{Locally finite dimensional representations}

Let $\frg$ be a Lie algebra over $F$. We write $\Mod_{\frg}^{\fd}$  for the category of finite dimensional $\frg$-modules, and $\Mod_{\frg}^{\lfd}$ for the category of locally finite dimensional $\frg$-modules (ie, the ones for which every finitely generated $\frg$-submodule is finite dimensional). 

\begin{lemma} \label{fd-lfd-serre}
$\Mod_{\frg}^{\fd}$ and $\Mod_{\frg}^{\lfd}$ are both Serre subcategories of $\Mod_\frg$. 
\end{lemma}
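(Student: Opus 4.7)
The plan is to verify, for each of the two categories, the three closure properties defining a Serre subcategory: closure under subobjects, quotients, and extensions (and, of course, the zero object).

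The case of $\Mod_\frg^{\fd}$ is essentially immediate from additivity of dimension: if $0 \to M' \to M \to M'' \to 0$ is exact and $\dim_E M', \dim_E M'' < \infty$, then $\dim_E M = \dim_E M' + \dim_E M'' < \infty$; similarly for subobjects and quotients. For $\Mod_\frg^{\lfd}$, closure under subobjects is tautological, since a finitely generated submodule of a submodule $N \subseteq M$ is in particular a finitely generated submodule of $M$, hence finite dimensional. Closure under quotients follows by lifting generators: any finitely generated submodule of $M'' = M/M'$ is the image of some finitely generated submodule of $M$, which is finite dimensional by hypothesis, so its image is finite dimensional as well.

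The main obstacle is closure of $\Mod_\frg^{\lfd}$ under extensions. Suppose $0 \to M' \to M \to M'' \to 0$ is exact with $M', M'' \in \Mod_\frg^{\lfd}$, and let $N \subseteq M$ be a finitely generated submodule, say by elements $m_1,\dots,m_n$. The image $\bar N \subseteq M''$ is finitely generated, hence finite dimensional; pick a basis $\bar v_1, \dots, \bar v_k$ of $\bar N$, lift each $\bar v_j$ to some $v_j \in N$, and let $W \subseteq N$ be the $E$-span of the $v_j$. Then $W \cap M' = 0$, so $W$ maps isomorphically onto $\bar N$. The key is to show that $N' := N \cap M'$ is itself finitely generated over $U(\frg)$; since $M'$ is locally finite dimensional, this will force $N'$ to be finite dimensional, and then $N$, being an extension of $\bar N$ by $N'$, will be finite dimensional as well.

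To produce finitely many $U(\frg)$-generators for $N'$, fix a finite $E$-basis $x_1, \dots, x_d$ of $\frg_E$, and set $u_i := m_i - \sum_j c_{ij} v_j \in N'$, where the $c_{ij} \in E$ are the coordinates of $\bar m_i$ in the basis $\{\bar v_j\}$; and $w_{j,l} := x_l \cdot v_j - \sum_{j'} a_{jj'}(x_l) v_{j'} \in N'$, where the $a_{jj'}(x_l) \in E$ are the coordinates of $x_l \cdot \bar v_j$. Let $N_0$ be the $U(\frg)$-submodule of $N'$ generated by these finitely many elements. A straightforward induction on word length in $U(\frg)$, using that $N$ is generated by the $m_i$, shows $N \subseteq W + N_0$, and intersecting with $M'$ and using $W \cap M' = 0$ gives $N' = N_0$, as desired. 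The bookkeeping via a basis of $\frg_E$ is the essential point that makes this finiteness argument work; everything else is formal.
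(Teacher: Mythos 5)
Your proof is correct, and the easy parts (the finite dimensional case, closure of $\Mod_\frg^{\lfd}$ under sub and quotient) match the paper's. Where you diverge is the one nontrivial step, closure of $\Mod_\frg^{\lfd}$ under extensions. The paper disposes of it in one line: $N$ is a finitely generated module over the noetherian ring $U(\frg)$, so the submodule $M' \cap N$ is automatically finitely generated, hence finite dimensional because $M'$ is locally finite dimensional. You instead build explicit generators of $N \cap M'$ (your elements $u_i$ and $w_{j,l}$) and verify by induction on word length that they suffice; in effect you are reproving, in this special case, that a finite dimensional $\frg$-module is finitely presented over $U(\frg)$ and that the kernel of a surjection from a finitely generated module onto a finitely presented one is finitely generated. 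Your route is more elementary in that it never uses noetherianity of $U(\frg)$ — only that $\frg_E$ has a finite basis — at the cost of some bookkeeping; the paper's route is shorter but leans on a standard structural fact about $U(\frg)$. Both arguments are sound and both implicitly use that $\frg$ is finite dimensional, which is the standing assumption throughout the paper.
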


\begin{proof}
Suppose 
\[ \begin{tikzcd} 0 \ar{r} & M' \ar{r} & M \ar{r} & M'' \ar{r} & 0 \end{tikzcd} \]
is an exact sequence of $\frg$-modules. It is clear that $M$ is finite dimensional if and only if $M'$ and $M''$ are. It is also clear that if $M$ is locally finite dimensional, then $M'$ and $M''$ are both locally finite dimensional as well. Conversely, suppose $M'$ and $M''$ are locally finite dimensional. Let $N$ be a finitely generated submodule of $M$. Then we have an exact sequence as follows. 
\[ \begin{tikzcd} 0 \ar{r} & M' \cap N \ar{r} & N \ar{r} & N/(M' \cap N) \ar{r} & 0 \end{tikzcd} \] 
Since $N/(M' \cap N)$ is a finitely generated submodule of $M''$, it is finite dimensional. Since $U(\frg)$ is noetherian and $N$ is finitely generated, $M' \cap N$ is also finitely generated; so, since $M'$ is locally finite dimensional, $M' \cap N$ must be finite dimensional. Thus, since $M' \cap N$ and $N/(M' \cap N)$ are both finite dimensional, $N$ must be as well. 
\end{proof}

\subsection{Modules over an abelian Lie algebra}

Let $\frt$ be an abelian Lie algebra over $F$. Let $E[X]$ be the polynomial ring in an indeterminate $X$ over $E$. We identify points of $\MaxSpec(E[X])$ with their monic generators. Note that, if $\bar{E}$ is an algebraic closure of $E$, then there is a surjective function $\bar{E} \to \MaxSpec(E[X])$ which maps elements of $\bar{E}$ to their minimal polynomials.

\begin{definition}
Let $\frt_E^\sharp$ denote the set of functions $\pi : \frt_E \to \MaxSpec(E[X])$ which factor through an $E$-linear map $\frt_E \to \bar{E}$ for any algebraic closure $\bar{E}$ of $E$. Note that if this is true for one choice of algebraic closure, it is true for any other choice as well. 
\end{definition}

\begin{definition}
Let $M$ be a $\frt$-module. If $\pi \in \frt_E^\sharp$, the \emph{primary component} $M_\pi$ of $M$ associated to $\pi$ is 
\[ M_\pi = \left\{ m \in M : (\pi(x)(\phi(x)))^n(m) = 0 \text{ for some } n \geq 0 \right\}, \]
where $x \in \frt$ and $\pi(x)(\phi(x))$ is the endomorphism of $M$ obtained by plugging $\phi(x) \in \End(M)$ into the monic irreducible polynomial $\pi(x) \in \MaxSpec(E[X])$. 
\end{definition}

\begin{theorem} \label{primary-decomposition}
For any locally finite dimensional $\frt$-module $M$, we have 
\[ M = \bigoplus_{\pi \in \frt_E^\sharp} M_\pi. \]
This is called the \emph{primary component decomposition} of $M$. 
\end{theorem}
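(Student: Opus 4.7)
The plan is to first reduce to the case where $M$ is finite-dimensional, and then establish the decomposition in that case by base changing to an algebraic closure $\bar E$, using simultaneous generalized eigenspace decomposition, and descending via Galois invariance.

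For the reduction, I would note that the primary component $M_\pi$ is, by inspection of the definition, compatible with filtered colimits: if $N \subseteq M$ is a submodule, then $N_\pi = N \cap M_\pi$, and if $M$ is the directed union of submodules $M_i$, then $M_\pi = \bigcup_i (M_i)_\pi$. Hence, since every element of the locally finite dimensional $M$ lies in some finite dimensional $\frt$-submodule $N$, it suffices to prove $N = \bigoplus_\pi N_\pi$ for each finite dimensional $N$; the general statement then follows by summing.

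Next, for finite dimensional $M$, fix an algebraic closure $\bar E$ and let $\bar M = M \otimes_E \bar E$, regarded as a module over $\frt_{\bar E} = \frt_E \otimes_E \bar E$. Since $\frt$ is abelian, the endomorphisms $\{\phi(x) : x \in \frt_E\}$ form a commuting family, and a standard simultaneous generalized eigenspace decomposition over the algebraically closed field $\bar E$ yields
\[ \bar M = \bigoplus_{\lambda} \bar M_\lambda, \quad \bar M_\lambda = \{m \in \bar M : (\phi(x) - \lambda(x))^n m = 0 \text{ for all } x \in \frt_E \text{ and some } n\}, \]
where $\lambda$ ranges over $E$-linear maps $\frt_E \to \bar E$, only finitely many of which yield a nonzero component. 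The $\Gal(\bar E/E)$-action on $\bar M$ permutes the $\bar M_\lambda$ via $\sigma \cdot \bar M_\lambda = \bar M_{\sigma \circ \lambda}$, so grouping by Galois orbits (equivalently, by the element $\pi_\lambda \in \frt_E^\sharp$ obtained by post-composing $\lambda$ with the minimal polynomial map $\bar E \to \MaxSpec(E[X])$) gives a decomposition of $\bar M$ into Galois-stable summands, each of which descends to an $E$-subspace of $M$.

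It then remains to identify these Galois-descended subspaces with the $M_\pi$ of the statement. In one direction, on $\bar M_\lambda$ the operator $\phi(x) - \lambda(x)$ is nilpotent, so by a Taylor expansion $\pi_\lambda(x)(\phi(x)) = \pi_\lambda(x)(\lambda(x) + \text{nilpotent}) = 0 + (\text{nilpotent})$ is nilpotent, showing the Galois-descended summand for $\pi$ is contained in $M_\pi$. Conversely, if $\pi(x)(\phi(x))$ is locally nilpotent for all $x$, then in $\bar M$ each $\bar E$-component of $m$ must lie in an $\bar M_\lambda$ with $\lambda(x)$ a root of $\pi(x)$ for every $x$, forcing $\pi_\lambda = \pi$. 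The main obstacle is keeping the bookkeeping clean when $\frt$ is infinite dimensional, but since on any particular finite dimensional $N$ the action of $\frt_E$ factors through a finite dimensional quotient of $\End(N)$, the standard finite dimensional linear algebra suffices throughout.
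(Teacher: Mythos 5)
Your proposal is correct and follows the same skeleton as the paper: both reduce to the finite dimensional case by writing $M$ as the directed union of its finite dimensional submodules and using that $(-)_\pi$ commutes with this colimit. The only difference is that for the finite dimensional case the paper simply cites Jacobson's primary decomposition theorem for nilpotent Lie algebras, whereas you supply a self-contained argument via base change to $\bar E$, simultaneous generalized eigenspace decomposition, and Galois descent; your argument is sound (the one point glossed over, descent along the possibly infinite extension $\bar E/E$, is harmless since only finitely many weights occur and everything happens over a finite Galois subextension).
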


\begin{proof}
When $M$ is finite dimensional, this is the statement of \cite[chapter II, section 4, theorem 5; p. 40--41]{JacobsonLie}. In general, observe that we have \[ N = \bigoplus_{\pi \in \frt_E^\sharp} N_\pi \] for any finite dimensional submodule of $M$. Since $M$ is the colimit over all such finite dimensional submodules $N$, and since colimits commute with direct sums, we have 
\[ M = \colim N = \colim \left( \bigoplus_{\pi \in \frt_E^\sharp} N_\pi \right) = \bigoplus_{\pi \in \frt_E^\sharp} \colim N_\pi = \bigoplus_{\pi \in \frt_E^\sharp} M_\pi. \qedhere \]
\end{proof}

\begin{proposition} \label{primary-component-exact}
$M \mapsto M_\pi$ defines an exact functor on $\Mod_{\frt}^\lfd$ for any $\pi \in \frt_E^\sharp$. 
\end{proposition}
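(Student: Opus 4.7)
The plan is to derive exactness as a direct consequence of the primary decomposition (Theorem \ref{primary-decomposition}) combined with the naturality of the construction $M \mapsto M_\pi$. The first observation I would establish is functoriality: any $\frt$-homomorphism $f : M \to N$ satisfies $f(M_\pi) \subseteq N_\pi$. Indeed, if $m \in M_\pi$, then for each $x \in \frt$ there is some $n \geq 0$ with $(\pi(x)(\phi_M(x)))^n(m) = 0$; applying $f$ and using that $f$ intertwines the actions yields $(\pi(x)(\phi_N(x)))^n(f(m)) = 0$, so $f(m) \in N_\pi$.

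Now suppose $0 \to M' \xrightarrow{f} M \xrightarrow{g} M'' \to 0$ is exact in $\Mod_\frt^{\lfd}$. Theorem \ref{primary-decomposition} gives compatible decompositions $M' = \bigoplus_\sigma M'_\sigma$, $M = \bigoplus_\sigma M_\sigma$, $M'' = \bigoplus_\sigma M''_\sigma$, and by functoriality, $f$ and $g$ decompose as direct sums $\bigoplus_\sigma f_\sigma$ and $\bigoplus_\sigma g_\sigma$ of their restrictions. Injectivity of $f_\pi : M'_\pi \to M_\pi$ is immediate from injectivity of $f$. For exactness at $M_\pi$, take $m \in M_\pi$ with $g(m) = 0$ and write $m = f(m')$; decomposing $m' = \sum_\sigma m'_\sigma$ gives $m = \sum_\sigma f(m'_\sigma)$ with $f(m'_\sigma) \in M_\sigma$. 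The uniqueness in the primary decomposition of $m$ (which lies entirely in $M_\pi$) forces $f(m'_\sigma) = 0$ for $\sigma \neq \pi$; injectivity of $f$ then gives $m' = m'_\pi \in M'_\pi$, as desired.

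For surjectivity of $g_\pi : M_\pi \to M''_\pi$, I would lift an arbitrary $m'' \in M''_\pi$ to some $m \in M$, decompose $m = \sum_\sigma m_\sigma$, and apply $g$ to obtain $m'' = \sum_\sigma g(m_\sigma)$ with $g(m_\sigma) \in M''_\sigma$. Uniqueness of the decomposition of $m''$ forces $g(m_\sigma) = 0$ for $\sigma \neq \pi$, so $m'' = g(m_\pi)$ lies in the image of $g_\pi$. There is no real obstacle here; all the substance sits in Theorem \ref{primary-decomposition}, and once functoriality is in hand the proof reduces to the standard fact that a direct sum decomposition of a short exact sequence is exact summand by summand.
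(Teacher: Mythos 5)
Your proposal is correct and follows essentially the same route as the paper: establish functoriality of $M \mapsto M_\pi$, then use the primary decomposition of \cref{primary-decomposition} together with uniqueness of components to conclude. The only cosmetic difference is that the paper checks exactness at the middle term of an arbitrary three-term exact sequence (which subsumes the injectivity and surjectivity checks you carry out separately), whereas you verify all three conditions for a short exact sequence directly; both arguments are sound.
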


\begin{proof}
One can check directly from the definition of the primary component that, if $\sigma : M \to N$ is a homomorphism of $\frt$-modules, then $\sigma(M_\pi) \subseteq N_\pi$, which shows that $M \mapsto M_\pi$ is in fact functorial. For exactness, suppose we have an exact sequence of $\frt$-modules
\[ \begin{tikzcd} M' \ar{r}{\sigma} & M \ar{r}{\tau} &  M'' \end{tikzcd} \]
and suppose $m \in \ker(\tau) \cap M_\pi$. Since $m \in \ker(\tau)$, there exists $m' \in M'$ such that $\sigma(m') = m$. Since $M'$ has a primary component decomposition by \cref{primary-decomposition}, we can write $m' = m'_1 + \dotsb + m'_n$ where $m'_i \in M'_{\pi_i}$ for distinct $\pi_i \in \frt_E^\sharp$. Then 
\[ m = \sigma(m') = \sigma(m'_1) + \dotsb + \sigma(m'_n) \]
and we have $\sigma(m'_i) \in M_{\pi_i}$. Since $m \in M_\pi$, there must exist a single $k$ such that $m = \sigma(m_k)$ and $\pi = \pi_k$ and $\sigma(m'_i) = 0$ for all $i \neq k$. Thus $m \in \sigma(M'_\pi)$, proving that
\[ \begin{tikzcd} M'_\pi \ar{r}{\sigma} & M_\pi \ar{r}{\tau} &  M''_\pi \end{tikzcd} \]
is exact.   
\end{proof}

\begin{definition}
There is a natural embedding $\frt_E^* = \Hom(\frt_E, E) \hookrightarrow \frt_E^\sharp$ which carries $\lambda \in \frt_E^*$ to the function $\pi_\lambda(x) = X - \lambda(x) \in \MaxSpec(E[X])$. For a locally finite dimensional $\frt$-module $M$, we write $M_\lambda$ instead of $M_{\pi_\lambda}$ and call $M_\lambda$ the \emph{generalized weight space} of $M$ associated to $\lambda$. Explicitly, we have
\[ M_\lambda = \{ m \in M : (\phi(x) - \lambda(x))^n(m) = 0 \text{ for some } n \geq 0 \}. \]
\end{definition}

\begin{definition} \label{definition-split}
A $\frt$-module $M$ is \emph{split} if it is locally finite dimensional and $M_\pi = 0$ unless $\pi = \pi_\lambda$ for some $\lambda \in \frt_E^*$. In other words, $M$ is split if it is locally finite dimensional and there exists a $\frt$-module decomposition
\begin{equation} \label{gws-decomposition} M = \bigoplus_{\lambda \in \frt_E^*} M_\lambda. \end{equation}
This decomposition is called the \emph{generalized weight space decomposition} of $M$. We write $\Mod_{\frt}^s$ for the category of split $\frt$-modules. 
\end{definition}

Note that every finite dimensional $\frt$-module becomes split after a finite field extension \cite[chapter II, section 4]{JacobsonLie}. 

\begin{lemma} \label{split-serre}
$\Mod_\frt^s$ is a Serre subcategory of $\Mod_\frt$. 
\end{lemma}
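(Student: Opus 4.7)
My plan is to deduce the Serre property of $\Mod_\frt^s$ by combining two facts that are already in hand: first, that $\Mod_\frt^{\lfd}$ is a Serre subcategory of $\Mod_\frt$ (\cref{fd-lfd-serre}), and second, that each primary component functor $M \mapsto M_\pi$ is exact on $\Mod_\frt^{\lfd}$ (\cref{primary-component-exact}). Since being split is, by definition, the conjunction of being locally finite dimensional and having $M_\pi = 0$ for every $\pi \in \frt_E^\sharp$ which is not of the form $\pi_\lambda$, it suffices to check that this additional vanishing condition is preserved under taking subobjects, quotients, and extensions inside $\Mod_\frt^{\lfd}$.

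Concretely, I would fix a short exact sequence $0 \to M' \to M \to M'' \to 0$ in $\Mod_\frt$ and fix $\pi \in \frt_E^\sharp$ with $\pi \neq \pi_\lambda$ for all $\lambda \in \frt_E^*$. Suppose first that $M$ is split. Then $M$ is locally finite dimensional, so $M'$ and $M''$ are as well by \cref{fd-lfd-serre}, and applying \cref{primary-component-exact} gives the exact sequence
\[ 0 \to M'_\pi \to M_\pi \to M''_\pi \to 0. \]
Since $M_\pi = 0$, we conclude $M'_\pi = M''_\pi = 0$, so $M'$ and $M''$ are split. Conversely, suppose $M'$ and $M''$ are split. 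Then both are locally finite dimensional, hence so is $M$ by \cref{fd-lfd-serre}, and the same exact sequence of primary components forces $M_\pi = 0$ from $M'_\pi = M''_\pi = 0$. Hence $M$ is split.

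I do not anticipate any serious obstacle: the entire argument is a one-line consequence of the exactness of primary components, once closure under subobjects and quotients is formally separated from the local-finite-dimensionality condition. The only small care required is to remember to invoke \cref{fd-lfd-serre} in the extension step so that local finite dimensionality (and hence the existence of the primary component decomposition on $M$) is not assumed gratuitously.
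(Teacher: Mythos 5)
Your proof is correct and follows essentially the same route as the paper's: both reduce the claim to \cref{fd-lfd-serre} for local finite dimensionality and to the exactness of the primary component functors (\cref{primary-component-exact}) to see that $M_\pi = 0$ for non-split $\pi$ is inherited in both directions along a short exact sequence. No gaps.
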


\begin{proof}
Suppose 
\[ \begin{tikzcd} 0 \ar{r} & M' \ar{r} & M \ar{r} & M'' \ar{r} & 0 \end{tikzcd} \]
is an exact sequence of $\frt$-modules. We know from \cref{fd-lfd-serre} that $M$ is locally finite dimensional if and only if $M'$ and $M''$ are. Now if $\pi \in \frt_E^\sharp$ and $\pi \neq \pi_\lambda$ for any $\lambda \in \frt_E^*$, then we know that 
\[ \begin{tikzcd} 0 \ar{r} & M'_\pi \ar{r} & M_\pi \ar{r} & M''_\pi \ar{r} & 0 \end{tikzcd} \]
is exact by \cref{primary-component-exact}, from which we see that $M_\pi = 0$ if and only if $M'_\pi = 0$ and $M''_\pi = 0$. 
\end{proof}

\begin{definition}
Let $M$ be a split $\frt$-module. Let $\phi_s : \frt \to \End(M)$ be the map given by $\phi_s(x) : m \mapsto \lambda(x)m$ on $M_\lambda$ for each $\lambda$. Then $\phi_s$ is a Lie algebra homomorphism, and it is called the \emph{semisimple part} of $\phi$. The \emph{nilpotent part} of $\phi$ is $\phi_n = \phi - \phi_s$.
\end{definition}

For the remainder of this subsection, we regard $\frt$ as the Lie algebra of an algebraic torus $\bbT$. 

\begin{definition}
We say that $\lambda \in \frt^*_E$ is \emph{algebraic} if there exists an algebraic character $\chi_\lambda : \bbT_E \to \bbG_{m,E}$ which differentiates to $\lambda$, in the sense that 
\[ \lambda(x) = \left. \frac{d}{dt} \chi_\lambda(\exp(tx)) \right|_{t = 0}. \]
Such a character $\chi$, if it exists, is uniquely determined by $\lambda$. 
\end{definition} 

\begin{definition}
A split $\frt$-module $M$ \emph{has algebraic weights} if $M_\lambda \neq 0$ for some $\lambda \in \frt_E^*$  only if $\lambda$ is algebraic. We write $\Mod_\frt^{\alg}$ for the category of split $\frt$-modules which have algebraic weights, and $\Mod_{\frt}^{\alg,\fd}$ for the subcategory of finite dimensional ones. 
\end{definition}

\begin{lemma} \label{alg-serre}
$\Mod_{\frt}^\alg$ and $\Mod_{\frt}^{\alg,\fd}$ are both Serre subcategories of $\Mod_{\frt}$. 
\end{lemma}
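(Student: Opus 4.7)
The plan is to leverage the two Serre-closure results already established, namely \cref{split-serre} for $\Mod_\frt^s$ and \cref{fd-lfd-serre} for $\Mod_\frt^\fd$, and then reduce the ``algebraic weights'' condition to a condition on generalized weight spaces that behaves well under short exact sequences thanks to \cref{primary-component-exact}.

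Concretely, given an exact sequence
\[ \begin{tikzcd} 0 \ar{r} & M' \ar{r} & M \ar{r} & M'' \ar{r} & 0 \end{tikzcd} \]
in $\Mod_\frt$, I would first invoke \cref{split-serre} to conclude that $M$ is split if and only if $M'$ and $M''$ are split. Next, for each $\lambda \in \frt_E^*$, \cref{primary-component-exact} applied to $\pi_\lambda$ yields an exact sequence
\[ \begin{tikzcd} 0 \ar{r} & M'_\lambda \ar{r} & M_\lambda \ar{r} & M''_\lambda \ar{r} & 0 \end{tikzcd} \]
so $M_\lambda = 0$ if and only if $M'_\lambda = 0$ and $M''_\lambda = 0$. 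Therefore the condition ``$M_\lambda = 0$ for every non-algebraic $\lambda \in \frt_E^*$'' is equivalent to the same condition for both $M'$ and $M''$ simultaneously. This proves that $\Mod_\frt^\alg$ is a Serre subcategory.

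For $\Mod_\frt^{\alg,\fd}$, the same argument applies, with the additional observation from \cref{fd-lfd-serre} that finite dimensionality of $M$ is equivalent to finite dimensionality of both $M'$ and $M''$. I do not expect any serious obstacle here: everything follows from combining the previously established Serre properties with the exactness of the primary component functor. The only point to be a little careful about is that we only need to test the vanishing of $M_\pi$ at $\pi = \pi_\lambda$ for non-algebraic $\lambda$, since the splitness hypothesis already forces $M_\pi = 0$ for $\pi$ not of the form $\pi_\lambda$.
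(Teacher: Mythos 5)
Your proposal is correct and follows essentially the same route as the paper, whose proof is just the one-line citation of \cref{fd-lfd-serre} and \cref{primary-component-exact} ``similar to the proof of \cref{split-serre}''; you have simply written out the details that the paper leaves implicit. No gaps.
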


\begin{proof}
This follows from \cref{fd-lfd-serre,primary-component-exact}, similar to the proof of \cref{split-serre}. 
\end{proof}

\subsection{Nilpotent actions of Lie algebras}

Let $\fru$ be a Lie algebra. 

\begin{definition} \label{nilp-definition}
Let $M$ be a $\fru$-module. We say that \emph{$\fru$ acts nilpotently on $M$} if every $x \in \fru$ acts on $M$ by a nilpotent endomorphism; in other words, if for every $x \in \fru$, there exists a positive integer $n$ such that $x^n \bdot m = 0$ for all $m \in M$. We say that \emph{$\fru$ acts locally nilpotently on $M$} if for every $x \in \fru$ and $m \in M$, there exists a positive integer $n$ such that $x^n \bdot m = 0$. Let $\Mod_\fru^\unilp$ denote the category of $\fru$-modules on which $\fru$ acts locally nilpotently.  
\end{definition}

\begin{lemma} \label{nilp-serre}
$\Mod_\fru^\unilp$ is a Serre subcategory of $\Mod_\fru$. 
\end{lemma}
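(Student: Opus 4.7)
The plan is to verify the three Serre closure conditions directly from the definition of local nilpotence: closure under subobjects, closure under quotients, and closure under extensions. Given any short exact sequence
\[ 0 \to M' \to M \to M'' \to 0 \]
of $\fru$-modules, I must show that $M \in \Mod_\fru^\unilp$ if and only if both $M'$ and $M''$ are.

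For the forward implication, if $\fru$ acts locally nilpotently on $M$, then for $m' \in M' \subseteq M$ and $x \in \fru$, the integer $n$ killing $x^n \bdot m'$ in $M$ also kills it in $M'$; likewise, if $\bar{m} \in M''$ is the image of $m \in M$, then $x^n \bdot m = 0$ in $M$ implies $x^n \bdot \bar{m} = 0$ in $M''$.

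For the reverse implication, assume local nilpotence on both $M'$ and $M''$. Given $m \in M$ and $x \in \fru$, let $\bar{m}$ denote its image in $M''$. By hypothesis there exists $n \geq 0$ with $x^n \bdot \bar{m} = 0$, so $x^n \bdot m$ lies in $M' \subseteq M$. Applying the hypothesis to $M'$, there exists $k \geq 0$ with $x^k \bdot (x^n \bdot m) = 0$, giving $x^{n+k} \bdot m = 0$. This completes the argument.

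No step is a real obstacle here; the whole proof is a two-line diagram chase. The only thing worth noting is that, in contrast with the situation for $\Mod_\frt^\lfd$, one does not need to invoke noetherianity of $U(\fru)$: local nilpotence is a pointwise condition on pairs $(x,m)$, so the extension argument proceeds element by element without any finite generation input.
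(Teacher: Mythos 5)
Your proof is correct and is essentially identical to the paper's: the forward direction is the obvious observation about subquotients, and the extension step is the same two-line argument that $x^n \bdot m$ lands in $M'$ and is then killed by a further power of $x$. Your closing remark that no noetherianity is needed is accurate and consistent with the paper, which likewise invokes noetherianity of the enveloping algebra only in the locally-finite-dimensional case.
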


\begin{proof}
It is clear that, if $\fru$ acts locally nilpotently on $M$, then the same is true of any subquotient of $M$. Suppose 
\[ \begin{tikzcd} 0 \ar{r} & M' \ar{r} & M \ar{r} & M'' \ar{r} & 0 \end{tikzcd} \]
is an exact sequence of $\fru$-modules. Fix $x \in \fru$ and $m \in M$. There exists some integer $n_1$ which annihilates the image of $m$ in $M''$, which means that $x^{n_1} \bdot m \in M'$. Then there exists some other integer $n_2$ which annihilates $x^{n_1} \bdot m$. This means that $x^{n_2} \bdot (x^{n_1} \bdot m) = x^{n_2 + n_1} \bdot m = 0$. 
\end{proof}

\subsection{Rational points of a connected reductive group}

Let $(\bbG, \bbT)$ be a connected split reductive algebraic group over $F$. Let $\Phi(\bbG, \bbT)$ denote the corresponding root system. Recall our standing convention that when an algebraic group is denoted in blackboard bold, we use a corresponding upper case letter to denote its group of $F$-points. Note that the following statement is a statement about groups of $F$-points (rather than about algebraic groups, or equivalently, about the group of $\bar{F}$-points). 

\begin{lemma} \label{generated-by-torus-and-root-subgroups}
The group $G$ is generated by $T$ and $U_\alpha$ for all $\alpha \in \Phi(\bbG, \bbT)$, where $\bbU_\alpha$ is the root group of $\bbG$ corresponding to $\alpha$. 
\end{lemma}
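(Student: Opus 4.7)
The plan is to use the Bruhat decomposition for the $F$-points of the split reductive group $\bbG$. Let $H \leq G$ denote the subgroup generated by $T$ and the root subgroups $U_\alpha$ for $\alpha \in \Phi(\bbG, \bbT)$; the goal is to show that $H = G$. Since $\bbG$ is $F$-split, the Borel $\bbB$, its unipotent radical $\bbU$, and every root subgroup are defined over $F$, and the Bruhat decomposition takes the form $G = \bigsqcup_{w \in W} B n_w B$, where $W$ denotes the Weyl group and $n_w \in N_G(T)$ is any choice of representative for $w$. Hence it suffices to check that $B \subseteq H$ and that each Weyl group element admits a representative lying in $H$.

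For the first point, writing $B = T \cdot U$ with $U := \bbU(F)$, I only need to verify that $U \subseteq H$. Splitness ensures that $\bbU$ carries a filtration by normal $F$-subgroups whose successive quotients are the one-dimensional root subgroups $\bbU_\alpha$ for positive $\alpha$ (in any fixed convex ordering), and that this filtration splits on $F$-points to yield a bijection $\prod_{\alpha > 0} U_\alpha \to U$ given by multiplication (so-called root-group coordinates on $U$). In particular $U \subseteq H$, so that $B \subseteq H$.

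For the second point, since $W$ is generated by simple reflections it suffices to produce, for each simple root $\alpha$, an element $n_\alpha \in H$ whose image in $W = N_G(T)/T$ is the simple reflection $s_\alpha$. Fixing $F$-isomorphisms $u_\alpha : F \xrightarrow{\sim} U_\alpha$ and $u_{-\alpha} : F \xrightarrow{\sim} U_{-\alpha}$ arising from the rank-one subgroup of $\bbG$ attached to $\alpha$ (an image of $\mathrm{SL}_2$ or $\mathrm{PGL}_2$ over $F$), the standard formula
\[ n_\alpha = u_\alpha(1)\, u_{-\alpha}(-1)\, u_\alpha(1) \]
produces an element of $H$ that normalizes $T$ and represents $s_\alpha$. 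Taking products of such $n_\alpha$ yields representatives $n_w \in H$ for every $w \in W$, and Bruhat then gives $G \subseteq H$.

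The main obstacle, such as it is, is ensuring that each ingredient (Bruhat decomposition, the root-coordinate parametrization of $\bbU(F)$, and the existence of Weyl representatives in $H$) holds on the level of $F$-points rather than only on $\bar{F}$-points. This is exactly where the assumption that $\bbG$ is $F$-split enters: in the split case all three structural statements descend from $\bar{F}$ to $F$ without change, and the argument reduces to well-known facts from the structure theory of split reductive groups.
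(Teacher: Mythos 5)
Your proof is correct and follows essentially the same route as the paper: both reduce the claim to (i) showing $U \subseteq H$ via the product decomposition $\prod_{\alpha>0} U_\alpha \to U$ on $F$-points and (ii) producing Weyl-group representatives inside the subgroup generated by $T$ and the root subgroups. The only differences are cosmetic — the paper uses the decomposition $G = UNU$ and obtains reflection representatives from a lemma of Landvogt characterizing $N \cap U_{-\alpha} g U_{-\alpha}$, whereas you use the Bruhat decomposition and the explicit pinning formula $n_\alpha = u_\alpha(1)\,u_{-\alpha}(-1)\,u_\alpha(1)$.
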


\begin{proof}
Fix a Borel subgroup $\bbB$ containing $\bbT$ and let $\bbU$ be its unipotent radical. The multiplication map $\prod_{\alpha > 0} \bbU_\alpha \to \bbU$ is an isomorphism of $F$-schemes \cite[2.3]{BorelTits}, so on $F$-points, we see that $U$ is generated by $U_\alpha$ for all $\alpha > 0$. Let $\bbN$ be the normalizer of $\bbT$ in $\bbG$. Then $G = UNU$ \cite[2.11]{BorelTits}, so it suffices to show that every element $n \in N$ is in the subgroup of $G$ generated by $T$ and $U_\alpha$ for all $\alpha$. 

The quotient $N/T$ is the Weyl group of $\bbG$, so it is generated by the reflections across hyperplanes perpendicular to $\alpha$ for all $\alpha$. Thus it suffices to fix $\alpha \in \Phi(\bbG, \bbT)$ and assume that $n$ induces reflection across the hyperplane perpendicular to $\alpha$. Fix a nontrivial $g \in U_\alpha$. Then 
\[ N \cap U_{-\alpha} g U_{-\alpha} = \{ m(g) \} \]
for some $m(g)$ which also induces reflection across the hyperplane perpendicular to $\alpha$ \cite[lemma 0.19]{Landvogt}. Since both $m(g)$ and $n$ induce the same element of the Weyl group, there exists $t \in T$ such that $n = m(g)t$. Since $m(g) \in U_{-\alpha} g U_{\alpha}$, which in turn is contained in the subgroup generated by $U_\alpha$ and $U_{-\alpha}$, this concludes the proof. 
\end{proof}

\begin{corollary} \label{generated-by-torus-and-derived}
The group $G$ is generated by $T$ and $G'$, where $\bbG'$ is the derived subgroup of $\bbG$. 
\end{corollary}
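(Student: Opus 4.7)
The plan is to combine \cref{generated-by-torus-and-root-subgroups} with the observation that each root subgroup $\bbU_\alpha$ is contained in the derived subgroup $\bbG'$ at the level of algebraic groups, so that $U_\alpha = \bbU_\alpha(F) \subseteq \bbG'(F) = G'$ at the level of $F$-points.

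In more detail, I would argue as follows. First, recall that $\bbG/\bbG'$ is a torus (this is standard for connected reductive groups), in particular a commutative group containing no nontrivial unipotent subgroups. Since $\bbU_\alpha \cong \bbG_a$ is unipotent, the composite $\bbU_\alpha \hookrightarrow \bbG \twoheadrightarrow \bbG/\bbG'$ must be trivial, which means that $\bbU_\alpha \subseteq \bbG'$. Passing to $F$-points, we conclude $U_\alpha \subseteq G'$ for every $\alpha \in \Phi(\bbG, \bbT)$.

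By \cref{generated-by-torus-and-root-subgroups}, $G$ is generated by $T$ together with the subgroups $U_\alpha$, all of which now lie in the subgroup generated by $T$ and $G'$. This finishes the proof.

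The argument is essentially immediate given the previous lemma; the only mild subtlety is the passage between algebraic groups and their $F$-points, but since both $\bbU_\alpha \subseteq \bbG'$ and $U_\alpha = \bbU_\alpha(F)$ hold formally, there is no genuine obstacle.
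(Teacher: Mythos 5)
Your proof is correct, and its overall shape matches the paper's: both reduce, via \cref{generated-by-torus-and-root-subgroups}, to showing that each root group $\bbU_\alpha$ lies in $\bbG'$, and then pass to $F$-points. The difference is in how that containment is justified. You observe that $\bbG/\bbG'$ is a torus and that any homomorphism from the unipotent group $\bbU_\alpha \cong \bbG_a$ to a torus is trivial, so $\bbU_\alpha$ dies in the quotient. The paper instead argues on the level of Lie algebras: $\frg_\alpha$ lies in the derived subalgebra $\frg' = \Lie(\bbG')$, and the root group of $\bbG'$ attached to $\frg_\alpha$ must coincide with $\bbU_\alpha$ because closed connected subgroups with the same Lie algebra agree (in characteristic $0$, citing Humphreys). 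Your route is arguably cleaner: it avoids invoking the Lie algebra--subgroup correspondence (which is genuinely a characteristic-$0$ statement) and rests only on the standard facts that $\bbG/\bbG'$ is of multiplicative type and that unipotent groups admit no nontrivial maps to such groups. Both arguments are complete; the final step of passing to $F$-points and citing the lemma is identical.
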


\begin{proof}
After \cref{generated-by-torus-and-root-subgroups}, it is sufficient to show that each root group  $\bbU_\alpha$ of $\bbG$ is contained in $\bbG'$. Since $\frg' = \Lie(\bbG')$ is the derived subalgebra of $\frg$, it contains $\frg_\alpha$. Let $\bbU_\alpha'$ be the root group of $\mathbb{G}'$ corresponding to $\frg_\alpha$. Then $\bbU_\alpha'$ and $\bbU_\alpha$ are both closed connected subgroups of $\bbG$ with Lie algebra $\frg_\alpha$, so $\bbU_\alpha = \bbU_\alpha'$ \cite[theorem 13.1]{Humphreys_Groups}. Thus $\bbU_\alpha$ is in $\bbU'$.
\end{proof}

\section{Lifting Lie algebra actions} \label{lifting}

In this section, we describe ways of lifting actions of Lie algebras over $F$ to actions of locally analytic groups. 

\subsection{Lifting to actions of unipotent groups} \label{lifting-unipotent}

The easiest situation is lifting actions of nilpotent Lie algebras to actions of corresponding unipotent locally analytic groups. 

\begin{para}
Let $\bbU$ be a unipotent algebraic group. Then $\fru$ is nilpotent, the exponential map $\exp : \fru \to U$ is bijective, and we write $\log : U \to \fru$ to denote its inverse. Suppose $M$ is a finite dimensional $\fru$-module on which $\fru$ acts nilpotently, ie, such that $\phi(\fru) \subseteq \Nil(M)$, where $\Nil(M)$ denotes the set of nilpotent endomorphisms of $M$ (cf. \cref{nilp-definition}). We let $\tilde{\phi}_M$ denote the composite
\[ \begin{tikzcd} U \ar{r}{\log} & \fru \ar{r}{\phi} & \Nil(M) \ar{r}{\exp} & \GL(M). \end{tikzcd} \]
Then $\tilde{\phi}_M$ defines a locally analytic action of $U$ on $M$ (in fact, the action is even algebraic). We write $\Lift(M)$ to denote $M$ regarded as a locally analytic representation of $U$. When $M$ is clear from context, we drop the subscript and simply write $\tilde{\phi}$ instead of $\tilde{\phi}_M$. \Cref{lifting-unipotent-check} below shows that $\tilde{\phi}$ does in fact lift the original action of $\fru$ on $M$.
\end{para}

\begin{lemma} \label{lifting-unipotent-check}
Suppose $M$ is a finite dimensional $\fru$-module on which $\fru$ acts nilpotently. Then
\[ \phi_M(x) = \left. \frac{d}{dt} \tilde{\phi}_M(\exp(tx)) \right|_{t = 0} \]
for all $x \in \fru$. 
\end{lemma}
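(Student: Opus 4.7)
The plan is to simply unwind the definition of $\tilde{\phi}_M$ and show that the composition $\exp \circ \phi \circ \log$ evaluated along the curve $t \mapsto \exp(tx)$ telescopes to $\exp(t\phi(x))$, whose derivative at $t=0$ is $\phi(x)$.

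First I would use that, by construction, $\log : U \to \fru$ is the inverse of the exponential $\exp : \fru \to U$ (which is a bijection since $\bbU$ is unipotent). Hence for any $x \in \fru$ and $t \in F$, we have $\log(\exp(tx)) = tx$. Plugging into the definition,
\[ \tilde{\phi}_M(\exp(tx)) = \exp\bigl(\phi_M(\log(\exp(tx)))\bigr) = \exp(\phi_M(tx)) = \exp(t\phi_M(x)), \]
where the last equality uses $E$-linearity of $\phi_M : \fru \to \End(M)$.

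Next I would observe that, since $\phi_M(\fru) \subseteq \Nil(M)$, the endomorphism $\phi_M(x)$ is nilpotent, say with $\phi_M(x)^n = 0$. Then
\[ \exp(t\phi_M(x)) = \sum_{k=0}^{n-1} \frac{t^k}{k!}\phi_M(x)^k \]
is a polynomial expression in $t$ with values in $\End(M)$, so differentiating term by term and evaluating at $t=0$ picks out the $k=1$ coefficient, which is exactly $\phi_M(x)$.

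There is really no main obstacle here; the only mild subtlety is keeping straight that two different exponentials are in play, namely the algebraic exponential $\exp : \fru \to U$ on the group side and the matrix exponential $\exp : \Nil(M) \to \GL(M)$ on the representation side. Once we observe that the latter is truly a finite sum on nilpotent inputs (so differentiation under the sum is trivially justified), the computation above is purely formal.
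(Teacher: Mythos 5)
Your proof is correct and follows essentially the same route as the paper's: unwind $\tilde{\phi}_M(\exp(tx)) = \exp(t\phi_M(x))$ using $\log \circ \exp = \mathrm{id}$ and linearity, then differentiate at $t=0$. Your extra remark that the matrix exponential of a nilpotent endomorphism is a polynomial in $t$ (so term-by-term differentiation is trivially justified) is a welcome clarification the paper leaves implicit.
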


\begin{proof}
Observe that
\[ \tilde{\phi}(\exp(tx)) = \exp(\phi(\log(\exp(tx)))) 
= \exp(\phi(tx)) 
= \exp(t\phi(x))  \]
which means that 
\[ \left. \frac{d}{dt} \tilde{\phi}(\exp(tx)) \right|_{t = 0} = \left. \frac{d}{dt} \exp(t\phi(x)) \right|_{t = 0} = \exp(t\phi(x))\phi(x) \bigg|_{t = 0} = \phi(x). \qedhere \]
\end{proof}

\begin{para}
This construction is evidently natural and defines a functor \[ \Lift : \Mod_\fru^{\unilp,\fd} \to \Rep_U^\an \] on the category $\Mod_\fru^{\unilp,\fd}$ of finite dimensional $\fru$-modules on which $\fru$ acts nilpotently.
\end{para}

\subsection{Logarithms on tori}

Let $\bbT$ be a split algebraic torus and let $T_0$ be the maximal compact subgroup of $T = \bbT(F)$. 

\begin{definition} \label{log-definition}
A \emph{logarithm} on $T$ is a locally analytic group homomorphism $T \to \frt_E$ such that, on a neighborhood of 0 where $\exp : \frt \dashedarrow T$ is defined, the composite $\log \circ \exp$ equals the natural map $\frt \to \frt_E$ (we will sometimes abusively write $\log \circ \exp = \id$ for brevity). We write $\Logs(T)$ for the set of all logarithms on $T$. 
\end{definition}

\begin{lemma}
$\Logs(T)$ is a torsor for $\Hom(T/T_0, \frt_E)$. 
\end{lemma}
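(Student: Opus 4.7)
The plan is to verify three things: first, that $\Logs(T)$ is nonempty; second, that there is a natural simply transitive action of $\Hom(T/T_0,\frt_E)$ on $\Logs(T)$; and third, that any two logarithms differ by an element of this group. The main substantive ingredient is an argument showing that a locally analytic homomorphism $T \to \frt_E$ which vanishes near the identity must vanish on all of $T_0$, which uses torsion-freeness of $\frt_E$ as a $\Q_p$-vector space.

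For non-emptiness, since $\bbT$ is split, fix an isomorphism $\bbT \simeq \bbG_m^n$, giving $T \simeq (F^\times)^n$ and $\frt_E \simeq E^n$. Iwasawa's $p$-adic logarithm provides a locally analytic homomorphism $\log_p : F^\times \to F \hookrightarrow E$ which inverts $\exp$ on a neighborhood of $0$; the $n$-fold product is then a logarithm on $T$. For the action, given $\log \in \Logs(T)$ and $\chi \in \Hom(T/T_0, \frt_E)$, let $\tilde\chi : T \to T/T_0 \to \frt_E$; this is locally analytic because $T_0$ is open (so $\tilde\chi$ is locally constant near $1$), and $\tilde\chi \circ \exp = 0$ on a neighborhood of $0$ in $\frt$ since $\exp$ of a small element lies in $T_0$. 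Hence $\log + \tilde\chi$ is again a logarithm, and this defines an action of $\Hom(T/T_0, \frt_E)$ on $\Logs(T)$.

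For the torsor property, let $\log_1, \log_2 \in \Logs(T)$ and set $\delta = \log_1 - \log_2 : T \to \frt_E$, which is a locally analytic group homomorphism. By the defining property of a logarithm, $\delta \circ \exp = 0$ on a neighborhood of $0$ in $\frt$, so $\delta$ vanishes on some open subgroup $T_1 \subseteq T_0$. The quotient $T_0/T_1$ is compact and totally disconnected, and in fact finite since $T_0$ is a compact $p$-adic Lie group (or equivalently, $T_0/T_1$ is a finite group as $T_1$ is open in the compact group $T_0$). Since $\frt_E$ is a $\Q_p$-vector space and therefore torsion-free, the induced homomorphism $T_0/T_1 \to \frt_E$ is zero; thus $\delta$ vanishes on $T_0$ and factors through a homomorphism $T/T_0 \to \frt_E$. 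This shows the action is transitive, and the same torsion-free argument shows it is free: if $\log + \tilde\chi = \log$, then $\tilde\chi = 0$ as a map $T \to \frt_E$, hence $\chi = 0$.

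The one step that requires a little care is the verification that the difference of two logarithms vanishes on all of $T_0$, not merely on an open subgroup where both agree with $\exp^{-1}$; this is where torsion-freeness of the target is essential. Everything else is a direct unpacking of definitions.
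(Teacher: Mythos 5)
Your proof is correct, but it follows a more self-contained route than the paper's. The paper's argument leans on Bourbaki (\emph{Groupes et alg\`ebres de Lie}, III.6, propositions 10--11) for the existence \emph{and uniqueness} of a homomorphism $\lambda : T_0 \to \frt_E$ inverting $\exp$ near the identity; it then splits the extension $1 \to T_0 \to T \to T/T_0 \to 1$ (using that $T/T_0$ is finite free abelian) to get a short exact sequence $0 \to \Hom(T/T_0,\frt_E) \to \Hom(T,\frt_E) \to \Hom(T_0,\frt_E) \to 0$, and identifies $\Logs(T)$ as the fiber over $\lambda$, hence a torsor over the kernel. You replace both citations with direct arguments: existence via the Iwasawa logarithm in explicit coordinates on $\bbG_m^n$ (using splitness of $\bbT$, just as the paper implicitly does through the structure of $T/T_0$), and the crucial uniqueness step --- that two logarithms agreeing on a neighborhood of $1$ agree on all of $T_0$ --- via the observation that the difference kills an open subgroup $T_1 \subseteq T_0$, the quotient $T_0/T_1$ is finite, and $\frt_E$ is torsion-free. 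This last argument is exactly the content hiding behind the uniqueness clause of Bourbaki's proposition, so your proof is arguably more transparent, at the cost of being longer. One small remark: freeness of the action needs only the surjectivity of $T \to T/T_0$ (if $\tilde\chi = 0$ on $T$ then $\chi = 0$), not torsion-freeness; your parenthetical attribution there is harmless but unnecessary. No gaps.
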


\begin{proof}
Let $T_f$ be the set of $x \in T$ such that $x^n \to 1$ for some strictly increasing sequence of integers $n$, as in \cite[III.6, proposition 10]{B-L}. Then $T_f$ is the union of all compact subgroups of $T$ \cite[III.6, corollary to proposition 13]{B-L}, so $T_f = T_0$ since $T_0$ is the unique maximal compact subgroup of $T$. By \cite[III.6, propositions 10--11]{B-L}, there exists a unique group homomorphism $\lambda : T_0 \to \frt_E$ which inverts the exponential map on a neighborhood of 0 in $\frt_E$. In other words, $\Logs(T)$ is precisely the set of locally analytic homomorphisms $T \to \frt_E$ which restrict to $\lambda$ on $T_0$. 

Since $T/T_0$ is a discrete, finite free abelian group, the exact sequence
\[ \begin{tikzcd} 1 \ar{r} & T_0 \ar{r} & T \ar{r} & T/T_0 \ar{r} & 1 \end{tikzcd} \]
splits in the category of abelian Lie groups, so applying $\Hom(-,\frt_E)$ yields an exact sequence 
\begin{equation} \label{torsor-exact-sequence} \begin{tikzcd} 0 \ar{r} & \Hom(T/T_0, \frt_E) \ar{r} & \Hom(T,\frt_E) \ar{r} & \Hom(T_0,\frt_E) \ar{r} & 0. \end{tikzcd} \end{equation}
Then $\Logs(T) \subseteq \Hom(T,\frt_E)$ is precisely the preimage of $\lambda \in \Hom(T_0, \frt_E)$, so it is a torsor over the kernel $\Hom(T/T_0, \frt_E)$.
\end{proof}

\begin{example}
Suppose $\bbT = \bbG_m$, so that $T = F^\times$ and $T_0 = \fro_F^\times$. The exact sequence
\[ \begin{tikzcd} 1 \ar{r} & \fro_F^\times \ar{r} & F^\times \ar{r} & F^\times/\fro_F^\times \ar{r} & 1 \end{tikzcd} \]
splits: if we choose a uniformizer $\varpi \in F$, then the subgroup $\varpi^{\bbZ} \subseteq F^\times$ maps isomorphically onto $F^\times/\fro_F^\times$. Now note that we have another exact sequence 
\[ \begin{tikzcd} 1 \ar{r} & T_0^\circ \ar{r} & \fro_F^\times \ar{r} & \fro_F^\times/T_0^\circ \ar{r} & 1 \end{tikzcd} \]
where $T_0^\circ = \{x \in F \mid |x-1| < 1\}$, and this exact sequence also splits: the subgroup $\mu_{(p)}(F) \subseteq \fro_F^\times$ of roots of unity in $F$ of order prime to $p$ maps isomorphically onto $\fro_F^\times/T_0^\circ$. In other words, we have 
\[ F^\times = \underbrace{T_0^\circ \times \mu_{(p)}(F)}_{\fro_F^\times} \times \varpi^{\bbZ}. \]
Any logarithm on $F^\times$ must be given on $T_0^\circ$ by the usual convergent power series 
\[ z \mapsto \sum_{n = 1}^\infty \frac{(-1)^{n-1}}{n} (z-1)^n \]
in order to invert the exponential map. Moreover, since roots of unity are torsion elements of $F^\times$ and the additive group $\frt_E = E$ has no torsion, any logarithm on $F^\times$ must necessarily annihilate $\mu_{(p)}(F)$. In other words, the $\lambda : T_0 \to \frt_E$ that shows up in the proof above is, in this case, the unique homomorphism $\fro_F^\times \to E$ which is given by the usual power series on $T_0^\circ$ and which annihilates $\mu_{(p)}(F)$. To extend this $\lambda$ to a logarithm on $F^\times$, we are free to choose the image of $\varpi$ to be any element of $E$. In other words, the set of all logarithms on $F^\times$ is in bijection with $E$. This corresponds to the fact that the $\Hom(T/T_0, \frt_E)$ which showed up in the proof above is just $\Hom(\varpi^{\bbZ}, E) = E$ in this example. 
\end{example}

\begin{remark}
The previous example generalizes straightforwardly to a higher-dimensional torus $\bbT = \bbG_m^n$. The reader is invited to study \cref{examples} for some explicit and motivated examples of choosing logarithms on higher dimensional tori. 
\end{remark}

\subsection{Lifting to actions of tori} \label{lifting-tori}

Let $\bbT$ be a split algebraic torus and fix $\log \in \Logs(T)$. 
\begin{lemma} \label{lifting-nilpotent}
Suppose $M$ is a finite dimensional $\frt$-module on which $\frt$ acts nilpotently, ie, such that $\phi_M(\frt) \subseteq \Nil(M)$, where $\Nil(M)$ is the set of nilpotent endomorphisms of $M$ as a vector space. Then the composite 
\[ \begin{tikzcd} T \ar{r}{\log} & \frt_E \ar{r}{\phi_M} & \Nil(M) \ar{r}{\exp} & \GL(M) \end{tikzcd} \]
defines a locally analytic homomorphism $\tilde{\phi}_M : T \to \GL(M)$ and 
\[ \phi_M(x) = \left. \frac{d}{dt} \tilde{\phi}_M(\exp(tx)) \right|_{t = 0} \]
for all $x \in \frt$. Thus $M$ equipped with $\tilde{\phi}_M$ is a representation of $T$ which lifts the original action of $\frt$. Moreover, if $N$ is another finite dimensional $\frt$-module with $\phi_N(\frt) \subseteq \Nil(N)$ and $\sigma : M \to N$ is a homomorphism of $\frt$-modules, then $\sigma$ is also a homomorphism of representations of $T$. 
\end{lemma}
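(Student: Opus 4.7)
The plan is to exploit two observations: (a) since $\frt$ is abelian, the image $\phi_M(\frt_E) \subseteq \End_E(M)$ is a commuting family of nilpotent operators; and (b) on any subspace of $\Nil(M)$ the formal exponential $A \mapsto \sum_{k \geq 0} A^k/k!$ is a finite sum, hence polynomial (so locally analytic, even algebraic), and satisfies $\exp(A + B) = \exp(A)\exp(B)$ on any commuting pair by the usual binomial identity. Together these show that $\exp$ restricted to the abelian subspace $\phi_M(\frt_E)$ is a locally analytic group homomorphism from the additive group $(\phi_M(\frt_E), +)$ into $\GL(M)$.

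Given this, verifying the three main assertions is essentially mechanical. First I would check that $\tilde\phi_M$ is a locally analytic group homomorphism: $\log : T \to \frt_E$ is one by \cref{log-definition}, the $E$-linear extension $\phi_M : \frt_E \to \End_E(M)$ is another, and then composing with $\exp$ on the image of $\phi_M \circ \log$ preserves the group structure by observation (b); local analyticity is immediate from the fact that each factor is locally analytic. Next, the derivative formula will follow from the defining property of a logarithm: on a neighborhood of $0$ where $\exp : \frt \dashedarrow T$ is defined, the composite $\log \circ \exp$ equals the canonical inclusion $\frt \hookrightarrow \frt_E$, so for small $t$ one computes
\[
\tilde\phi_M(\exp(tx)) = \exp(\phi_M(\log(\exp(tx)))) = \exp(t\phi_M(x)),
\]
which is polynomial in $t$ and whose derivative at $t = 0$ is $\phi_M(x)$, as in \cref{lifting-unipotent-check}. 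Finally, naturality in $\sigma$ is formal: a $\frt$-module homomorphism $\sigma : M \to N$ intertwines $\phi_M(x)$ with $\phi_N(x)$ for every $x \in \frt$, hence for every $x \in \frt_E$ by $E$-linearity, hence intertwines the truncated exponential series termwise, and precomposing with $\log$ gives $\sigma \circ \tilde\phi_M(t) = \tilde\phi_N(t) \circ \sigma$ for all $t \in T$.

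I do not expect any serious obstacle. The only real subtlety, as compared with \cref{lifting-unipotent-check}, is conceptual: on a torus $\exp : \frt \dashedarrow T$ is only partially defined, so one cannot directly build $\tilde\phi_M$ by pushing $\phi_M$ through a group-side exponential. The role of the logarithm is precisely to provide a globally defined, locally analytic substitute, and it is only the combination of abelianness of $\frt$ with the nilpotency hypothesis on $\phi_M$ that allows one to transport the exponential to the endomorphism side, where it is automatically defined and automatically a homomorphism on the relevant commuting family.
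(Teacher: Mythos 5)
Your proposal is correct and follows essentially the same route as the paper's proof: the derivative computation via $\log(\exp(tx)) = tx$ and the termwise intertwining argument for naturality are identical, and your observation (b) about $\exp$ being a polynomial homomorphism on the commuting nilpotent family $\phi_M(\frt_E)$ simply spells out the step the paper dismisses as ``clear.''
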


\begin{proof}
It is clear that $\tilde{\phi}$ is a locally analytic homomorphism, so we only need to check that that it differentiates to $\phi$, but this is more or less identical to the proof of \cref{lifting-unipotent-check}. Fix $x \in \frt$. If $t$ is small enough that $\exp$ is defined on $tx$, we have $\log(\exp(tx)) = tx$, which means that 
\[ \tilde{\phi}(\exp(tx)) = \exp(\phi(\log(\exp(tx)))) = \exp(\phi(tx)) = \exp(t\phi(x)), \]
so 
\[ \left. \frac{d}{dt} \tilde{\phi}(\exp(tx)) \right|_{t = 0} = \left. \frac{d}{dt} \exp(t\phi(x)) \right|_{t = 0} = \exp(t\phi(x))\phi(x) \bigg|_{t = 0} = \phi(x). \]
Finally, if $\sigma : M \to N$ is a homomorphism of finite dimensional $\frt$-modules with $\phi_N(\frt) \subseteq \Nil(N)$, then for $t \in T$ and $m \in M$, we have
\[ \begin{aligned} \sigma(\tilde{\phi}_M(t)(m)) &= \sigma( \exp(\phi_M(\log(t)))(m) ) \\
&= \sigma( m + \phi_M(\log(t))(m) + \dotsb ) \\
&= \sigma(m) + \sigma(\phi_M(\log(t))(m)) + \dotsb \\
&= \sigma(m) + \phi_N(\log(t))(\sigma(m)) + \dotsb \\
&= \exp(\phi_N(\log(t))(\sigma(m)) \\
&= \tilde{\phi}_N(t)(\sigma(m)) \end{aligned} \]
which proves that $\sigma$ is also a homomorphism of representations of $T$.  
\end{proof}

We now generalize the above lifting procedure. 

\begin{para}
If $M = M_\lambda$ for a single algebraic $\lambda \in \frt^*_E$, we define $\tilde{\phi}_M : T \to \GL(M)$ by 
\[ \tilde{\phi}_M(t) = \chi_\lambda(t) \exp(\phi_n(\log(t))). \] 
This defines a locally analytic action of $T$ on $M$ lifting the original action of $\frt$. 
\end{para}

\begin{para} \label{lifting-tori-general}
More generally, if $M$ is a split finite dimensional $\frt$-module with algebraic weights, we can lift the action of $\frt$ one generalized weight space at a time. In other words, let $\chi : T \to \GL(M)$ be the map where $\chi(t)$ acts on $M_\lambda$ by multiplication by $\chi_\lambda(t)$. Then defining $\tilde{\phi}_M : T \to \GL(M)$ by  
\[ \tilde{\phi}_M(t) = \chi(t) \exp(\phi_n(\log(t))) \]
gives a lift of the action of $\frt$. We write $\Lift(M, \log)$ to denote $M$ regarded as a representation of $T$ via this procedure.
\end{para}

\begin{para} \label{torus-action-fd}
Since the generalized weight space decomposition is functorial, this lifting construction is also functorial. In other words, $M \mapsto \Lift(M, \log)$ defines a functor
\[ \Lift(-, \log) : \Mod_\frt^{\alg,\fd} \to \Rep_T^\an \]
on the category $\Mod_{\frt}^{\alg,\fd}$ of finite dimensional $\frt$-modules with algebraic weights. 
\end{para}

\begin{para}[Extension to locally finite dimensional modules] \label{torus-action-lfd}
Finally, we extend the lifting functor $\Lift(-,\log)$ from $\Mod_\frt^{\alg,\fd}$ to all of $\Mod_\frt^{\alg}$ in the universal way, using a left Kan extension \cite[chapter X]{MacLaneCats}. Let us tentatively write $\Lift^*(-,\log)$ for the left Kan extension of $\Lift(-,\log)$ along the inclusion $\Mod_\frt^{\alg,\fd} \hookrightarrow \Mod_\frt^{\alg}$ (we will ultimately drop the asterisk from this notation). This left Kan extension can be described explicitly as follows: if $M \in \Mod_\frt^{\alg}$, then  
\begin{equation} \label{lan-definition} 
\Lift^*(M, \log) = \colim_{N \in \cI(M)} \Lift(N, \log) 
\end{equation}
where $\cI(M)$ is the category of $\frt$-module homomorphisms $N \to M$ where $N \in \Mod^{\alg,\fd}_{\frt}$. Observe that the subcategory of finite dimensional $\frt$-submodules of $M$ is cofinal in $\cI(M)$, since any $\frt$-module homomorphism $N \to M$ in $\cI(M)$ factors through the finite dimensional $\frt$-submodule $f(N)$. Thus we may replace $\cI(M)$ with this cofinal subcategory without affecting \cref{lan-definition}. In other words, $\Lift^*(M, \log)$ is just the union of $\Lift(N, \log)$ over all finite dimensional $\frt$-submodules $N \subseteq M$. Now, if $M \in \Mod_{\frp}^{\alg,\fd}$, it is clear that there is a natural isomorphism $\Lift^*(M,\log) = \Lift(M,\log)$. Thus, we may remove the asterisk and define 
\[ \Lift(-,\log) : \Mod_\frt^{\alg} \to \Rep_P^\an \]
to be the left Kan extension of the functor on $\Mod_\frp^{\alg,\fd}$ defined in \cref{torus-action-fd} without introducing any conflict of notation. 
\end{para}

We conclude this subsection with the following observation about how this lifting construction interacts with passage to subtori. 

\begin{lemma} \label{subtorus}
Let $\bbS \subseteq \bbT$ be split algebraic tori and suppose $\log \in \Logs(T)$. Then $\mathrm{log}|_{S} \in \Logs(S)$, and if $M \in \Mod_\frt^\alg$, then 
\[ \Lift(M, \log)|_{S} = \Lift(M|_{\frs}, \mathrm{log}|_S). \]
\end{lemma}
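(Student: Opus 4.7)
The plan is to verify the two assertions in turn: first that $\log|_S$ is indeed a logarithm on $S$---the key subtlety being that its image must lie in $\frs_E$---and second that the two lifting procedures produce the same representation of $S$ on the common underlying vector space.

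For the first assertion, $\log|_S \colon S \to \frt_E$ is clearly a locally analytic group homomorphism. The compatibility of exponentials gives $\exp_T(x) = \exp_S(x)$ for $x \in \frs$ near zero, whence
\[ \log|_S(\exp_S(x)) = \log(\exp_T(x)) = x \in \frs, \]
so $\log|_S$ maps an open neighborhood of $1 \in S$ into $\frs$ and inverts $\exp_S$ there. To upgrade this to $\log|_S(S) \subseteq \frs_E$ globally, I would compose with the projection $\frt_E \to \frt_E/\frs_E$: the resulting locally analytic homomorphism has zero differential at $1$, so factors through the discrete quotient $S/S_0^\circ$. On this quotient, the torsion part is annihilated because $\frt_E/\frs_E$ is torsion-free, and the free abelian part $S/S_0 \cong \bbZ^k$ must be handled via the compatibility of $\log$ with the inclusion $\bbS \subseteq \bbT$ (e.g.\ via a chosen splitting of $X^*(\bbT) \twoheadrightarrow X^*(\bbS)$ and the corresponding direct-factor decomposition of $\bbT$). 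Granted $\log|_S(S) \subseteq \frs_E$, the local inversion immediately gives $\log|_S \in \Logs(S)$.

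For the second assertion, by the universal property of the left Kan extension described in \cref{torus-action-lfd}, both $\Lift(-, \log)|_S$ and $\Lift((-)|_\frs, \log|_S)$ commute with filtered colimits over finite-dimensional subobjects, so it suffices to establish the equality for $M \in \Mod_\frt^{\alg,\fd}$. For such an $M$, use the generalized weight space decomposition $M = \bigoplus_\lambda M_\lambda$ indexed by algebraic $\lambda \in \frt_E^*$: restricting the $\frt$-action to $\frs$ refines this to $M|_\frs = \bigoplus_\mu (M|_\frs)_\mu$ with $(M|_\frs)_\mu = \bigoplus_{\lambda|_\frs = \mu} M_\lambda$. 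Three observations then suffice: (i) restriction of algebraic characters of $\bbT$ to $\bbS$ gives $\chi_\lambda|_S = \chi_{\lambda|_\frs}$; (ii) on each summand $M_\lambda \subseteq (M|_\frs)_{\lambda|_\frs}$, the nilpotent part of the $\frs$-action sends $x \in \frs$ to $\phi(x) - \lambda(x)\cdot\id$, which is exactly the restriction to $\frs$ of the nilpotent part of the $\frt$-action; and (iii) $\log|_S(s)$, viewed in $\frs_E \subseteq \frt_E$, equals $\log(s)$. Substituting into the defining formula $\tilde\phi_M(t) = \chi(t) \exp(\phi_n(\log(t)))$ then yields the same endomorphism of $M$ for every $t = s \in S$, which is the desired equality of $S$-representations.

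The main obstacle I anticipate is completing the first part: the derivative argument only shows that $\log|_S$ lands in $\frs_E$ on the compact open subgroup $S_0^\circ$, and extending to the free abelian quotient $S/S_0$ really does require some compatibility beyond the mere inclusion $\bbS \subseteq \bbT$. Once that is pinned down, the second part is a direct bookkeeping calculation with the weight space decomposition.
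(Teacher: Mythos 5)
Your handling of the second assertion is correct: the reduction to finite-dimensional $M$ via the Kan extension and the weight-space bookkeeping in your observations (i)--(iii) supply exactly the details that the paper compresses into ``the latter statement follows immediately.'' The crux is the first assertion, and you have located the difficulty precisely: local inversion only controls $\log|_S$ on the maximal compact subgroup $S_0$ (where $\log|_{S_0}$ is forced, by the uniqueness statement from Bourbaki cited in the paper, to be the canonical logarithm of $S_0$ and hence to land in $\frs_E$), while nothing in the definition of $\Logs(T)$ constrains the values of $\log$ on the free quotient $S/S_0 \cong \bbZ^{\dim \bbS}$. The paper's own proof attempts to bridge exactly this step by asserting that every $s \in S$ admits a power $s^n \in S_0$, so that $n\log(s) = \log(s^n) \in \frs_E$ and divisibility of $\frs_E$ finishes; but that assertion fails whenever $\dim \bbS \geq 1$ (no power of a uniformizer lies in $\fro_F^\times$), so the paper does not close the gap you flagged either.

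In fact the gap cannot be closed in the stated generality. Since $\Logs(T)$ is a torsor under $\Hom(T/T_0,\frt_E)$, for $\bbT = \bbG_m^2$ with $\bbS$ the first factor one may choose $\log$ with $\log(\varpi,1) = (0,1) \in E^2 = \frt_E$; then $\log|_S$ does not map into $\frs_E = E \times \{0\}$ at all, so $\log|_S \notin \Logs(S)$. Your proposed repair via a splitting of the character lattices cannot work for the same reason: a logarithm on $T$ need not respect the resulting product decomposition of $\bbT$ away from $T_0$. The lemma therefore requires an additional hypothesis (for instance, that $\log(S) \subseteq \frs_E$, which can always be arranged by adjusting $\log$ by an element of $\Hom(T/T_0,\frt_E)$) or a restriction to $S_0$, where the containment is automatic. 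Granted such a hypothesis, your weight-space computation does establish the displayed equality of $S$-representations, so the second half of your argument stands.
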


\begin{proof}
Let $S_0$ be the maximal compact subgroup of $S$. Then $\log|_{S_0}$ must be the unique logarithm map on $S_0$ in the sense of \cite[III.6]{B-L}, so $\log(S_0) \subseteq \frs$. Then, for any $s \in S$, there exists a positive integer $n$ such that $s^n \in S_0$, which means that $n\log(s) = \log(s^n) \in \frs$. Since $\frs$ is a divisible abelian group, we conclude that $\log(s) \in \frs$ as well. This shows that $\log(S) \subseteq \frs$, from which it follows that $\log|_S \in \Logs(S)$. The latter statement follows immediately. 
\end{proof}

\subsection{Compatibility of the torus action}

Suppose $\bbG$ is a connected algebraic group and $\bbT$ is a split maximal torus of $\bbG$. Then $\frt$ is a Cartan subalgebra of $\frg$, and we have the root space decomposition 
\[ \frg = \bigoplus_{\alpha \in \frt_E^*} \frg_\alpha. \] 

\begin{lemma} \label{root-space-generalized-weight-space}
Suppose $M$ is a $\frg$-module. Then the root space $\frg_\alpha$ maps the generalized weight space $M_\lambda$ into $M_{\alpha + \lambda}$ for all $\alpha, \lambda \in \frt^*_E$.
\end{lemma}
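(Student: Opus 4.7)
The plan is to exploit the defining commutation relation between $\frt$ and $\frg_\alpha$. Namely, for $x \in \frt$ and $y \in \frg_\alpha$ one has $[x,y] = \alpha(x) y$, which on the level of endomorphisms of $M$ translates into $\phi(x)\phi(y) - \phi(y)\phi(x) = \alpha(x)\phi(y)$. Rewriting this yields the key intertwining identity
\[ \bigl(\phi(x) - (\alpha(x)+\lambda(x))\bigr)\phi(y) = \phi(y)\bigl(\phi(x) - \lambda(x)\bigr), \]
valid for every $\lambda \in \frt_E^*$. The right-hand factor is the operator whose iterated kernel cuts out $M_\lambda$, while the left-hand factor is the corresponding one for $M_{\alpha+\lambda}$, so this identity is essentially the whole proof.

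From here, a straightforward induction on $n$ (repeatedly moving $\phi(y)$ across one factor at a time) gives
\[ \bigl(\phi(x) - (\alpha+\lambda)(x)\bigr)^n \phi(y) = \phi(y)\bigl(\phi(x) - \lambda(x)\bigr)^n \]
for every $n \geq 0$.

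To conclude, fix $m \in M_\lambda$ and $y \in \frg_\alpha$, and take any $x \in \frt$. By definition of the (generalized) weight space, there exists $n \geq 0$ with $(\phi(x) - \lambda(x))^n m = 0$, and applying $\phi(y)$ to this and using the identity above shows $(\phi(x) - (\alpha+\lambda)(x))^n \phi(y)m = 0$. Since $x \in \frt$ was arbitrary, $\phi(y)m$ lies in $M_{\alpha+\lambda}$, which is exactly what we wanted.

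There is no real obstacle here: the proof is a two-line bracket computation followed by a trivial induction and an unwinding of the definition of $M_\lambda$. The only thing to keep in mind is that, for each $x$, the exponent $n$ annihilating $m$ may depend on $x$, but this matches the formulation of generalized weight spaces (the $n$ in the definition is allowed to depend on $x$), so no uniformity issue arises.
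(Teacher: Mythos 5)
Your proof is correct and rests on the same idea as the paper's: the commutation relation $[x,y]=\alpha(x)y$ for $x\in\frt$, $y\in\frg_\alpha$, followed by induction on the exponent $n$. The only cosmetic difference is that you specialize to $y\in\frg_\alpha$ from the start and obtain a clean intertwining identity, whereas the paper first proves a general binomial-type formula for arbitrary $x\in\frg$ and then observes that all but one term vanish when $x\in\frg_\alpha$.
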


\begin{proof}
Suppose $x \in \frg, y \in \frt$ and $m \in M$. Then 
\[ \begin{aligned} (\phi(y) - \alpha(y) - \lambda(y))(x \bdot m) &= y \bdot (x \bdot m) - \alpha(y)(x \bdot m) - \lambda(y)(x \bdot m) \\
&= [y,x] \bdot m + x \bdot (y \bdot m) - \alpha(y)(x \bdot m) - \lambda(y)(x \bdot m) \\
&= (\ad(y) - \alpha(y))(x) \bdot m + x \bdot (\phi(y) - \lambda(y))(m)  \end{aligned} \]
By induction, we have 
\[ (\phi(y) - \alpha(y) - \lambda(y))^n(x \bdot m) = \sum_{k = 0}^n \binom{n}{k} (\ad(y) - \alpha(y))^k(x) \bdot (\phi(y) - \lambda(y))^{n-k}(m). \]
Suppose now that $x \in \frg_{\alpha}$ and $m \in M_\lambda$. Choose $n$ such that $(\phi(y) - \lambda(y))^n(m) = 0$. If $1 \leq k \leq n$, then $(\ad(y)-\alpha(y))^k(x) = 0$ since $x \in \frg_\alpha$. If $k = 0$, then $n-k = n$ so \[ (\phi(y)-\lambda(y))^{n-k}(m) = (\phi(y)-\lambda(y))^{n}(m) = 0 \] by our choice of $n$. This means that all of the summands in the summation above vanish. Thus \[ (\phi(y) - \alpha(y) - \lambda(y))^n(x \bdot m) = 0, \] ie, $x \bdot m$ is a generalized weight vector with weight $\alpha + \lambda$. 
\end{proof}

\begin{lemma} \label{nilpotent-part-commutes}
Let $M$ be a $\frg$-module which splits as a $\frt$-module and let $\phi_n$ denote the nilpotent part of $\phi|_{\frt}$. Then 
\[ \phi(x) \circ \phi_n(y) = \phi_n(y) \circ \phi(x) \]
for all $x \in \frp$ and $y \in \frt$.
\end{lemma}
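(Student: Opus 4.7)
The plan is to reduce to the case where $x$ lies in a single root space and $m$ lies in a single generalized weight space, and then to check the identity by a direct calculation.

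First, since $\bbP$ contains $\bbT$, the parabolic subalgebra $\frp$ is stable under $\ad(\frt)$ and hence decomposes as $\frp = \frt \oplus \bigoplus_{\alpha \in \Phi(\frp,\frt)} \frg_\alpha$ for some subset of roots. Both sides of the claimed identity are $E$-linear in $x$, so it suffices to treat the cases $x \in \frt$ and $x \in \frg_\alpha$ separately. The case $x \in \frt$ is immediate: $\phi|_\frt$ is a homomorphism from an abelian Lie algebra, so the images of $\phi|_\frt$ commute with each other, and $\phi_n(y)$ is a polynomial (without constant term) in the endomorphisms $\phi(z)$ for $z \in \frt$ via the generalized weight space decomposition, so it commutes with $\phi(x)$.

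For $x \in \frg_\alpha$, we use that $M$ splits as a $\frt$-module to reduce further to the case where $m \in M_\lambda$ for a single $\lambda \in \frt_E^*$. On $M_\lambda$, the semisimple part $\phi_s(y)$ acts as the scalar $\lambda(y)$, so $\phi_n(y) = \phi(y) - \lambda(y)\cdot\id$ as endomorphisms of $M_\lambda$. By \cref{root-space-generalized-weight-space}, we have $\phi(x)(m) \in M_{\alpha + \lambda}$, so $\phi_n(y)$ acts on $\phi(x)(m)$ as $\phi(y) - (\alpha+\lambda)(y)\cdot\id$. Subtracting, one computes
\[ \phi(x)\phi_n(y)(m) - \phi_n(y)\phi(x)(m) = [\phi(x),\phi(y)](m) + \alpha(y)\phi(x)(m) = \phi([x,y])(m) + \alpha(y)\phi(x)(m). \]
Since $x \in \frg_\alpha$ and $y \in \frt$, we have $[y,x] = \alpha(y) x$ by the very definition of the root space, hence $\phi([x,y]) = -\alpha(y)\phi(x)$, and the two terms on the right cancel.

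No serious obstacle is expected; the calculation is bookkeeping with signs and the bigrading by roots and weights. The only mild subtlety is to apply the nilpotent-part decomposition weight space by weight space (since $\phi_n$ is only defined via the splitting of $M$ as a $\frt$-module), which is why the reduction to a single $\lambda$ is needed before computing.
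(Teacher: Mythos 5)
Your proof is correct and follows essentially the same route as the paper's: decompose $x$ into root vectors and $m$ into generalized weight vectors, use \cref{root-space-generalized-weight-space} to see that $\phi_s(y)$ acts by $(\alpha+\lambda)(y)$ on $\phi(x)(m)$, and cancel against $[y,x]=\alpha(y)x$. The only cosmetic difference is that the paper absorbs your separate $x\in\frt$ case into the general computation as the $\alpha=0$ instance (which also sidesteps your slightly informal "polynomial in $\phi(z)$" remark, since that description of $\phi_n(y)$ is only literally valid on finite dimensional pieces).
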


\begin{proof}
Since $\frg = \bigoplus \frg_\alpha$, we may assume without loss of generality that $x \in \frg_{\alpha}$. Furthermore, since $M = \bigoplus M_\lambda$, it is sufficient to show that $\phi(x)$ and $\phi_n(y)$ commute when the domain is restricted to $M_\lambda$. In other words, it is sufficient to show that 
\begin{equation} \label{nilpotent-part-commutes-sufficient} x \bdot \phi_n(y)(m) = \phi_n(x \bdot m) \end{equation}
for all $m \in M_\lambda$. Let $\phi_s$ denote the semisimple part of $\phi|_{\frt_E}$. Since $x \in \frp_{\alpha,E}$, we have $[y,x] = \ad(y)(x) = \alpha(y) x$, so 
\[ \begin{aligned}
\alpha(y) x \bdot m &= [y,x] \bdot m \\
&= \phi(y)(x \bdot m) - x \bdot \phi(y)(m) \\
&= \phi_s(y)(x \bdot m) + \phi_n(y)(x \bdot m) - x \bdot \phi_s(y)(m) - x \bdot \phi_n(y)(m) \\
&= (\alpha + \lambda)(y)(x \bdot m) - x \bdot \lambda(y)m + \phi_n(y)(x \bdot m) - x \bdot \phi_n(y)(m) \\
&= \alpha(y) x \bdot m + \phi_n(y)(x \bdot m) - x \bdot \phi_n(y)(m).
\end{aligned} \]
Here, we have used \cref{root-space-generalized-weight-space} for the equality $\phi_s(x \bdot m) = (\alpha + \lambda)(x \bdot m)$. Subtracting $\alpha(y) x \bdot m$ from both sides of the above equation
\[ \alpha(y) x \bdot m = \alpha(y) x \bdot m + \phi_n(y)(x \bdot m) - x \bdot \phi_n(y)(m) \]
yields precisely \cref{nilpotent-part-commutes-sufficient}. 
\end{proof}

\begin{lemma} \label{compatibility}
Suppose $M$ is a $\frg$-module such that $M|_\frt \in \Mod^{\alg}_\frt$. Fix $\log \in \Logs(T)$ and let $\tilde{\phi} = \tilde{\phi}_{M|_{\frt}} : T \to \GL(M)$ denote the action of $T$ on $\Lift(M|_\frt, \log)$. Then 
\begin{equation} \label{compatibility-statement} 
\Ad(t)(x) \bdot \tilde{\phi}(t)(m) = \tilde{\phi}(t)(x \bdot m)
\end{equation}
for all $t \in T$, $x \in \frg$, and $m \in M$. 
\end{lemma}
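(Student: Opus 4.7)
The plan is to verify \eqref{compatibility-statement} by a direct computation after reducing to the case of a single root space and a single generalized weight space. Concretely, both sides of \eqref{compatibility-statement} are bilinear in $(x, m)$, so using the root space decomposition $\frg = \bigoplus_\alpha \frg_\alpha$ and the generalized weight space decomposition $M = \bigoplus_\lambda M_\lambda$ (available since $M|_\frt \in \Mod_\frt^\alg$ is locally finite dimensional), it suffices to verify the identity when $x \in \frg_\alpha$ and $m \in M_\lambda$ for fixed $\alpha, \lambda \in \frt_E^*$.

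Three facts drive the computation in this reduced setting. First, since $\frg_\alpha$ is the $\alpha$-weight space for the algebraic adjoint action of the split torus $\bbT$ on $\frg$, the weight $\alpha$ is automatically algebraic and $\Ad(t)(x) = \chi_\alpha(t) \cdot x$ for $x \in \frg_\alpha$. Second, by \cref{root-space-generalized-weight-space}, one has $x \bdot m \in M_{\alpha + \lambda}$. Third, by \cref{nilpotent-part-commutes}, $\phi(x)$ commutes with $\phi_n(y)$ for every $y \in \frt$, and hence with the endomorphism $\exp(\phi_n(\log t))$, since the latter is a (pointwise convergent) power series in $\phi_n(\log t)$.

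Unpacking $\tilde\phi(t) = \chi(t) \exp(\phi_n(\log t))$ from \cref{lifting-tori-general} and combining these three facts with the multiplicativity $\chi_\alpha \chi_\lambda = \chi_{\alpha+\lambda}$, both sides of \eqref{compatibility-statement} reduce to
\[ \chi_{\alpha+\lambda}(t) \exp(\phi_n(\log t))(x \bdot m), \]
completing the argument. There is no substantial obstacle; the proof is essentially an assembly of \cref{root-space-generalized-weight-space} and \cref{nilpotent-part-commutes} with the definitions. The only point warranting a moment of care is the legitimacy of the reduction when $M$ is infinite dimensional, but this is immediate from the direct sum decompositions, since the identity at a given $(x, m)$ only touches finitely many weight spaces at a time.
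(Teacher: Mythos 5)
Your proof is correct and follows essentially the same route as the paper's: reduce to $x \in \frg_\alpha$ and $m \in M_\lambda$, use \cref{root-space-generalized-weight-space} to place $x \bdot m$ in $M_{\alpha+\lambda}$, and use \cref{nilpotent-part-commutes} to commute $\phi(x)$ past $\exp(\phi_n(\log t))$, so that both sides become $\chi_{\alpha+\lambda}(t)\exp(\phi_n(\log t))(x \bdot m)$.
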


\begin{proof}
Since $\frg = \bigoplus \frg_\alpha$ and $M = \bigoplus M_\lambda$, it is enough to prove \cref{compatibility-statement} for $x \in \frg_{\alpha}$ and $m \in M_\lambda$. 
First of all, notice that, since $m \in M_\lambda$, we have 
\[ \tilde{\phi}(t)(m) = \chi_\lambda(t) \exp(\phi_n(\log(t)))(m) \]
where $\phi_n$ is the nilpotent part of $\phi|_{\frt}$. Then we have $\Ad(t)(x) = \chi_\alpha(t)x$ since $x \in \frg_\alpha$, so
\[ \begin{aligned}
\Ad(t)(x) \bdot \tilde{\phi}(t)(m) &= (\chi_\alpha(t)x) \bdot \left( \chi_\lambda(t) \exp(\phi_n(\log(t)))(m) \right) \\
&= \chi_\alpha(t) \chi_\lambda(t) x \bdot \exp(\phi_n(\log(t)))(m)  \\
\end{aligned} \]
On the other hand, note that $x \bdot m \in M_{\alpha + \lambda}$ by \cref{root-space-generalized-weight-space}, so  
\[ \tilde{\phi}(t)(x \bdot m) = \chi_{\alpha+\lambda}(t) \exp(\phi_n(\log(t)))(x \bdot m). \]
Thus \cref{compatibility-statement} is equivalent to the assertion that 
\[ \phi(x) \circ \exp(\phi_n(\log(t))) = \exp(\phi_n(\log(t))) \circ \phi(x), \]
but this follows immediately from \cref{nilpotent-part-commutes}. 
\end{proof}

\begin{para}
The upshot of \cref{compatibility} is that $\Lift(M|_{\frt}, \log)$ is naturally a ``locally analytic $(\frp,T)$-module'' (cf. \cref{gP-module}). 
\end{para}

\subsection{Lifting to actions of bigger groups} \label{lifting-general}

Suppose $\bbP$ is a connected algebraic group and $\bbT$ is a split maximal torus of $\bbP$. 

\begin{definition} \label{alg-unilp-definition}
Let $\Mod_{\frp}^\alg$ denote the category of locally finite dimensional $\frp$-modules $M$ such that $M|_{\frt}$ is split with algebraic weights. Inside this, we have the following subcategories: 
\begin{itemize}
\item $\Mod_{\frp}^{\alg,\fd}$ is the subcategory of finite dimensional $\frp$-modules in $\Mod_\frp^\alg$. 
\item $\Mod_\frp^{\alun}$ is the subcategory of $\frp$-modules in $\Mod_\frp^\alg$ on which $\fru$ acts locally nilpotently (cf. \cref{nilp-definition}), where $\fru$ is the Lie algebra of the unipotent radical $\bbU$ of $\bbP$. 
\item $\Mod_\frp^{\alun,\fd} = \Mod_\frp^{\alun} \cap \Mod_\frp^{\alg,\fd}$. 
\end{itemize}
These are all Serre subcategories of $\Mod_\frp$ by \cref{fd-lfd-serre,alg-serre,nilp-serre}. 
\end{definition}

The goal in this section is to define an action of $P$ on $\Lift(M|_{\frt},\log)$ for any $M \in \Mod_\frp^{\alun}$. We do this incrementally. Until \cref{kan-extension}, we assume that $M \in \Mod_{\frp}^{\alun,\fd}$. 

\begin{para}[Semisimple groups] \label{lifting-semisimple}
Suppose $\bbP$ is semisimple. The Cartan subalgebra $\frt$ consists of semisimple elements in the semisimple Lie algebra $\frp$, and representations of semisimple Lie algebras preserve Jordan decompositions \cite[section 6.4]{humphreys}; thus, $\frt$ acts semisimply on $M$. Since $M|_\frt$ has algebraic weights, the action of $T$ induced by the construction of \cref{lifting-tori} is algebraic; in other words, $M$ is naturally a $(\frp, \bbT)$-module in the sense of \cite[part II, section 1.20]{Jantzen}, so the discussion in \emph{loc.~cit.}~implies that $M$ naturally has an action of the entire group $P$ which lifts the original action of $\frp$. We write $\Lift(M)$ to denote this representation of $P$. 

Note also that, since elements of the root spaces of $\frp$ are $\ad$-nilpotent, they are equal to the nilpotent part of their abstract Jordan decomposition. Therefore, their action on $M$ is nilpotent because $\frp$ is semisimple \cite[section 6.4]{humphreys}. Thus the action of the root subgroups of $P$ must be the action constructed in \cref{lifting-unipotent}. 
\end{para}

\begin{para}[Reductive groups] \label{lifting-reductive}
Suppose $\bbP$ is reductive. Let $\rho : T \to \GL(M)$ denote the action of $T$ on $\Lift(M|_{\frt}, \log)$. Let $\bbP'$ be the derived subgroup of $\bbP$, which is a semisimple group \cite[8.1.6]{springer}. Then $\bbT' = \bbP' \cap \bbT$ is a maximal torus in $\bbP'$ \cite[exercise 27.9]{Humphreys_Groups} \cite[theorem 17.82]{MilneGroups}, so we can lift the action of $P'$ on $M$ as in \cref{lifting-semisimple}. Let $\sigma : P' \to \GL(M)$ denote the action of $P'$ on $\Lift(M|_{\frp'})$. It follows from \cref{subtorus} that $\sigma_{T'} = \rho|_{T'}$. Note that $P'$ is normal in $P$, and we have 
\begin{equation} \label{reductive-compatibility} \rho(t) \circ \sigma(h) = \sigma(h) \circ \rho(t) \end{equation}
for all $t \in T$ and $h \in P'$ (see proof below). This means that $\rho \times \sigma$ defines a homomorphism $T \ltimes P' \to \GL(M)$, where $T \ltimes P'$ is the external semidirect product of $T$ and $P'$ in which $T$ acts on $P'$ by conjugation. But $P$ is generated by $T$ and $P'$ by  \cref{generated-by-torus-and-derived}, so the multiplication map $T \ltimes P' \to P$ is surjective and we have an exact sequence as follows. 
\[ \begin{tikzcd} 1 \ar{r} & T' \ar{r} & T \ltimes P' \ar{d}[swap]{\rho \times \sigma} \ar{r} & P \ar{r} \ar[dotted,bend left]{dl} & 1 \\
& & \GL(M) \end{tikzcd} \]
Since $\rho$ and $\sigma$ agree on $T'$, it follows that $\rho \times \sigma$ factors through $\tilde{\phi} : P \to \GL(M)$ as indicated above. Let $\Lift(M, \log)$ denote this representation of $P$. 
\end{para}

\begin{proof}[Proof of \cref{reductive-compatibility}] 
Let $\phi_n$ denote the nilpotent part of $\phi|_\frt$ and let $\chi : T \to \GL(M)$ represent the action of $T$ where $T$ acts on a generalized weight space $M_\lambda$ by $\chi_\lambda$, as in \cref{lifting-tori-general}. Then
\[ \rho(t) = \chi(t) \exp(\phi_n(\log(t)). \]
By \cref{generated-by-torus-and-root-subgroups}, we know that the group of $F$-points $P'$ is generated by $T'$ and its root subgroups $U_\alpha$ for all $\alpha \in \Phi(P', T)$. Thus, it is sufficient to show \cref{reductive-compatibility} for $h \in T'$ and $h \in U_\alpha$. Suppose first that $h \in T'$. Then $\sigma(h) = \chi(h)$ is just a scalar on each generalized weight space, so \cref{reductive-compatibility} is clear. Next, suppose that $h$ is in a root subgroup of $P'$. As we noted at the end of \cref{lifting-semisimple}, the action of $h$ is then given by 
\[ \sigma(h) = \exp(\phi(\log(h))), \]
as in \cref{lifting-unipotent}. Thus, to show \cref{reductive-compatibility} in this case, it is sufficient to show that $\phi_n(\log(t))$ and $\phi(\log(h))$ commute. This follows from \cref{nilpotent-part-commutes}. 
\end{proof}

\begin{para}[General groups] \label{lifting-general-sub}
Suppose $\bbP$ is any connected algebraic group. By a theorem of Mostow's, $\bbP$ has a Levi decomposition $\bbU \rtimes \bbL$, where $\bbU$ is the unipotent radical of $\bbP$ and $\bbL$ is a reductive subgroup \cite[VIII.4.3]{Hochschild-Groups}. Note that the maximal torus $\bbT$ of $\bbP$ is contained in $\bbL$. 

Let $\rho : U \to \GL(M)$ be the action induced by the construction of \cref{lifting-unipotent}. Let $\log \in \Logs(T)$, and let $\sigma : L \to \GL(M)$ be the action of $L$ on $\Lift(M|_{\frl}, \log)$ induced by the construction of \cref{lifting-reductive} above. To show that $\rho$ and $\sigma$ induce an action of the entire group $P$, it is sufficient to show that 
\begin{equation} \label{lifting-general-compatibility} \rho(huh^{-1}) = \sigma(h) \circ \rho(u) \circ \sigma(h)^{-1}
\end{equation}
for all $u \in U$ and $h \in L$. 
\end{para}

\begin{proof}[Proof of \cref{lifting-general-compatibility}]
Let $\bbU_\alpha$ be the root subgroup of $\bbL$ corresponding to $\alpha \in \Phi(\bbL, \bbT)$, so that $\Lie(\bbU_\alpha) = \frl_\alpha$. The group $L$ is generated by $T$ and $U_\alpha$ for all $\alpha \in \Phi(\bbL, \bbT)$ by \cref{generated-by-torus-and-root-subgroups}. Thus it suffices to assume that $h$ is in $T$ or in $U_\alpha$ for some $\alpha$. Suppose first that $h \in T$. Note that $\rho(u) = \exp(\phi(\log(u)))$ for all $u \in U$. Setting $x = \log(u)$, we see that it is sufficient to show that 
\[ \phi(\Ad(h)(x)) = \sigma(h) \circ \phi(x) \circ \sigma(h)^{-1} \]
for all $x \in \fru$. This is precisely \cref{compatibility}. 

Next, suppose that $h \in U_\alpha$ for some $\alpha$. Let $\bbU \rtimes \bbU_\alpha$ be the algebraic subgroup of $\bbP$ generated by $\bbU$ and $\bbU_\alpha$. Since the class of unipotent groups is stable under extension \cite[corollary 14.7]{MilneGroups}, we know that $\bbU \rtimes \bbU_\alpha$ is also unipotent. Moreover, $\Lie(\bbU \rtimes \bbU_\alpha) = \fru + \frl_\alpha$ acts nilpotently on $M$, by \cref{semidirect-nilpotent} below. Let $\rho_\alpha : U \rtimes U_\alpha \to \GL(M)$ denote the action on $M$ furnished by \cref{lifting-unipotent}. Since $\rho_\alpha|_U$ and $\rho$ are both actions of $U$ on $M$ which lift $\phi|_\fru$, we must have $\rho_\alpha|_U = \rho$. Similarly, since $\sigma|_{U_\alpha}$ and $\rho_\alpha|_{U_\alpha}$ are both actions of the unipotent group $U_\alpha$ on $M$ which lift $\phi|_{\frl_\alpha}$, we must have $\sigma|_{U_{\alpha}} = \rho_{\alpha}|_{U_\alpha}$. Thus 
\[ \rho(huh^{-1}) = \rho_\alpha(huh^{-1}) = \rho_\alpha(h) \circ \rho_\alpha(u) \circ \rho_\alpha(h)^{-1} = \sigma(h) \circ \rho(u) \circ \sigma(h)^{-1}, \]
proving \cref{lifting-general-compatibility} for $h \in U_\alpha$.
\end{proof}

\begin{lemma} \label{semidirect-nilpotent}
For $M \in \Mod_\frp^{\alun,\fd}$ and $\alpha \in \Phi(\bbL,\bbT)$, the Lie subalgebra $\frl_\alpha + \fru \subseteq \frp$ acts nilpotently on $M$.
\end{lemma}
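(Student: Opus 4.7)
The plan is to argue by induction on $\dim_E M$, exploiting the fact that $\fru$ is an ideal of $\mathfrak{h} := \fru + \frl_\alpha$ (inherited from $\fru$ being an ideal of $\frp$, since $\bbU$ is the unipotent radical of $\bbP$).

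First, I record two preliminary facts. Since $M$ is finite-dimensional and $\fru$ acts locally nilpotently on $M$ (by definition of $\Mod_\frp^{\alun,\fd}$), every element of $\fru$ is in fact a nilpotent endomorphism of $M$. Moreover, every $x \in \frl_\alpha$ is a nilpotent endomorphism of $M$: by \cref{root-space-generalized-weight-space}, $x^n(M_\mu) \subseteq M_{\mu + n\alpha}$, which vanishes for $n$ large since $M|_\frt$ has finite support in $\frt_E^*$ and $\alpha \neq 0$.

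Let $M^\fru := \{m \in M : \fru \cdot m = 0\}$. Engel's theorem applied to the nilpotent action of $\fru$ yields $M^\fru \neq 0$ whenever $M \neq 0$. A short bracket computation shows $M^\fru$ is $\mathfrak{h}$-stable: for $m \in M^\fru$, $y \in \fru$, $z \in \mathfrak{h}$, we have $y \cdot (z \cdot m) = [y,z] \cdot m + z \cdot (y \cdot m) = [y,z] \cdot m$, and $[y,z] \in \fru$ annihilates $m$. On $M^\fru$ itself, $\fru$ acts trivially, so any $z = x + y \in \mathfrak{h}$ (with $x \in \frl_\alpha$ and $y \in \fru$) acts there as $x$, hence nilpotently by the preliminary observation.

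If $M = M^\fru$, this already shows every element of $\mathfrak{h}$ acts nilpotently on $M$. Otherwise $M' := M/M^\fru$ has strictly smaller dimension and still lies in $\Mod_\frp^{\alun,\fd}$ (the relevant conditions are inherited by quotients via \cref{fd-lfd-serre,alg-serre,nilp-serre}), so by induction $\mathfrak{h}$ acts nilpotently on $M'$. Given $z \in \mathfrak{h}$, choose $n_1$ with $z^{n_1}(M) \subseteq M^\fru$ (from nilpotence of $z$ on $M/M^\fru$) and $n_2$ with $z^{n_2}(M^\fru) = 0$; then $z^{n_1+n_2}$ annihilates $M$. The main subtlety is the $\mathfrak{h}$-stability of $M^\fru$; once that is in hand, the rest is standard dévissage.
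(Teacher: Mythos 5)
Your argument is correct, but it proceeds along a genuinely different route from the paper's. The paper fixes $x \in \frl_\alpha$ and $y \in \fru$ with $x^{n+1} \bdot M = 0$ and $\fru^{m+1} \bdot M = 0$, expands $(x+y)^\ell$ into words in $x$ and $y$, and uses the inclusion $x \bdot (\fru^i \bdot M) \subseteq \fru^i \bdot M$ (coming from $[\frl_\alpha,\fru] \subseteq \fru$) to show that every word of length $\ell \geq (m+1)(n+1)$ annihilates $M$: either it contains more than $n$ consecutive occurrences of $x$, or it contains at least $m+1$ occurrences of $y$. This is a direct combinatorial argument that produces an explicit uniform exponent. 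You instead run the standard Engel-style d\'evissage: pass to the $\fru$-invariants $M^{\fru}$, which are nonzero by Engel's theorem and stable under the bracket computation $y \bdot (z \bdot m) = [y,z]\bdot m$, and induct on dimension. Both proofs rest on the same two inputs — that elements of $\fru$ and of $\frl_\alpha$ individually act nilpotently (the latter via \cref{root-space-generalized-weight-space} and finiteness of the weight support, a point the paper leaves implicit when it posits $x^{n+1}\bdot M = 0$) and that $\fru$ is an ideal — so neither is "deeper"; the paper's version yields an explicit bound, while yours is shorter and more structural. One small point to tidy up: for the quotient $M/M^{\fru}$ to lie in $\Mod_\frp^{\alun,\fd}$ you need $M^{\fru}$ to be a $\frp$-submodule, not merely an $(\frl_\alpha + \fru)$-submodule; your bracket computation gives this verbatim for arbitrary $z \in \frp$ since $\fru$ is an ideal of all of $\frp$, so you should either state the stability in that generality or recast the induction purely in terms of $(\frl_\alpha+\fru)$-modules satisfying the two nilpotence hypotheses.
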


\begin{proof}
Let $x \in \frl_\alpha$ and $y \in \fru$. Because $[\frl_\alpha,\fru] \subseteq \fru$ it follows easily by induction that $x \bdot (\fru^i \bdot M) \subseteq \fru^i \bdot M$, where $\fru^0 \bdot M := M$ and $\fru^{i+1} \bdot M := \fru \bdot  (\fru^i \bdot M)$. We now consider an element $w$ of $U(\frp)$ of the form 
\begin{numequation}\label{word}
x^{e_1}y^{f_1}x^{e_2}y^{f_2} \cdot \ldots \cdot x^{e_\ell}y^{f_\ell},
\end{numequation} 
with the condition that for all $j \in \{1, \ldots, \ell\}$ we have $e_j,f_j \in \{0,1\}$ and $e_j+f_j = 1$. It follows from what we have just seen, that $w \bdot M \subseteq \fru^{\sum_j f_j} \bdot M$. Now suppose that $x^{n+1} \bdot M =0$ and $\fru^{m+1} \bdot M = 0$ for non-negative integers $n$ and $m$. Let $w$ be as in \ref{word} with $\ell = \sum_j e_j+f_j \ge (m+1)(n+1)$. If there are more than $n$ consecutive $e_j =1$ in \ref{word}, then $w \bdot M = 0$. If there are no $n$ consecutive $e_j = 1$ in \ref{word} then there must be least $m+1$ occurrences of $f_j=1$, hence $w \bdot M \in \fru^{m+1} \bdot M = 0$. This shows that $(x+y)^\ell \bdot M = 0$
\end{proof}

\begin{para}
We let $\Lift(M, \log)$ denote this locally analytic representation of $P$. We have thus defined a functor \[ \Lift(-, \log) : \Mod_{\frp}^{\alun,\fd} \to \Rep_P^\an \] for any connected algebraic group $\bbP$. 
\end{para}

\begin{para} \label{kan-extension}
Finally, we drop the finite dimensionality assumption: we extend the functor $\Lift(-,\log)$ from $\Mod_\frp^{\alun,\fd}$ to all of $\Mod_\frp^{\alun}$ using a left Kan extension \cite[chapter X]{MacLaneCats}. By exactly the same argument described in \cref{torus-action-lfd}, it does not introduce a conflict of notation to denote the left Kan extension of the functor $\Lift(-,\log) : \Mod_\frp^{\alun,\fd} \to \Rep_P^\an$ along the inclusion $\Mod_\frp^{\alun,\fd} \hookrightarrow \Mod_\frp^{\alun}$ 
by 
\[ \Lift(-,\log) : \Mod_\frp^{\alun} \to \Rep_P^\an. \]
Moreover, exactly as in \cref{torus-action-lfd}, this left Kan extension is computing by taking a colimit over finite dimensional $\frp$-submodules of $M$, each of which is in $\Mod_{\frp}^{\alun,\fd}$. 
\end{para}

\subsection{Changing the logarithm}

Suppose $\log, \log' \in \Logs(T)$. The exact sequence \eqref{torsor-exact-sequence} shows that the difference $\epsilon = \log' - \log$ is a homomorphism $T \to \frt$ which annihilates $T_0$. In other words, it is locally constant. 

Let $M$ be a finite dimensional $\frt$-module such that $\phi(\frt) \subseteq \Nil(M)$, and let $\tilde{\phi}$ and $\tilde{\phi}'$ denote the homomorphisms $T \to \GL(M)$ representing the actions of $T$ on $\Lift(M,\log)$ and $\Lift(M,\log')$, respectively. Then 
\[ \tilde{\phi}'(t) = \exp(\phi(\log'(t))) = \exp(\phi(\log(t) + \epsilon(t))) = \exp(\phi(\log(t)) + \phi(\epsilon(t))). \]
Observe that 
\[ [\phi(\log(t)), \phi(\epsilon(t))] = \phi([\log(t), \epsilon(t)]) = 0 \]
since $\frt$ is abelian, so
\[ \tilde{\phi}'(t) = \exp(\phi(\log(t))) \circ \exp(\phi(\epsilon(t))). \]
We set $\tilde{\epsilon}(t) = \exp(\phi(\epsilon(t)))$. This is a locally constant, i.e., smooth, representation $T \to \GL(M)$, and $\tilde{\phi}'(t) = \tilde{\phi}(t) \circ \tilde{\epsilon}(t)$.

\section{Categories \texorpdfstring{$\cO^{\frp,\infty}$}{O p infinity} and \texorpdfstring{$\cO^{\frp,\infty}_\alg$}{O p infinity alg}} \label{cat-o-infinity}

Let $(\bbG, \bbT)$ be connected a split reductive algebraic group over $F$ and let $\bbP$ be a parabolic subgroup containing $\bbT$. 

\subsection{Definition and basic properties}

\begin{definition}
Let $\cO^{\frp,\infty}$ be the full subcategory of $\Mod_\frg$ consisting of finitely generated $\frg$-modules $M$ such that $M|_{\frp}$ is locally finite dimensional and $M|_{\frt}$ is split (cf. \cref{definition-split}). 
\end{definition}

\begin{proposition} \label{o-infinity-serre}
$\cO^{\frp,\infty}$ is a Serre subcategory of $\Mod_\frg$. 
\end{proposition}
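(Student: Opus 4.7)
The plan is to verify the three defining conditions (finite generation over $\frg$, local finite dimensionality over $\frp$, and splitness over $\frt$) separately, observing that each is individually preserved under subobjects, quotients, and extensions in an exact sequence
\[ \begin{tikzcd} 0 \ar{r} & M' \ar{r} & M \ar{r} & M'' \ar{r} & 0 \end{tikzcd} \]
of $\frg$-modules.

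First, I would handle finite generation over $\frg$. Since $U(\frg)$ is Noetherian (a fact already used in the proof of \cref{fd-lfd-serre}), any submodule of a finitely generated $\frg$-module is finitely generated. Quotients of finitely generated modules are obviously finitely generated, and an extension of two finitely generated modules is finitely generated by lifting generators of $M''$ to $M$ and adjoining generators of $M'$. Thus finite generation is a Serre property in $\Mod_\frg$.

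Next, I would dispense with the remaining two conditions by direct appeal to earlier results. Since restriction to $\frp$ (respectively $\frt$) is exact, applying $(-)|_\frp$ to the exact sequence and invoking \cref{fd-lfd-serre} shows $M|_\frp$ is locally finite dimensional if and only if $M'|_\frp$ and $M''|_\frp$ are. Similarly, applying $(-)|_\frt$ and invoking \cref{split-serre} shows $M|_\frt$ is split if and only if $M'|_\frt$ and $M''|_\frt$ are.

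Combining these three observations yields that $M \in \cO^{\frp,\infty}$ if and only if both $M', M'' \in \cO^{\frp,\infty}$, which is exactly the Serre property. There is no real obstacle here: the whole argument is a routine aggregation of \cref{fd-lfd-serre} and \cref{split-serre} together with the Noetherian property of $U(\frg)$. The only mild subtlety worth flagging in the write-up is that finite generation of $\frg$-submodules does require Noetherianness; it is not automatic for arbitrary rings.
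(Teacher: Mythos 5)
Your proof is correct and follows essentially the same route as the paper: check finite generation directly (using Noetherianness of $U(\frg)$ for submodules), then cite \cref{fd-lfd-serre} for local finite dimensionality over $\frp$ and \cref{split-serre} for splitness over $\frt$. No gaps.
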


\begin{proof}
Suppose 
\[ \begin{tikzcd} 0 \ar{r} & M' \ar{r} & M \ar{r} & M'' \ar{r} & 0 \end{tikzcd} \]
is an exact sequence of $\frg$-modules. It is clear that if $M'$ and $M''$ are finitely generated, then $M$ is as well; it is also clear that if $M$ is finitely generated, so is $M''$. Since $U(\frg)$ is noetherian, we also know that $M$ being finitely generated implies that $M'$ is. Also, it follows from \cref{fd-lfd-serre} that $M|_{\frp}$ is locally finite dimensional if and only if $M'|_{\frp}$ and $M''|_{\frp}$ are. Finally, it follows from \cref{split-serre} that $M|_{\frt}$ is split if and only if $M'|_{\frt}$ and $M''|_{\frt}$ are. 
\end{proof}

\begin{definition}
Let $\cO^{\frp,\infty}_{\alg}$ be the full subcategory of $\cO^{\frp,\infty}$ consisting of modules $M$ such that $M|_{\frt} \in \Mod_{\frt}^{\alg}$.
\end{definition}

\begin{proposition} \label{o-infinity-alg-serre}
$\cO^{\frp,\infty}_\alg$ is a Serre subcategory of $\Mod_\frg$. 
\end{proposition}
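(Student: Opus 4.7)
The plan is to deduce this immediately from the two Serre subcategory results already established, namely Proposition \ref{o-infinity-serre} (which says $\cO^{\frp,\infty}$ is a Serre subcategory of $\Mod_\frg$) and Lemma \ref{alg-serre} (which says $\Mod_\frt^\alg$ is a Serre subcategory of $\Mod_\frt$). There is no genuine obstacle here; the only content is noticing that the defining conditions of $\cO^{\frp,\infty}_\alg$ decompose into two independent Serre-closure conditions.

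Concretely, I would start with an exact sequence
\[ \begin{tikzcd} 0 \ar{r} & M' \ar{r} & M \ar{r} & M'' \ar{r} & 0 \end{tikzcd} \]
in $\Mod_\frg$ and argue as follows. First, by Proposition \ref{o-infinity-serre}, $M \in \cO^{\frp,\infty}$ if and only if both $M'$ and $M''$ are in $\cO^{\frp,\infty}$. Second, restriction to $\frt$ is exact, so the sequence remains exact as a sequence of $\frt$-modules; then by Lemma \ref{alg-serre}, $M|_\frt \in \Mod_\frt^\alg$ if and only if $M'|_\frt, M''|_\frt \in \Mod_\frt^\alg$. Combining these two equivalences, $M \in \cO^{\frp,\infty}_\alg$ if and only if $M', M'' \in \cO^{\frp,\infty}_\alg$, which is exactly the Serre subcategory property.

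Since the proof is so short, I would just write one compact paragraph. The main conceptual point worth emphasizing is that the ``algebraic weights'' condition is imposed only on the restriction to $\frt$, and Lemma \ref{alg-serre} handles precisely this restriction; all remaining conditions (finite generation over $\frg$, local finite-dimensionality over $\frp$, splitness over $\frt$) are already absorbed into the Serre-ness of $\cO^{\frp,\infty}$.
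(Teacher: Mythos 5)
Your proposal is correct and is exactly the paper's argument: the paper's proof consists of the single line ``This follows from \cref{o-infinity-serre,alg-serre},'' and your paragraph just spells out that deduction. Nothing further is needed.
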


\begin{proof}
This follows from \cref{o-infinity-serre,alg-serre}. 
\end{proof}

\subsection{Relation to categories \texorpdfstring{$\cO^\frp$}{O p} and \texorpdfstring{$\cO^\frp_\alg$}{O p alg}}

\begin{definition}
For any positive integer $n$, let $\cO^{\frp,n}$ be the full subcategory of $\Mod_\frg$ consisting of finitely generated $\frg$-modules $M$ such that $M|_\frp$ is locally finite dimensional and $M = \bigoplus_{\lambda \in \frt_E^*} M_{\lambda,n}$, where \[ M_{\lambda,n} = \{m \in M : (\phi(x) - \lambda(x))^n(m) = 0 \text{ for all } x \in \frt_E \}. \]
Observe that $M_{\lambda,n} \subseteq M_{\lambda,n+1} \subseteq M_\lambda$ and that $\cO^{\frp,n} \subseteq \cO^{\frp,n+1} \subseteq \cO^{\frp,\infty}$ for all $n$. We define $\cO^{\frp} := \cO^{\frp,1}$. 
\end{definition}

\begin{lemma} \label{filtering-cat-o-p-infinity}
$\cO^{\frp,\infty} = \bigcup_{n \geq 1} \cO^{\frp,n}$. 
\end{lemma}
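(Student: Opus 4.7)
The containment $\bigcup_n \cO^{\frp,n} \subseteq \cO^{\frp,\infty}$ is essentially formal: if $M = \bigoplus_\lambda M_{\lambda,n}$, then $M|_\frt$ is split (with $M_\lambda = \bigcup_n M_{\lambda,n}$ containing each $M_{\lambda,n}$), and the other defining properties of $\cO^{\frp,\infty}$ are shared with $\cO^{\frp,n}$ by definition. So the real content is the reverse inclusion: I need to take $M \in \cO^{\frp,\infty}$ and produce a single $n$ that works uniformly.

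The plan is to pick a finite generating set $m_1,\ldots,m_k$ of $M$ and exploit the local finite dimensionality of $M|_\frp$ to produce such an $n$. Specifically, each $m_i$ lies in the $\frp$-submodule it generates, which is finite dimensional; restricting to $\frt$ gives a finite dimensional $\frt$-submodule $N_i \subseteq M$. Since $M|_\frt$ is split, $N_i$ inherits a (finite) generalized weight space decomposition $N_i = \bigoplus_\lambda (N_i)_\lambda$, and by Cayley--Hamilton applied to the action of each $x \in \frt_E$ on the finite dimensional space $(N_i)_\lambda$, there exists $n_i$ (e.g.~$n_i = \dim N_i$) such that $(\phi(x) - \lambda(x))^{n_i}$ annihilates $(N_i)_\lambda$ for \emph{every} $x \in \frt_E$ simultaneously. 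Setting $n = \max_i n_i$, each generator $m_i$ lies in $\bigoplus_\lambda M_{\lambda,n}$.

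The remaining step, and the one doing the real work, is to show that $M' := \bigoplus_\lambda M_{\lambda,n} \subseteq M$ is stable under the action of $U(\frg)$; granted this, the inclusion $\{m_1,\ldots,m_k\} \subseteq M'$ forces $M = U(\frg) \cdot \{m_1,\ldots,m_k\} \subseteq M'$, and the reverse containment $M' \subseteq M$ is tautological, so $M \in \cO^{\frp,n}$. By the root space decomposition $\frg = \frt \oplus \bigoplus_{\alpha \neq 0} \frg_\alpha$ it suffices to check $\frt \cdot M' \subseteq M'$ and $\frg_\alpha \cdot M' \subseteq M'$ separately. For $y \in \frt$ and $m \in M_{\lambda,n}$, the operators $\phi(y)$ and $\phi(x) - \lambda(x)$ commute (since $\frt$ is abelian), so $M_{\lambda,n}$ is $\phi(y)$-stable. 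For $x \in \frg_\alpha$ and $m \in M_{\lambda,n}$, the same binomial identity that underlies the proof of \cref{root-space-generalized-weight-space} collapses (because $(\ad(y) - \alpha(y))(x) = 0$ for $x \in \frg_\alpha$) to the clean formula $(\phi(y)-\alpha(y)-\lambda(y))^n(x \cdot m) = x \cdot (\phi(y)-\lambda(y))^n(m) = 0$, showing $x \cdot m \in M_{\alpha+\lambda,n}$.

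The main (minor) obstacle is simply checking that the nilpotency index $n$ is preserved, not just the generalized weight space, under the action of the root spaces; this is a refinement of \cref{root-space-generalized-weight-space} and amounts to the observation above that only the $k=0$ term in the binomial expansion survives for $x \in \frg_\alpha$. Everything else is a routine combination of Cayley--Hamilton, the noetherian/finite generation hypothesis, and the root space decomposition.
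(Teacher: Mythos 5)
Your proof is correct and follows essentially the same route as the paper's: extract a uniform nilpotency index $n$ from a finite generating set and then propagate it through the root space decomposition via the binomial identity. You make explicit two points the paper's proof leaves implicit --- the uniformity of $n$ over all $x \in \frt_E$ (obtained from the finite dimensionality of the $\frt$-submodules generated by the generators) and the fact that the identity in the proof of \cref{root-space-generalized-weight-space} preserves the index $n$ when $x \in \frg_\alpha$ --- and this is exactly the right bookkeeping.
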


\begin{proof}
Suppose $M \in \cO^{\frp,\infty}$. Since $M$ is a finitely generated $\frg$-module and since $M = \bigoplus M_\lambda$, we can choose a finite set of generators $m_1, \dotsc, m_r$ such that $m_i \in M_{\lambda_i}$ for all $i$. Since $m_i \in M_{\lambda_i}$, there exists an integer $n_i$ such that \[ (\phi(x)-\lambda(x))^{n_i}(m_i) = 0 \] for all $x \in \frt$. Set $n = \max n_i$, so that $m_i \in M_{\lambda_i, n}$ for all $i$. It then follows from \cref{root-space-generalized-weight-space} that $M = \bigoplus_{\lambda \in \frt_E^*} M_{\lambda,n}$, proving that $M \in \cO^{\frp,n}$. 
\end{proof}

\begin{proposition}[{\cite[2.1.2.5]{SoergelDiploma}}] \label{composition-factors-in-cat-o}
A $\frg$-module $M$ is in category $\cO^{\frp,\infty}$ if and only if it has a finite filtration whose successive quotients are in $\cO^\frp$. 
\end{proposition}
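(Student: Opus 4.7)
The plan is to handle the two directions separately. The ``if'' direction is essentially formal: if $M$ admits a finite filtration $0 = M_0 \subseteq M_1 \subseteq \dotsb \subseteq M_r = M$ with each $M_i/M_{i-1} \in \cO^\frp$, then since $\cO^\frp \subseteq \cO^{\frp,\infty}$ trivially and since $\cO^{\frp,\infty}$ is closed under extensions by \cref{o-infinity-serre}, induction on $r$ gives $M \in \cO^{\frp,\infty}$.

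For the ``only if'' direction, the idea is to filter by the depth of the generalized weight vectors. Assume $M \in \cO^{\frp,\infty}$; by \cref{filtering-cat-o-p-infinity} there exists an $n$ with $M \in \cO^{\frp,n}$, so
\[ M = \bigoplus_{\lambda \in \frt_E^*} M_{\lambda,n}. \]
For $0 \leq k \leq n$, I set
\[ M^{(k)} = \bigoplus_{\lambda \in \frt_E^*} M_{\lambda,k}, \]
so that $0 = M^{(0)} \subseteq M^{(1)} \subseteq \dotsb \subseteq M^{(n)} = M$. The main claim is that each $M^{(k)}$ is a $\frg$-submodule of $M$. Using the root space decomposition $\frg = \bigoplus_\alpha \frg_\alpha$, it suffices to check that $\frg_\alpha \bdot M_{\lambda,k} \subseteq M_{\alpha+\lambda,k}$ for all $\alpha$ and $\lambda$. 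This follows from the binomial identity already derived in the proof of \cref{root-space-generalized-weight-space}: if $x \in \frg_\alpha$, then $(\ad(y)-\alpha(y))(x) = 0$ for all $y \in \frt$, so in the expansion
\[ (\phi(y)-\alpha(y)-\lambda(y))^k(x \bdot m) = \sum_{j = 0}^k \binom{k}{j} (\ad(y)-\alpha(y))^j(x) \bdot (\phi(y)-\lambda(y))^{k-j}(m) \]
only the $j = 0$ term survives, yielding $x \bdot (\phi(y)-\lambda(y))^k(m) = 0$ whenever $m \in M_{\lambda,k}$.

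It then remains to check that $M^{(k)}/M^{(k-1)} \in \cO^\frp$. Finite generation over $\frg$ follows because $U(\frg)$ is noetherian and $M^{(k)} \subseteq M$ is finitely generated, and $\frp$-local finite dimensionality is inherited from $M$ through submodules and quotients by \cref{fd-lfd-serre}. For the weight-space condition, observe that if $m \in M_{\lambda,k}$ and $y \in \frt$, then $(\phi(y) - \lambda(y))(m) \in M_{\lambda,k-1} \subseteq M^{(k-1)}$, so in the quotient $\bar{m}$ satisfies $(\phi(y)-\lambda(y))(\bar{m}) = 0$. Hence the images of $M_{\lambda,k}$ sit inside the honest weight space of $M^{(k)}/M^{(k-1)}$ of weight $\lambda$, and since these cover the quotient, we obtain the required decomposition $M^{(k)}/M^{(k-1)} = \bigoplus_\lambda (M^{(k)}/M^{(k-1)})_{\lambda,1}$. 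Thus $M^{(k)}/M^{(k-1)} \in \cO^\frp$ as needed.

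The only mildly delicate point is the verification that each $M^{(k)}$ is stable under $\frg$, since a priori one might worry that applying $\frg_\alpha$ to a depth-$k$ generalized weight vector could increase its depth. The calculation above shows this does not happen, and the rest is bookkeeping.
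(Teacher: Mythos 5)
Your proof is correct and follows essentially the same route as the paper: the ``if'' direction by induction using that $\cO^{\frp,\infty}$ is closed under extensions (\cref{o-infinity-serre}), and the ``only if'' direction via \cref{filtering-cat-o-p-infinity} and the filtration $M^{(k)} = \bigoplus_\lambda M_{\lambda,k}$, whose $\frg$-stability rests on the binomial identity from the proof of \cref{root-space-generalized-weight-space}. You simply spell out in more detail the depth-preservation and the verification that the successive quotients lie in $\cO^\frp$, which the paper leaves as ``clear from definitions.''
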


\begin{proof}
If $M$ has a finite filtration whose successive quotients are in $\cO^\frp$, the fact that $M \in \cO^{\frp,\infty}$ follows by induction on the length of the filtration using \cref{o-infinity-serre}. Conversely, suppose $M \in \cO^{\frp,\infty}$. Then there exists an $n$ such that $M \in \cO^{\frp,n}$ by \cref{filtering-cat-o-p-infinity}. Let 
\[ M_k = \bigoplus_{\lambda \in \frt_E^*} M_{\lambda,k}, \]
and observe that each $M_k$ is a $\frg$-submodule of $M$ by \cref{root-space-generalized-weight-space}. In other words, we have a filtration \[ M = M_n \supseteq M_{n-1} \supseteq \dotsb \supseteq M_1 \supseteq M_0 = 0 \]
of $M$ by $\frg$-submodules. It is clear from definitions that $Q_i := M_{i}/M_{i-1} \in \cO^{\frp}$. 
\end{proof}

\begin{corollary}
Simple objects in $\cO^{\frp,\infty}$ are in $\cO^{\frp}$. \qed
\end{corollary}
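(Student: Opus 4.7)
The plan is to reduce the statement to a direct application of the preceding \cref{composition-factors-in-cat-o}. Given a simple object $M \in \cO^{\frp,\infty}$, that proposition produces a finite filtration
\[ 0 = M_0 \subseteq M_1 \subseteq \dotsb \subseteq M_n = M \]
by $\frg$-submodules with every successive quotient $Q_i = M_i/M_{i-1}$ lying in $\cO^{\frp}$. Since $M$ is simple as a $\frg$-module (and a fortiori has no proper nonzero $\frg$-submodule inside $\cO^{\frp,\infty}$), every $M_i$ must be either $0$ or $M$. Hence there is a unique index $k$ with $M_{k-1} = 0$ and $M_k = M$, which gives $M \cong Q_k$, and $Q_k \in \cO^{\frp}$.

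The only subtlety worth flagging is that \cref{composition-factors-in-cat-o} refers to $\frg$-submodules (not just subobjects of some other ambient category), and simplicity in $\cO^{\frp,\infty}$ is simplicity as a $\frg$-module because $\cO^{\frp,\infty}$ is a full subcategory of $\Mod_\frg$. Thus every $M_i$ really is a $\frg$-submodule of a simple $\frg$-module, so the dichotomy $M_i \in \{0, M\}$ applies. No serious obstacle arises; the result is essentially a one-line corollary of the existence of a finite filtration with factors in $\cO^{\frp}$.
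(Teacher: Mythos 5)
Your argument is correct and is exactly the intended deduction --- the paper states this corollary with a bare \qed, as an immediate consequence of \cref{composition-factors-in-cat-o}, and your filtration argument is the obvious way to make that explicit. One small quibble with your flagged subtlety: identifying simple objects of $\cO^{\frp,\infty}$ with simple $\frg$-modules uses that $\cO^{\frp,\infty}$ is a \emph{Serre} subcategory of $\Mod_\frg$ (\cref{o-infinity-serre}), so that arbitrary $\frg$-submodules of $M$ again lie in $\cO^{\frp,\infty}$; fullness alone would not give this.
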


\begin{corollary}
Every object in $\cO^{\frp,\infty}$ has finite length. 
\end{corollary}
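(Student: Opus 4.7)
The plan is to combine Proposition \ref{composition-factors-in-cat-o} with the classical fact that every object of category $\cO^\frp$ has finite length (see e.g.\ Humphreys, \emph{Representations of Semisimple Lie Algebras in the BGG Category $\cO$}, §1.11). Given $M \in \cO^{\frp,\infty}$, I would first apply Proposition \ref{composition-factors-in-cat-o} to produce a finite filtration
\[ 0 = M_0 \subsetneq M_1 \subsetneq \cdots \subsetneq M_n = M \]
by $\frg$-submodules whose successive quotients $Q_i := M_i/M_{i-1}$ lie in $\cO^\frp$.

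Next, I would invoke the classical fact recalled above: each $Q_i \in \cO^\frp$ admits a composition series of finite length whose simple subquotients again lie in $\cO^\frp \subseteq \cO^{\frp,\infty}$. Interleaving these composition series with the original filtration $M_\bullet$ yields a finite filtration of $M$ whose successive quotients are simple objects of $\cO^{\frp,\infty}$. Because $\cO^{\frp,\infty}$ is a Serre subcategory of $\Mod_\frg$ by Proposition \ref{o-infinity-serre}, any simple subquotient arising inside $\cO^{\frp,\infty}$ is also simple in $\cO^{\frp,\infty}$, so this is indeed a composition series of $M$ in $\cO^{\frp,\infty}$.

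There is no real obstacle here once one is willing to quote the finite-length statement for $\cO^\frp$; the only thing to double-check is that the notion of finite length is unambiguous, but this is automatic since $\cO^{\frp,\infty}$ being Serre in $\Mod_\frg$ means that the simple subquotients of $M$ computed in $\cO^{\frp,\infty}$ coincide with those computed in $\Mod_\frg$. Thus the entire argument reduces to a one-line appeal to Proposition \ref{composition-factors-in-cat-o} followed by the BGG finite-length result for $\cO^\frp$.
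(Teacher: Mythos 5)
Your proposal is correct and matches the paper's proof exactly: the paper also deduces finite length from \cref{composition-factors-in-cat-o} together with the classical fact that objects of $\cO^{\frp}$ have finite length. You have merely spelled out the refinement/interleaving step that the paper leaves implicit.
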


\begin{proof}
This follows from \cref{composition-factors-in-cat-o} plus the fact that objects in category $\cO^{\frp}$ have finite length. 
\end{proof}

\begin{definition}
Let $\cO^\frp_\alg = \cO^\frp \cap \cO^{\frp,\infty}_\alg$. 
\end{definition}

\begin{proposition}
A $\frg$-module $M$ is in the category $\cO^{\frp,\infty}_\alg$ if and only if it has a finite filtration whose successive quotients are in $\cO^\frp_\alg$. 
\end{proposition}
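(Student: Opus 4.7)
The plan is to mirror the argument of \cref{composition-factors-in-cat-o}, adding only the observation that the filtration constructed there automatically respects the algebraicity of weights.

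For the ``if'' direction, I would proceed by induction on the length of the filtration. The base case is immediate, and the inductive step follows from the fact that $\cO^{\frp,\infty}_\alg$ is a Serre subcategory of $\Mod_\frg$ (\cref{o-infinity-alg-serre}), so being in $\cO^{\frp,\infty}_\alg$ is preserved under extensions.

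For the ``only if'' direction, suppose $M \in \cO^{\frp,\infty}_\alg$. Then in particular $M \in \cO^{\frp,\infty}$, so by \cref{filtering-cat-o-p-infinity} there is an integer $n$ with $M \in \cO^{\frp,n}$, and the proof of \cref{composition-factors-in-cat-o} produces a filtration
\[ M = M_n \supseteq M_{n-1} \supseteq \dotsb \supseteq M_1 \supseteq M_0 = 0 \]
with $M_k = \bigoplus_{\lambda \in \frt_E^*} M_{\lambda,k}$ and successive quotients $Q_i = M_i/M_{i-1}$ in $\cO^\frp$. The key additional observation is that, since $M|_\frt \in \Mod_\frt^\alg$, we have $M_\lambda = 0$ whenever $\lambda$ is not algebraic, and consequently $M_{\lambda,k} \subseteq M_\lambda = 0$ for all such $\lambda$ and all $k$. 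Thus each $M_k$, and hence each quotient $Q_i$, has only algebraic weights, which places $Q_i$ in $\cO^\frp_\alg = \cO^\frp \cap \cO^{\frp,\infty}_\alg$.

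There is no real obstacle here; the argument is essentially a refinement of the proof of \cref{composition-factors-in-cat-o} once one notes that the construction $M_k = \bigoplus_\lambda M_{\lambda,k}$ automatically inherits the algebraic-weight condition from $M$. The Serre property (\cref{o-infinity-alg-serre}) handles the converse direction cleanly.
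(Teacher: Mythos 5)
Your proof is correct and follows essentially the same route as the paper: the ``if'' direction via the Serre property of $\cO^{\frp,\infty}_\alg$, and the ``only if'' direction by reusing the filtration $M_k = \bigoplus_\lambda M_{\lambda,k}$ from \cref{composition-factors-in-cat-o}. The only cosmetic difference is that the paper deduces $Q_i \in \cO^{\frp,\infty}_\alg$ abstractly from \cref{o-infinity-alg-serre} applied to the submodules $M_i$ and their quotients, whereas you trace the algebraic-weights condition through the construction by hand; the two arguments are interchangeable.
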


\begin{proof}
One direction follows from \cref{o-infinity-alg-serre}. Conversely, if $M$ is in category $\cO^{\frp,\infty}_\alg$, we know from \cref{composition-factors-in-cat-o} that $M$ has a filtration \[ M = M_n \supseteq M_{n-1} \supseteq \dotsb \supseteq M_0 = 0 \]
where $Q_i := M_{i}/M_{i-1} \in \cO^{\frp}$. Since $M \in \cO^{\frp,\infty}_\alg$, we know that $M_i \in \cO^{\frp,\infty}_\alg$ for all $i$ by \cref{o-infinity-alg-serre}. Furthermore, we have exact sequences 
\[ \begin{tikzcd} 0 \ar{r} & M_{i-1} \ar{r} & M_{i} \ar{r} & Q_i \ar{r} & 0, \end{tikzcd} \] 
so $Q_i \in \cO^{\frp,\infty}_\alg$ again by \cref{o-infinity-alg-serre}. Thus $Q_i \in \cO^{\frp,\infty}_\alg \cap \cO^{\frp} = \cO^{\frp}_\alg$. 
\end{proof}

\subsection{Lifting from category \texorpdfstring{$\cO^{\frp,\infty}_\alg$}{O p infinity alg}}

Suppose $M \in \cO^{\frp,\infty}_\alg$ and fix $\log \in \Logs(T)$. 

\begin{para} \label{lift-oinfty}
As $\cO^{\frp,\infty}_\alg$ is the extension closure of $\cO^{\frp}_\alg$, and the restriction to $\frp$ of every object in $\cO^{\frp}_\alg$ is in the Serre subcategory $\Mod_{\frp}^{\alun}$ (cf. \cref{alg-unilp-definition}), it follows that $M|_{\frp} \in \Mod_\frp^{\alun}$. Thus we can form the lift \[ \Lift(M, \log) := \Lift(M|_{\frp}, \log). \]
This is a locally analytic representation of $P$. Also, $\Lift(M, \log)$ is a $\frg$-module since $M = \Lift(M, \log)$ as sets. Since the action of $P$ on $\Lift(M, \log)$ lifts the original action of $\frp$, the two actions of $\frp$ agree. The proposition below shows that $\Lift(M, \log)$ is a locally analytic $(\frg,P)$-module in the sense of \cref{gP-module}. Thus it is naturally a $D(\frg,P)$-module by \cref{gP-module-DgP-module}. 
\end{para}

\begin{proposition}
Let $\tilde{\phi} :  P \to \GL(M)$ denote the action of $P$ on $\Lift(M|_{\frp}, \log)$. Then
\[ \Ad(h)(x) \bdot \tilde{\phi}(h)(m) = \tilde{\phi}(h)(x \bdot m) \]
for $m \in M$, $x \in \frg$, and $h \in P$.\footnote{Since $\tilde{\phi}$ lifts the action of $\frp$ on $M$, we already know this for $x \in \frp$. We only need to prove it for $x \in \frg_\beta$ for roots $\beta$ which are not roots of $\frp$, but it does not help the proof to assume this.}
\end{proposition}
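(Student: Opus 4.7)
The first observation is that the compatibility statement is multiplicative in $h$: if it holds for $h_1$ and for $h_2$ (each for every $x \in \frg$ and $m \in M$), then applying the $h_2$-instance to $(x,m)$ and then the $h_1$-instance to $(\Ad(h_2)(x), \tilde{\phi}(h_2)(m))$ yields the identity for $h_1 h_2$. Thus the plan is to verify the identity on a set of abstract generators of $P$. Using the Levi decomposition $\bbP = \bbU \rtimes \bbL$ from \cref{lifting-general-sub} together with \cref{generated-by-torus-and-root-subgroups} applied to the connected reductive $\bbL$, the group $P$ is generated by $T$ and the root subgroups $U_\alpha$ for $\alpha \in \Phi(\bbP,\bbT)$. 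The case $h \in T$ is immediate from \cref{compatibility} (already stated for arbitrary $x \in \frg$), so only the case $h \in U_\alpha$ remains.

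For $h \in U_\alpha$, write $h = \exp(y)$ with $y = \log(h) \in \frg_\alpha$. I first claim that $\tilde{\phi}(h)$ acts on $M$ as $\exp(\phi(y))$. If $\alpha$ is a root of $\fru$, this is by construction in \cref{lifting-unipotent} and \cref{lifting-general-sub}. If instead $\alpha \in \Phi(\bbL,\bbT)$, then $h$ lies in the derived subgroup of $\bbL$, and the remark at the end of \cref{lifting-semisimple} ensures that the semisimple lift used to build $\sigma$ acts on its root subgroups via the same formula from \cref{lifting-unipotent}. For $\exp(\phi(y))$ to make sense on all of $M$, one needs $\phi(y)$ to act locally nilpotently: if $\alpha$ is a root of $\fru$ this is because $\fru$ acts locally nilpotently on $M \in \Mod_\frp^\alun$; if $\alpha \in \Phi(\bbL,\bbT)$, then any $m \in M$ lies in a finite-dimensional $\frp$-submodule $N \in \Mod_\frp^{\alun,\fd}$ on which the algebraic $\bbL$-action from \cref{lifting-reductive} forces the root vector $y$ to act nilpotently. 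Likewise, $\ad(y)$ is nilpotent on $\frg$, so $\Ad(h) = \exp(\ad(y))$ is a well-defined finite-sum operator on $\frg$.

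With these formulas the identity reduces to
\[ \phi\bigl(\exp(\ad(y))(x)\bigr)\bigl(\exp(\phi(y))(m)\bigr) \;=\; \exp(\phi(y))\bigl(\phi(x)(m)\bigr), \]
which follows by evaluating at $\exp(\phi(y))(m)$ the operator identity
\[ \exp(\phi(y)) \circ \phi(x) \circ \exp(-\phi(y)) \;=\; \phi\bigl(\exp(\ad(y))(x)\bigr). \]
To prove the latter, consider $f(t) = \exp(t\phi(y)) \circ \phi(x) \circ \exp(-t\phi(y))$: evaluated at any fixed $m$, this is polynomial in $t$ (by local nilpotence of $\phi(y)$), satisfies $f'(t) = [\phi(y), f(t)]$ and $f(0) = \phi(x)$; the unique solution is $f(t) = \exp(t\,\ad(\phi(y)))(\phi(x))$, which equals $\phi(\exp(t\,\ad(y))(x))$ because $\phi$ is a Lie algebra homomorphism (apply it termwise to $\exp(t\,\ad(y))(x) = \sum_k \tfrac{t^k}{k!}\ad(y)^k(x)$ and use $\phi \circ \ad(y) = \ad(\phi(y)) \circ \phi$). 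Setting $t = 1$ finishes the argument.

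The main obstacle in this plan is the verification of local nilpotence of $\phi(y)$ on $M$ when $\alpha \in \Phi(\bbL,\bbT)$: the hypothesis $M|_\frp \in \Mod_\frp^\alun$ concerns only $\fru$ and gives nothing directly for root vectors in the Levi. One must reach back to the Jantzen-style algebraic structure from \cref{lifting-reductive,lifting-semisimple} applied to finite-dimensional $\frp$-submodules, in which root vectors of the reductive group $\bbL$ are automatically nilpotent operators.
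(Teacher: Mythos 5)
Your proposal is correct and follows essentially the same route as the paper: reduce to generators ($T$, handled by the torus-compatibility lemma, and the root subgroups $U_\alpha$ with $\bbU_\alpha \subseteq \bbP$), identify $\tilde{\phi}(\exp(y))$ with $\exp(\phi(y))$ using the same two-case local-nilpotence discussion ($\frg_\alpha \subseteq \fru$ versus $\bbU_\alpha \subseteq \bbL'$), and then verify the resulting $\exp$-$\Ad$ identity. The only difference is cosmetic: the paper proves that last identity by the binomial formula $y^n x = \sum_k \binom{n}{k}\ad(y)^k(x)\,y^{n-k}$ in $U(\frg)$ and a double-sum rearrangement, whereas you use the standard ODE/uniqueness argument for $t \mapsto \exp(t\phi(y))\circ\phi(x)\circ\exp(-t\phi(y))$; both are valid.
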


\begin{proof}
Let $\bbU$ be the unipotent radical of $\bbP$, $\bbL$ be a Levi subgroup of $\bbP$, and let $\bbU_\alpha$ be the root subgroup of $\bbG$ corresponding to $\alpha \in \Phi(\bbG,\bbT)$. Then $P$ is generated by $U$ and $L$, and $L$ in turn is generated by $T$ and $U_\alpha$ for $\alpha \in \Phi(\bbL,\bbT)$ by \cref{generated-by-torus-and-root-subgroups}. Moreover, $U$ is generated by $U_\alpha$ for $\alpha \in \Phi(\bbG,\bbT)^+ \setminus \Phi(\bbL,\bbT)$. Thus $P$ is generated by the subgroups $T$ and $U_\alpha$ for $\alpha \in \Phi(\bbL,\bbT) \cup \Phi(\bbG,\bbT)^+$. Said differently, $P$ is generated by $T$ and all of the $U_\alpha$ for roots $\alpha$ such that $\bbU_\alpha \subseteq \bbP$. It thus suffices to show the desired identity for $h \in T$ and for $h \in U_\alpha$ for such $\alpha$. For $h \in T$, this is \cref{compatibility}.

Suppose $h \in U_\alpha$ for $\alpha$ such that $\bbU_\alpha \subseteq \bbP$. Let $y = \log(h)$ be the image of $h$ under the isomorphism $\log : U_\alpha \to \frg_\alpha$ so that $\Ad(h) = \exp(\ad(y))$ on $\frg$. Note that $\Lie(U_\alpha) = \frg_\alpha$ must act locally nilpotently on $M$ for all of these $\alpha$. Indeed, if $\bbU_\alpha \subseteq \bbU$, then $\frg_\alpha \subseteq \fru$ and local nilpotence of the action of $\frg_\alpha$ on $M$ follows from the fact that $\fru$ acts locally nilpotently on $M$ as we observed in \cref{lift-oinfty}; otherwise, if $\bbU_\alpha \not\subseteq \bbU$, then $\bbU_\alpha \subseteq \bbL$, which means $\bbU_\alpha$ is contained in the derived subgroup $\bbL'$ of $\bbL$ (cf. proof of \cref{generated-by-torus-and-derived}), and then local nilpotence of the action of $\frg_\alpha$ on $M$ follows from our observations in \cref{lifting-semisimple}. Thus $\tilde{\phi}(h) = \exp(\phi(y))$ as endomorphisms of $M$, and we want to show that 
\[ \exp(\ad(y))(x) \bdot \exp(\phi(y))(m) = \exp(\phi(y))(x \bdot m). \]
One first shows by induction that 
\[ y^nx = \sum_{k = 0}^n \binom{n}{k} \ad(y)^k(x) y^{n-k} \]
in $U(\frg)$ for all non-negative integers $n$, from which it follows that  
\[ \begin{aligned} \exp(\phi(y))(x \bdot m) &= \sum_{n = 0}^\infty \sum_{k = 0}^n \binom{n}{k} \frac{\ad(y)^k(x) y^{n-k}}{n!} \bdot m \\
&= \sum_{n = 0}^\infty \sum_{k = 0}^n \frac{\ad(y)^k(x)}{k!} \frac{y^{n-k}}{(n-k)!} \bdot m. \end{aligned} \]
On the other hand, we have 
\[ \exp(\ad(y))(x) \bdot \exp(\phi(y))(m) = \left( \sum_{i = 0}^\infty \frac{\ad(y)^i(x)}{i!} \right) \bdot \left( \sum_{j = 0}^\infty \frac{y^j}{j!} \bdot m \right)  \]
and the right-hand sides of these two equations are evidently equal. 
\end{proof}

\section{Globalization functor}

Let $(\bbG, \bbT)$ be a connected split reductive algebraic group over $F$ and let $\bbP$ be a parabolic subgroup containing $\bbT$. Also, fix a logarithm $\log \in \Logs(T)$. 

\subsection{Setup and preliminaries}

Let $M \in \cO^{\frp,\infty}_\alg$ and let $V$ be a smooth strongly admissible representation of $P$ over $E$.\footnotemark

\footnotetext{For example, if $V$ is a smooth admissible representation of $P$ of finite length, it is strongly admissible. Indeed, the unipotent radical of $P$ acts trivially on $V$ \cite[lemme 13.2.3]{Boyer99}, so $V$ is a smooth representation of the Levi quotient of finite length. These are strongly admissible by \cite[proposition 2.2]{ST01a}.}

\begin{para} \label{dual-smooth}
We endow $V$ with its finest locally convex topology. Then the $D(P)$-module $V'$ is finitely generated as a $D(P_0)$-module \cite[p. 453]{ST02b}, where $P_0$ is a compact open subgroup of $P$, and it is annihilated by $\frp$ \cite[proposition 2.1]{ST01a}. In other words, $V'$ is a finitely generated module over the quotient $D^\infty(P_0)$ of $D(P_0)$ by the two-sided ideal generated by $\frp$. 
\end{para}

\begin{para} \label{dual-smooth-fp}
In fact, $V'$ is even finitely presented over $D^\infty(P_0)$ (or, equivalently, over $D(P_0)$), where $P_0$ is any compact open subgroup of $P$. Note that smooth strongly admissibile representations of $P$ can be characterized as those whose restrictions to $P_0$ are subrepresentations of $C^\infty(P_0)^{\oplus m}$ for some $m$ \cite[section 2]{ST01b}. If $V$ is a  smooth strongly admissible representation of $P$, we can set up a short exact sequence 
\[ \begin{tikzcd} 0 \ar{r} & V \ar{r} & C^\infty(P_0)^{\oplus m} \ar{r} & W \ar{r} & 0 \end{tikzcd} \]
where $W$ is the cokernel of the inclusion. But $C^\infty(P_0)^{\oplus m}$ is semisimple, so $W$ is itself a subrepresentation of $C^\infty(P_0)^{\oplus m}$, so it is also strongly admissible. Dualizing the above sequence yields an exact sequence
\[ \begin{tikzcd} 0 \ar{r} & W' \ar{r} & D^\infty(P_0)^{\oplus m} \ar{r} & V' \ar{r} & 0. \end{tikzcd} \]
Since $W$ is a smooth strongly admissible representation of $P_0$, its dual $W'$ is also finitely generated over $D^\infty(P_0)$, so we conclude that $V'$ is finitely presented over $D(P_0)$. 
\end{para}

\begin{para} \label{DgP-action-tensor-product}
Since $V'$ is a $D^\infty(P)$-module, we can use the map from \cref{quotients-isomorphic} to regard $V'$ as a $D(\frg,P)$-module on which $\frg$ acts trivially. Then, applying the construction of \cref{tensor-DgP-module-construction}, we see that $\Lift(M,\log) \otimes_E V'$ is naturally a $D(\frg,P)$-module. Since the action of $\frg$ is trivial on $V'$, the action of $x \in \frg$ is given explicitly by  
\[  x \bdot (m \otimes \lambda) = (x \bdot m) \otimes \lambda. \]
\end{para}

\begin{lemma} \label{finite-presentation-FMV-lemma}
If $W$ is any $\frp$-module, and if $X$ is a trivial $\frp$-module, then there is a natural $\frg$-linear isomorphism 
\[ (U(\frg) \otimes_{U(\frp)} W) \otimes_E X = U(\frg) \otimes_{U(\frp)} (W \otimes_E X), \]
where we regard the left-hand side as a $\frg$-module by extending the trivial action of $\frp$ on $X$ to a trivial action of $\frg$. 
\end{lemma}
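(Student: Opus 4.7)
The plan is to construct explicit mutually inverse maps between the two sides and verify that they are well-defined, with all the subtlety concentrated in the well-definedness check, which is where the triviality hypothesis on $X$ enters.

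First I would define the forward map
\[
\phi : (U(\frg) \otimes_{U(\frp)} W) \otimes_E X \longrightarrow U(\frg) \otimes_{U(\frp)} (W \otimes_E X), \quad (u \otimes w) \otimes x \longmapsto u \otimes (w \otimes x),
\]
and the candidate inverse
\[
\psi : U(\frg) \otimes_{U(\frp)} (W \otimes_E X) \longrightarrow (U(\frg) \otimes_{U(\frp)} W) \otimes_E X, \quad u \otimes (w \otimes x) \longmapsto (u \otimes w) \otimes x.
\]
Each formula is $E$-trilinear in the obvious variables, so the only question is whether they descend to the tensor products over $U(\frp)$.

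The step I expect to be the main content is the well-definedness of $\psi$, since this is the one that uses the hypothesis on $X$ in an essential way. On the source of $\psi$, we have the relation $(up) \otimes (w \otimes x) = u \otimes p \bdot (w \otimes x)$ for $p \in U(\frp)$; to interpret the right-hand side we need the $\frp$-action on $W \otimes_E X$, and because $\frp$ acts trivially on $X$ we have $p \bdot (w \otimes x) = (pw) \otimes x$. Under $\psi$, both sides of the relation map to $(up \otimes w) \otimes x$ and $(u \otimes pw) \otimes x$, which coincide in $(U(\frg) \otimes_{U(\frp)} W) \otimes_E X$. Well-definedness of $\phi$ is then the same computation read backwards, and does not require triviality of $X$ on its own but is certainly automatic under that hypothesis.

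Having produced both maps, the verification that $\phi \circ \psi$ and $\psi \circ \phi$ are the identity is immediate on generators. For $\frg$-linearity, note that on the left-hand side $\frg$ acts only on the factor $U(\frg) \otimes_{U(\frp)} W$ (by the assumed trivial action on $X$), so $y \bdot ((u \otimes w) \otimes x) = (yu \otimes w) \otimes x$, and on the right-hand side $\frg$ acts by left multiplication on $U(\frg)$, giving $y \bdot (u \otimes (w \otimes x)) = yu \otimes (w \otimes x)$; the two are exchanged correctly by $\phi$ and $\psi$. Naturality in $W$ and $X$ is visible from the formulas. (Conceptually, this is the standard associativity isomorphism $(A \otimes_R B) \otimes_E C \cong A \otimes_R (B \otimes_E C)$ with $A = U(\frg)$, $B = W$, $C = X$, and $R = U(\frp)$, which applies precisely because the triviality of $X$ makes the $U(\frp)$-action on $B \otimes_E C$ factor through its action on $B$.)
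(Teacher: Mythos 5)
Your proof is correct, but it takes a different route from the paper's. The paper only constructs the map in one direction (your $\psi$): it observes that $w \otimes x \mapsto (1 \otimes w) \otimes x$ is $\frp$-linear (this is where triviality of $X$ enters there) and invokes the universal property of $U(\frg) \otimes_{U(\frp)} (-)$ to get the $\frg$-linear map; bijectivity is then established not by exhibiting an inverse but by noting that $U(\frg)$ is free as a left $U(\frp)$-module with basis $U(\fru^-)$ (PBW), so that both sides are compatibly identified with $U(\fru^-) \otimes_E W \otimes_E X$ as vector spaces. You instead write down both maps explicitly and verify the $U(\frp)$-balancing relations by hand; this avoids any appeal to the PBW freeness of $U(\frg)$ over $U(\frp)$ and so is more elementary and slightly more general (it works for any inclusion of algebras $U(\frp) \subseteq U(\frg)$, free or not), at the cost of a somewhat longer verification. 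Both arguments hinge on the same essential point: triviality of the $\frp$-action on $X$ makes the balancing relation on $W \otimes_E X$ reduce to the one on $W$. One small quibble: your remark that well-definedness of $\phi$ ``does not require triviality of $X$ on its own'' is misleading --- for $\phi$ to descend you need $up \otimes (w \otimes x) = u \otimes (pw \otimes x)$ in the target, and since the relation there reads $up \otimes (w \otimes x) = u \otimes p \bdot (w \otimes x)$, this too uses that $p \bdot (w \otimes x) = (pw) \otimes x$, i.e.\ triviality. This does not affect the validity of your argument, since you work under that hypothesis throughout.
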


\begin{proof}
There is a natural $\frp$-linear map 
\[ \begin{tikzcd} W \otimes_E X \ar{r} &  (U(\frg) \otimes_{U(\frp)} W) \otimes_E X \end{tikzcd} \] 
given by $w \otimes x \mapsto (1 \otimes w) \otimes x$, and this induces a natural $\frg$-linear map 
\[ \begin{tikzcd} U(\frg) \otimes_{U(\frp)} (W \otimes_E X) \ar{r} & (U(\frg) \otimes_{U(\frp)} W) \otimes_E X. \end{tikzcd} \]
We know that $U(\frg)$ is free as a left $U(\frp)$-module with basis $U(\fru^-)$, so both sides are compatibly isomorphic to $U(\fru^-) \otimes_E W \otimes_E X$ as vector spaces. Thus the above map is an isomorphism. 
\end{proof}

\begin{proposition} \label{finite-presentation-FMV}
Suppose $M \in \cO^{\frp,\infty}_\alg$ and $V$ is a smooth strongly admissible representation of $P$. If $P_0$ is a compact open subgroup of $P$, then $\Lift(M, \log) \otimes_E V'$ is finitely presented as a $D(\frg,P_0)$-module.  
\end{proposition}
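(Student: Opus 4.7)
The strategy is to exhibit $\Lift(M,\log) \otimes_E V'$ as a quotient of a manifestly finitely presented $D(\frg,P_0)$-module by a finitely generated submodule. First, since $M$ is finitely generated over $U(\frg)$ and $M|_\frp \in \Mod_\frp^{\alun}$ (cf.~\cref{lift-oinfty}) is locally finite-dimensional, I would pick a finite-dimensional $\frp$-stable subspace $M_0 \subseteq M$ with $M = U(\frg) \cdot M_0$. Then $M_0 \in \Mod_\frp^{\alun,\fd}$, and $\Lift(M_0,\log)$ is a finite-dimensional locally analytic representation of $P$ whose $\frp$-action agrees with that on $M_0 \subseteq M$.

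Next, I would observe that $D(\frp,P_0) = D(P_0)$ (because $U(\frp) \subseteq D(P_0)$), and consider $N_0 := \Lift(M_0,\log) \otimes_E V'$ as a $D(P_0)$-module with diagonal $P_0$-action. This $N_0$ is finitely presented over $D(P_0)$: by \cref{dual-smooth-fp}, $V'$ admits a finite presentation $D(P_0)^a \to D(P_0)^b \to V' \to 0$, and the Hopf-algebra ``twist'' isomorphism $\Lift(M_0,\log) \otimes_E D(P_0) \cong D(P_0)^{\dim M_0}$ of left $D(P_0)$-modules (which I expect to be available from Part~II) transports this to a finite presentation of $N_0$ after tensoring with $\Lift(M_0,\log)$ over $E$. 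Setting $N := D(\frg,P_0) \otimes_{D(P_0)} N_0$, base change yields a finite presentation of $N$ over $D(\frg,P_0)$, and the inclusion $N_0 \hookrightarrow \Lift(M,\log) \otimes_E V'$ extends to a $D(\frg,P_0)$-linear map $\Phi : N \to \Lift(M,\log) \otimes_E V'$.

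Surjectivity of $\Phi$ is straightforward: writing $m = \sum u_j \cdot m_0^{(j)}$ with $u_j \in U(\frg)$ and $m_0^{(j)} \in M_0$, and using that $\frg$ acts trivially on $V'$ (cf.~\cref{DgP-action-tensor-product}), we get $m \otimes \lambda = \sum u_j \cdot (m_0^{(j)} \otimes \lambda)$ in the image. To analyze $\ker \Phi$, I would invoke a PBW-type decomposition $D(\frg,P_0) \cong U(\fru^-) \otimes_E D(P_0)$ (supplied by Part~II) together with \cref{finite-presentation-FMV-lemma} to identify $N$ with $(U(\frg) \otimes_{U(\frp)} M_0) \otimes_E V'$ and $\Phi$ with the map induced by the canonical surjection $U(\frg) \otimes_{U(\frp)} M_0 \twoheadrightarrow M$ tensored with $\id_{V'}$. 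Under this identification, $\ker \Phi \cong K \otimes_E V'$, where $K := \ker(U(\frg) \otimes_{U(\frp)} M_0 \to M)$ is finitely generated over $U(\frg)$ by Noetherianity of $U(\frg)$; a further application of the Hopf-algebra twist then shows $K \otimes_E V'$ is finitely generated over $D(\frg,P_0)$. Thus $\Lift(M,\log) \otimes_E V' \cong N/\ker \Phi$ is finitely presented.

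The main obstacle is not the outline itself but the structural inputs it presupposes: the PBW-type decomposition $D(\frg,P_0) \cong U(\fru^-) \otimes_E D(P_0)$ and the Hopf-algebra twist $W \otimes_E D(P_0) \cong D(P_0)^{\dim W}$ for a finite-dimensional locally analytic $P_0$-representation $W$. Both are facts about the fine structure of distribution algebras and their subalgebras that properly belong to Part~II; my plan is to cite them as black boxes and let the argument here consist chiefly of tracking the interaction of the $\frg$-action (on the first factor only) and the diagonal $P_0$-action on the relevant tensor products.
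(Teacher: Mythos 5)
Your skeleton is the same as the paper's: choose a finite-dimensional $\frp$-stable generating subspace $W=M_0$, form the surjection $D(\frg,P_0)\otimes_{D(P_0)}(\Lift(W,\log)\otimes_E V')\to\Lift(M,\log)\otimes_E V'$, identify the domain with $(U(\frg)\otimes_{U(\frp)}W)\otimes_E V'$ via $D(\frg,P_0)=U(\frg)\otimes_{U(\frp)}D(P_0)$ and \cref{finite-presentation-FMV-lemma}, recognize the kernel as $Z\otimes_E V'$ with $Z$ finitely generated over $U(\frg)$ by noetherianity, and finish by exhibiting a finite-dimensional $\frp$-stable generating subspace of $Z$. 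All of that matches the paper.

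The genuine gap is the finiteness input you black-box. The ``Hopf-algebra twist'' $\Lift(M_0,\log)\otimes_E D(P_0)\cong D(P_0)^{\dim M_0}$ (diagonal action on the left, regular action on the right) is \emph{not} supplied by Part~II, and the paper explicitly flags why: $D(P_0)$ is not a Hopf algebra --- its comultiplication lands in the completed tensor product $D(P_0)\,\hat\otimes_i\,D(P_0)$, not in $D(P_0)\otimes D(P_0)$, so the standard untwisting argument (which needs comultiplication and antipode applied termwise to an algebraic tensor) does not go through. The entire section on the tensor-hom adjunction and \cref{tensor-product-fd} exists precisely to work around this. What Part~II actually provides is \cref{tensor-product-fd}: for compact $H$, a finite-dimensional locally analytic $W$, and a finitely generated $D(H)$-module $X$ \emph{annihilated by $\frh$}, the diagonal module $W\otimes_E X$ is finitely generated over $D(H)$. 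This is strictly weaker than your twist (the second factor must be a $D^\infty$-module, and the conclusion is finite generation, not freeness), and its proof is a nontrivial analytic argument via good analytic subgroups, not a formal Hopf computation. Fortunately the weaker statement suffices at both places you need it, because $V'$ (and hence $Z_1\otimes_E V'$) is killed by $\frp$ (\cref{dual-smooth}): this is how the paper shows both that $\Lift(W,\log)\otimes_E V'$ is finitely generated over $D(P_0)$ and that the kernel $Z\otimes_E V'$ is finitely generated over $D(\frg,P_0)$. So your argument is repaired by replacing the twist with \cref{tensor-product-fd} throughout; as written, the twist with $D(P_0)$ itself in the second slot is an unproved (and possibly false, or at least much harder) assertion, and it is exactly the point where the real work of the proof lives.
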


\begin{proof} 
To ease notation, set $\tilde{M} = \Lift(M, \log)$. Let $W$ be a finite dimensional $\frp$-submodule of $M$ which generates $M$ as a $\frg$-module, and set $\tilde{W} = \Lift(W, \log)$. The locally analytic $P$-representation $\tilde{W}$ is naturally a $D(P)$-module \cite[proposition 3.2 and the sentence before lemma 3.1]{ST02b}. Moreover, since $W$ generates $M$ as a $\frg$-module, the natural map $D(\frg,P_0) \otimes_{D(P_0)} \tilde{W} \to \tilde{M}$ is surjective. Let $Z$ be its kernel, so that we have an exact sequence
\begin{equation} \label{presentation-lift-M} \begin{tikzcd} 0 \ar{r} & Z \ar{r} & D(\frg,P_0) \otimes_{D(P_0)} \tilde{W} \ar{r} & \tilde{M} \ar{r} & 0. \end{tikzcd} \end{equation}
Observe that 
\[ D(\frg,P_0) \otimes_{D(P_0)} \tilde{W} = (U(\frg) \otimes_{U(\frp)} D(P_0)) \otimes_{D(P_0)} \tilde{W} = U(\frg) \otimes_{U(\frp)} W \]
where we have used the fact that $D(\frg,P_0) = U(\frg) \otimes_{U(\frp)} D(P_0)$ \cite[sentence after lemma 4.1]{ScSt} and the fact that $\tilde{W} = W$ as a $\frp$-module. Since $W$ is finite dimensional, we see that $U(\frg) \otimes_{U(\frp)} W$ is a finitely generated $\frg$-module. Since $\tilde{M} = M$ is also a finitely generated $\frg$-module, and since $U(\frg)$ is noetherian, we see that $Z$ is also finitely generated as a $\frg$-module. 

Let us tensor \eqref{presentation-lift-M} with $V'$ over $E$. 
\begin{equation} \label{presentation-lift-m-tensor} \begin{tikzcd} 0 \ar{r} & Z \otimes V' \ar{r} & (D(\frg,P_0) \otimes_{D(P_0)} \tilde{W}) \otimes_E V' \ar{r} & \tilde{M} \otimes_E V' \ar{r} & 0 \end{tikzcd} \end{equation}
Let us first show that the term in the middle is naturally a $D(\frg,P_0)$-module in such a way that the map on the right is $D(\frg,P_0)$-linear. Using \cite[sentence after lemma 4.1]{ScSt} as well as  \cref{finite-presentation-FMV-lemma}, note that we have a natural isomorphism 
\[ \begin{aligned} 
(D(\frg,P_0) \otimes_{D(P_0)} \tilde{W}) \otimes_E V' &= (U(\frg) \otimes_{U(\frp)} W) \otimes V' \\ 
&= U(\frg) \otimes_{U(\frp)} (W \otimes V') \\
&= D(\frg,P_0) \otimes_{D(P_0)} (\tilde{W} \otimes_E V'), 
\end{aligned} \]
which shows that the middle term is in fact a $D(\frg,P_0)$-module. Using this natural identification, the fact that the map on the right of \eqref{presentation-lift-m-tensor} is $D(\frg, P_0)$-linear follows from the fact that the map $D(\frg,P_0) \otimes_{D(P_0)} \tilde{W} \to \tilde{M}$ is $D(\frg, P_0)$-linear. It follows that $Z \otimes V'$ is naturally a $D(\frg,P_0)$-module as well. 

Note that $V'$ is a finitely generated $D^\infty(P_0)$-module (cf. \cref{dual-smooth}), and $\tilde{W}$ is a finite dimensional locally analytic representation of $P_0$, so $\tilde{W} \otimes_E V'$ is a finitely generated $D(P_0)$-module by \cref{tensor-product-fd}. Thus the middle term of \eqref{presentation-lift-m-tensor} is a finitely generated $D(\frg,P_0)$-module. 

We now claim that $Z \otimes_E V'$ is also finitely generated. Because $Z$ too is in $\cO^{\frp, \infty}_\alg$, there is a finite-dimensional $\frp$-submodule $Z_1 \sub Z$ which generates $Z$ as $U(\frg)$-module. Then, given any $\lambda \in V'$ and $w \in Z$ there are $w_1, \ldots, w_n \in Z_1$ and $u_1, \ldots, u_n \in U(\frg)$ such that $w = \sum_i u_i.w_i$, and hence $w \otimes \lambda  = \sum_i u_i.(w_i \otimes \lambda)$. Hence $Z \otimes_E V'$ is generated by $Z_1 \otimes_E V'$ as a $U(\frg)$-module. The map $D(\frg,P_0) \otimes_{D(P_0)} (Z_1 \otimes V') \ra Z \otimes_E V'$ is hence surjective, because $Z_1 \otimes_E V'$ is finitely generated as a $D(P_0)$-module by \cref{tensor-product-fd}.
\end{proof} 

\subsection{Definition and properties}

We continue to assume that $M \in \cO^{\frp,\infty}_\alg$ and that $V$ is a smooth strongly admissible representation of $P$ over $E$. As we noted in \cref{DgP-action-tensor-product}, $\Lift(M, \log) \otimes_E V'$ is naturally a $D(\frg,P)$-module.  

\begin{definition}
We define 
\[ \vF^G_P(M,V) = D(G) \otimes_{D(\frg,P)} (\Lift(M, \log) \otimes_E V'). \]
When $G$ and $P$ can be understood from context, we drop the superscript and subscript and simply write $\vF(M,V)$ instead. Also, when $V = E$ is the trivial representation of $P$, we simply write $\vF(M)$.
\end{definition}

\begin{para}
Let $G_0$ be a maximal compact subgroup of $G$, and let $P_0 = G_0 \cap P$. Observe that
\[ \vF(M,V) = D(G_0) \otimes_{D(\frg,P_0)} (\Lift(M, \log) \otimes_E V'), \]
as $D(G_0)$-modules \cite[lemma 4.2]{ScSt}. We use this observation repeatedly below.
\end{para} 

\begin{theorem} \label{coadmissibility}
If $G_0$ is a maximal compact subgroup of $G$, then $\vF(M,V)$ is a finitely presented $D(G_0)$-module. In particular, it is a coadmissible $D(G)$-module. 
\end{theorem}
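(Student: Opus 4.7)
The plan is to deduce finite presentation of $\vF(M,V)$ over $D(G_0)$ by base change from the finite presentation of $\Lift(M,\log) \otimes_E V'$ over $D(\frg,P_0)$ established in \cref{finite-presentation-FMV}, and then invoke the standard fact that finitely presented modules over a Fréchet--Stein algebra are coadmissible.

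Concretely, I would start from a finite presentation
\[ D(\frg,P_0)^{\oplus m} \longrightarrow D(\frg,P_0)^{\oplus n} \longrightarrow \Lift(M,\log) \otimes_E V' \longrightarrow 0 \]
furnished by \cref{finite-presentation-FMV}, where $P_0 = G_0 \cap P$. Because $D(\frg,P_0) \hookrightarrow D(G_0)$ is a ring homomorphism (indeed an inclusion of subrings of $D(G)$), applying the right-exact functor $D(G_0) \otimes_{D(\frg,P_0)}(-)$ to this presentation yields an exact sequence
\[ D(G_0)^{\oplus m} \longrightarrow D(G_0)^{\oplus n} \longrightarrow \vF(M,V) \longrightarrow 0, \]
using the identification $\vF(M,V) = D(G_0) \otimes_{D(\frg,P_0)} (\Lift(M,\log)\otimes_E V')$ recorded immediately after the definition. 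Thus $\vF(M,V)$ is a finitely presented $D(G_0)$-module.

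For the second assertion, I would appeal to Schneider--Teitelbaum: $D(G_0)$ is a Fréchet--Stein algebra \cite{ST03}, and any finitely presented module over a Fréchet--Stein algebra is coadmissible. Indeed, base changing the presentation above to each noetherian Banach completion $D_r(G_0)$ exhibits the corresponding level $\vF(M,V)_r$ as finitely generated, and the compatibility required for coadmissibility holds automatically because the transition maps are flat. Hence $\vF(M,V)$ is coadmissible as a $D(G_0)$-module, and since coadmissibility of a $D(G)$-module is by definition coadmissibility of its restriction to $D(G_0)$ for some (equivalently, any) compact open subgroup, $\vF(M,V)$ is coadmissible as a $D(G)$-module as well.

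The substantive work has already been carried out in \cref{finite-presentation-FMV}; the present theorem is essentially a formal consequence of the preservation of finite presentations under right-exact base change, combined with the Fréchet--Stein formalism. I do not anticipate any genuine obstacle beyond verifying that the ring map $D(\frg,P_0) \to D(G_0)$ is compatible with the $D(\frg,P_0)$-module structure on $\Lift(M,\log) \otimes_E V'$, which is built into the construction in \cref{DgP-action-tensor-product}.
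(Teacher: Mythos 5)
Your proof is correct and follows exactly the same route as the paper: the paper also reduces to the finite presentation of $\Lift(M,\log)\otimes_E V'$ over $D(\frg,P_0)$ from \cref{finite-presentation-FMV}, uses the identification $\vF(M,V) = D(G_0)\otimes_{D(\frg,P_0)}(\Lift(M,\log)\otimes_E V')$ recorded just before the theorem, and concludes by right-exactness of base change together with the fact that finitely presented modules over the Fr\'echet--Stein algebra $D(G_0)$ are coadmissible. You have merely spelled out the details that the paper compresses into ``the result follows immediately.''
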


\begin{proof}
Let $P_0 = G_0 \cap P$. Since $\Lift(M, \log) \otimes_E V'$ is finitely presented as a $D(\frg,P_0)$-module by \cref{finite-presentation-FMV}, the result follows immediately. 
\end{proof}

\begin{theorem} \label{exactness}
The functor $(M,V) \mapsto \vF(M,V)$ is exact in each  argument. 
\end{theorem}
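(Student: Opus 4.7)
The plan is to factor the bifunctor $(M,V) \mapsto \vF(M,V)$ as a chain of elementary operations and check that each preserves short exact sequences in the relevant variable. Throughout, fix a maximal compact open subgroup $G_0 \subseteq G$ and set $P_0 = G_0 \cap P$. Using the identification
\[ \vF(M,V) = D(G_0) \otimes_{D(\frg,P_0)} \big(\Lift(M,\log) \otimes_E V'\big) \]
from the proof of \cref{coadmissibility}, it suffices to test exactness at the level of $D(G_0)$-modules.

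For exactness in $M$, fix $V$ and consider an exact sequence $0 \to M' \to M \to M'' \to 0$ in $\cO^{\frp,\infty}_\alg$. The first step is to observe that $\Lift(-,\log)$ is exact as a functor into $D(\frg,P)$-modules: $\Lift(M,\log)$ coincides with $M$ as a vector space (and as a $\frg$-module), and the $P$-action is built functorially from the $\frt$-module structure via the generalized weight space decomposition, the splitting into semisimple and nilpotent parts, and the chosen logarithm, so the obviously exact sequence of vector spaces is automatically exact as a sequence of $D(\frg,P)$-modules. The second step, $-\otimes_E V'$, is exact because $E$ is a field. The third step, $D(G_0)\otimes_{D(\frg,P_0)} -$, preserves exactness on the subcategory of $D(\frg,P_0)$-modules that are finitely presented — to which each of $\Lift(M',\log)\otimes V'$, $\Lift(M,\log)\otimes V'$, and $\Lift(M'',\log)\otimes V'$ belongs by \cref{finite-presentation-FMV} — invoking the flatness of $D(G_0)$ over $D(\frg,P_0)$ (on finitely presented modules) developed in Part II of the paper.

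For exactness in $V$, fix $M$ and consider an exact sequence $0 \to V_1 \to V_2 \to V_3 \to 0$ of smooth strongly admissible representations of $P$. Each $V_i$ is endowed with its finest locally convex topology, so every linear map is continuous and the continuous dual coincides with the algebraic dual; the dualized sequence $0 \to V_3' \to V_2' \to V_1' \to 0$ is therefore exact as a sequence of $D^\infty(P_0)$-modules (cf. \cref{dual-smooth,dual-smooth-fp}). Tensoring over $E$ with the fixed module $\Lift(M,\log)$ preserves exactness, and applying $D(G_0)\otimes_{D(\frg,P_0)} -$ again preserves exactness on the finitely presented modules just as above.

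The only non-formal step — and hence the main obstacle — is the exactness of $D(G_0)\otimes_{D(\frg,P_0)} -$ on the class of finitely presented modules of the form $\Lift(M,\log)\otimes_E V'$. This is a flatness-type statement for $D(G_0)$ over the subring $D(\frg,P_0)$, which I expect to be deduced from the tensor-hom adjunction and related structural results for distribution algebras announced for Part II. All other ingredients (functoriality of $\Lift$, exactness of tensoring over a field, and well-behaved duality for smooth strongly admissible representations with the finest locally convex topology) are transparent, so the theorem reduces cleanly to the ring-theoretic work of Part II.
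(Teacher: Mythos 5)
Your proposal is correct and follows essentially the same route as the paper: exactness of $(M,V)\mapsto \Lift(M,\log)\otimes_E V'$ in each variable is immediate, and the real content is that $D(G_0)\otimes_{D(\frg,P_0)}-$ preserves exact sequences of finitely presented $D(\frg,P_0)$-modules, which the paper supplies as \cref{exactness-fp} via \cref{finite-presentation-FMV}. One small correction of attribution: that key input comes not from the tensor-hom adjunction but from the Fr\'echet--Stein structure of $D(G_0)_{P_0}$, the flatness of $U_r(\frg)$ over $U(\frg)$, and the faithful c-flatness of $D(G_0)$ over $D(G_0)_{P_0}$ (\cref{exactness-DgP-DGP,distributions-flat-over-supported-distributions}).
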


\begin{proof}
Certainly $(M,V) \mapsto \Lift(M,\log) \otimes_E V'$ is exact in each argument. Let $G_0$ be a maximal compact subgroup of $G$, and let $P_0 = G_0 \cap P$. Since $\Lift(M,\log) \otimes_E V'$ is a finitely presented $D(\frg,P_0)$-module, it follows from \cref{exactness-fp} that \[ (M,V) \mapsto D(G_0) \otimes_{D(\frg,P_0)} (\Lift(M,\log) \otimes_E V') = \vF(M,V) \]
is also exact in each argument. 
\end{proof}

\subsection{Change of parabolic}

Let $Q \supseteq P$ be a parabolic subgroup, and let $V$ be a smooth strongly admissible representation of $P$. We let $i(V) = \ind_P^Q(V)$ denote the smooth induction of $V$ to $Q$. 

\begin{lemma}
$i(V)$ is a smooth strongly admissible representation of $Q$. 
\end{lemma}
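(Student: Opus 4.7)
The plan is to reduce strong admissibility of $i(V)$ on $Q$ to strong admissibility of $V$ on $P$ by restricting to a carefully chosen compact open subgroup. Smoothness of $i(V)$ is immediate from the definition of smooth induction and the smoothness of $V$, so the real content is in verifying strong admissibility: I need to exhibit a compact open subgroup $Q_0 \subseteq Q$ and an $m' \geq 0$ so that $i(V)|_{Q_0}$ embeds into $C^\infty(Q_0)^{\oplus m'}$.

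First I would produce a compact open subgroup $Q_0 \subseteq Q$ satisfying the Iwasawa-type identity $Q = P Q_0$. Since $\bbP$ contains a Borel of $\bbQ$, it is a parabolic subgroup of $\bbQ$, so the quotient $\bbQ/\bbP$ is a projective variety and hence $Q/P$ is compact; a sufficiently large compact open subgroup $Q_0$ whose image covers $Q/P$ then satisfies $Q = PQ_0$. Setting $P_0 = P \cap Q_0$, I may further shrink $Q_0$ so that $V|_{P_0}$ embeds into $C^\infty(P_0)^{\oplus m}$. This shrinking is legitimate because strong admissibility passes to smaller compact open subgroups: if $P_0 \subseteq P_0'$ has finite index $n$, then $C^\infty(P_0')|_{P_0} \cong C^\infty(P_0)^{\oplus n}$ via evaluation on coset representatives.

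Next, restriction of functions from $Q$ to $Q_0$ gives a $Q_0$-equivariant isomorphism
\[ i(V)|_{Q_0} \;\cong\; \ind_{P_0}^{Q_0}(V|_{P_0}), \]
whose inverse reconstructs $f \in i(V)$ from $g : Q_0 \to V$ via $f(p q_0) = p \cdot g(q_0)$, well-defined thanks to the decomposition $Q = P Q_0$ and the compatibility on the overlap $P_0$. Because smooth induction from an open subgroup to a compact open supergroup is exact, the embedding $V|_{P_0} \hookrightarrow C^\infty(P_0)^{\oplus m}$ induces an embedding
\[ \ind_{P_0}^{Q_0}(V|_{P_0}) \;\hookrightarrow\; \ind_{P_0}^{Q_0}\!\bigl(C^\infty(P_0)\bigr)^{\oplus m}. \]

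Finally, I would identify $\ind_{P_0}^{Q_0}(C^\infty(P_0)) \cong C^\infty(Q_0)$ as $Q_0$-representations via $f \mapsto (q_0 \mapsto f(q_0)(1))$, with inverse $F \mapsto (q_0 \mapsto (p_0 \mapsto F(p_0^{-1} q_0)))$, and check equivariance directly. Composing yields $i(V)|_{Q_0} \hookrightarrow C^\infty(Q_0)^{\oplus m}$, which is precisely strong admissibility. The main obstacle is the first step — producing $Q_0$ with $Q = P Q_0$ — after which everything is a standard Mackey-type manipulation and the computation of induced regular representations.
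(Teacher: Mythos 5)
Your argument is correct and reaches the conclusion by a genuinely different route. The paper does not produce an Iwasawa-type subgroup with $Q = PQ_0$; it restricts $i(V)$ to an \emph{arbitrary} compact open subgroup $Q_0$ and invokes the full Mackey formula
\[ i(V)|_{Q_0} \;=\; \bigoplus_{g \in Q_0 \backslash Q / P} \ind^{Q_0}_{Q_0 \cap gPg^{-1}}({}^g V), \]
using only the \emph{finiteness} of $Q_0\backslash Q/P$ (which follows from compactness of $Q/P$ and openness of $Q_0$) to conclude that $i(V)|_{Q_0}$ is a finite direct sum of representations, each embedding into some $C^\infty(Q_0)^{\oplus m}$ by exactly the induction-in-stages computation $\ind_{P_0}^{Q_0}(C^\infty(P_0)) \cong C^\infty(Q_0)$ that you perform. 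Your choice of $Q_0$ collapses the double coset space to a single class, buying the cleaner on-the-nose identification $i(V)|_{Q_0} \cong \ind_{P_0}^{Q_0}(V|_{P_0})$, at the cost of needing the existence of a compact open $Q_0$ with $Q = PQ_0$ — true, but not free (it follows from the Iwasawa decomposition $G = PK$ for a suitable maximal compact $K$, whence $Q = P(Q \cap K)$), whereas the paper's finiteness input is weaker. One small wrinkle to repair: you should not ``shrink $Q_0$'' to arrange the embedding $V|_{P_0} \hookrightarrow C^\infty(P_0)^{\oplus m}$, since shrinking $Q_0$ destroys the identity $Q = PQ_0$. Instead keep $Q_0$ fixed and note that strong admissibility gives such an embedding for \emph{every} compact open subgroup of $P$, in particular for $P_0 = P \cap Q_0$; this is the characterization from \cite[section 2]{ST01b} that the paper cites, and it amounts to commensurability of compact open subgroups together with your finite-index observation applied in both directions (passing down via $C^\infty(P_0')|_{P_0} \cong C^\infty(P_0)^{\oplus n}$, and back up via $V \hookrightarrow \ind_{P_0 \cap P_1}^{P_0}(V)$ for finite index).
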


\begin{proof}
Let $Q_0$ be a compact open subgroup of $Q$ and let $P_0 \subseteq Q_0 \cap P$ be a compact open subgroup. By the characterization of \cite[section 2]{ST01b}, we see that $V$ restricted to $P_0$ is a subrepresentation of $C^\infty(P_0)^{\oplus m}$ for some $m$ \cite[section 2]{ST01b}. Since smooth induction is exact, $\ind_{P_0}^{Q_0}(V)$ is a subrepresentation of 
\[ \ind_{P_0}^{Q_0}(C^\infty(P_0)^{\oplus m}) = C^\infty(Q_0)^{\oplus m} \,, \]
and $\ind_{P_0}^{Q_0}(V)$ is thus strongly admissible again by the characterization of \cite[section 2]{ST01b}. But 
\[ i(V)|_{Q_0} = \bigoplus_{g \in Q_0 \backslash Q/P} \ind^{Q_0}_{Q_0 \cap gPg^{-1}}({}^g V). \] Because $V$ is a strongly admissible as representation of the compact open subgroup $g^{-1}Q_0g \cap P$, so is ${}^g V$ as a representation of $Q_0 \cap gPg^{-1}$. Therefore, $\ind_{Q_0 \cap gPg^{-1}}^{Q_0}({}^g V)$ is strongly admissible by the argument above. Since $P$ is parabolic, the double quotient $Q_0 \backslash Q/P$ is finite, so $i(V)$ is a finite direct sum of strongly admissible representations and is therefore itself smooth strongly admissible. 
\end{proof}

\begin{lemma} \label{smooth-induction-module}
Let $D^\infty(Q)$ denote the quotient of $D(Q)$ by the two-sided ideal $I(\frq)$ generated by $\frq$. There is an isomorphism of $D(Q)$-modules 
\[ D^\infty(Q) \otimes_{D(P)} V' = i(V)' \]
which is natural in the smooth strongly admissible representation $V$. 
\end{lemma}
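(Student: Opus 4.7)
The plan is to first reduce the tensor product to one over $D^\infty(P)$, then construct a natural comparison map by integration, and finally verify it is an isomorphism by restriction to compact open subgroups.

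First, since $V$ is a smooth representation of $P$, the action of $\frp$ on $V'$ is zero (cf.\ \ref{dual-smooth}), so the $D(P)$-module structure on $V'$ factors through $D^\infty(P)$, yielding
$$D^\infty(Q) \otimes_{D(P)} V' = D^\infty(Q) \otimes_{D^\infty(P)} V'.$$
Similarly, $i(V)$ is a smooth representation of $Q$, so $i(V)'$ is naturally a $D^\infty(Q)$-module, and both sides of the desired isomorphism are $D^\infty(Q)$-modules.

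Next, I would construct a natural $D(Q)$-linear comparison map $\Phi : D^\infty(Q) \otimes_{D^\infty(P)} V' \to i(V)'$ by the formula
$$\Phi(\mu \otimes \lambda)(f) = \mu(\lambda \circ f)$$
for $\mu \in D^\infty(Q)$, $\lambda \in V'$, and $f \in i(V)$, noting that $\lambda \circ f : Q \to E$ is a smooth scalar function. The $D^\infty(P)$-balancedness of the pairing follows from the $P$-equivariance of $f$, which converts convolution of $\lambda \circ f$ by $\delta \in D^\infty(P) \subseteq D^\infty(Q)$ into the dual $D^\infty(P)$-action on $V'$; the $D(Q)$-linearity and naturality in $V$ are then immediate from the construction.

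To show $\Phi$ is an isomorphism, it suffices to check this after restriction to each compact open subgroup $Q_0 \subseteq Q$. Since $Q/P$ is compact, the double coset space $Q_0 \backslash Q / P$ is finite, giving an Iwasawa-type decomposition $Q = \bigsqcup_{g} Q_0 g P$. By the preceding lemma,
$$i(V)'|_{Q_0} = \bigoplus_{g} \ind_{Q_0 \cap gPg^{-1}}^{Q_0}({}^g V)',$$
and a parallel Mackey-style decomposition of $D^\infty(Q)$ as a $(D^\infty(Q_0), D^\infty(P))$-bimodule yields
$$D^\infty(Q) \otimes_{D^\infty(P)} V' = \bigoplus_{g} D^\infty(Q_0) \otimes_{D^\infty(Q_0 \cap gPg^{-1})} {}^g V'.$$
This reduces matters to the case of compact open subgroups $P_0 \subseteq Q_0$, where $Q_0/P_0$ is finite, one has $\ind_{P_0}^{Q_0}(W) \cong \bigoplus_{g \in Q_0/P_0} W$ via evaluation at coset representatives, $D^\infty(Q_0)$ is free over $D^\infty(P_0)$ on the Dirac distributions $\delta_g$, and $\Phi$ restricts directly to the obvious identification $\bigoplus_g \delta_g \otimes W' \cong \bigoplus_g W'$.

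The main obstacle is the careful bookkeeping verifying the $D^\infty(P)$-balancedness of the pairing defining $\Phi$, which requires tracking the interplay between the equivariance relation $f(qp) = p^{-1}\cdot f(q)$ for $f \in i(V)$ and the dual action of $\delta \in D^\infty(P)$ on $V'$ (including the correct inversion conventions for distributions acting on continuous duals). Once this is established, the Iwasawa-type decomposition and the elementary identification in the compact-open case conclude the proof, with naturality in $V$ being manifest from the integration formula.
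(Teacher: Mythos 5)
There is a genuine gap in the final step. Your reduction asserts that after restricting to a compact open subgroup $Q_0$, one lands in "the case of compact open subgroups $P_0 \subseteq Q_0$, where $Q_0/P_0$ is finite." This is false: $P$ is a \emph{proper} parabolic subgroup of $Q$, hence closed but not open, so $Q_0 \cap gPg^{-1}$ is a compact subgroup of strictly smaller dimension than $Q_0$, and the quotient $Q_0/(Q_0\cap gPg^{-1})$ is a positive-dimensional compact $p$-adic manifold (an open piece of $Q/P$), not a finite set. Consequently $D^\infty(Q_0)$ is not free of finite rank over $D^\infty(Q_0\cap gPg^{-1})$ on Dirac distributions, and $\ind_{Q_0\cap gPg^{-1}}^{Q_0}({}^gV)$ is not a finite direct sum of copies of $V$ -- it is infinite-dimensional already for $V$ trivial, where it contains $C^\infty(Q_0/(Q_0\cap gPg^{-1}))$. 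The "elementary identification" you invoke at the end therefore does not exist, and the entire difficulty of the lemma -- showing that the \emph{abstract} tensor product over the distribution algebra of a lower-dimensional closed subgroup computes the continuous dual of the induction -- is precisely what gets skipped. (Your intermediate Mackey-type decomposition of $D^\infty(Q)$ as a $(D^\infty(Q_0),D^\infty(P))$-bimodule is also not free: establishing it for a single double coset is essentially the same statement as the lemma itself.)

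For comparison, the paper's proof confronts this head-on: it first proves $D(Q)\otimes_{D(P)}V' = \Ind_P^Q(V)'$ for the \emph{locally analytic} induction, by showing the algebraic tensor product is coadmissible (using the finite presentation of $V'$ over $D(P_0)$ from \cref{dual-smooth-fp}), identifying it with Kohlhaase's completed tensor product $D(Q)\,\widetilde{\otimes}_{D(P)}\,V' \simeq D(Q/P)\,\hat{\otimes}\,V'$ via the open mapping theorem, and then applying Kohlhaase's duality and reflexivity; the smooth statement follows by passing to the quotient by $I(\frq)$. Your opening reduction to $D^\infty(Q)\otimes_{D^\infty(P)}V'$ and your candidate pairing $\Phi(\mu\otimes\lambda)(f)=\mu(\lambda\circ f)$ are reasonable starting points (and $\mu$ does pair with the non-compactly-supported function $\lambda\circ f$ since distributions in $D(Q)$ have compact support), but to prove $\Phi$ is bijective you would need to supply an argument of comparable analytic content to the paper's; the finite-index shortcut is not available.
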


\begin{proof}
We first claim that \[ D(Q) \otimes_{D(P)} V' = \Ind^Q_P(V)' \]
where $\Ind_P^Q(V)$ is the locally analytic induction. To see this, observe first that $D(Q) \otimes_{D(P)} V'$ is coadmissible. Indeed, suppose $Q_0$ is a compact open subgroup of $Q$ and $P_0 = Q_0 \cap P$. Then $D(Q) \otimes_{D(P)} V' = D(Q_0) \otimes_{D(P_0)} V'$ as $D(P_0)$-modules \cite[lemma 6.1(ii)]{ST05}, and $V'$ is a finitely presented $D(P_0)$-module by \cref{dual-smooth-fp}, so $D(Q_0) \otimes_{D(P_0)} V'$ is a finitely presented $D(Q_0)$-module, which in turn means that $D(Q) \otimes_{D(P)} V'$ is coadmissible.

We now relate $D(Q) \otimes_{D(P)} V'$ to what is denoted $D(Q) \, \widetilde{\otimes}_{D(P)}\, V'$ in the notation of \cite[section 2]{KohlhaaseII}. Note that, by \cite[eq. (53)]{KohlhaaseII}, there an isomorphism of topological vector spaces
\[ D(Q) \,\widetilde{\otimes}_{D(P)}\, V' \simeq D(Q/P) \, \hat{\otimes}_{E,i} V' \]
which depends on the choice of a locally analytic section $Q/P \ra Q$ of the projection map $Q \ra Q/P$. Since $P$ is parabolic, the quotient $Q/P$ is compact, so $D(Q/P)$ is Fr\'echet; thus $D(Q) \,\widetilde{\otimes}_{D(P)}\, V'$ is Fr\'echet as well. Since $D(Q) \otimes_{D(P)} V'$ is coadmissible and therefore complete, there is a natural continuous map $\alpha : D(Q) \,\widetilde{\otimes}_{D(P)}\, V' \to D(Q) \otimes_{D(P)} V'$. By universal property of $D(Q) \otimes_{D(P)} V'$, there is also a natural map $\beta : D(Q) \otimes_{D(P)} V' \to D(Q) \,\widetilde{\otimes}_{D(P)}\, V'$, and it is clear that $\alpha \circ \beta = \id$. This means that $\alpha$ is surjective; since its domain $D(Q) \,\widetilde{\otimes}_{D(P)}\, V'$ is Fr\'echet, we see that $\alpha$ is an open map by the open mapping theorem \cite[proposition 8.6]{NFA}. But then $\beta$ must continuous as well, and it is clear that $\beta \circ \alpha$ is the identity when restricted to $\im(\alpha)$, which is dense in $D(Q) \,\widetilde{\otimes}_{D(P)}\, V'$, which means that $\beta \circ \alpha = 1$ as well. In other words, we have shown that $D(Q) \otimes_{D(P)} V' = D(Q) \widetilde{\otimes}_{D(P)} V'$. 

Observe that 
\[ (D(Q) \otimes_{D(P)} V')' = \Ind_P^Q(V) \]
using \cite[proposition 5.3 and remark 5.4]{KohlhaaseII} together with  reflexivity of $V$. Now note that $D(Q) \otimes_{D(P)} V'$ is also reflexive \cite[proposition 5.3 and theorem 3.1]{KohlhaaseII}, so we can dualize both sides of the above isomorphism to get
\[ D(Q) \otimes_{D(P)} V' = \Ind_P^Q(V)'. \]
Finally, observe that the smooth induction $i(V) = \ind_P^Q(V)$ is the subspace of vectors in the locally analytic induction $\Ind^Q_P(V)$ that are annihilated by $\frq$. Thus 
\[ i(V)' = (D(Q) \otimes_{D(P)} V')/I(\frq) (D(Q) \otimes_{D(P)} V') = D^{\infty}(Q) \otimes_{D(P)} V'. \qedhere \]
\end{proof}

\begin{theorem}\label{PQthm}
Suppose $M \in \cO^{\frq,\infty}_\alg$ and $V$ is a strongly admissible smooth representation of $P$. Then there is an isomorphism of $D(G)$-modules 
\[ \vcF^G_P(M,V) = \vcF^G_Q(M,i(V)) \]
which is natural in both $M$ and $V$. 
\end{theorem}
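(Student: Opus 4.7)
The strategy is to reduce to an isomorphism of $D(\frg,Q)$-modules via transitivity of tensor products, construct this map via Frobenius reciprocity, and verify it using the Hopf algebra structure on $D(Q_0)$ for a suitable compact open subgroup $Q_0 \subseteq Q$.

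Applying transitivity of tensor products along $D(\frg,P) \hookrightarrow D(\frg,Q) \hookrightarrow D(G)$ together with \cref{smooth-induction-module}, which identifies $i(V)' = D^\infty(Q) \otimes_{D(P)} V'$, the theorem reduces to exhibiting a natural $D(\frg,Q)$-linear isomorphism
\[
\Phi : D(\frg,Q) \otimes_{D(\frg,P)} \bigl(\Lift(M,\log) \otimes_E V'\bigr) \xrightarrow{\sim} \Lift(M,\log) \otimes_E \bigl(D^\infty(Q) \otimes_{D(P)} V'\bigr).
\]
Observe that $\Lift(M,\log)$ is a $D(\frg,Q)$-module because $M \in \cO^{\frq,\infty}_\alg$, while the right-hand side receives its $D(\frg,Q)$-structure from the tensor-product construction of the paper, with $D^\infty(Q) \otimes_{D(P)} V'$ regarded as a $D(\frg,Q)$-module with trivial $\frg$-action.

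For the construction of $\Phi$, I would note that the evaluation-at-identity map $i(V) \to V$ is $P$-equivariant and hence dualizes to a $D(P)$-linear inclusion $\iota : V' \hookrightarrow i(V)'$. Tensoring with $\Lift(M,\log)$ produces a $D(\frg,P)$-linear map $\Lift(M,\log) \otimes V' \to \Lift(M,\log) \otimes i(V)'$, which Frobenius reciprocity extends uniquely to the desired $D(\frg,Q)$-linear map $\Phi$. Choosing a compact open $Q_0 \subseteq Q$ with $Q = Q_0 P$ (possible because $Q/P$ is compact) and setting $P_0 = Q_0 \cap P$, and using Sweedler notation for the comultiplication on $D(Q_0)$, the explicit formula reads $\Phi(d \otimes (m \otimes \lambda)) = \sum_{(d)} d_{(1)} m \otimes (d_{(2)} \otimes \lambda)$.

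The main obstacle is showing $\Phi$ is an isomorphism. My plan is to exhibit the explicit inverse
\[
\Psi(m \otimes (d \otimes \lambda)) = \sum_{(d)} d_{(1)} \otimes \bigl(S(d_{(2)}) m \otimes \lambda\bigr),
\]
where $S$ is the antipode of $D(Q_0)$ and $S(d_{(2)})$ acts on $m$ through the $D(\frg,Q)$-action on $\Lift(M,\log)$. Checking that $\Psi$ descends across the tensor-product relations over $D(P_0)$ relies on the ideal generated by $\frq$ being a Hopf ideal in $D(Q_0)$ (so that $D^\infty(Q_0)$ inherits a Hopf algebra structure) together with the triviality of the $\frp$-action on $V'$. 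The identities $\Phi \circ \Psi = \id$ and $\Psi \circ \Phi = \id$ then reduce to the Hopf algebra identities
\[
\sum_{(d)} d_{(1)} S(d_{(3)}) \otimes d_{(2)} = 1 \otimes d \quad \text{and} \quad \sum_{(d)} d_{(2)} \otimes S(d_{(3)}) d_{(1)} = d \otimes 1,
\]
both of which follow from coassociativity combined with the antipode axiom $\sum_{(d)} d_{(1)} S(d_{(2)}) = \epsilon(d)\cdot 1$ and can be verified directly on primitive and group-like elements. Naturality in $M$ and $V$ follows from the naturality of all constructions involved.
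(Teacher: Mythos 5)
Your reduction is the same as the paper's: after \cref{smooth-induction-module} and transitivity of the tensor product, everything comes down to the projection formula
\[
D(\frg,Q) \otimes_{D(\frg,P)} \bigl(\Lift(M,\log) \otimes_E V'\bigr) \cong \Lift(M,\log) \otimes_E \bigl(D(\frg,Q) \otimes_{D(\frg,P)} V'\bigr),
\]
and your $\Phi$ agrees on $1 \otimes m \otimes \lambda$ with the isomorphism $1 \otimes m \otimes \lambda \mapsto m \otimes 1 \otimes \lambda$ that the paper ultimately exhibits. The difference is in how the projection formula is proved, and here your argument has a genuine gap: it treats $D(Q_0)$ as an honest Hopf algebra. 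It is not one --- as the paper stresses (citing \cite[appendix to section 3]{ST05}), the comultiplication induced by the diagonal takes values in the \emph{completed} tensor product $D(Q_0)\,\hat{\otimes}_i\,D(Q_0) \cong D(Q_0 \times Q_0)$, so for a general distribution $d$ there is no finite Sweedler expression $\sum_{(d)} d_{(1)} \otimes d_{(2)}$, and the triple-coproduct identities you invoke live in triply completed tensor products. Your formula for $\Phi$ can be salvaged because the leg of the coproduct acting on $\Lift(M,\log)$ can be pushed into $\End(N)$ for a finite dimensional subrepresentation $N$ (this is exactly the content of \cref{construction-fd,construction}), but the formula for $\Psi$ and the verification that $\Phi$ and $\Psi$ are mutually inverse require both legs of iterated coproducts to be controlled simultaneously, plus an antipode compatible with all of this and with the quotient to $D^\infty(Q_0)$; none of this is set up, and the well-definedness of $\Psi$ across the $D(P_0)$-balancing relations (where $D(P_0)$ acts nontrivially on $\Lift(M,\log)$) is asserted rather than proved.

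The paper avoids every one of these issues by proving, in Part II, a tensor-hom adjunction $(M \otimes -) \dashv \Hom(M,-)$ for $D(\frg,H)$-modules (\cref{DgP-adjunction}); the projection formula then follows purely formally by composing this adjunction with the extension-of-scalars adjunction and applying Yoneda, with no need for an explicit inverse. If you want to keep your explicit-inverse strategy, you would in effect have to re-prove the continuity and finite-dimensional-factorization lemmas of Part II for iterated coproducts and for the antipode, which is more work than the adjunction route, not less.
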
 

\begin{proof}
To ease notation, let us write $\tilde{M} = \Lift(M, \log)$. It is sufficient to prove that
\begin{equation} \label{pq-sufficient}
D(\frg,Q) \otimes_{D(\frg,P)} \left( \tilde{M} \otimes_E V' \right) = \tilde{M} \otimes_E i(V)' \end{equation}
since then applying $D(G) \otimes_{D(\frg,Q)} -$ yields the desired isomorphism. Observe that
\[ \begin{aligned} i(V)' &= D^\infty(Q) \otimes_{D(P)} V' \\
&= D^\infty(Q) \otimes_{D^\infty(P)} V' \\
&= D(\frg,Q)/J(\frg) \otimes_{D(\frg,P)/J(\frg)} V' \\ 
&= D(\frg,Q)/J(\frg) \otimes_{D(\frg,P)} V' \\
&= D(\frg,Q) \otimes_{D(\frg,P)} V',
\end{aligned} \]
where the first isomorphism is \cref{smooth-induction-module}, the third is \cref{quotients-isomorphic}, and the last is because $\frg$ acts trivially on $V'$. Thus \eqref{pq-sufficient} is equivalent to
\begin{equation} \label{pq-sufficient2}
D(\frg,Q) \otimes_{D(\frg,P)} \left( \tilde{M} \otimes_E V' \right) = \tilde{M} \otimes_E \left( D(\frg,Q) \otimes_{D(\frg,P)} V' \right). 
\end{equation}
Observe that we have natural isomorphisms 
\[ \begin{aligned}
\Hom_{D(\frg,Q)} & \left (D(\frg,Q) \otimes_{D(\frg,P)} \left(\tilde{M} \otimes_E V' \right),- \right) \\
&= \Hom_{D(\frg,P)} \left (\tilde{M} \otimes_E V',- \right) \\
&= \Hom_{D(\frg,P)} \left(V',\Hom_E(\tilde{M},-) \right) \\
&= \Hom_{D(\frg,Q)} \left( D(\frg,Q) \otimes_{D(\frg,P)} V',\Hom_E(\tilde{M},-) \right) \\
&= \Hom_{D(\frg,Q)} \left(\tilde{M} \otimes_E \left(D(\frg,Q) \otimes_{D(\frg,P)} V' \right), - \right)
\end{aligned} \]
where we have used the adjunction of \cref{DgP-adjunction} twice (for the second and final steps). The fully faithfulness of the Yoneda embedding thus implies \eqref{pq-sufficient2}. By unwinding Yoneda's lemma and the above isomorphisms, we find that the isomorphism \eqref{pq-sufficient2} is given explicitly by $1 \otimes m \otimes v \mapsto m \otimes 1 \otimes v$. 
\end{proof}

\section{Two Examples} \label{examples}

As usual, let $(\bbG, \bbT)$ be a connected split reductive group over $F$ and let $\bbP$ be a parabolic subgroup of $\bbG$ containing $\bbT$. Fix $\log \in \Logs(T)$. As in the introduction, we write $\cF^G_P(M,V)$ for the admissible locally analytic representation $\vF^G_P(M,V)'$. We begin with a general calculation. 

\setcounter{subsection}{1}
\begin{lemma} \label{induction-globalization}
Suppose $M$ is a finite dimensional $\frp$-module and $\alpha : P \to E^\times$ is a smooth character. Then 
\[ \cF^G_P(\Ind_\frp^\frg(M), \alpha) = \Ind_P^G(\Lift(M, \log) \otimes \alpha'). \]
\end{lemma}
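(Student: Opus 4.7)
The plan is to reduce the computation of $\vF^G_P(\Ind_\frp^\frg M, \alpha)$ to an expression involving $D(G) \otimes_{D(P)} (-)$ rather than $D(G) \otimes_{D(\frg,P)} (-)$, and then dualize and invoke the duality result used in the proof of \cref{smooth-induction-module} (Kohlhaase's theorem, extended from the smooth to the locally analytic case) to identify the resulting dual with $\Ind_P^G(\Lift(M,\log) \otimes \alpha')$.

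The technical heart of the argument is to establish an isomorphism of $D(\frg,P)$-modules
\[ \Lift(\Ind_\frp^\frg M, \log) \;\cong\; D(\frg,P) \otimes_{D(P)} \Lift(M, \log). \]
Both sides are naturally identified with $U(\frg) \otimes_{U(\frp)} M$ as $\frg$-modules, using that $D(\frg,P) = U(\frg) \otimes_{U(\frp)} D(P)$ (as observed in the proof of \cref{finite-presentation-FMV}) and that $\Lift(M,\log) = M$ as $\frp$-modules. The content is that the $P$-actions agree. On the right-hand side, the $P$-action arises from left multiplication by $D(P) \subseteq D(\frg,P)$; using the PBW decomposition $U(\frg) = U(\fru^-) \otimes_E U(\frp)$, this unwinds to $p \cdot (u \otimes m) = \Ad(p)(u) \otimes p \cdot m$ for $u \in U(\fru^-)$ and $m \in \Lift(M,\log)$. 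On the left-hand side, the $P$-action is the lift of the $\frp$-action on $\Ind_\frp^\frg M$. Both actions lift the same underlying $\frp$-action, both satisfy the $\Ad$-compatibility of \cref{compatibility}, and both restrict to the given $P$-action on the embedded copy of $\Lift(M,\log)$ inside $\Ind_\frp^\frg M$; one argues this characterizes the $P$-action uniquely, so the two agree. Since $\Ind_\frp^\frg M$ is infinite-dimensional as a $\frp$-module, $\Lift(\Ind_\frp^\frg M, \log)$ is defined via the left Kan extension of \cref{kan-extension}, so the verification is carried out one finite-dimensional $\frp$-submodule at a time (e.g., filtering by degree in $U(\fru^-)$).

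Given this identification, \cref{finite-presentation-FMV-lemma} (applied with $\alpha'$ in the role of the trivial module $X$, and upgraded from $U(\frp)$ to $D(P)$-coefficients since $\frg$ acts trivially on $\alpha'$) yields
\[ \Lift(\Ind_\frp^\frg M, \log) \otimes_E \alpha' \;\cong\; D(\frg,P) \otimes_{D(P)} \bigl(\Lift(M,\log) \otimes_E \alpha'\bigr) \]
as $D(\frg,P)$-modules. Applying $D(G) \otimes_{D(\frg,P)} (-)$ and using the associativity of tensor products then produces
\[ \vF^G_P(\Ind_\frp^\frg M, \alpha) \;\cong\; D(G) \otimes_{D(P)} \bigl(\Lift(M,\log) \otimes_E \alpha'\bigr) \]
as $D(G)$-modules. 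Dualizing and invoking the duality between $D(G) \otimes_{D(P)} (-)$ and locally analytic induction (as used in the proof of \cref{smooth-induction-module}, in its locally analytic incarnation) then identifies the strong dual with $\Ind_P^G(\Lift(M,\log) \otimes \alpha')$, completing the argument.

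The main obstacle will be the first step: verifying that the lifted $P$-action on $\Ind_\frp^\frg M$ (defined indirectly via the Kan extension over the category of finite-dimensional $\frp$-modules) coincides with the ``induced" $P$-action arising from left multiplication by $D(P)$ on $D(\frg,P) \otimes_{D(P)} \Lift(M,\log)$. This will require careful bookkeeping of the interaction between the adjoint action of $P$ on $U(\fru^-)$ and the lifted action on $\Lift(M,\log)$, ultimately resting on the compatibility established in \cref{compatibility}.
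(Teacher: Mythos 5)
Your proposal is correct and follows essentially the same route as the paper: both reduce to the key isomorphism $D(\frg,P)\otimes_{D(P)}(\Lift(M,\log)\otimes\alpha')\cong\Lift(\Ind_\frp^\frg(M),\log)\otimes\alpha'$ and then pass through $D(G)\otimes_{D(P)}(-)$ and the duality with locally analytic induction as in the proof of \cref{smooth-induction-module}. The only (cosmetic) difference is that the paper sidesteps your ``uniqueness of the lifted $P$-action'' step by constructing the canonical $D(\frg,P)$-linear comparison map from the extension-of-scalars universal property applied to $\Lift(M,\log)\to\Lift(\Ind_\frp^\frg(M),\log)$ and then simply checking it is the identity on the underlying $\frg$-module $U(\frg)\otimes_{U(\frp)}M$, which delivers the agreement of the two $P$-actions for free.
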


\begin{proof}
The $\frp$-linear map $M \to \Ind_\frp^\frg(M) = U(\frg) \otimes_{U(\frp)} M$ induces a morphism of locally analytic representations $\Lift(M, \log) \to \Lift(\Ind_\frp^\frg(M), \log)$, which we regard as a homomorphism of $D(P)$-modules. Tensoring with $\alpha'$ yields another $D(P)$-module homomorphism whose codomain is naturally a $D(\frg,P)$-module (cf. \cref{tensor-DgP-module-construction}), so there is a natural $D(\frg,P)$-linear map 
\begin{equation} \label{crux} 
\begin{tikzcd} D(\frg,P) \otimes_{D(P)} \left( \Lift(M, \log) \otimes \alpha' \right) \ar{r} & \Lift(\Ind_\frp^\frg(M), \log) \otimes \alpha'. \end{tikzcd} \end{equation}
We claim that \eqref{crux} is an isomorphism. To see this, observe that $D(\frg,P) = U(\frg) \otimes_{U(\frp)} D(P)$ as left $\frg$-modules, so the domain is 
\[ D(\frg,P) \otimes_{D(P)} \left( \Lift(M, \log) \otimes \alpha' \right) = U(\frg) \otimes_{U(\frp)} \left( \Lift(M, \log) \otimes \alpha' \right) = U(\frg) \otimes_{U(\frp)} M \]
as $\frg$-modules, since $\Lift(M, \log) = M$ and $\alpha' = E$ as $\frp$-modules. On the other hand, we also have  
\[ \Lift(\Ind_\frp^\frg(M), \log) \otimes \alpha' = \Ind_\frp^\frg(M) \]
as $\frg$-modules, and under these $\frg$-linear identifications, the map \eqref{crux} becomes the identity map. 

To conclude, observe that the isomorphism \eqref{crux} produces the first isomorphism in the following sequence of isomorphisms which prove the desired result. 
\[ \begin{aligned} \vF(\Ind_\frp^\frg(M), \alpha) &= D(G) \otimes_{D(P)} (\Lift(M, \log) \otimes \alpha') \\
&= D(G) \tilde{\otimes}_{D(P)}\, (\Lift(M, \log) \otimes \alpha') \\
&= \Ind_P^G(\Lift(M, \log) \otimes \alpha')'
\end{aligned} \]
Here, we use arguments similar to those in the proof of \cref{smooth-induction-module} (now using finite dimensionality) for the second and third isomorphisms. 
\end{proof}

This brings us to our examples. 

\begin{example}[Breuil's representations] \label{breuil}
Let $F = \bbQ_p$ and let $E$ be a finite extension. Let $\bbG = \mathrm{GL}_2$. Let $\bbP$ be the Borel of upper triangular matrices and $\bbT \subseteq \bbG$ the maximal torus of diagonal matrices. 

Let $M = Em_1 \oplus Em_2$ be the 2-dimensional $\frp$-module where \[ \phi_M \begin{pmatrix} x & * \\ 0 & y \end{pmatrix} : \begin{aligned} m_1 &\mapsto 0 \\ m_2 &\mapsto (x-y)m_1 \end{aligned}. \]
Identifying $\End(M) = \M_2(E)$, we can also write 
\[ \phi_M : \begin{pmatrix} x & * \\ 0 & y \end{pmatrix} \mapsto \begin{pmatrix} 0 & x-y \\ 0 & 0 \end{pmatrix}. \]
Observe that $M$ is the inflation of a $\frt$-module and its only weight is 0. 

If we choose $\mathscr{L} \in E$, there is a unique logarithm map $\log_{\mathscr{L}} : \bbQ_p^\times \to E$ such that $\log_{\mathscr{L}}(p) = \mathscr{L}$. We can use this to construct a logarithm $\log \in \Logs(T)$ by
\[ \log : \begin{pmatrix} a & 0 \\ 0 & d \end{pmatrix} \mapsto \begin{pmatrix} \log_{\mathscr{L}}(a) & 0 \\ 0 & \log_{\mathscr{L}}(d) \end{pmatrix}. \]
Then $\Lift(M, \log)$ is precisely the representation of $P$ that is denoted $\sigma(\mathscr{L})$ in \cite[section 2.1]{BreuilInv}, and \cref{induction-globalization} implies that 
\[ \vF^G_P(\Ind_\frp^\frg(M))' = \Ind_P^G(\sigma(\mathscr{L})). \]
More generally, for an integer $k \geq 2$, let $N(k) = En$ denote the 1-dimensional $\frp$-module given by 
\[ \phi_{N(k)} : \begin{pmatrix} x & * \\ 0 & y \end{pmatrix} \mapsto (k-2)y, \]
and let $\alpha_k : P \to E^\times$ be the smooth character \[ \alpha_k : \begin{pmatrix} a & * \\ 0 & b \end{pmatrix} \mapsto |ab|^{-(k-2)/2}. \]
Note that if $k$ is odd, we must assume that $E$ is large enough that $\sqrt{p} \in E$ in order for this character to be defined. Then $\Lift(M \otimes N(k), \log) \otimes \alpha_k'$ is precisely the representation of $P$ denoted $\sigma(k, \mathscr{L})$ in \cite[section 2.1]{BreuilInv}, and \cref{induction-globalization} implies that 
\[ \cF_P^G(\Ind_{\frp}^{\frg}(M \otimes N(k)), \alpha_k) = \Ind_P^G(\sigma(k, \mathscr{L})). \]
This representation has the representation $\Sigma(k,\sL)$ of \cite[section  2]{BreuilInv} as a subquotient. Note, however, that $\Sigma(k,\sL)$ is not itself in the image of the functor $\cF^G_P$, by (iii) of the following: 
\end{example}

\begin{prop} 
We continue to use the same notation as in \cref{breuil}. Let $\chi$ be a smooth character of $T$.
\begin{enumerate}[(i)]
\item If $L \in \cO^{\frp,\infty}_\alg$ is simple and finite dimensional, then $\cF^G_P(L,\chi)$ is infinite dimensional and locally algebraic. The topological Jordan-H\"older length\footnotemark\, of $\cF^G_P(L,\chi)$ is one or two. In the latter case, it has a finite dimensional and an infinite dimensional topological Jordan-H\"older factor.

\footnotetext{An admissible $W \in \Rep_G^\an$ has \emph{topological Jordan-H\"older length $n$} if there is a filtration 
\[ 0 = W_0 \subsetneq W_1 \subsetneq \cdots \subsetneq W_n = W \]
by closed subrepresentations where each quotient $W_i/W_{i-1}$ has no  nonzero proper closed $G$-invariant subspaces. Then each $W_i/W_{i-1}$ is called a \emph{topological Jordan-H\"older factor} of $W$. 

The topological Jordan-H\"older length and the multi-set of isomorphism classes of topological Jordan-H\"older factors are independent of the filtration. This can be seen by considering the coadmissible $D(G)$-module $M = W'$. The filtration of $W$ gives rise to a filtration 
\[ 0 = M_0 \subsetneq M_1 \subsetneq\cdots \subsetneq M_n = M \]
of $M$ by closed submodules with topologically simple successive quotients. This implies that any submodule of $M$ is closed. To see this, note that it follows from \cite[3.4(iv)]{ST03} that any coadmissible module which is topologically simple (ie, has no nonzero proper closed submodules) is also algebraically simple (ie, has no nonzero proper submodules). Induction on $n$ then implies that any submodule of $M$ is closed. Thus $n$ is equal to the length of $M$ as an abstract $D(G)$-module, and the multi-set of (isomorphism classes of) the quotients $M_i/M_{i-1}$ are the abstract Jordan-H\"older factors of $M$, so both are independent of the filtration.}

\item  If $L \in \cO^\infty_\alg$ is simple and infinite dimensional, then $\cF^G_P(L,\chi)$ is topologically irreducible and not locally algebraic.

\item $\Sigma(k,\sL)$ is not isomorphic as a locally analytic representation to a representation of the form $\cF^G_P(M,V)$ where $M$ ia in $\cO^\infty_\alg$ and $V$ is a smooth strongly admissible representation of $T$.
\end{enumerate}
\end{prop}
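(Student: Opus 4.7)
The plan is to prove (i) and (ii) directly, and then derive (iii) from them by length considerations combined with an analysis of topological Jordan--H\"older factors.

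For (i), a simple object of $\cO^{\frp,\infty}_\alg$ already lies in $\cO^\frp_\alg$ by the corollary following \cref{composition-factors-in-cat-o}, so a simple finite dimensional $L \in \cO^{\frp,\infty}_\alg$ is a simple finite dimensional $\frg$-module on which $\frt$ acts semisimply. Applying \cref{PQthm} with $\bbQ = \bbG$ gives $\cF^G_P(L,\chi) \cong \cF^G_G(L,\ind^G_P(\chi))$. Since $D(\frg,G) = D(G)$ and $\Lift(L,\log)$ is simply the algebraic $G$-representation associated to $L$, unwinding the definition directly yields $\vF^G_G(L,W) = L \otimes_E W'$ as $D(G)$-modules under the diagonal action, whence $\cF^G_G(L,W) \cong L^\vee \otimes_E W$ as a locally algebraic $G$-representation. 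Thus $\cF^G_P(L,\chi) \cong L^\vee \otimes_E \ind^G_P(\chi)$ is infinite dimensional and locally algebraic, of topological length equal to that of the smooth principal series $\ind^G_P(\chi)$; for $\GL_2(\Qp)$ this is $1$ or $2$, the length-$2$ case producing one finite dimensional (character) constituent and one infinite dimensional Steinberg-type constituent.

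For (ii), topological irreducibility of $\cF^G_P(L,\chi)$ for $L \in \cO^\infty_\alg$ simple and infinite dimensional can be obtained from an irreducibility criterion in the spirit of \cite{OrlikStrauchJH}, adapted to the present setting. For the non-locally-algebraicity assertion: the map $L \to \vF^G_P(L,\chi)$ sending $m$ to $1 \otimes m \otimes \lambda_0$ (for any nonzero $\lambda_0 \in \chi'$) is $\frg$-linear and injective, since $L$ embeds into $\Lift(L,\log) \otimes \chi' \subseteq \vF^G_P(L,\chi)$. If $\cF^G_P(L,\chi)$ were locally algebraic, duality considerations combined with the coadmissibility of $\vF^G_P(L,\chi)$ would force the underlying $\frg$-action to be locally finite, whence $L$ would be contained in a locally finite dimensional $\frg$-module --- impossible since $L$ is simple and infinite dimensional.

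For (iii), suppose for contradiction that $\Sigma(k,\sL) \cong \cF^G_P(M,V)$. Since both topological Jordan--H\"older factors of $\Sigma(k,\sL)$ are locally algebraic (namely the smooth Steinberg tensored with $W_k$, and $W_k$ itself), taking composition series of $M$ in $\cO^\infty_\alg$ with simple factors $L_i$ and of $V$ by smooth characters $\chi_j$, the exactness \cref{exactness} of $\vF^G_P$ in both arguments combined with (ii) forces every $L_i$ to be finite dimensional (otherwise a non-locally-algebraic constituent would appear among the topological Jordan--H\"older factors of $\cF^G_P(M,V)$); hence $M$ itself is finite dimensional. Applying \cref{PQthm} with $\bbQ = \bbG$, and using \cref{lifting-reductive} (which furnishes $\Lift(M,\log)$ as a locally analytic $G$-representation), one computes
\[ \cF^G_P(M,V) \cong \Lift(M,\log)^\vee \otimes_E \ind^G_P(V) \]
with diagonal $G$-action. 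For this tensor product to have topological length exactly $2$, one of the two factors must be topologically irreducible. If $\dim M = 1$ then $\Lift(M,\log)$ is an algebraic character of $G$ and the tensor product is locally algebraic, contradicting that $\Sigma(k,\sL)$ is not locally algebraic. Otherwise $\ind^G_P(V)$ is topologically irreducible with smooth factor $\sigma$, and the two topological Jordan--H\"older factors of $\cF^G_P(M,V)$ both have the form (algebraic) $\otimes\, \sigma$ with the same smooth part $\sigma$, contradicting that the two topological Jordan--H\"older factors of $\Sigma(k,\sL)$ have distinct smooth parts (Steinberg versus trivial). The main obstacle is this last case analysis, specifically verifying the tensor product formula for $\cF^G_G(M,W)$ when $\frt$ acts non-semisimply on $M$ and carefully identifying the topological Jordan--H\"older factors of the resulting tensor product.
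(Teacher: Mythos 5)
Parts (i) and (ii) follow essentially the paper's route: reduce (i) to $\cF^G_G(L,\ind^G_P(\chi)) = L'\otimes\ind^G_P(\chi)$ via \cref{PQthm} and quote the known length of the smooth principal series, and obtain (ii) from the Orlik--Strauch irreducibility criterion. (Your sketch for the ``not locally algebraic'' half of (ii) is vaguer than one would like, but the idea is workable.)

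Part (iii), however, rests on a false premise and does not go through. You assert that $\Sigma(k,\sL)$ has two topological Jordan--H\"older factors, both locally algebraic (Steinberg $\otimes\, W_k$ and $W_k$). In fact $\Sigma(k,\sL)$ has \emph{three} topological Jordan--H\"older factors: a finite dimensional one $W_1$, an infinite dimensional locally algebraic one $W_2$, and a third factor $W_3$ which is a topologically irreducible locally analytic principal series that is \emph{not} locally algebraic (this is \cite[2.1.2]{BreuilInv}, and it is the whole point of the construction --- the non-locally-algebraic piece is where the $\sL$-invariant lives). Because of this, your conclusion that every simple constituent $L_i$ of $M$ must be finite dimensional is exactly backwards: by your own part (ii), the presence of the type-3 factor $W_3$ forces $M$ to have an infinite dimensional simple constituent. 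Consequently $M$ is not finite dimensional, the reduction to $\cF^G_G$ and the tensor-product formula $\Lift(M,\log)^\vee\otimes\ind^G_P(V)$ never applies, and the final case analysis about matching smooth parts is moot.

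The correct argument (the one the paper gives) uses the three-factor structure together with two extra facts from \cite[2.4.2]{BreuilInv}: $W_1$ is the \emph{unique} topologically irreducible quotient of $\Sigma(k,\sL)$ and $W_2$ is the \emph{unique} closed topologically irreducible subrepresentation. One first shows $V$ is a character by counting type-2 and type-3 factors (each occurs exactly once), then concludes from (i) and (ii) that $M$ has length two with exactly one infinite dimensional simple factor $L$; the two possible positions of $L$ in $M$ (sub or quotient) would respectively produce a type-3 irreducible quotient or a type-3 closed irreducible subrepresentation of $\Sigma(k,\sL)$, contradicting the uniqueness statements above. You would need to rebuild part (iii) along these lines.
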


\begin{proof}
(i) If $L$ is finite dimensional, it is contained in $\cO^\frg_\alg$ and hence lifts to an algebraic representation of $G$. By \cref{PQthm}, we see that \[ \cF^G_P(L,\chi) = \cF^G_G(L,i(\chi)) = L' \otimes \ind^G_P(\chi) \] is locally algebraic. The smooth representation $\ind^G_P(\chi)$ is well-known to have length one or two. In the latter case, it has a one dimensional and an infinite dimensional Jordan-H\"older factor. If $W$ is an irreducible smooth representation of $G$, then $L' \otimes W$ is irreducible by \cite[4.2.8\footnotemark]{EmertonA}. \footnotetext{To apply this result, note that $L$, being $E$-split, is absolutely irreducible, so $\End_{\bbG}(L') = E$.} 

\mbox{}

\noindent (ii) If $L$ is an infinite dimensional simple module, it is an irreducible Verma module. Hence $\frp$ is maximal for $L$ in the sense of \cite[5.2]{OrlikStrauchJH}. It follows from \cite[5.8]{OrlikStrauchJH} that $\cF^G_P(L,\chi)$ is topologically irreducible.

\mbox{}

\noindent (iii) Let us assume, to the contrary, that there is a topological isomorphism $\Sigma(k,\sL) \cong \cF^G_P(M,V)$ of $G$-representations. The representation $\Sigma(k,\sL)$ has three topological Jordan-H\"older factors, $W_1$, $W_2$ and $W_3$, where $W_1$ is finite dimensional, $W_2$ is locally algebraic and infinite dimensional, and $W_3$ is a topologically irreducible locally analytic principal series representation which is not locally algebraic \cite[2.1.2]{BreuilInv}. Moreover, by \cite[2.4.2]{BreuilInv}, $W_1$ is the unique topologically irreducible quotient (by a closed subrepresentation) of $\Sigma(k,\sL)$, and $W_2$ is the unique closed topologically irreducible subrepresentation of $\Sigma(k,\sL)$. It follows from the explicit description of these Jordan-H\"older factors in \cite[2.1.2]{BreuilInv} that they stay irreducible after passing from $E$ to a finite extension.

Let us say that a topological Jordan-H\"older factor is of {\it type 1} (resp. {\it 2}, {\it 3}) if it is finite dimensional (resp. infinite dimensional and locally algebraic, infinite dimensional and not locally algebraic).

Let $L$ be a simple subquotient of $M$. Exactness of $\cF^G_P$ implies that the topological Jordan-H\"older factors of $\cF^G_P(L,V)$ are also topological Jordan-H\"older factors of $\cF^G_P(M,V)$.  

\mbox{}

\noindent {\it Claim.} $V$ is one dimensional (ie, a character). 

\mbox{}

\noindent {\it Proof of the claim.}
Suppose that is not the case. After extending the coefficient field $E$, we find $E$-valued smooth characters $\chi_1$ and $\chi_2$ of $T$ such that $\chi_1 \hra V$ and $\chi_2 \hra V/\chi_1$. If $L$ is finite dimensional, then, by part (i), both $\cF^G_P(L,\chi_1)$ and $\cF^G_P(L,\chi_2)$ contribute topological Jordan-H\"older factors of type 2 to $\cF^G_P(L,V)$, hence to $\cF^G_P(M,V)$, but there is only one such.  Similarly, if $L$ is infinite dimensional, then, by part (ii), both $\cF^G_P(L,\chi_1)$ and $\cF^G_P(L,\chi_2)$ contribute topological Jordan-H\"older factors of type 3 to $\cF^G_P(L,V)$, hence to $\cF^G_P(M,V)$, but there is only one such.\qed

\mbox{}

We henceforth write $V = \chi$. It follows from parts (i) and (ii) that $M$ must have exactly one finite dimensional Jordan-H\"older factor, and exactly one infinite dimensional Jordan-H\"older factor which we call $L$. Thus $M$ has length two and $L$ must be either a submodule or a quotient of $M$. An embedding $L \hra M$ gives rise to a continuous surjection $\cF^G_P(M,\chi) \twoheadrightarrow \cF^G_P(L,\chi)$. 
By (ii), the target of this map is irreducible and of type 3, but $W_1$ is the only topologically irreducible quotient of $\Sigma(k,\sL)$ and it is of type 1. On the other hand, a surjection $M \twoheadrightarrow L$ gives rise to closed embedding $\cF^G_P(L,\chi) \hra \cF^G_P(M,\chi)$. By (ii), the domain of this map is irreducible and of type 3, but $W_2$ is the unique closed topologically irreducible subrepresentation of $\Sigma(k,\sL)$ and it is of type 2. 
\end{proof}

\begin{remark} \label{choosing-logarithm}
Let us continue to use the same notation as in \cref{breuil}. Here's another perspective on the choice of logarithms that occurred above. We can choose coordinates on $\bbT$ using the ``standard'' isomorphism $\bbT \to \bbG_m^2$ given by $\diag(a,b) \mapsto (a,b)$, but we can also use other isomorphisms to choose coordinates. Suppose we choose coordinates using the isomorphism $\zeta : \bbT \to \bbG_m^2$ given by $\diag(a,b) \mapsto (ab^{-1}, b)$. If we choose $\sL_1, \sL_2 \in E$, we get a logarithm $\log : (\bbQ_p^\times)^2 \to E^2$ on $\bbG_m^2(\Q_p) = (\Q_p^\times)^2$ given by $\log(a,b) = (\log_{\sL_1}(a), \log_{\sL_2}(b))$. Now consider the following commutative diagram. 
\[ \begin{tikzcd} \frt_E \ar{r}{d\zeta} & E^2 \\ \frt \ar{r}{d\zeta} \ar[dashed]{d}[swap]{\exp} \ar{u} & \Q_p^2 \ar{u} \ar[dashed]{d}[swap]{\exp} \\ T \ar{r}[swap]{\zeta} & (\Q_p^\times)^2 \ar[bend right=60]{uu}[swap]{\log} \end{tikzcd} \] 
We can use this to define a logarithm $\log : T \to \frt_E$ on $T$ given by \[ \log = (d\zeta)^{-1} \circ \log \circ \, \zeta, \] and this is in fact a logarithm since
\[ \log \circ \exp = (d\zeta^{-1} \circ \log \circ\, \zeta) \circ \exp = d\zeta^{-1} \circ \log \circ \exp \circ\, d\zeta = d\zeta^{-1} \circ \id \circ d\zeta = \id, \]
where we abusively write $\id$ to denote the natural maps $\frt \to \frt_E$ and $\Q_p^2 \to E^2$. Explicitly, this logarithm is given by 
\[ \begin{aligned} \log \begin{pmatrix} a & 0 \\ 0 & b \end{pmatrix} &= \begin{pmatrix} \log_{\sL_1}(ab^{-1}) + \log_{\sL_2}(b) & 0 \\ 0 & \log_{\sL_2}(b) \end{pmatrix} \\
&= \begin{pmatrix} \log_{\sL_1}(a) + \gamma(a,b) & 0 \\ 0 & \log_{\sL_2}(b) \end{pmatrix} \end{aligned} \]
where $\gamma$ is the smooth function \[ \gamma(a,b) = \log_{\sL_2}(b) - \log_{\sL_1}(b). \]
If we fix $\sL_1 = \sL$, the above construction of a logarithm on $T$ applied with any choice of $\sL_2 \in E$ will furnish us with a logarithm $\log \in \Logs(T)$ with the property that $\Lift(M \otimes N(k), \log) \otimes \alpha_k'$ is Breuil's representation $\sigma(k, \sL)$. Also, in preparation for \cref{schraen} below, we remark that it is not necessary for $\zeta$ to be an isomorphism; it is sufficient for it to be an \'etale homomorphism of group schemes (since this will imply that $d\zeta$ is invertible).
\end{remark}

\begin{example}[Schraen's representations] \label{schraen}
Suppose $F = \bbQ_p$ and $E$ is a finite extension. Let $\bbG = \mathrm{GL}_3$. Let $\bbP$ be the Borel of upper triangular matrices and $\bbT$ the maximal torus of diagonal matrices. 

Let $M = Em_1 \oplus Em_2 \oplus Em_3$ be the 3-dimensional representation of $\frp$ given by 
\[ \phi_M : \begin{pmatrix} x & * & * \\ 0 & y & * \\ 0 & 0  & z \end{pmatrix} \mapsto \begin{pmatrix} 0 & -2x+y+z & -x-y+2z \\ 0 & 0 & 0 \\ 0 & 0 & 0 \end{pmatrix}, \]
where we have identified $\End(M) = \M_3(E)$ using the basis $m_1, m_2, m_3$. Again, $M$ is the inflation of a representation of $\frt$ and its only weight is 0. 
We can use a finite \'etale homomorphism $\zeta : \bbT \to \bbG_{m}^3$ of the form 
\[ \begin{pmatrix} a & 0 & 0\\ 0 & b &  0 \\ 0 & 0 & c \end{pmatrix} \mapsto (a^{-2}bc,a^{-1}b^{-1}c^2,*) \]
to choose a logarithm on $T$ (eg, take $* = c$). More precisely, given $\mathscr{L}, \mathscr{L}' \in E$, choose any logarithm on $\bbG_m^3(\Q_p) = (\Q_p^\times)^3$ which is given by $\log_{\mathscr{L}'}$ on the first coordinate and $\log_{\mathscr{L}}$ on the second (there are infinitely many such logarithms). Via $\zeta$, this induces a logarithm $\log \in \Logs(T)$ as in \cref{choosing-logarithm}. This logarithm has the property that $\Lift(M, \log)$ is precisely the representation of $P$ that is denoted $\sigma(\mathscr{L}, \mathscr{L}')$ in \cite[remarque 5.14]{SchraenGL3}. For example, we can take the logarithm $\log : T \to \frt_E$ given by 
\[ \log \begin{pmatrix} a & 0 & 0\\ 0 & b &  0 \\ 0 & 0 & c \end{pmatrix} =  \begin{pmatrix} \log_{\sL'}(a) + \gamma(a,b,c) & 0 & 0\\ 0 & \log_{\sL'}(b) + 2\gamma(a,b,c) &  0 \\ 0 & 0 & \log_{\sL'}(c) \end{pmatrix}\]
where $\gamma$ is the smooth function
\[ \gamma(a,b,c) = -\frac{1}{3}(\log_{\sL}(a^{-1}b^{-1}c^2)-\log_{\sL'}(a^{-1}b^{-1}c^2)). \]
Thus \cref{induction-globalization} implies that 
\[ \cF^G_P(\Ind_\frp^\frg(M)) = \Ind_P^G(\sigma(\sL,\sL')). \]
For $\lambda$ a dominant weight, the representation $\Sigma(\lambda,\sL,\sL')$ of \cite[5.12--14]{SchraenGL3} is thus a subquotient of a representation in the image of $\cF^G_P$, but \cref{breuil} leads one to expect that $\Sigma(\lambda, \sL, \sL')$ is not itself in the image of $\cF^G_P$. 
\end{example}

\part*{Part II}

\section{Tensor-hom adjunction for \texorpdfstring{$D(H)$}{D(H)}-modules}

Let $H$ be a locally analytic group. In the following, whenever we consider separately or jointly continuous $D(H)$-modules, we assume that the underlying topological vector space is locally convex. If $M$ is a locally analytic representation, there exists a separately continuous $D(H)$-module structure on $M$ which has the property that $\delta_h \bdot m = h \bdot m$ for all $h \in H$ and $m \in M$ \cite[proposition 3.2]{ST02b}. 

We will assume in this section that $M$ is locally finite dimensional (ie, that it is the colimit of its finite dimensional subrepresentations). Our goal is to show that, if $X$ is any $D(H)$-module, then we can endow $M \otimes X$ and $\Hom(M, X)$ with natural $D(H)$-module structures in such a way that $M \otimes -$ is left adjoint to $\Hom(M, -)$. This would be clear if $D(H)$ were a Hopf algebra, but it is not quite a Hopf algebra \cite[appendix to section 3]{ST05}. 

Note that everywhere in this section, we do \emph{not} ask the $D(H)$-module $X$ to come equipped with a topology; even if it is just an abstract $D(H)$-module, we obtain natural $D(H)$-module structures on $M \otimes X$ and $\Hom(M, X)$ that fit into a tensor-hom adjunction. 

\subsection{Some functional analysis}

\begin{lemma} \label{tensor-fd}
Suppose $V$ is a locally convex vector space and $W$ is a \emph{finite dimensional} vector space. Then the inductive and projective topologies on $V \otimes W$ coincide. In other words, if $U$ is a locally convex vector space and $\sigma : V \times W \to U$ is bilinear and separately continuous, then it is automatically jointly continuous. 
\end{lemma}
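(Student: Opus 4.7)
The plan is to use a basis of $W$ to reduce the question to combining separate continuity in $V$ with the automatic continuity of linear maps out of a finite dimensional space.

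First I would fix a basis $w_1, \ldots, w_n$ of $W$. Since $W$ is a finite dimensional vector space over a nondiscrete complete field, it carries a unique Hausdorff locally convex topology, and in particular the coordinate functionals $\alpha_i : W \to E$ determined by $w = \sum_i \alpha_i(w) w_i$ are continuous. Given a separately continuous bilinear $\sigma : V \times W \to U$, I would then write
\[ \sigma(v, w) = \sum_{i=1}^n \alpha_i(w)\, \sigma(v, w_i). \]

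Next I would check that each term on the right is jointly continuous as a function of $(v, w) \in V \times W$. Indeed, by separate continuity $v \mapsto \sigma(v, w_i)$ is a continuous map $V \to U$, and $w \mapsto \alpha_i(w)$ is continuous $W \to E$; hence $(v, w) \mapsto (\sigma(v, w_i), \alpha_i(w))$ is continuous into $U \times E$, and composing with the jointly continuous scalar multiplication $U \times E \to U$ produces a jointly continuous map. A finite sum of jointly continuous maps into the locally convex space $U$ is again jointly continuous, so $\sigma$ itself is jointly continuous.

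Finally, to translate this into the statement about topologies, recall that the inductive topology on $V \otimes W$ is the finest locally convex topology for which the canonical bilinear map is separately continuous, while the projective topology is the finest one for which it is jointly continuous; the inductive topology is always at least as fine as the projective. Since we have just shown every separately continuous bilinear map out of $V \times W$ is automatically jointly continuous, the two universal properties coincide, so the two topologies agree.

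There is no real obstacle here — the only thing to be careful about is that separate continuity in the finite dimensional argument $W$ is genuinely automatic (which is why we can freely expand $\sigma(v,w)$ through the basis), and that joint continuity of scalar multiplication $U \times E \to U$ is part of the definition of a locally convex vector space. Everything else is a routine manipulation.
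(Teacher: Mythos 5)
Your proof is correct and takes essentially the same approach as the paper: both decompose $\sigma(v,w)=\sum_i \alpha_i(w)\,\sigma(v,w_i)$ through a basis of $W$ and use separate continuity of each $\sigma(-,w_i)$. The only cosmetic difference is that the paper verifies the lattice condition $\sigma(L,M)\subseteq N$ by hand, whereas you package the same computation as joint continuity of scalar multiplication and addition in $U$; both are fine.
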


\begin{proof}
Let $N$ be an open lattice in $U$. Choose a basis $w_1, \dotsc, w_n$ of $W$. Then $\sigma(-, w_i) : V \to U$ is continuous for all $i$, so there exist open lattices $L_i \subseteq V$ such that $\sigma(L_i, w_i) \subseteq N$. Then let $L = \cap L_i$, so that $\sigma(L, w_i) \subseteq N$. Let $M$ be the $\mathscr{O}_E$-submodule of $W$ generated by $w_1, \dotsc, w_n$. Then $M$ is an open lattice in $W$. If $v \in L$ and if $a_1, \dotsc, a_n \in \mathscr{O}_E$, then 
\[ \sigma\left(v, \sum a_i w_i \right) = \sum_i a_i \sigma(v, w_i). \]
We know that $\sigma(v, w_i) \in N$, and $N$ is an $\mathscr{O}_E$-submodule of $U$, so $\sigma(v, \sum a_i w_i) \in N$ as well. Thus $\sigma(L, M) \subseteq N$, proving that $\sigma$ is continuous. 
\end{proof}

\begin{remark}
We frequently use \cref{tensor-fd} tacitly. Whenever $V$ is a locally convex vector space and $W$ is finite dimensional, we regard $V \otimes W$ as a locally convex vector space with the ``inductive = projective'' topology without specifying a subscript $i$ or $\pi$. 
\end{remark}

\begin{remark} \label{lfd-jointly-continuous}
If $M$ is a finite dimensional locally analytic representation of $H$, the separately continuous $D(H)$-module structure $D(H) \times M \to M$ is automatically continuous by \cref{tensor-fd}. Taking colimits, we find the same is true when $M$ is only locally finite dimensional. 
\end{remark}

\subsection{Constructions and functoriality}

We begin by constructing a module structure over $D(H)$ on $M \otimes X$, where $M$ is a locally finite dimensional locally analytic representation of $H$ and $X$ is a $D(H)$-module. \Cref{construction-fd} does this when $M$ is finite dimensional, and \cref{construction} upgrades this to the general case. Intermediate to this we have \cref{functoriality-M-fd} which proves that the construction of the $D(H)$-module structure on $M \otimes X$ is functorial in $M$ when restricted to the category of finite dimensional locally analytic repersentations of $H$ (and again, \cref{functoriality-M} upgrades this statement to the general case). 

\begin{lemma} \label{construction-fd}
Suppose $M$ is a finite dimensional locally analytic representation of $H$. For any $D(H)$-module $X$, there exists a natural $D(H)$-module structure on the tensor product $M \otimes X$ with the property that 
\[ \delta_h \bdot (m \otimes x) = (h \bdot m) \otimes (\delta_h \bdot x) \]
for all $h \in H$, $m \in M$, and $x \in X$. Moreover, if $X$ is a separately (resp. jointly) continuous $D(H)$-module, then so is $M \otimes X$. 
\end{lemma}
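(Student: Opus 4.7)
The plan is to define the $D(H)$-action on $M\otimes X$ explicitly via the matrix coefficients of $M$, then verify the module axioms by reducing them to an identity coming directly from the group structure of $H$. The key structural input is that, because $M$ is finite dimensional and locally analytic, its matrix coefficients lie in $C^\an(H)$, which acts on $D(H)$ by multiplication.

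Fix a basis $m_1,\ldots,m_n$ of $M$ and introduce the matrix coefficient functions $c_{ji}\in C^\an(H)$ determined by $h\cdot m_i=\sum_j c_{ji}(h)m_j$. The algebra $C^\an(H)$ acts on $D(H)$ by $(c\cdot\mu)(\phi):=\mu(c\phi)$ for $\phi\in C^\an(H)$; in particular $c\cdot\delta_h=c(h)\delta_h$. I would then define
\[
\mu\cdot(m_i\otimes x)\;:=\;\sum_j m_j\otimes\bigl((c_{ji}\cdot\mu)\cdot x\bigr),
\]
extended bilinearly to all of $M\otimes X$. Specializing to $\mu=\delta_h$ and using $c_{ji}\cdot\delta_h=c_{ji}(h)\delta_h$ recovers the target formula $\delta_h\cdot(m\otimes x)=(h\cdot m)\otimes(\delta_h\cdot x)$.

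The main verification is associativity $\mu_1\cdot(\mu_2\cdot y)=(\mu_1\mu_2)\cdot y$, which unwinds to the identity
\[
\sum_j(c_{kj}\cdot\mu_1)(c_{ji}\cdot\mu_2)\;=\;c_{ki}\cdot(\mu_1\mu_2)\qquad\text{in }D(H).
\]
Both sides paired with $\phi\in C^\an(H)$ become double integrals against $\mu_1\otimes\mu_2$, and the identity then follows from the matrix-coefficient composition law $\sum_j c_{kj}(h_1)c_{ji}(h_2)=c_{ki}(h_1h_2)$, which is itself a restatement of associativity of the $H$-action on $M$. The unit axiom reduces to $c_{ji}(e)=\delta_{ji}$. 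Basis independence, together with naturality in $M$, follows from a routine computation with the change-of-basis matrix: an $H$-equivariant map $f\colon M_1\to M_2$ transports matrix coefficients in the expected way, so $f\otimes\id_X$ is automatically $D(H)$-linear.

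For the continuity claim, the basis furnishes a vector-space decomposition $M\otimes X\cong X^n$, under which the action of $\mu\in D(H)$ on $(x_i)_i$ becomes $(x_i)_i\mapsto\bigl(\sum_i(c_{ji}\cdot\mu)\cdot x_i\bigr)_j$. Since each map $\mu\mapsto c_{ji}\cdot\mu$ is a continuous endomorphism of $D(H)$ (as the transpose of continuous multiplication by $c_{ji}$ on $C^\an(H)$), separate or joint continuity of the $D(H)$-action on $X$ transfers to $M\otimes X$ via the displayed formula. The only real subtlety is clean bookkeeping between convolution in $D(H)$ and the $C^\an(H)$-multiplication on $D(H)$; but no input beyond these defining formulas is needed for the associativity check.
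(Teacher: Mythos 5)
Your proof is correct, but it takes a genuinely different route from the paper's. You define the action explicitly by $\mu \bdot (m_i \otimes x) = \sum_j m_j \otimes (\mu_{c_{ji}} \bdot x)$, where $\mu \mapsto \mu_{c_{ji}}$ is the transpose of multiplication by the matrix coefficient $c_{ji}$ on $C^\an(H)$, and you then verify associativity by hand, reducing it to the cocycle identity $\sum_j c_{kj}(h_1)c_{ji}(h_2) = c_{ki}(h_1h_2)$ paired against $\mu_1 \otimes \mu_2$ via a Fubini-type manipulation with convolution. The paper instead builds the action abstractly as the composite
\[ \begin{tikzcd} D(H) \ar{r}{\Delta_*} & D(H \times H) = D(H) \,\hat{\otimes}_i\, D(H) \ar{r} & \End(M) \otimes D(H) \ar{r} & \End(M \otimes X), \end{tikzcd} \]
using finite dimensionality of $\End(M)$ and completeness of $D(H)$ to factor through the completed tensor product; since every arrow is an algebra homomorphism, the module axioms come for free, and only the formula on delta distributions and the continuity statements require verification. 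The two constructions produce the same module structure (your formula is precisely the concrete incarnation of the composite above, in the same way that the paper's explicit formula for $\zeta_{\Ad}$ describes an abstractly defined map), so the trade-off is this: your approach costs an explicit associativity check and a basis-independence argument, but it buys an explicit description of how an arbitrary distribution --- not just a delta distribution --- acts on $M \otimes X$, which the paper's own footnote concedes is ``not entirely clear'' from the abstract construction; the paper's approach avoids all hand verification of the module axioms at the price of invoking the identification $D(H \times H) = D(H)\,\hat{\otimes}_i\, D(H)$. Your continuity argument via the identification $M \otimes X \cong X^n$ and continuity of $\mu \mapsto \mu_{c_{ji}}$ is also sound and somewhat shorter than the paper's. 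One small point to make explicit if you write this up: in the associativity computation you should note that the inner function $h_1 \mapsto \mu_2(h_2 \mapsto c_{ji}(h_2)\phi(h_1h_2))$ lies in $C^\an(H)$, but this is exactly the same fact that makes convolution on $D(H)$ well defined, so no new input is needed.
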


\begin{proof}
The tensor product $M \otimes X$ naturally has the structure of a module over $D(H) \otimes D(H)$, given by 
\[ (\mu_1 \otimes \mu_2)(m \otimes x) = (\mu_1 \bdot m) \otimes (\mu_2 \bdot x). \]
Observe that the structure map $D(H) \otimes D(H) \to \End(M \otimes X)$ factors as 
\[ \begin{tikzcd} D(H) \otimes D(H) \ar{r} & \End(M) \otimes D(H) \ar{r} & \End(M \otimes X), \end{tikzcd} \]
where the first map is the structure map $D(H) \to \End(M)$ tensored with $D(H)$, and the second map is given by \[ \sigma \otimes \mu \mapsto [m \otimes x \mapsto \sigma(m) \otimes \mu \bdot x]. \]
Since $M$ is a separately continuous $D(H)$-module, the map $D(H) \to \End(M)$ is continuous, so the map $D(H) \otimes D(H) \to \End(M) \otimes D(H)$ is also continuous. Since $\End(M)$ is finite dimensional and $D(H)$ is complete, the tensor product $\End(M) \otimes D(H)$ is also complete. Thus $D(H) \otimes D(H) \to \End(M) \otimes D(H)$ factors naturally through $D(H) \, \hat{\otimes}_i \, D(H)$. 
\[ \begin{tikzcd} D(H) \,\hat{\otimes}_i\, D(H) \ar{r} & \End(M) \otimes D(H) \ar{r} & \End(M \otimes X). \end{tikzcd} \]
Now note that $D(H \times H) = D(H) \, \hat{\otimes}_i \, D(H)$ \cite[proposition A.3]{ST05}, and that the diagonal map $\Delta : H \to H \times H$ induces a continuous homomorphism $\Delta_* : D(H) \to D(H \times H)$ of algberas. The composite
\[ \begin{tikzcd} D(H) \ar{r}{\Delta_*} & D(H \times H) = D(H) \, \hat{\otimes}_i\, D(H) \ar[out=0,in=180, to path={..controls +(5,-0.4) and +(-5,0.4).. (\tikztotarget)}]{d} \\
& \End(M) \otimes D(H) \ar{r} & \End(M \otimes X) \end{tikzcd} \]
then defines the structure of a $D(H)$-module on $M \otimes X$. It is easy to see that this $D(H)$-module structure is given by the stated formula on delta distributions.\footnote{To know how a distribution in $D(H)$ act on $M \otimes X$, one would need to know how to transfer the image of that distribution under $\Delta_*$ through the isomorphism $D(H \times H) = D(H) \, \hat{\otimes}_i \, D(H)$. This is clear for delta distributions. For distributions in the image of $U(\frh) \to D(H)$, see \cref{hopf-algebra-map} below. For other distributions, this is not entirely clear.}

If $X$ happens to be a separately continuous $D(H)$-module, we would like to show that the resulting $D(H)$-module structure on $M \otimes X$ is also separately continuous. In other words, we need to show that the maps $D(H) \to \End(M \otimes X)$ and $M \otimes X \to \Hom(D(H), M \otimes X)$ land in the spaces $\End^\cts(M \otimes X)$ and $\Hom^\cts(D(H), M \otimes X)$ of continuous linear maps, respectively. 

To show that $D(H) \to \End(M \otimes X)$ lands inside $\End^\cts(M \otimes X)$, we note that, by using the above factorization, it is sufficient to show that $\End(M) \otimes D(H) \to \End(M \otimes X)$ lands inside $\End^\cts(M \otimes X)$. In other words, we have to show that, for fixed $\sigma \in \End(M)$ and $\mu \in D(H)$, the map $M \otimes X \to M \otimes X$ given by $m \otimes x \mapsto \sigma(m) \otimes \mu \bdot x$ is continuous. But this map factors as 
\[ \begin{tikzcd} M \otimes X \ar{r}{\sigma \otimes X} & M \otimes X \ar{r}{M \otimes \mu} & M \otimes X \end{tikzcd} \]
and each of these maps is continuous (the former since $M$ is finite dimensional, the latter since $X$ is separately continuous). Thus the composite is continuous as well. 

Next, we need to show that the image of $M \otimes X \to \Hom(D(H), M \otimes X)$ is inside $\Hom^\cts(D(H), M \otimes X)$. Fix $m \otimes x$. We want to show that the map $D(H) \to M \otimes X$ given by $\mu \mapsto \mu \bdot (m \otimes x)$ is continuous. Observe that this map factors as follows. 
\[ \begin{tikzcd} D(H) \ar{r}{\Delta_*} & D(H \times H) = D(H) \, \hat{\otimes}_i \, D(H) \ar{r}{m \otimes D(H)} & M \otimes D(H) \ar{r}{M \otimes x} & M \otimes X \end{tikzcd} \]
The first two maps are continuous, and the last map is continuous by separate continuity of $X$. To see that $D(H) \, \hat{\otimes}_i \, D(H) \to M \otimes D(H)$ is continuous, note that the map $D(H) \to M$ given by $\mu \mapsto \mu \bdot x$ is continuous by separate continuity of $M$, so $D(H) \otimes_i D(H) \to M \otimes D(H)$ is continuous as well. Since $M$ is finite dimensional, we know that $M \otimes D(H)$ is complete, so this continuous map factors continuously through $D(H) \, \hat{\otimes}_i \, D(H)$, as desired. 

Now suppose $X$ is \emph{jointly} continuous. Then $D(H) \otimes_{\pi} X \to X$ is continuous. Since $M$ is finite dimensional, separate continuity of $D(H) \times M \to M$ implies continuity, so $D(H) \otimes_\pi M \to M$ is also continuous. Thus the map 
\[ (D(H) \otimes_\pi D(H)) \otimes_\pi (M \otimes_\pi X) = (D(H) \otimes_\pi M) \otimes_\pi (D(H) \otimes_\pi X) \to M \otimes_\pi X \]
is continuous. Since the inductive topology is finer than the projective topology, this implies that 
\[ \begin{tikzcd} (D(H) \otimes_i D(H)) \otimes_\pi (M \otimes X) \ar{r} & M \otimes X \end{tikzcd} \]
is continuous. We noted above that the action of $D(H) \otimes D(H)$ on $M \otimes X$ factors through $D(H) \, \hat{\otimes}_i \, D(H)$, so the above map factors through a continuous map
\[ \begin{tikzcd} (D(H) \,\hat{\otimes}_i \, D(H)) \otimes_\pi (M \otimes X) \ar{r} & M \otimes X. \end{tikzcd} \]
Now note that $D(H) \to D(H) \,\hat{\otimes}_i \, D(H)$ is continuous, so the first map in the composite
\[ \begin{tikzcd} D(H) \otimes_\pi (M \otimes X) \ar{r} & (D(H) \,\hat{\otimes}_i \, D(H)) \otimes_\pi (M \otimes X) \ar{r} & M \otimes X. \end{tikzcd} \]
is also continuous. This proves that the $D(H)$-module structure on $M \otimes X$ is jointly continuous. 
\end{proof}

\begin{lemma} \label{functoriality-M-fd}
Suppose $f : M \to N$ is a homomorphism of finite dimensional locally analytic representations of $H$. If $X$ is a $D(H)$-module, then $f \otimes X : M \otimes X \to N \otimes X$ is also $D(H)$-linear. 
\end{lemma}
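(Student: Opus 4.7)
The plan is to unwind the construction of the $D(H)$-module structure from the proof of \cref{construction-fd} and exploit its naturality in the finite-dimensional representation. Recall that for a finite-dimensional locally analytic representation $L$ of $H$ with structure map $\rho_L : D(H) \to \End(L)$, the action of $\mu \in D(H)$ on $L \otimes X$ is obtained by composing
\[ D(H) \xrightarrow{\Delta_*} D(H) \,\hat\otimes_i\, D(H) \xrightarrow{\rho_L \otimes \id} \End(L) \otimes D(H) \xrightarrow{a_L} \End(L \otimes X), \]
where $a_L$ sends $\sigma \otimes \nu$ to the endomorphism $\ell \otimes x \mapsto \sigma(\ell) \otimes (\nu \bdot x)$, and $\rho_L \otimes \id$ is well-defined on the completed tensor product because $\End(L)$ is finite-dimensional (so $\End(L) \otimes D(H)$ is already complete).

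The key step is to establish the identity
\begin{equation} \label{plan-f-tensor-X-naturality}
f_* \circ (\rho_M \otimes \id) \circ \Delta_* \;=\; f^* \circ (\rho_N \otimes \id) \circ \Delta_*
\end{equation}
of continuous maps $D(H) \to \Hom(M,N) \otimes D(H)$, where $f_* : \End(M) \otimes D(H) \to \Hom(M,N) \otimes D(H)$ is given by $\sigma \otimes \nu \mapsto (f \circ \sigma) \otimes \nu$, and $f^* : \End(N) \otimes D(H) \to \Hom(M,N) \otimes D(H)$ by $\tau \otimes \nu \mapsto (\tau \circ f) \otimes \nu$. All the intermediate spaces are of the form ``finite-dimensional $\otimes\, D(H)$'' and the constituent maps are continuous, so both sides of \eqref{plan-f-tensor-X-naturality} are continuous. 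It therefore suffices to verify the identity on the dense subspace $D(H) \otimes D(H) \subseteq D(H) \,\hat\otimes_i\, D(H)$, and by linearity only on elementary tensors $\mu_1 \otimes \mu_2$. There, the two sides reduce to $(f \circ \rho_M(\mu_1)) \otimes \mu_2$ and $(\rho_N(\mu_1) \circ f) \otimes \mu_2$, which are equal because $f$ is $D(H)$-linear.

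To conclude, apply the natural map $\Hom(M,N) \otimes D(H) \to \Hom(M \otimes X, N \otimes X)$ sending $\varphi \otimes \nu$ to $m \otimes x \mapsto \varphi(m) \otimes (\nu \bdot x)$ to both sides of \eqref{plan-f-tensor-X-naturality} and evaluate on an element $m \otimes x$. The left side produces $(f \otimes X)(\mu \bdot (m \otimes x))$ while the right side produces $\mu \bdot (f \otimes X)(m \otimes x)$, yielding the desired $D(H)$-linearity of $f \otimes X$. The only real subtlety is the density-plus-continuity step used to propagate the identity from elementary tensors to all of $D(H) \,\hat\otimes_i\, D(H)$; once the relevant continuity of $\rho_L \otimes \id$, $f_*$, and $f^*$ is noted, no other obstacle arises, and in particular no topology needs to be imposed on $X$.
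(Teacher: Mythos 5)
Your proposal is correct and follows essentially the same route as the paper: factor both structure maps through $\End(-)\otimes D(H)$, observe that the outer part of the diagram commutes by naturality, and then verify the remaining identity of continuous maps on a dense subspace (you use $D(H)\otimes D(H)\subseteq D(H)\,\hat\otimes_i\,D(H)$ together with the $D(H)$-linearity of $f$, where the paper restricts to $E[H]\subseteq D(H)$ and uses $H$-equivariance; these are interchangeable density arguments). No gaps.
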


\begin{proof}
We want to show that the following diagram commutes. 
\[ \begin{tikzcd} D(H) \ar{r} \ar{d} & \End(M \otimes X) \ar{d}{f_*} \\ \End(N \otimes X) \ar{r}[swap]{f^*} & \Hom(M \otimes X, N \otimes X)  \end{tikzcd} \]
In this proof, we use $f_*$ to denote maps induced by $f$ under covariant functoriality of $\Hom$, and $f^*$ to denote maps induced by $f$ under contravariant functoriality of $\Hom$. For example, in the above diagram, $f_*$ denotes postcomposition with $f \otimes X : M \otimes X \to N \otimes X$, and $f^*$ denotes precomposition with $f \otimes X$. 

Note that $D(H) \to \End(M \otimes X)$ factors through $\End(M) \otimes D(H)$ and $D(H) \to \End(N \otimes X)$ factors through $\End(N) \otimes X$, and the black part of the following diagram clearly commutes. 
\[ \begin{tikzcd}
\color{lightgray} D(H) \ar[color=lightgray]{rr} \ar[color=lightgray]{d} & & \End(M) \otimes D(H) \ar{d} \ar{dl}{f_*} \\
\End(N) \otimes D(H) \ar{d} \ar{r}{f^*} & \Hom(M, N) \otimes D(H) \ar{dr} & \End(M \otimes X) \ar{d}{f_*} \\
\End(N \otimes X) \ar{rr}[swap]{f^*} & & \Hom(M \otimes X, N \otimes X)  \end{tikzcd} \]
Thus it is sufficient to show that the following diagram commutes. 
\[ \begin{tikzcd} D(H) \ar{r} \ar{d} & \End(M) \otimes D(H) \ar{d}{f_*} \\ \End(N) \otimes D(H) \ar{r}[swap]{f^*} & \Hom(M, N) \otimes D(H)  \end{tikzcd} \]
These are all continuous maps. Thus it sufficies to check commutativity after restricting to $E[H]$, where this is clear. 
\end{proof}

\begin{lemma} \label{construction}
Suppose $M$ is a locally finite dimensional locally analytic representation of $H$. For any $D(H)$-module $X$, there exists a natural $D(H)$-module structure on the tensor product $M \otimes X$ with the property that 
\[ \delta_h \bdot (m \otimes x) = (h \bdot m) \otimes (\delta_h \bdot x) \]
for all $h \in H$, $m \in M$, and $x \in X$. Moreover, if $X$ is a separately (resp. jointly) continuous $D(H)$-module, then so is $M \otimes X$. 
\end{lemma}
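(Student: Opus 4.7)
The plan is to bootstrap from the finite-dimensional case already established in \cref{construction-fd} by writing $M$ as a filtered colimit of its finite-dimensional subrepresentations. Let $\{M_i\}_{i \in I}$ be the directed system of finite-dimensional $H$-subrepresentations of $M$, so that $M = \colim_i M_i$ (both as a vector space and, with the locally convex inductive limit topology, as a locally analytic representation). For each $i$, \cref{construction-fd} furnishes $M_i \otimes X$ with a natural $D(H)$-module structure satisfying the prescribed formula on delta distributions, and \cref{functoriality-M-fd} applied to each inclusion $M_i \hookrightarrow M_j$ ensures that the induced map $M_i \otimes X \to M_j \otimes X$ is $D(H)$-linear. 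Thus $\{M_i \otimes X\}_{i \in I}$ is a directed system of $D(H)$-modules.

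Since tensoring with $X$ commutes with colimits of vector spaces, one has $M \otimes X = \colim_i (M_i \otimes X)$, and this colimit inherits a $D(H)$-module structure. The formula $\delta_h \bdot (m \otimes x) = (h \bdot m) \otimes (\delta_h \bdot x)$ holds because any simple tensor $m \otimes x$ comes from some $M_i \otimes X$ where the formula is already known. Naturality in $X$ is immediate: a $D(H)$-linear map $f : X \to Y$ induces a compatible system of $D(H)$-linear maps $M_i \otimes X \to M_i \otimes Y$, whose colimit is $M \otimes f$.

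For the continuity statements, the key point is that the locally convex inductive limit topology on $M$ presents $M$ as a colimit in locally convex spaces, and a linear map out of such a colimit is continuous iff each restriction to an $M_i$ is. If $X$ is separately continuous, then each $M_i \otimes X$ is separately continuous by \cref{construction-fd}; the map $D(H) \to \End(M \otimes X)$ factors through the compatible maps $D(H) \to \End(M_i \otimes X)$, each landing in $\End^{\mathrm{cts}}(M_i \otimes X)$, and similarly for the map $M \otimes X \to \Hom(D(H), M \otimes X)$. Joint continuity is transferred analogously, using that the projective tensor product commutes with inductive limits in one variable so that $D(H) \otimes_\pi (M \otimes X) = \colim_i \bigl(D(H) \otimes_\pi (M_i \otimes X)\bigr)$.

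The main obstacle is the continuity argument, where one must verify that ``continuity on each $M_i \otimes X$'' upgrades to the corresponding continuity on $M \otimes X$ equipped with the appropriate topology. The algebraic existence and naturality of the $D(H)$-action, by contrast, is formal once \cref{construction-fd,functoriality-M-fd} are in place; the only work there is recording that the colimit presentation is compatible with the action on delta distributions and with $D(H)$-linear morphisms.
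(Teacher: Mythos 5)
Your proposal is correct and follows essentially the same route as the paper: write $M$ as the directed union of its finite dimensional subrepresentations, equip each $N \otimes X$ with the $D(H)$-module structure from \cref{construction-fd}, use \cref{functoriality-M-fd} to see the transition maps are $D(H)$-linear, and pass to the colimit, noting that the delta-distribution formula and the separate (resp.\ joint) continuity are inherited. The paper's own proof is in fact terser than yours on the continuity point, so no gap there.
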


\begin{proof}
We know that $M$ is the colimit\footnote{This colimit, and all others that occur in this section, are really just directed unions.} of its finite dimensional subrepresentations $N$. Note that 
\[ M \otimes X = (\colim N) \otimes X = \colim (N \otimes X). \]
Each $N \otimes X$ has a $D(H)$-module structure as in \cref{construction-fd}, and the transition maps are all $D(H)$-linear by \cref{functoriality-M-fd}. Thus $M \otimes X$ acquires a $D(H)$-module structure from each $N \otimes X$. It too is given by the same formula on delta distributions. Moreover, if $X$ is separately (resp. jointly) continuous, then $M \otimes X$ inherits the same continuity property from each $N \otimes X$. 
\end{proof}

\begin{lemma} \label{functoriality-M}
Suppose $f : M \to N$ is a homomorphism of locally finite dimensional locally analytic representations of $H$. If $X$ is a $D(H)$-module, then $f \otimes X : M \otimes X \to N \otimes X$ is also $D(H)$-linear. 
\end{lemma}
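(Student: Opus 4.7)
The plan is to reduce this statement to the finite-dimensional case (Lemma \ref{functoriality-M-fd}) by exploiting the colimit description of the $D(H)$-module structure constructed in Lemma \ref{construction}. The key observation is that $f$ itself is, in a suitable sense, a colimit of $H$-equivariant maps between finite dimensional subrepresentations.

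First, I would note that any finite dimensional subrepresentation $M' \subseteq M$ is carried by $f$ into a finite dimensional $H$-stable subspace $f(M') \subseteq N$, which is therefore a finite dimensional subrepresentation of $N$ (subspaces of locally analytic representations that are $H$-stable and finite dimensional are automatically locally analytic subrepresentations). Thus $f$ restricts to a homomorphism $f_{M'} : M' \to f(M')$ in the category of finite dimensional locally analytic representations of $H$. By Lemma \ref{functoriality-M-fd}, the induced map $f_{M'} \otimes X : M' \otimes X \to f(M') \otimes X$ is $D(H)$-linear.

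Next, I would invoke the construction in Lemma \ref{construction}: the $D(H)$-module structure on $M \otimes X$ (respectively, on $N \otimes X$) is defined as the colimit, over finite dimensional subrepresentations $M' \subseteq M$ (respectively, $N' \subseteq N$), of the $D(H)$-module structures on $M' \otimes X$ (respectively, $N' \otimes X$). The inclusion $f(M') \hookrightarrow N$ induces a $D(H)$-linear map $f(M') \otimes X \to N \otimes X$ by Lemma \ref{functoriality-M-fd} applied to the inclusion. Composing, we get a $D(H)$-linear map $M' \otimes X \to N \otimes X$ for each $M'$, and these are compatible as $M'$ varies. Taking the colimit over all finite dimensional subrepresentations $M' \subseteq M$ yields $f \otimes X : M \otimes X \to N \otimes X$, which is therefore $D(H)$-linear as a colimit of $D(H)$-linear maps.

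I do not anticipate a substantive obstacle here; the only mild care required is to verify that these $D(H)$-linear maps do assemble correctly into $f \otimes X$ in the colimit, which is immediate from the fact that $M \otimes X = \colim_{M'} M' \otimes X$ as $D(H)$-modules (by construction) and from $f = \colim_{M'} f_{M'}$ at the level of underlying vector spaces. In particular, no topology or continuity argument is needed at this stage, since $D(H)$-linearity is a purely algebraic condition once the module structures are in place.
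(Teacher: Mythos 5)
Your proof is correct and follows essentially the same route as the paper: reduce to the finite dimensional case via Lemma \ref{functoriality-M-fd} applied to compatible pairs of finite dimensional subrepresentations (the paper uses arbitrary pairs $M' \subseteq M$, $N' \subseteq N$ with $f(M') \subseteq N'$, you take $N' = f(M')$), and then pass to the colimit defining the module structures. The only cosmetic imprecision is that the $D(H)$-linearity of $f(M') \otimes X \to N \otimes X$ comes from it being a structural map of the colimit (whose transition maps are $D(H)$-linear by the finite dimensional lemma), rather than from applying Lemma \ref{functoriality-M-fd} directly to the inclusion $f(M') \hookrightarrow N$, since $N$ itself is not finite dimensional.
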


\begin{proof}
Suppose $M' \subseteq M$ and $N' \subseteq N$ are finite dimensional subrepresentations such that $f(M') \subseteq N'$. By \cref{functoriality-M-fd}, we know that $M' \otimes X \to N' \otimes X$ is $D(H)$-linear. As $M'$ and $N'$ vary, the transition maps are also $D(H)$-linear by \cref{functoriality-M-fd}. Taking a colimit over all such $M'$ and $N'$ thus yields the result.  
\end{proof}

\begin{lemma} \label{hopf-algebra-map}
The following diagram commutes. 
\[ \begin{tikzcd} U(\frh) \ar{d} \ar{rr} & & U(\frh) \otimes U(\frh) \ar{d} \\ 
D(H) \ar{r} & D(H \times H) \ar[equals]{r} & D(H) \, \hat{\otimes}_i \, D(H) \end{tikzcd} \]
Here, $U(\frh) \to U(\frh) \otimes U(\frh)$ is the comultiplication on $U(\frh)$, $D(H) \to D(H \times H)$ is the continuous algebra  homomorphism induced by the diagonal map $H \to H \times H$, and $D(H \times H) = D(H) \, \hat{\otimes}_i \, D(H)$ is the isomorphism of \cite[proposition A.3]{ST05}. 
\end{lemma}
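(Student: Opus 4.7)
The plan is to reduce the equality of maps to the generating subspace $\frh \subseteq U(\frh)$. Both compositions $U(\frh) \to D(H) \,\hat{\otimes}_i\, D(H)$ are algebra homomorphisms: the top-right route is the comultiplication of the Hopf algebra $U(\frh)$ followed by $\iota \otimes \iota$, where $\iota: U(\frh) \hookrightarrow D(H)$ is the canonical embedding (and comultiplication is an algebra map by the Hopf algebra structure); the bottom-left route is $\iota$ followed by the pushforward $\Delta_*$ along the diagonal group homomorphism $H \to H \times H$, which preserves convolution, and then the topological algebra isomorphism of \cite[proposition A.3]{ST05}. Since $U(\frh)$ is generated as a unital algebra by $\frh$, it suffices to verify the equality on $x \in \frh$ (agreement on $1$ being automatic).

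For such $x$, I would directly compute both outputs. The image of $x$ in $D(H)$ is the first-order distribution $\partial_x : f \mapsto \frac{d}{dt} f(\exp(tx))\big|_{t=0}$. The pushforward $\Delta_*(\partial_x) \in D(H \times H)$ then sends a test function $g \in C^\an(H \times H)$ to $\frac{d}{dt} g(\exp(tx), \exp(tx))\big|_{t=0}$. Going the other way, the comultiplication sends $x$ to $x \otimes 1 + 1 \otimes x$, whose image in $D(H) \otimes D(H) \subseteq D(H) \,\hat{\otimes}_i\, D(H)$ is $\partial_x \otimes \delta_e + \delta_e \otimes \partial_x$.

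To compare, I would unpack the Schneider-Teitelbaum identification on separable test functions: a simple tensor $\mu_1 \otimes \mu_2$ pairs with $(h_1,h_2) \mapsto g_1(h_1)\, g_2(h_2)$ as $\langle \mu_1, g_1 \rangle \langle \mu_2, g_2 \rangle$. Hence $\partial_x \otimes \delta_e + \delta_e \otimes \partial_x$ sends this test function to $\partial_x(g_1)\, g_2(e) + g_1(e)\, \partial_x(g_2)$, and by the Leibniz rule
\[ \frac{d}{dt}\bigl(g_1(\exp(tx))\, g_2(\exp(tx))\bigr)\bigg|_{t=0} = \partial_x(g_1)\, g_2(e) + g_1(e)\, \partial_x(g_2), \]
matching $\Delta_*(\partial_x)$ on the same function. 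Since both distributions are first-order and supported at $(e,e)$, they are determined by their values on such separable analytic test functions in a chart around $(e,e)$; this forces them to coincide. The only obstacle is the routine bookkeeping needed to transport $\delta_e$'s and derivations through the identification $D(H \times H) = D(H) \,\hat{\otimes}_i\, D(H)$, and this is handled by appealing to how that isomorphism is constructed in \cite{ST05}.
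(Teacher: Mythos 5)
Your proof is correct, but it takes a different route from the paper's. You reduce to the Lie algebra generators: both composites are algebra homomorphisms out of $U(\frh)$, so it suffices to check them on $x \in \frh$, where the comparison comes down to the Leibniz rule (equivalently, the primitivity of $x$ under comultiplication) evaluated against product test functions $g_1 \boxtimes g_2$, followed by a density/continuity argument to pass from such functions to all of $C^\an(H \times H)$. The paper instead factors the comultiplication as $U(\frh) \to U(\frh \oplus \frh) \cong U(\frh) \otimes U(\frh)$, with the first arrow induced by the diagonal $\frh \to \frh \oplus \frh$, and then invokes the naturality of the embedding $U(\Lie(-)) \hookrightarrow D(-)$ applied to the diagonal homomorphism $H \to H \times H$; commutativity is immediate, with no computation on test functions. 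The paper's argument is shorter and never needs to unwind the isomorphism of \cite[proposition A.3]{ST05} on individual distributions (though it does silently use its compatibility with $U(\frh \oplus \frh) = U(\frh) \otimes U(\frh)$), while yours is more concrete and makes visible the underlying mechanism. The only place your write-up is soft is the final step: rather than appealing to "first-order distributions supported at $(e,e)$," the clean justification is that $C^\an(H) \otimes C^\an(H)$ is dense in $C^\an(H \times H)$ and both sides are continuous functionals, so agreement on product functions forces equality.
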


\begin{proof}
The comultiplication on $U(\frh)$ factors as $U(\frh) \to U(\frh \oplus \frh)$ induced by the diagonal map $\frh \to \frh \oplus \frh$ and a natural isomorphism $U(\frh \oplus \frh) = U(\frh) \otimes U(\frh)$. 
\[ \begin{tikzcd} U(\frh) \ar{d} \ar{r} & U(\frh \oplus \frh) \ar[equals]{r} \ar{d} & U(\frh) \otimes U(\frh) \ar{d} \\ 
D(H) \ar{r} & D(H \times H) \ar[equals]{r} & D(H) \, \hat{\otimes}_i \, D(H) \end{tikzcd} \]
Since the inclusion $U(\Lie(-)) \to D(-)$ is functorial in its argument (a locally analytic group), we can apply this functoriality to the diagonal map $H \to H \times H$ and conclude that the above diagram commutes.
\end{proof}

\begin{corollary} \label{restrict-tensor-product}
Suppose $M$ is a locally finite dimensional locally analytic representation of $H$. The $D(H)$-module structure on $M \otimes X$  from \ref{construction} restricts to the usual $U(\frh)$-module structure on $M \otimes X$. 
\end{corollary}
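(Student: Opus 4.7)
The plan is to first dispatch the finite dimensional case by unwinding the construction of Lemma \ref{construction-fd} and applying Lemma \ref{hopf-algebra-map}, and then promote to the locally finite dimensional case by taking a colimit.

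Assume first that $M$ is finite dimensional. By construction, the action of $D(H)$ on $M \otimes X$ is the composite
\[ \begin{tikzcd} D(H) \ar{r}{\Delta_*} & D(H) \,\hat{\otimes}_i\, D(H) \ar{r} & \End(M) \otimes D(H) \ar{r} & \End(M \otimes X), \end{tikzcd} \]
where the last map sends $\sigma \otimes \mu$ to $m \otimes x \mapsto \sigma(m) \otimes \mu \bdot x$. Now precompose with the natural algebra map $\iota : U(\frh) \to D(H)$. Lemma \ref{hopf-algebra-map} tells us that $\Delta_* \circ \iota$ equals the composite of the Hopf-algebra comultiplication $U(\frh) \to U(\frh) \otimes U(\frh)$ with the natural map $U(\frh) \otimes U(\frh) \to D(H) \,\hat{\otimes}_i\, D(H)$. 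Substituting, the $U(\frh)$-action on $M \otimes X$ induced from $D(H)$ is thus computed by the composite
\[ \begin{tikzcd}[column sep=small] U(\frh) \ar{r}{\mathrm{comult}} & U(\frh) \otimes U(\frh) \ar{r} & \End(M) \otimes \End(X) \ar{r} & \End(M \otimes X), \end{tikzcd} \]
which in Sweedler notation reads $x \bdot (m \otimes v) = \sum x_{(1)} \bdot m \otimes x_{(2)} \bdot v$. This is exactly the usual $U(\frh)$-module structure on a tensor product of two $U(\frh)$-modules, proving the claim when $M$ is finite dimensional.

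For the general case, recall from the proof of Lemma \ref{construction} that the $D(H)$-module structure on $M \otimes X$ is the colimit, taken over all finite dimensional subrepresentations $N \subseteq M$, of the $D(H)$-module structures on $N \otimes X$ provided by Lemma \ref{construction-fd}. The usual $U(\frh)$-module structure on $M \otimes X$ is likewise the colimit of the usual $U(\frh)$-module structures on the $N \otimes X$: indeed, the functor $- \otimes X$ on $U(\frh)$-modules (with the Hopf comultiplication) preserves colimits, and each finite dimensional subrepresentation $N \subseteq M$ is in particular a $U(\frh)$-submodule, so $N \otimes X \hookrightarrow M \otimes X$ is $U(\frh)$-linear. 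By the finite dimensional case, the two module structures coincide on each $N \otimes X$; passing to the colimit yields the coincidence on $M \otimes X$.

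The only nontrivial ingredient is the compatibility recorded in Lemma \ref{hopf-algebra-map}; everything else is a bookkeeping exercise, so no serious obstacle is anticipated.
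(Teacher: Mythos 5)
Your proof is correct and follows exactly the route the paper intends (the paper leaves the corollary unproved precisely because it is this immediate consequence of \cref{hopf-algebra-map} applied to the construction of \cref{construction-fd}, followed by the colimit argument of \cref{construction}). The only cosmetic imprecision is that the paper's factorization passes through $\End(M) \otimes D(H)$ rather than $\End(M) \otimes \End(X)$, but since $X$ is a $U(\frh)$-module by restriction this changes nothing.
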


\begin{lemma}  \label{associativity}
Suppose $M$ and $N$ are both finite dimensional locally analytic representations of $H$. For any $D(H)$-module $X$, the associativity isomorphism $M \otimes (N \otimes X) = (M \otimes N) \otimes X$ is $D(H)$-linear. 
\end{lemma}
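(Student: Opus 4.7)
The plan is to show that both $D(H)$-module structures on $M \otimes N \otimes X$ (coming from the two bracketings) arise from a single structure map $D(H) \to \End(M \otimes N \otimes X)$ induced by the iterated diagonal $H \to H \times H \times H$, and hence coincide by coassociativity.

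First I would introduce the iterated diagonal $\Delta^{(2)} : H \to H \times H \times H$, $h \mapsto (h,h,h)$, and the resulting continuous algebra homomorphism
\[ \Delta^{(2)}_* : D(H) \to D(H \times H \times H) = D(H) \,\hat{\otimes}_i\, D(H) \,\hat{\otimes}_i\, D(H), \]
where the identification uses \cite[proposition A.3]{ST05} iteratively. The two factorizations $\Delta^{(2)} = (\Delta \times \id) \circ \Delta = (\id \times \Delta) \circ \Delta$ together with functoriality of $D(-)$ give the coassociativity identity
\[ \Delta^{(2)}_* = (\Delta_* \,\hat{\otimes}\, \id) \circ \Delta_* = (\id \,\hat{\otimes}\, \Delta_*) \circ \Delta_*. \]

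Next I would unwind the construction of \cref{construction-fd} for the bracketing $M \otimes (N \otimes X)$. The action on $N \otimes X$ is itself given by a composite $D(H) \xrightarrow{\Delta_*} D(H) \,\hat{\otimes}_i\, D(H) \to \End(N) \otimes D(H) \to \End(N \otimes X)$, and tensoring this with the action of $D(H)$ on $M$ shows that $\mu \in D(H)$ acts on $M \otimes (N \otimes X)$ through the composite $(\id \,\hat{\otimes}\, \Delta_*) \circ \Delta_*$ followed by the triple structure map
\[ D(H) \,\hat{\otimes}_i\, D(H) \,\hat{\otimes}_i\, D(H) \longrightarrow \End(M) \otimes \End(N) \otimes D(H) \longrightarrow \End(M \otimes N \otimes X). \]
Here the factorization through the completed inductive tensor product is valid because $\End(M) \otimes \End(N) \otimes D(H)$ is complete (tensor of finite-dimensional spaces with a complete space), exactly as in the proof of \cref{construction-fd}. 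The symmetric calculation on $(M \otimes N) \otimes X$ yields the same recipe, but with $(\Delta_* \,\hat{\otimes}\, \id) \circ \Delta_*$ in place of $(\id \,\hat{\otimes}\, \Delta_*) \circ \Delta_*$.

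By coassociativity, both composites equal $\Delta^{(2)}_*$, so under the canonical identification $M \otimes (N \otimes X) = (M \otimes N) \otimes X = M \otimes N \otimes X$ (which evidently intertwines the triple tensor action $(T_1 \otimes T_2 \otimes T_3)(m \otimes n \otimes x) = T_1(m) \otimes T_2(n) \otimes T_3(x)$), the two $D(H)$-actions coincide. The main obstacle is the bookkeeping around the completed inductive tensor products: one must verify that each of the multilinear maps encountered (such as $D(H) \otimes D(H) \otimes D(H) \to \End(M) \otimes \End(N) \otimes D(H)$) extends continuously to the relevant inductive completions, using finite-dimensionality of $\End(M)$ and $\End(N)$ to ensure completeness of the targets at each step. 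Once this is in place, the identity of the two composites is a purely formal consequence of coassociativity.
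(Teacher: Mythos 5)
Your proof is correct, but it takes a genuinely different route from the paper's. The paper never forms the triple completed tensor product $D(H)\,\hat{\otimes}_i\,D(H)\,\hat{\otimes}_i\,D(H)$ and never invokes coassociativity of $\Delta_*$. Instead it observes that the two structure maps factor through $\End(M)\otimes(\End(N)\otimes D(H))$ and $\End(M\otimes N)\otimes D(H)$ respectively, reduces the lemma to the commutativity of the single triangle formed by $D(H)\to\End(M)\otimes(\End(N)\otimes D(H))\to\End(M\otimes N)\otimes D(H)$ and $D(H)\to\End(M\otimes N)\otimes D(H)$, and then—since all maps involved are continuous—checks commutativity on the dense subalgebra $E[H]$, where it reduces to $\rho_M(h)\otimes\rho_N(h)\mapsto\rho_{M\otimes N}(h)$. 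Your argument proves the stronger, more conceptual statement that both actions are induced by $\Delta^{(2)}_*$ followed by a single triple structure map, so the conclusion becomes a formal consequence of coassociativity; what this costs is precisely the bookkeeping you flag: constructing the iterated identification $D(H\times H\times H)=D(H)^{\hat{\otimes}_i 3}$, verifying that $D(\Delta\times\id)$ and $D(\id\times\Delta)$ correspond to $\Delta_*\,\hat{\otimes}\,\id$ and $\id\,\hat{\otimes}\,\Delta_*$ under it, and checking that maps such as $\id\,\hat{\otimes}\,\Delta_*$ and the triple structure map extend continuously to the completions (the latter using completeness of $\End(M)\otimes\End(N)\otimes D(H)$, as you note). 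Each of these is a routine naturality-plus-density verification, so your proof goes through; the paper's version simply confines the density argument to one final step and stays within the objects already built in the proof of the tensor construction. In the end both proofs rest on the same elementary identity on delta distributions, reached by different packaging.
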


\begin{proof}
Let $\alpha : M \otimes (N \otimes X) \to (M \otimes N) \otimes X$ be the associativity isomorphism \[ m \otimes (n \otimes x) \mapsto (m \otimes n) \otimes x. \] We want to show that the following diagram commutes. 
\[ \begin{tikzcd} D(H) \ar{r} \ar{d} & \End(M \otimes (N \otimes X)) \ar{d}{\alpha_*} \\ \End((M \otimes N) \otimes X) \ar{r}[swap]{\alpha^*} & \Hom(M \otimes (N \otimes X), (M \otimes N) \otimes X)  \end{tikzcd} \]
Note that the structure map $D(H) \to \End(M \otimes (N \otimes X))$ factors through $\End(M) \otimes (\End(N) \otimes D(H))$, while the structure map $D(H) \to \End((M \otimes N) \otimes X)$ factors through $\End(M \otimes N) \otimes D(H)$. It is clear that the black part of the follow diagram commutes. 
\[ \begin{tikzcd} \color{lightgray} D(H) \ar[color=lightgray]{r} \ar[color=lightgray]{d} & \End(M) \otimes (\End(N) \otimes D(H)) \ar{dl} \ar{d} \\
\End(M \otimes N) \otimes D(H) \ar{d} & \End(M \otimes (N \otimes X)) \ar{d}{\alpha_*} \\ 
\End((M \otimes N) \otimes X) \ar{r}[swap]{\alpha^*} & \Hom((M \otimes N) \otimes X, M \otimes (N \otimes X)) \end{tikzcd} \]
Thus it is sufficient to show that the following diagram commutes. 
\[ \begin{tikzcd} D(H) \ar{d} \ar{r} & \End(M) \otimes (\End(N) \otimes D(H)) \ar{dl} \\ \End(M \otimes N) \otimes D(H) \end{tikzcd} \]
These are all continuous maps. Thus it sufficies to check commutativity after restricting to $E[H]$, where this is clear. 
\end{proof}

\begin{lemma} \label{functoriality-X}
Suppose $M$ is a locally finite dimensional locally analytic representation of $H$. If $f : X \to Y$ is a homomorphism of $D(H)$-modules, then $M \otimes f : M \otimes X \to M \otimes Y$ is also $D(H)$-linear. 
\end{lemma}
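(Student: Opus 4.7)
The plan is to mirror the two-step approach used for functoriality in $M$: first establish the statement when $M$ is finite dimensional by a diagram chase analogous to the proof of \cref{functoriality-M-fd}, and then bootstrap to the locally finite dimensional case via the colimit description from \cref{construction}.

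For the finite dimensional case, I would reduce to showing that the diagram
\[ \begin{tikzcd} D(H) \ar{r} \ar{d} & \End(M \otimes X) \ar{d}{(M \otimes f)_*} \\ \End(M \otimes Y) \ar{r}[swap]{(M \otimes f)^*} & \Hom(M \otimes X, M \otimes Y) \end{tikzcd} \]
commutes. By the proof of \cref{construction-fd}, the top map factors through $\End(M) \otimes D(H) \to \End(M \otimes X)$ given by $\sigma \otimes \mu \mapsto [m \otimes x \mapsto \sigma(m) \otimes \mu \bdot x]$, and similarly for $Y$. Thus it is enough to show commutativity of
\[ \begin{tikzcd} \End(M) \otimes D(H) \ar{r} \ar{d} & \End(M \otimes X) \ar{d}{(M \otimes f)_*} \\ \End(M \otimes Y) \ar{r}[swap]{(M \otimes f)^*} & \Hom(M \otimes X, M \otimes Y). \end{tikzcd} \]
Chasing a pure tensor $\sigma \otimes \mu$ in both directions sends $m \otimes x$ to $\sigma(m) \otimes f(\mu \bdot x)$ one way and $\sigma(m) \otimes \mu \bdot f(x)$ the other, and these are equal precisely because $f$ is $D(H)$-linear. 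So the finite dimensional case reduces to the $D(H)$-linearity of $f$ itself.

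For arbitrary locally finite dimensional $M$, write $M = \colim_N N$ as the colimit of its finite dimensional subrepresentations. Then $M \otimes X = \colim_N (N \otimes X)$ and $M \otimes Y = \colim_N (N \otimes Y)$ as $D(H)$-modules, where the $D(H)$-structures are defined precisely as in the proof of \cref{construction}, and $M \otimes f$ is the colimit of the maps $N \otimes f$. Each $N \otimes f$ is $D(H)$-linear by the finite dimensional case, and the structure transition maps are $D(H)$-linear by \cref{functoriality-M-fd}, so the colimit map $M \otimes f$ is $D(H)$-linear as well.

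There is no genuine obstacle here beyond bookkeeping; the only subtle point is simply making sure that the factorization through $\End(M) \otimes D(H)$ is in place before chasing the diagram, so that all maps involved land in ordinary (algebraic) tensor products where the computation with pure tensors is unambiguous.
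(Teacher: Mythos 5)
Your proof is correct and takes essentially the same route as the paper: reduce to finite dimensional $M$ via the colimit description, factor the structure maps through $\End(M) \otimes D(H)$, and observe that commutativity of the resulting square is exactly the $D(H)$-linearity of $f$. The paper's version is just terser, leaving the colimit bookkeeping implicit.
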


\begin{proof} 
It is sufficient to consider the case when $M$ is finite dimensional. We want to show that the following diagram commutes. 
\[ \begin{tikzcd} D(H) \ar{r} \ar{d} & \End(M \otimes X) \ar{d}{f_*} \\ \End(M \otimes Y) \ar{r}[swap]{f^*} & \Hom(M \otimes X, M \otimes Y)  \end{tikzcd} \]
By construction, the two maps coming out of $D(H)$ factor through $\End(M) \otimes D(H)$ in the same way, so it is sufficient to show that the following diagram commutes. 
\[ \begin{tikzcd} \End(M) \otimes D(H) \ar{r} \ar{d} & \End(M \otimes X) \ar{d}{f_*} \\ \End(M \otimes Y) \ar{r}[swap]{f^*} & \Hom(M \otimes X, M \otimes Y)  \end{tikzcd} \]
This follows from $D(H)$-linearity of $f$.
\end{proof}

\begin{remark} \label{hom-construction}
Suppose $M$ is a finite dimensional locally analytic representation of $H$ and $X$ is a $D(H)$-module. Then the contragredient $M'$ is also a finite dimensional locally analytic representation of $H$. If we transfer the $D(H)$-module structure on $M' \otimes X$ through the natural isomorphism $M' \otimes X = \Hom(M, X)$, it is given by 
\[ (\delta_h \bdot \sigma)(m) = \delta_h \bdot \sigma( h^{-1} \bdot m) \]
for $h \in H, m \in M$, and $\sigma \in \Hom(M, X)$. To see this, observe that the natural isomorphism $\psi : M' \otimes X \to \Hom(M, X)$ is given by 
\[ \psi(\lambda \otimes x)(m) = \lambda(m)x. \]
If $\psi(\lambda \otimes x) = \sigma$, then 
\[ \begin{aligned} (\delta_h \bdot \sigma)(m) &= \psi(\delta_h \bdot \psi^{-1}(\sigma))(m) \\
&= \psi(\delta_h \bdot (\lambda \otimes x))(m) \\
&= \psi((h \bdot \lambda) \otimes (\delta_h \bdot x))(m) \\
&= (h \bdot \lambda)(m)(\delta_h \bdot x) \\
&= \delta_h \bdot (h \bdot \lambda)(m)x \\
&= \delta_h \bdot \lambda(h^{-1} \bdot m)x \\
&= \delta_h \bdot \sigma(h^{-1} \bdot m).
\end{aligned} \]
If $X$ is a separately (resp. jointly) $D(H)$-module, then so is $\Hom(M, X)$, simply because $M' \otimes X = \Hom(M, X)$ is a topological isomorphism. If $M$ is only \emph{locally} finite dimensional, it is the colimit of its finite dimensional subrepresentations $N$, so 
\[ \Hom(M, X) = \Hom(\colim N, X) = \lim \Hom(N, X) \]
has a natural structure of a $D(H)$-module, induced by the $D(H)$-module structure described above on each $\Hom(N, X)$. And again, if $X$ is a separately (resp. jointly) continuous $D(H)$-module, then $\Hom(M, X)$ inherits the same continuity property from all of the $\Hom(N, X)$. 
\end{remark}

\subsection{Adjunction}

\begin{theorem} \label{adjunction}
Suppose $M$ is a locally finite dimensional locally analytic representation of $H$. Then there is an adjunction of $D(H)$-module-valued functors $(M \otimes -) \dashv \Hom(M, -)$. 
\end{theorem}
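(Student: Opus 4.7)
The plan is to reduce to $M$ finite-dimensional, and then to establish the adjunction via an explicit unit and counit.

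For the reduction, I would write $M = \colim_N N$ as the directed union of its finite-dimensional subrepresentations. By the construction in \cref{construction}, $M \otimes X = \colim_N (N \otimes X)$ as $D(H)$-modules, and by the construction in \cref{hom-construction}, $\Hom(M, Y) = \lim_N \Hom(N, Y)$ as $D(H)$-modules. Since $\Hom_{D(H)}(-,-)$ turns colimits in the first argument and limits in the second into limits, the desired adjunction $\Hom_{D(H)}(M \otimes X, Y) \cong \Hom_{D(H)}(X, \Hom(M, Y))$ for $M$ will follow from the analogous adjunction for each finite-dimensional $N$. So it remains to settle the case where $M$ is finite-dimensional.

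In that case, I would exhibit the adjunction by producing a unit $\eta_X \colon X \to \Hom(M, M \otimes X)$, $x \mapsto (m \mapsto m \otimes x)$, and a counit $\epsilon_Y \colon M \otimes \Hom(M, Y) \to Y$, $m \otimes \sigma \mapsto \sigma(m)$. At the level of $E$-vector spaces these are the usual unit and counit of the tensor-hom adjunction for finite-dimensional $M$, so they automatically satisfy the triangle identities; the adjunction will then follow formally provided $\eta_X$ and $\epsilon_Y$ are $D(H)$-linear and natural (naturality is clear from \cref{functoriality-M-fd,functoriality-X} and the analogous statements for $\Hom(M,-)$).

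The verification of $D(H)$-linearity will be the main obstacle. For $\mu \in D(H)$, the action on $M \otimes X$ is built from the image of $\Delta_*\mu \in D(H) \,\hat{\otimes}_i\, D(H)$ in $\End(M) \otimes D(H)$ (genuinely a finite sum, since $\End(M)$ is finite-dimensional, cf.~the proof of \cref{construction-fd}); the action on $\Hom(M, Y) = M' \otimes Y$ has a parallel description using the contragredient action on the finite-dimensional $M'$, which involves the antipode $S \colon D(H) \to D(H)$ induced by inversion on $H$. Unpacking these, the $D(H)$-linearity of $\eta_X$ and $\epsilon_Y$ will reduce to algebraic identities built from the iterated coproduct, counit, and antipode of $\mu$. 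These are the standard Hopf-algebra identities, and they should go through here because $D(H)$ is cocommutative (the diagonal $H \to H \times H$ is symmetric, so $\tau_* \circ \Delta_* = \Delta_*$ for the swap $\tau$). The fact that $D(H)$ is not literally a Hopf algebra (its coproduct lands in the completed tensor product) causes no trouble, since all the relevant identities only involve the finite-sum image in $\End(M) \otimes D(H)$; as a backup, one could verify the identities on the generating subsets $E[H]$ and $U(\frh)$ (using \cref{hopf-algebra-map}) and then extend, as the set of $\mu$ satisfying the identity will form a subalgebra of $D(H)$.
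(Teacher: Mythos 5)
Your overall architecture coincides with the paper's: reduce to finite dimensional $M$ by writing $M \otimes X = \colim_N (N\otimes X)$ and $\Hom(M,Y)=\lim_N\Hom(N,Y)$, then exhibit the adjunction through the usual unit and counit and check that these are $D(H)$-linear. Where you diverge is in how that linearity is verified. The paper avoids any Hopf-algebra computation: it transports $\epsilon$ and $\eta$ through the identifications $M\otimes\Hom(M,X)\cong(M\otimes M')\otimes X$ and $\Hom(M,M\otimes X)\cong(M'\otimes M)\otimes X$ of \cref{hom-construction,associativity}, under which they become $\ev\otimes X$ and $\iota\otimes X$ for the evaluation and coevaluation morphisms $\ev\colon M\otimes M'\to E$ and $\iota\colon E\to M'\otimes M$ of $H$-representations, and then simply invokes \cref{functoriality-M}. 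Your route, a direct computation with the iterated coproduct, antipode and cocommutativity, is viable and morally equivalent, but it buys nothing over the paper's argument and forces you to confront exactly the difficulty the paper is organized to sidestep, namely that $\Delta_*\mu$ is not a finite sum of elementary tensors; only its image in $\End(M)\otimes D(H)$ is.

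One step of your verification does not close as stated: the ``backup'' of checking the identities on $E[H]$ and $U(\frh)$ and extending because the valid $\mu$ form a subalgebra. The subalgebra of $D(H)$ generated by $E[H]$ and $U(\frh)$ is a proper (dense) subalgebra, so the subalgebra property alone proves nothing about a general distribution. The correct extension mechanism, used throughout the paper (e.g.\ in the proofs of \cref{functoriality-M-fd,associativity}), is continuity plus density of $E[H]$: you must first rewrite the desired identity as an equality of two \emph{continuous} maps out of $D(H)$ into a genuine topological vector space such as $(M\otimes M')'\otimes D(H)$ (this is where finite dimensionality of $M$ enters; the target $\End(M\otimes X)$ for an abstract module $X$ carries no topology), check it on delta distributions where it reduces to the $H$-equivariance of $\ev$ and $\iota$, and then conclude by density. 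Once you phrase things this way your computation collapses to \cref{functoriality-M-fd} applied to $\ev$ and $\iota$, i.e.\ to the paper's proof; in particular cocommutativity and the antipode never need to be invoked explicitly.
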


\begin{proof}
Suppose first that $M$ is finite dimensional. We know that there is an adjunction $(M \otimes -) \dashv \Hom(M, -)$ of vector-space-valued functors, and we want to upgrade this to an adjunction of $D(H)$-module valued functors. Suppose $X$ is a $D(H)$-module. Since adjunctions are determined by their counit and unit maps, it is sufficient to show that the counit $\epsilon : M \otimes \Hom(M, X) \to X$ and the unit $\eta : X \to \Hom(M, M \otimes X)$ of the vector-space-valued adjunction are both $D(H)$-linear.\footnotemark

\footnotetext{Suppose $X$ and $Y$ are $D(H)$-modules and $f : M \otimes X \to Y$ is $D(H)$-linear. Under the isomorphism $\Hom(M \otimes X, Y) = \Hom(X, \Hom(M, Y))$, the map $f$ on the left corresponds on the right to the composite $f_* \circ \eta$, where $f_* : \Hom(M, M \otimes X) \to \Hom(M, Y)$. Then $D(H)$-linearity of $\eta$ and $f$ implies $D(H)$-linearity of $f_* \circ \eta$, proving that the isomorphism $\Hom(M \otimes X, Y) = \Hom(X, \Hom(M, Y))$ restricts to a well-defined map $\Hom_{D(H)}(M \otimes X, Y) \to \Hom_{D(H)}(X, \Hom(M, Y))$. We use the counit $\epsilon$ to go the other way. }

First let us show that $\epsilon$ is $D(H)$-linear. 
\[ M \otimes \Hom(M, X) = M \otimes (M' \otimes X) = (M \otimes M') \otimes X \]
as $D(H)$-modules by \cref{hom-construction,associativity}. Let $\ev : M \otimes M' \to E$ be the evaluation map. Under the isomorphism $M \otimes \Hom(M,X) = (M \otimes M') \otimes X$, the counit $\epsilon$ corresponds to the map $\ev \otimes X : (M \otimes M') \otimes X \to X$. Since $\ev$ is a homomorphism of representations, it follows from \cref{functoriality-M} that $\ev \otimes X$ is $D(H)$-linear. Thus $\epsilon$ is also $D(H)$-linear. 

Next we show that $\eta$ is $D(H)$-linear in the same way. Note that 
\[ \Hom(M, M \otimes X) = M' \otimes (M \otimes X) = (M' \otimes M) \otimes X \]
as $D(H)$-modules again by \cref{hom-construction,associativity}. Let $\iota : E \to M \otimes M'$ be the coevaluation. Under the isomorphism $\Hom(M, M \otimes X) = (M' \otimes M) \otimes X$, the unit $\eta$ corresponds to the map $\iota \otimes X : X \to (M' \otimes M) \otimes X$. Since $\iota$ is a homomorphism of representations, it follows from \cref{functoriality-M} that $\iota \otimes X$ is $D(H)$-linear. Thus $\eta$ is also $D(H)$-linear. 

This completes the proof when $M$ is finite dimensional. If $M$ is \emph{locally} finite dimensional, it is the colimit of its finite dimensional subrepresentations $N$. Since $M \otimes -$ is functorial in $M$ by \cref{functoriality-M}, and since $\Hom(M, -)$ is consequently also functorial in $M$, we have that 
\begin{align*}
\Hom(M \otimes -, -) &= \lim \Hom(N \otimes -, -) \\
&= \lim \Hom(-, \Hom(N, -)) \\
&= \Hom(-, \lim \Hom(N, -)) \\
&= \Hom(-, \Hom(M, -)). \qedhere \end{align*}
\end{proof}

\begin{remark}
If $X$ happens to be a separately continuous $D(H)$-module, then $\epsilon$ and $\eta$ are continuous, so it is sufficient to show that $\epsilon$ and $\eta$ are both $E[H]$-linear. This can be done in a more ``hands-on'' way than the general case discussed above. Notice that $\epsilon(m \otimes \sigma) = \sigma(m)$, so 
\[ \begin{aligned} \epsilon( \delta_h \bdot (m \otimes \sigma)) &= \epsilon (h \bdot m \otimes \delta_h \bdot \sigma) \\
&= (\delta_h \bdot \sigma)(h \bdot m) \\
&= \delta_h \bdot \sigma(h^{-1} \bdot h \bdot m) \\
&= \delta_h \bdot \sigma(m) \\
&= \delta_h \bdot \epsilon(m \otimes \sigma), \end{aligned} \]
proving that $\epsilon$ is $E[H]$-linear. Similarly, since $\eta(x)(m) = m \otimes x$, we have 
\[ \begin{aligned} \eta( \delta_h \bdot x)(m) &= m \otimes \delta_h \bdot x \\
&= \delta_h \bdot (h^{-1} \bdot m \otimes x) \\
&= \delta_h \bdot \eta(x)(h^{-1} \bdot m) \\
&= (\delta_h \bdot \eta(x))(m) \end{aligned} \]
so $\eta$ is also $E[H]$-linear. 

\end{remark}

\subsection{Finite generation}

Our goal of this subsection is to prove that the module produced by \cref{construction} is sometimes finitely generated.  This statement is analogous to \cite[lemma 3.3]{ST01b}, and its proof largely follows the proof of that result. The precise statement is the following. 

\begin{proposition} \label{tensor-product-fd}
Suppose $H$ is compact. Let $M$ be a finite dimensional locally analytic representation of $H$, and let $X$ be finitely generated module over $D(H)$ which is annihilated by $\frh$. Then $M \otimes X$ is a finitely generated $D(H)$-module. 
\end{proposition}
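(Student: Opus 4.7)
The plan is to reduce finite generation of $M \otimes X$ to that of $M \otimes D(H)$, and then to establish that the latter is $D(H)$-free of rank $\dim M$ by using the Hopf-like structure of $D(H)$ to convert its coproduct action into an external one.

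First I would choose generators $x_1,\dotsc,x_\ell$ of $X$ over $D(H)$ and define maps $\varphi_j : M \otimes D(H) \to M \otimes X$ by $m \otimes \mu \mapsto m \otimes \mu x_j$, where both sides carry the coproduct $D(H)$-action of \cref{construction}. A direct computation with the formula $\mu \bdot (m \otimes y) = \sum \mu_{(1)} m \otimes \mu_{(2)} y$ (writing $\Delta(\mu) = \sum \mu_{(1)} \otimes \mu_{(2)}$) shows each $\varphi_j$ is $D(H)$-linear, and $\bigoplus_j \varphi_j : (M \otimes D(H))^\ell \twoheadrightarrow M \otimes X$ is surjective because the $x_j$ generate $X$. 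It therefore suffices to prove $M \otimes D(H)$ is finitely generated over $D(H)$.

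To do this, I would equip $D(H) \otimes_E M$ with the external action $\mu \bdot (\nu \otimes m) := \mu\nu \otimes m$, which is manifestly $D(H)$-free of rank $\dim M$, and define $\Psi : D(H) \otimes_E M \to M \otimes D(H)$ by $\Psi(\nu \otimes m) = \nu \bdot (m \otimes 1)$; this is $D(H)$-linear by design. For the inverse, I would use the antipode $S : D(H) \to D(H)$ induced by the locally analytic inversion $h \mapsto h^{-1}$, and set $\Phi : M \otimes D(H) \to D(H) \otimes M$, $\Phi(m \otimes \nu) = \sum \nu_{(1)} \otimes (S(\nu_{(2)}) \bdot m)$. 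Since $\Delta(\delta_h) = \delta_h \otimes \delta_h$ and $S(\delta_h) = \delta_{h^{-1}}$, one checks on delta distributions that $\Phi\Psi(\delta_h \otimes m) = \Phi(hm \otimes \delta_h) = \delta_h \otimes h^{-1}(hm) = \delta_h \otimes m$, and symmetrically $\Psi\Phi(m \otimes \delta_h) = \Psi(\delta_h \otimes h^{-1}m) = m \otimes \delta_h$. A density argument then extends these to identities on the full spaces, giving $\Psi$ as the desired $D(H)$-module isomorphism.

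The main obstacle I expect is the density step. It requires $E[H]$ to be dense in $D(H)$ (which uses compactness of $H$) and both $\Phi$ and $\Psi$ to be continuous with respect to the natural Fr\'echet topologies on $D(H) \otimes_E M \cong D(H)^{\dim M}$ and $M \otimes D(H)$. Continuity of $\Psi$ follows from joint continuity of the coproduct action, itself a consequence of \cref{tensor-fd} since $M$ is finite dimensional; continuity of $\Phi$ additionally uses continuity of $S$ and of the coproduct $\Delta : D(H) \to D(H) \,\hat{\otimes}_i\, D(H)$. Although the hypothesis $\frh \bdot X = 0$ does not appear overtly in this strategy, compactness of $H$ enters crucially through the density of $E[H]$ in $D(H)$.
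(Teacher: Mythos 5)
Your argument is essentially correct, but it takes a genuinely different route from the paper. The paper first reduces to $X = D^\infty(H)$, then (after shrinking $H$ to a good analytic open subgroup and inducting on the length of $M$) embeds $M' \otimes C^\infty(H)$ into $C^\an(H)$ by a function-theoretic argument — injectivity of the multiplication map from analytic-times-smooth functions into $C^\an(H)$ via a power-series computation, followed by strictness of the embedding using the fact that $C^\an(H)^{J=0}$ carries the finest locally convex topology — and finally dualizes to obtain a surjection $D(H) \twoheadrightarrow M \otimes D^\infty(H)$. That route uses the hypothesis $\frh \bdot X = 0$ in an essential way. Your route instead proves the stronger statement that $M \otimes D(H)$ with the diagonal action is \emph{free} of rank $\dim M$, after which any $M \otimes X$ is a quotient of $(M \otimes D(H))^{\oplus \ell}$; this is shorter, never uses the hypothesis on $\frh$, and gives freeness rather than just a generating set. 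Two points need care, both surmountable. First, Sweedler notation is not literally available, since $\Delta_*$ lands in the completed tensor product $D(H)\,\hat{\otimes}_i\,D(H)$ (see the footnote to \cref{construction-fd}); your $\Phi$ should be defined as the composite of $\Delta_*$ with $\id\,\hat{\otimes}\,(\rho_M\circ S)$, which lands in $D(H)\otimes\End(M)$ — an ordinary, complete tensor product because $\End(M)$ is finite dimensional — and the $D(H)$-linearity of the maps $\varphi_j$ is exactly \cref{functoriality-X}, so no Sweedler manipulation is needed there either. Second, the identities $\Phi\Psi=\id$ and $\Psi\Phi=\id$ must be checked on Dirac distributions and extended by continuity, which requires density of $E[H]$ in $D(H)$ and Hausdorffness of the (Fréchet, since $H$ is compact) spaces involved; you address this, and continuity of $\Psi$ in fact only needs the separate continuity of the module structure furnished by \cref{construction-fd}. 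With these points made precise, your proof is complete and yields a slightly more general result than the proposition as stated.
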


Before proceeding with the proof, let us make a preliminary observation. 

\begin{remark} \label{two-actions-same}
Suppose $M$ is a finite dimensional locally analytic representation of $H$ (and $H$ need not be compact for the purposes of this remark). There are two ways to endow $M$ with the structure of a $D(H)$-module: 
\begin{enumerate}[(i)]
\item We can follow the construction of \cite[proposition 3.2 and the sentence before lemma 3.1]{ST02a}. 
\item We can note that the contragredient representation $M'$ is also locally analytic representation, so its dual $(M')'$ is a $D(H)$-module \cite[corollary 3.3]{ST02a}. We can then transfer this structure through the canonical isomorphism $M = (M')'$. 
\end{enumerate}
These two $D(H)$-module structures on $M$ coincide. Indeed, both $D(H)$-module structures are separately continuous, so it is sufficient to check that the $E[H]$-module structures coincide, but this follows from the fact that $M = (M')'$ is an isomorphism of representations of $H$.
\end{remark}

\begin{proof}[Proof of \cref{tensor-product-fd}]
\renewcommand{\bbH}{\mathbf{H}}
We remark that the assertion is true for $H$ if it is true for any open subgroup $H_1 \subseteq H$ (which is hence itself compact), since there is a canonical injective homomorphism $D(H_1) \to D(H)$. We may thus replace $H$ by an open subgroup in the course of the proof. 

After possibly shrinking $H$, we may assume that there is a good analytic open subgroup $\bbH$ of $H$ (in the sense of \cite[sec. 5.2]{EmertonA}) with the property that $H = \bbH^\circ(F)$ (cf. \cite[pp. 101-102]{EmertonA} for notation). We claim that the canonical map
\begin{equation}\label{injectivemap}
\begin{tikzcd} \cO(\bbH^\circ) \otimes_F C^\infty(H) \ar{r} & C^\an(H) \end{tikzcd} 
\end{equation} 
given by $\psi \otimes f \mapsto \psi|_H \cdot f$ is injective. 

\mbox{}

\noindent \textit{Proof of injectivity of \eqref{injectivemap}.} Suppose that for linearly independent elements $\psi_1, \dotsc, \psi_r \in \cO(\bbH^\circ)$ and functions $f_1, \ldots, f_r \in C^\infty(H)$ one has $\sum_{i=1}^r \psi_i|_H \cdot f_i = 0$. Choose a sufficiently small affinoid subgroup $\bbH_1 \sub \bbH^\circ$ such that $f_i|_{H_1h}$ is constant for all $i \in \{1, \dotsc, r\}$ and $h \in H$, where $H_1 = \bbH_1(F)$. If $c_{i,h}$ is the value of $f_i$ on the coset $H_1h$, then this implies that $\sum_{i=1}^r c_{i,h}\psi_i|_{H_1h} = 0$. Replacing $\psi_i$ by the function $x \mapsto \psi_i(xh)$, we may assume $h=1$. After choosing coordinates for $\bbH$ (coming from the $\mathscr{O}_F$-Lie lattice in $\frh$ which gives rise to $\bbH$, cf. \cite[p. 100]{EmertonA}), we may assume that $\bbH$ is, as a rigid analytic space, isomorphic to a $d$-dimensional polydisc of polyradius $(1, \dotsc,1)$ and $\bbH_1$ is a $d$-dimensional polydisc of polyradius $(|p|^n, \ldots, |p|^n)$ for some $n>0$. Then $\cO(\bbH^\circ)$ is the ring of power series $\sum_{\nu \in \bbN^d} a_\nu X^\nu \in F[[X_1, \ldots, X_d]]$ with $\lim_{|\nu|\ra \infty} |a_\nu|r^{|\nu|} =0$ for all $r<1$. If the restriction of such a power series to $(p^n\fro_F)^d$ vanishes, then all coefficients must vanish.\footnote{This can be seen by induction on $d$. We may assume without loss of generality that $f(X_1, \ldots, X_d)$ is a power series in $F\langle X_1, \ldots, X_d\rangle$ which vanishes on $\fro_F^d$. If $d=1$ then, as is well known, a non-zero power series in one variable can have only finitely many zeroes in any affinoid disc where it converges, hence $f=0$ in this case. If now $f = \sum_{k=0}^\infty f_k(X_2, \ldots,X_d)X_1^k$ is as above, then, for all fixed $(a_2, \ldots,a_d) \in \fro_F^{d-1}$, one has $f(X_1, a_2, \ldots,a_d) = 0$, by the case $d=1$. Hence $f_k(a_2, \ldots,a_d) = 0$ for all $k \ge 0$ and $(a_2, \ldots,a_d) \in \fro_F^{d-1}$. By induction, this implies that all $f_k$ vanish identically.} But this implies that $\sum_{i=1}^r c_{i,h}\psi_i = 0$, and hence $c_{i,h}=0$ for all $i \in \{1, \ldots,r\}$ and $h \in H$. \qed

\mbox{}

It suffices to prove that $M \otimes X$ is finitely generated when $X = D^\infty(H)$.\footnote{Since $X$ is a finitely generated $D(H)$-module which is annihilated by $\frh$, it is naturally a finitely generated module over $D^\infty(H)$. In other words, there is a $D(H)$-linear surjection $D^\infty(H)^{\oplus m} \to X$. Tensoring with $M$, we obtain another $D(H)$-linear surjection \[ (M \otimes D^\infty(H))^{\oplus m} = M \otimes \left(D^\infty(H)^{\oplus m} \right) \to M \otimes X. \] If $M \otimes D^\infty(H)$ is a finitely generated $D(H)$-module, then so is a finite direct sum of copies of this, and so is any quotient thereof.}
Write $\cD^\an(\bbH^\circ) = (\cO(\bbH^\circ)_E)'_b$ for the analytic distribution algebra of $\bbH^\circ$ with coefficients in $E$ (cf. \cite[2.2.2]{EmertonA} for the definition of $\cD^\an(\bbH^\circ)$, and the discussion of in \cite[p. 95-102]{EmertonA} for some properties of this algebra). This is a topological algebra whose underlying topological vector space is of compact type \cite[p. 102]{EmertonA}, and hence it is reflexive \cite[16.10]{NFA}. The canonical map $\cO(\bbH^\circ) \otimes_F E \to C^\an(H)$ gives rise to a continuous homorphism of topological algebras $D(H) \to \cD^\an(\bbH^\circ)$. 

Since $M$ is finite dimensional and locally analytic, the same is true for the dual space $M'$ with its contragredient action. There is thus a small enough group of the form $\bbH^\circ$ such that $M'$ is an analytic representation of $\bbH^\circ$ in the sense of \cite[3.6.1]{EmertonA}. As a consequence, $M = (M')'$ is naturally a module over $\cD^\an(\bbH^\circ)$, i.e., the map $D(H) \times M \to M$ which gives $M$ the structure of a $D(H)$-module factors as $D(H) \times M \to \cD^\an(\bbH^\circ) \times M$. Since $M$ is finite dimensional, we may argue by induction on the length of $M$ as a $\cD^\an(\bbH^\circ)$-module. It thus suffices to prove the case when $M$ is simple as $\cD^\an(\bbH^\circ)$-module. Choosing a generator for $M$ as a $\cD^\an(\bbH^\circ)$-module we obtain a surjection $\cD^\an(\bbH^\circ) \to M$ of $\cD^\an(\bbH^\circ)$-modules. This induces an $H$-equivariant injection $M' \hookrightarrow \cD^\an(\bbH^\circ)'_b = \cO(\bbH^\circ)_E$, by reflexivity. This map gives rise to an $H$-equivariant injection 
\[ \begin{tikzcd} M' \otimes C^\infty(G) \ar[hookrightarrow]{r} & \cO(\bbH^\circ)_E \otimes C^\infty(H) = \cO(\bbH^\circ) \otimes_F C^\infty(H) \end{tikzcd} \]
which we compose with the injection \eqref{injectivemap} to obtain an injective $H$-equivariant map
\begin{equation}\label{injectivemap2}
\begin{tikzcd} M' \otimes C^\infty(H) \ar[hookrightarrow]{r} & C^\an(H). \end{tikzcd}
\end{equation}
The group action of $H$ on the domain and target of \ref{injectivemap2} induces a derived action of $\frh$ and hence the structure of a $U(\frh)$-module on these representations. Since $M$ is finite-dimensional, the kernel $J$ of the map $U(\frh) \to \End(M')$ is a two-sided ideal of finite codimension.   Because the action of $\frh$ on $C^\infty(H)$ is trivial, the ideal $J$ also acts trivially on the domain of \ref{injectivemap2}. This implies that the image of \ref{injectivemap2} is contained in 
\[ C^\an(H)^{J=0} = \{f \in C^\an(H) \mid z \bdot f = 0 \text{ for all } z \in J\}. \]
By \cite[3.1]{ST01b}, the subspace topology on $C^\an(H)^{J=0}$ induced from $C^\an(H)$ is the finest locally convex topology. Because the natural topology on $M' \otimes C^\infty(H)$ is also the finest locally convex topology, we see that the map \ref{injectivemap2} is strict. As a consequence, the map $D(H) \to M \otimes D^\infty(H)$ obtained from \ref{injectivemap2} by passing to continuous dual spaces is surjective, cf. \cite[9.4]{NFA}.
\end{proof}
\renewcommand{\bbH}{\mathbb{H}}

\section{Modules over \texorpdfstring{$D(\frg,H)$}{D(g,H)}}

Let $G$ be a locally analytic group and $H$ a closed subgroup. In this section, we establish some foundational facts about the module theory of $D(\frg,H)$. Recall that this is defined to be the subring of $D(G)$ generated by $\frg$ and $D(H)$ \cite[section 3.4]{OrlikStrauchJH} \cite[section4]{ScSt}. 

\subsection{Some functional analysis}

\begin{lemma} \label{adjoint-continuity}
Suppose $\beta : V \times W \to Z$ is separately (resp. jointly) continuous bilinear map of locally convex vector spaces. Let $\beta^* : V \to \Hom(W, Z)$ be the adjoint map. Then $\beta^*(V) \subseteq \Hom^\cts(W, Z)$ and $\beta^*$ is continuous when $\Hom^\cts(W, Z)$ is given the topology of pointwise (resp. bounded) convergence. 
\end{lemma}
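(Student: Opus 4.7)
The plan is to unwind definitions. Recall that $\beta^*(v)(w) = \beta(v,w)$. I will handle the two parts of the statement separately, as they require slightly different arguments.

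First I would verify that $\beta^*(V) \subseteq \Hom^\cts(W,Z)$. Fix $v \in V$. Then $\beta^*(v)$ is, by definition, the map $w \mapsto \beta(v,w)$, and this is continuous by separate continuity of $\beta$ in the second variable. This argument works in either the separately or jointly continuous case, since joint continuity implies separate continuity.

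Next I would handle the separately continuous case. A subbasis of neighborhoods of $0$ for the topology of pointwise convergence on $\Hom^\cts(W,Z)$ is given by sets of the form $\mathcal{N}(w,U) = \{\sigma \in \Hom^\cts(W,Z) : \sigma(w) \in U\}$, where $w \in W$ and $U$ is an open lattice in $Z$. Its preimage under $\beta^*$ is $\{v \in V : \beta(v,w) \in U\}$, which is open by separate continuity of $\beta$ in the first variable. Hence $\beta^*$ is continuous.

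Finally, for the jointly continuous case, a basis of neighborhoods of $0$ for the topology of bounded convergence on $\Hom^\cts(W,Z)$ is given by sets of the form $\mathcal{N}(B,U) = \{\sigma \in \Hom^\cts(W,Z) : \sigma(B) \subseteq U\}$, where $B \subseteq W$ is bounded and $U$ is an open lattice in $Z$. Fix such $B$ and $U$. By joint continuity, there exist open lattices $N_V \subseteq V$ and $N_W \subseteq W$ with $\beta(N_V, N_W) \subseteq U$. Since $B$ is bounded, there is a nonzero scalar $\lambda \in E$ with $\lambda B \subseteq N_W$. Bilinearity then gives $\beta(\lambda^{-1} N_V, B) = \beta(N_V, \lambda^{-1} \cdot \lambda B) \cdot \lambda^{-1} \cdot \lambda \subseteq \beta(N_V, N_W) \subseteq U$ (more precisely, $\beta(\lambda^{-1}N_V, B) = \lambda^{-1}\beta(N_V, B) = \lambda^{-1} \cdot \lambda \cdot \beta(N_V, \lambda^{-1}B) \subseteq \beta(N_V, N_W) \subseteq U$, using that $U$ is a lattice and $\lambda^{-1}\lambda = 1$; the cleanest version is to note $\beta(N_V, B) = \lambda \beta(N_V, \lambda^{-1}B) \subseteq \lambda \beta(N_V, N_W) \subseteq \lambda U$, and then replace $N_V$ by $\lambda^{-1} N_V$). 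Thus $\lambda^{-1} N_V$ is an open lattice in $V$ whose image under $\beta^*$ lies in $\mathcal{N}(B,U)$, which proves continuity of $\beta^*$.

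No step should present a real obstacle; the only subtlety is keeping track of the scalar rescaling in the last paragraph, which exploits the fact that lattices are $\mathscr{O}_E$-submodules and that boundedness in the non-archimedean setting is phrased in terms of absorption by neighborhoods of $0$.
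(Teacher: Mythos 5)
Your proof is correct and follows essentially the same route as the paper's: continuity of each $\beta^*(v)$ from separate continuity in the second variable, pointwise convergence handled by separate continuity in the first variable, and the bounded-convergence case by absorbing the bounded set $B$ into an open lattice and rescaling. The only difference is cosmetic (you phrase the pointwise case via preimages of subbasic neighborhoods rather than exhibiting a lattice directly), and your scalar bookkeeping, though verbose, lands in the right place.
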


\begin{proof}
For fixed $v \in V$, note that $w \mapsto \beta(v, w)$ is continuous by separate continuity of $\beta$. Fix an open lattice $N$ in $Z$. If $w \in W$, continuity of $v \mapsto \beta(v, w)$ implies that there exists an open lattice $L$ in $V$ such that $\beta(L, w) \subseteq N$. In other words, $\beta^*(L) \subseteq L(w, N)$, so $\beta^*$ is continuous for the topology of pointwise convergence (ie, the weak topology \cite[p. 30]{NFA}). 

Now suppose $\beta$ is jointly continuous and $B$ is a bounded subset of $W$. Then there exist open lattices $L \subseteq V$ and $M \subseteq W$ such that $\beta(L, M) \subseteq N$. Since $B$ is bounded, there exists an $a \in E$ such that $B \subseteq aM$. Then \[ \beta(a^{-1}L, B) \subseteq \beta(a^{-1}L, aM) = \beta(L, M) \subseteq N, \]
so $\beta^*(a^{-1}L) \subseteq L(B, M)$. This shows that $\beta^*$ is continuous for the topology of bounded convergence (ie, the strong topology \cite[p. 30]{NFA}). 
\end{proof}

The following is a version of \cite[proposition 1.2.28]{EmertonA} in which the space is not necessarily compact, but the vector space is finite dimensional. 

\begin{lemma} \label{functions-into-fd}
If $V$ is a finite dimensional vector space, then the natural map $C^\an(H) \otimes V \to C^\an(H, V)$ given by $f \otimes v \mapsto [h \mapsto f(h)v]$ is a topological isomorphism. 
\end{lemma}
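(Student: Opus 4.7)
The plan is to reduce to the scalar case by choosing a basis. Fix a basis $v_1, \ldots, v_n$ of $V$. This induces mutually inverse $E$-linear bijections
\[ C^\an(H)^{\oplus n} \xrightarrow{\sim} C^\an(H) \otimes V, \qquad (f_1, \ldots, f_n) \mapsto \sum_i f_i \otimes v_i, \]
and
\[ C^\an(H, V) \xrightarrow{\sim} C^\an(H)^{\oplus n}, \qquad g \mapsto (g_1, \ldots, g_n), \]
where $g = \sum_i g_i v_i$. The second bijection uses the elementary observation that a function $g : H \to V$ is locally analytic if and only if each coordinate function $g_i : H \to E$ is locally analytic; this follows directly from the definition of local analyticity after choosing any norm on $V$ (all such norms being equivalent because $V$ is finite dimensional). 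Under these identifications, the natural map $C^\an(H) \otimes V \to C^\an(H, V)$ becomes the identity on $C^\an(H)^{\oplus n}$, so it is an algebraic isomorphism.

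It remains to verify that these identifications are also topological isomorphisms. On the left, Lemma \ref{tensor-fd} guarantees that there is no ambiguity in the topology on $C^\an(H) \otimes V$, and by decomposing along the basis $\{v_i\}$ one sees immediately that this topology coincides with the direct sum topology on $C^\an(H)^{\oplus n}$. On the right, the topology on $C^\an(H, V)$ is defined as a locally convex (LF-type) inductive limit over analytic partitions of $H$ of spaces of $V$-valued analytic functions on affinoid charts; for a single chart $U$, the Banach space of $V$-valued analytic functions is, by construction and again because $V$ is finite dimensional, topologically the direct sum of $n$ copies of the corresponding Banach space of $E$-valued analytic functions. Since locally convex inductive limits commute with finite direct sums, the coordinate map identifies $C^\an(H, V)$ topologically with $C^\an(H)^{\oplus n}$.

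The only real subtlety, and thus the main obstacle, is checking that on each affinoid chart the space of $V$-valued analytic functions is indeed (topologically, not just algebraically) the direct sum of $n$ copies of the scalar analytic function space; this rests on the fact that for a Banach algebra $A$ and finite dimensional $V$, the space $A \otimes V$ with its unique Hausdorff locally convex topology is complete and equals the direct sum $A^{\oplus n}$. Once this is in hand, both sides assemble to $C^\an(H)^{\oplus n}$ in compatible ways and the claim follows.
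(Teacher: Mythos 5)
Your proof is correct and follows essentially the same route as the paper's: choose a basis of $V$ and reduce to the identity on $C^\an(H)^{\oplus n}$. The paper is terser---it deduces continuity of the forward map from separate continuity of the bilinear map together with \cref{tensor-fd} and simply asserts continuity of the coordinate-wise inverse---whereas you justify the topological identification of $C^\an(H,V)$ with $C^\an(H)^{\oplus n}$ chart by chart, which is precisely the detail the paper leaves implicit.
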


\begin{proof}
The bilinear map $(f, v) \mapsto [h \mapsto f(h)v]$ is separately continuous, so the induced map $C^\an(H) \otimes V \to C^\an(H, V)$ is continuous. If $v_1, \dotsc, v_n$ is a basis for $V$, and if $f \in C^\an(H, V)$, then we can define functions $f_i \in C^\an(H)$ by
\[ f(h) = \sum_i f_i(h) v_i. \]
The map $C^\an(H, V) \to C^\an(H) \otimes V$ given by $f \mapsto \sum f_i \otimes v_i$ defines a continuous inverse to the map in the statement. 
\end{proof}

\begin{definition} \label{convolution}
Any function $f \in C^\an(H)$ defines a continuous  multiplication map $C^\an(H) \to C^\an(H)$. It thus induces a continuous map on dual spaces $D(H) \to D(H)$, which we denote by $\mu \mapsto \mu_f$. Explicitly, for $g \in C^\an(H)$, we have 
\[ \mu_f(g) = \mu(fg). \]
Note that the map $(f, \mu) \mapsto \mu_f$ is bilinear and separately continuous (see below). Thus the induced map $C^\an(H) \otimes_i D(H) \to D(H)$ is also continuous. 
\end{definition}

\begin{proof}[Proof of separate continuity]
We have already noted that $(f, \mu) \mapsto \mu_f$ is continuous for fixed $f$. For fixed $\mu$, let $M$ be an open lattice in $E$, and let $B$ is a bounded subset of $C^\an(H)$, so that \[ L(B, M) = \{ \mu \in D(H) : \mu(B) \subseteq M\} \] is an open lattice in $D(H)$. We want to construct an open lattice $L$ in $C^\an(H)$ such that $\mu_f \in L(B,M)$ for all $f \in L$. 

Since the multiplication map $C^\an(H) \otimes_\pi C^\an(H) \to C^\an(H)$ and $\mu : C^\an(H) \to E$ are both continuous, there exist lattices $L_1$ and $L_2$ in $C^\an(H)$ such that \[ \mu(L_1 L_2) \subseteq M. \] Since $B$ is bounded, there exists an $a \in E$ such that $B \subseteq aL_2$. Let $L = a^{-1}L_1$ and note that this an open lattice in $C^\an(H)$. Then any $f \in L$ can be written as $a^{-1}f_1$ for $f_1 \in L_1$. Similarly, since $B \subseteq aL_2$, any $g \in B$ can be written as $ag_2$ where $g_2 \in L_2$. Then 
\[ \mu_f(g) = \mu(fg) = \mu(a^{-1}f_1 a g_2) = \mu(f_1 f_2) \in \mu(L_1 L_2) \subseteq M \]
which shows that $\mu_f \in L(B, M)$ for all $f \in L$. 
\end{proof}

\subsection{The adjoint-twisted braiding}

\begin{definition} \label{zeta-definition}
Let $\zeta$ denote the composite 
\[ \begin{tikzcd} (D(H) \otimes \frg) \otimes_i C^\an(H, \End(\frg))  \ar{r}{\sim} & (D(H) \otimes \frg) \otimes_i (C^\an(H) \otimes \End(\frg)) \ar{d} \\ & \frg \otimes D(H) \end{tikzcd} \]
where the horizontal map is the tensor product of $D(H) \otimes \frg$ with the topological isomorphism from \cref{functions-into-fd} (applied with $V = \End(\frg)$), and the vertical one is given by \[ (\mu \otimes x) \otimes (f \otimes \sigma) \mapsto \sigma(x) \otimes \mu_f. \]
The evaluation map $\frg \otimes \End(\frg) \to \frg$ is automatically continuous (as it is a linear map between finite dimensional vector spaces), and we have already noted that $C^\an(H) \otimes_i D(H) \to D(H)$ is also continuous in \cref{convolution}. It follows that $\zeta$ is continuous. 
\end{definition}

\begin{definition}
Let $\Ad \in C^\an(H, \End(\frg))$ denote the adjoint action of $H$ on $\frg$. We define
\[ \begin{tikzcd} D(H) \otimes \frg \ar{r}{\zeta_{\Ad}} & \frg \otimes D(H) \end{tikzcd} \] 
to be the map given by $\mu \otimes x \mapsto \zeta(\mu \otimes x \otimes \Ad)$. Continuity of $\zeta$ implies that $\zeta_{\Ad}$ is continuous. We call this map the \emph{adjoint-twisted  braiding} of $D(H)$ and $\frg$. 
\end{definition}

\begin{lemma} \label{reformulate-joho-lemma}
Following \cite[proof of lemma 3.5]{OrlikStrauchJH}, suppose $x_1, \dotsc, x_n$ is a basis for $\frg$, and fix $x \in \frg$. Let $c_i \in C^\an(H)$ be defined by $\Ad(h)(x) = \sum_i c_i(h) x_i$ and let $\mu_i = \mu_{c_i}$. Then 
\[ \zeta_{\Ad}(\mu \otimes x) = \sum_i x_i \otimes \mu_i. \]
\end{lemma}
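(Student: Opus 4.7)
The plan is to unwind the definition of $\zeta$ from \cref{zeta-definition}. Let $E_{ij} \in \End(\frg)$ denote the matrix units associated to the basis $x_1,\ldots,x_n$, so that $E_{ij}(x_k)=\delta_{jk}x_i$. The inverse of the topological isomorphism of \cref{functions-into-fd} sends $\Ad : H \to \End(\frg)$ to $\sum_{i,j} c_{ij} \otimes E_{ij} \in C^\an(H) \otimes \End(\frg)$, where $c_{ij} \in C^\an(H)$ is determined by $\Ad(h)(x_j)=\sum_i c_{ij}(h)x_i$ (these are just the matrix coefficients of $\Ad(h)$ in the given basis).

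Applying the explicit formula for $\zeta$ from \cref{zeta-definition} then yields
\[ \zeta_{\Ad}(\mu \otimes x) = \sum_{i,j} E_{ij}(x) \otimes \mu_{c_{ij}}. \]
Writing $x = \sum_k a_k x_k$, we have $E_{ij}(x) = a_j x_i$, so the right-hand side rearranges to $\sum_i x_i \otimes \bigl(\sum_j a_j \mu_{c_{ij}}\bigr)$. Since $f \mapsto \mu_f$ is clearly linear in $f$ (from the defining formula $\mu_f(g)=\mu(fg)$ in \cref{convolution}), the inner sum equals $\mu_{\sum_j a_j c_{ij}}$.

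It remains only to identify $\sum_j a_j c_{ij}$ with the function $c_i$ from the statement, but this is immediate from
\[ \Ad(h)(x) = \sum_j a_j \Ad(h)(x_j) = \sum_i \Bigl(\sum_j a_j c_{ij}(h)\Bigr) x_i, \]
which forces $c_i = \sum_j a_j c_{ij}$. Substituting yields the claimed identity $\zeta_{\Ad}(\mu \otimes x) = \sum_i x_i \otimes \mu_{c_i}$. There is no substantive obstacle: the lemma is a direct unwinding of \cref{zeta-definition} together with the explicit description of the inverse isomorphism supplied in the proof of \cref{functions-into-fd}.
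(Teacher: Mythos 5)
Your proposal is correct and follows essentially the same route as the paper's proof: both decompose $\Ad$ under the isomorphism of \cref{functions-into-fd} into matrix coefficients against the matrix units (the paper calls them $\sigma_{i,j}$), apply the explicit formula for $\zeta$, and then use linearity of $f \mapsto \mu_f$ to collapse $\sum_j a_j \mu_{c_{ij}}$ into $\mu_{c_i}$. There is nothing to add.
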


\begin{proof}
This is a straightforward calculation. Let $\sigma_{i,j} \in \End(\frg)$ be the map that sends $x_j$ to $x_i$ and annihilates all $x_k$ for $k \neq j$. Under the isomorphism of \cref{functions-into-fd}, $\Ad \in C^\an(H, \End(\frg))$ corresponds to $\sum_{i,j} c_{i,j} \otimes \sigma_{i,j}$ where $c_{i,j} \in C^\an(H)$ is defined by
\[ \Ad(h) = \sum_{i,j} c_{i,j}(h) \sigma_{i,j}. \]
Then
\[ \zeta_{\Ad}(\mu \otimes x) = \sum_{i,j} \sigma_{i,j}(x) \otimes \mu_{c_{i,j}}. \]
Suppose $x = \sum_j b_j x_j$ for $b_j \in E$. Then $\sigma_{i,j}(x) = b_j x_i$, so 
\[ \zeta_{\Ad}(\mu \otimes x) = \sum_{i,j} b_j x_i \otimes \mu_{c_{i,j}} = \sum_i x_i \otimes \left( \sum_j b_j \mu_{c_{i,j}} \right). \]
Now note that
\[ \sum_i c_i(h) x_i = \Ad(h)(x) = \sum_i \left( \sum_j c_{i,j}(h) b_j \right) x_i \]
so $c_i = \sum b_j c_{i,j}$. Thus
\[ \left( \sum b_j \mu_{c_{i,j}} \right)(g) = \sum_j \mu(b_j c_{i,j} g) = \mu(c_i g) = \mu_{c_i}(g) = \mu_i(g), \]
so $\zeta_{\Ad}(\mu \otimes x) = \sum_i x_i \otimes \mu_i$. 
\end{proof}

\begin{corollary} \label{joho-and-adjoint}
Suppose $x \in \frg$. If $h \in H$ and $\delta_h \in D(H)$ is the corresponding delta distribution, then 
\[ \zeta_{\Ad}(\delta_h \otimes x) = \Ad(h)(x) \otimes \delta_h. \]
\end{corollary}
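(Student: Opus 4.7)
The plan is to deduce this as a direct calculation from \cref{reformulate-joho-lemma}, with the only real content being an explicit evaluation of the convolution-style operation $\mu \mapsto \mu_f$ when $\mu$ is a delta distribution.

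First I would pick a basis $x_1, \ldots, x_n$ of $\frg$ and define $c_i \in C^\an(H)$ by $\Ad(h')(x) = \sum_i c_i(h') x_i$, exactly as in the setup of \cref{reformulate-joho-lemma}. Applying that lemma with $\mu = \delta_h$ gives
\[ \zeta_{\Ad}(\delta_h \otimes x) = \sum_i x_i \otimes (\delta_h)_{c_i}. \]
The next step is to observe that for any $f \in C^\an(H)$, the definition in \cref{convolution} yields $(\delta_h)_f(g) = \delta_h(fg) = f(h) g(h) = f(h) \delta_h(g)$, so $(\delta_h)_f = f(h)\, \delta_h$. Substituting $f = c_i$ and evaluating at $h$ gives $(\delta_h)_{c_i} = c_i(h)\, \delta_h$.

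Plugging this back in, I would compute
\[ \zeta_{\Ad}(\delta_h \otimes x) = \sum_i x_i \otimes c_i(h)\, \delta_h = \left( \sum_i c_i(h) x_i \right) \otimes \delta_h = \Ad(h)(x) \otimes \delta_h, \]
where the last equality is just the definition of the coefficients $c_i(h)$. No step here is a real obstacle; the only mild subtlety is checking that $(\delta_h)_f = f(h) \delta_h$, which is immediate from unwinding the definition of the twisted action of $C^\an(H)$ on $D(H)$ in \cref{convolution}.
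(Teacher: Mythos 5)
Your proof is correct and is essentially the paper's own argument: both apply \cref{reformulate-joho-lemma} with $\mu = \delta_h$ and reduce to the observation that $(\delta_h)_{c_i} = c_i(h)\,\delta_h$, differing only in the direction in which the chain of equalities is written.
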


\begin{proof}
As in \cref{reformulate-joho-lemma}, let $x_1, \dotsc, x_n$ be a basis for $\frg$ and let $c_i \in C^\an(H)$ be defined by $\Ad(h)(x) = \sum_i c_i(h) x_i$. For a distribution $\mu \in D(H)$, set $\mu_i = \mu_{c_i}$. Then
\[ \Ad(h)(x) \otimes \delta_h = \sum_i x_i \otimes c_i(h)\delta_h, \]
but 
\[ (c_i(h)\delta_h)(g) = c_i(h) g(h) = \delta_{h,i}(g) \]
and then \cref{reformulate-joho-lemma} allows us to conclude that 
\[ \Ad(h)(x) = \sum x_i \otimes \delta_{h, i} = \zeta_{Ad}(\delta_h \otimes x). \qedhere \]
\end{proof}

\begin{corollary} \label{is-in-category}
The following diagram of continuous maps commutes. 
\[ \begin{tikzcd} D(H) \otimes \frg \ar{r}{\zeta_{\Ad}} \ar[bend right]{dr} & \frg \otimes D(H) \ar{d} \\ & D(G) \end{tikzcd} \]
Here, both maps into $D(G)$ are restrictions of the multiplication map on $D(G)$. 
\end{corollary}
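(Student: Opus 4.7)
The two maps in the diagram, call them $T_1, T_2 : D(H) \otimes \frg \to D(G)$, are both continuous: $T_1(\mu \otimes x) = \mu \cdot x$ (computed via the inclusion $\frg \hookrightarrow U(\frg) \hookrightarrow D(G)$ and the multiplication in $D(G)$) is continuous because multiplication $D(G) \otimes_i D(G) \to D(G)$ is continuous, and $T_2 = \mathrm{mult} \circ \zeta_{\Ad}$ is continuous since $\zeta_{\Ad}$ was shown to be continuous in \cref{zeta-definition}. The plan is to verify agreement on a dense subset and then invoke continuity.

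First, I would reduce to checking the equality one $x \in \frg$ at a time. Since $\frg$ is finite dimensional, \cref{tensor-fd} identifies $D(H) \otimes \frg$ topologically with $D(H)^{\oplus \dim \frg}$; so after fixing a basis $x_1,\dots,x_n$ of $\frg$, it suffices to show that, for each $x \in \frg$, the two continuous maps $D(H) \to D(G)$ defined by $\mu \mapsto \mu \cdot x$ and $\mu \mapsto \mathrm{mult}(\zeta_{\Ad}(\mu \otimes x))$ coincide. Since $D(G)$ is Hausdorff and the group algebra $E[H] \subseteq D(H)$ of Dirac distributions is dense in $D(H)$ (this follows from reflexivity of $D(H)$, as $D(H) = (C^\an(H))'_b$ and evaluations at points of $H$ clearly separate $C^\an(H)$), it is then enough to prove the equality when $\mu = \delta_h$ for $h \in H$.

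At this point \cref{joho-and-adjoint} gives $\zeta_{\Ad}(\delta_h \otimes x) = \Ad(h)(x) \otimes \delta_h$, so the task collapses to the single identity
\[
\delta_h \cdot x \;=\; \Ad(h)(x) \cdot \delta_h \qquad \text{in } D(G).
\]
I would verify this by evaluating both sides on an arbitrary $f \in C^\an(G)$, using that $x$ acts on $C^\an(G)$ by $(x \cdot f)(g) = \tfrac{d}{dt}\big|_{t=0} f(g \exp(tx))$ and that $\exp(t\,\Ad(h)(x)) = h\exp(tx)h^{-1}$. A direct computation then gives
\[
(\delta_h \cdot x)(f) \;=\; \tfrac{d}{dt}\big|_{t=0} f(h\exp(tx)) \;=\; (\Ad(h)(x) \cdot \delta_h)(f),
\]
which completes the argument.

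The only point I expect to require a little care is the density statement for $E[H]$ in $D(H)$; this is a standard fact in Schneider--Teitelbaum's framework and could, if one prefers a more self-contained argument, be bypassed by noting that $\zeta_{\Ad}$ and the multiplication map are constructed precisely so that the identity $\delta_h \cdot x = \Ad(h)(x) \cdot \delta_h$ extends continuously. The rest of the proof is essentially a formal continuity-and-density argument; the content is concentrated in the classical adjoint identity, together with \cref{joho-and-adjoint}.
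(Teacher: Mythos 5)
Your argument is correct, but it follows a somewhat different route than the paper. The paper's proof is a two-line citation: it invokes \cref{reformulate-joho-lemma}, which computes $\zeta_{\Ad}(\mu \otimes x) = \sum_i x_i \otimes \mu_i$ for an \emph{arbitrary} distribution $\mu \in D(H)$, and then refers to the proof of lemma 3.5 in \cite{OrlikStrauchJH} for the identity $\mu \cdot x = \sum_i x_i \cdot \mu_i$ in $D(G)$; continuity of the two maps is deduced from separate continuity of multiplication on $D(G)$ \cite[proposition 2.3]{ST02b}. You instead avoid the external reference entirely: you use continuity of both composites together with density of $E[H]$ in $D(H)$ (a fact the paper itself uses later, in \cref{continuity-DgP-prelim}) to reduce to Dirac distributions, apply \cref{joho-and-adjoint}, and then verify $\delta_h \cdot x = \Ad(h)(x)\cdot \delta_h$ by evaluating on test functions via $h\exp(tx)h^{-1} = \exp(t\,\Ad(h)(x))$. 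This buys a self-contained proof whose only analytic inputs are \cref{tensor-fd}, the continuity of $\zeta_{\Ad}$ from \cref{zeta-definition}, and the density of Dirac distributions, at the cost of being longer than the paper's citation. Two small points to tidy up: you should note explicitly that the map $\frg \otimes D(H) \to D(G)$ is also continuous (same argument as for $D(H)\otimes\frg \to D(G)$: separate continuity of multiplication plus finite-dimensionality of $\frg$), and you should fix the sign/side convention for the embedding $\frg \hookrightarrow D(G)$ once and for all in the test-function computation --- the identity holds for either standard convention, but the intermediate formulas change.
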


\begin{proof}
The commutativity follows from \cref{reformulate-joho-lemma} and \cite[proof of lemma 3.5]{OrlikStrauchJH}. The fact that the maps into $D(G)$ are continuous is a consequence of the fact that the multiplication map on $D(G)$ is separately continuous \cite[proposition 2.3]{ST02b}. 
\end{proof}

\subsection{Universal property of \texorpdfstring{$D(\frg,H)$}{D(g,H)}}

\begin{definition}
Let $\mathscr{C}$ denote the category of triples $(A, \alpha, \beta)$ where $A$ is a $E$-algebra, $\alpha : D(H) \to A$ is a homomorphism of $E$-algebras, and $\beta : \frg \to A$ is a homomorphism of Lie algebras, such that $\alpha|_{\frh} = \beta|_{\frh}$ and such that the following diagram commutes, where $\zeta_{\Ad}$ is the adjoint-twisted braiding. 
\[ \begin{tikzcd} D(H) \otimes \frg \ar{r}{\zeta_{\Ad}} \ar[bend right]{dr}[swap]{\alpha \otimes \beta} & \frg \otimes D(H) \ar{d}{\beta \otimes \alpha} \\ & A \end{tikzcd} \]
Morphisms $(A, \alpha, \beta) \to (A', \alpha', \beta')$ in $\mathscr{C}$ are $E$-algebra homomorphisms $A \to A'$ commuting with the maps from $D(H)$ and $\frg$.
\[ \begin{tikzcd} D(H) \ar{r}{\alpha} \ar[bend right]{dr}[swap]{\alpha'} & A \ar{d} & \frg \ar{r}{\beta} \ar[bend right]{dr}[swap]{\beta'} & A \ar{d} \\
& A' & & A' \end{tikzcd} \]
\end{definition}

\begin{remark} \label{restrict}
Suppose $(A, \alpha, \beta) \in \mathscr{C}$. If $A'$ is the $E$-subalgebra of $A$ generated by the images of $\alpha$ and $\beta$, then evidently $(A', \alpha, \beta)$ is again an object of $\mathscr{C}$. 
\end{remark}

\begin{theorem} \label{universal-prop}
$D(\frg,H)$ is an initial object of $\mathscr{C}$. 
\end{theorem}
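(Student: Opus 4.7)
The plan is first to verify that $(D(\frg,H), \alpha_0, \beta_0)$, with $\alpha_0 : D(H) \hookrightarrow D(\frg,H)$ and $\beta_0 : \frg \hookrightarrow D(\frg,H)$ the natural inclusions, is an object of $\mathscr{C}$, and then to show it is initial. Membership is essentially immediate: the agreement $\alpha_0|_\frh = \beta_0|_\frh$ follows because the maps $U(\frg) \hookrightarrow D(G)$ and $D(H) \hookrightarrow D(G)$ restrict to a common embedding on $\frh$ (given by left-invariant distributions supported at the identity), and the required commutativity of the $\zeta_{\Ad}$-diagram is exactly \cref{is-in-category}.

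To establish initiality, I would fix $(A,\alpha,\beta) \in \mathscr{C}$ and construct the unique morphism $\phi : D(\frg,H) \to A$. Uniqueness is automatic: $D(\frg,H)$ is by definition generated as an $E$-algebra by the images of $\alpha_0$ and $\beta_0$, so any morphism in $\mathscr{C}$ is determined on these generators. For existence, the universal property of $U(\frg)$ extends $\beta$ to an $E$-algebra homomorphism $\bar\beta : U(\frg) \to A$, and the hypothesis $\alpha|_\frh = \beta|_\frh$ forces $\bar\beta$ and $\alpha$ to agree on all of $U(\frh)$. Using the identification $D(\frg,H) = U(\frg) \otimes_{U(\frh)} D(H)$ of \cite[sentence after lemma 4.1]{ScSt}, I would define
\[ \phi(u \otimes d) = \bar\beta(u)\alpha(d), \]
which is well-defined by the agreement just noted and is $E$-linear by construction.

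Multiplicativity of $\phi$ reduces, by bilinearity, to verifying that $\alpha(d)\bar\beta(u) = \phi(d \cdot u)$ in $A$, where $d \cdot u$ is the product computed in $D(G)$ and then re-expressed in the normal form $\sum u'_i \otimes d'_i \in U(\frg) \otimes_{U(\frh)} D(H)$. The case $u = x \in \frg$ is exactly the commutativity condition defining $\mathscr{C}$: by \cref{is-in-category}, the product $d \cdot x$ in $D(G)$ equals the image of $\zeta_{\Ad}(d \otimes x) = \sum_i x_i \otimes d_i$ under multiplication, so the required identity becomes $\alpha(d)\beta(x) = \sum_i \beta(x_i)\alpha(d_i)$, which is our hypothesis on $(A,\alpha,\beta)$. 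For a PBW monomial $u = x_1 \cdots x_n$ in $U(\frg)$, I would proceed by induction on $n$: using the $n=1$ case to move $d$ past $x_1$ rewrites $d \cdot (x_1 \cdots x_n) = \sum_i x_i \cdot (d_i \cdot x_2 \cdots x_n)$, and the inductive hypothesis, combined with multiplicativity of $\bar\beta$ on $U(\frg)$, completes the verification. Linearity then extends the identity to all of $U(\frg)$.

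The main obstacle I anticipate is precisely this multiplicativity check. The condition defining $\mathscr{C}$ controls only how elements of $D(H)$ and $\frg$ braid past each other, so one must argue that this single relation — together with the known algebra structures on $U(\frg)$ and $D(H)$ — fully determines the product on $D(\frg,H)$. The induction above works because the image of $\zeta_{\Ad}$ lies in the \emph{finite}-dimensional tensor product $\frg \otimes D(H)$, so the normalization $d \cdot u \mapsto \sum u'_i \otimes d'_i$ is a finite sum at every stage and no delicate convergence issues intrude.
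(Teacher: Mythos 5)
Your proof is correct, but it takes a genuinely different route from the paper's. The paper does not construct the universal map to an arbitrary $(A,\alpha,\beta)$ directly: it first notes (via \cref{is-in-category} and \cref{restrict}) that $D(\frg,H)$ lies in $\mathscr{C}$, then invokes the abstract existence of an initial object $(R,\alpha,\beta)$ of $\mathscr{C}$ (citing Bergman), and proves that the induced algebra map $\sigma : R \to D(\frg,H)$ is bijective. Surjectivity of $\sigma$ comes from factoring the known isomorphism $U(\frg) \otimes_{U(\frh)} D(H) \xrightarrow{\sim} D(\frg,H)$ through a bimodule map $\gamma$ into $R$; injectivity is reduced to surjectivity of $\gamma$, which is proved by exactly the rewriting induction you use -- pushing $\alpha(\mu)$'s past $\beta(x)$'s via the $\zeta_{\Ad}$-relation. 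So the computational engine is the same in both arguments, but the packaging differs. Your approach buys self-containedness (no appeal to the general existence of initial objects) at the price of having to verify by hand that $\phi(u \otimes d) = \bar\beta(u)\alpha(d)$ is well defined over $U(\frh)$ and multiplicative; your reduction of multiplicativity to the single identity $\alpha(d)\bar\beta(u) = \phi(d \cdot u)$ and the PBW induction on the length of $u$ are sound, and your observation that $\zeta_{\Ad}$ lands in the finite sum $\frg \otimes D(H)$ (so no convergence issues arise in the rewriting) is exactly the point that makes the induction terminate. The paper's approach buys the homomorphism property for free ($R$ is an algebra and $\sigma$ an algebra map by construction) at the price of an indirect surjective-implies-injective argument and the external existence result. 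Either is acceptable; just make sure, if you write yours up in full, to spell out that $\bar\beta$ and $\alpha$ agree on all of $U(\frh)$ (not just $\frh$) because both are algebra maps out of $U(\frh)$ determined by their restrictions to $\frh$ -- this is what makes $\phi$ balanced over $U(\frh)$.
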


\begin{proof}
It follows from \cref{is-in-category} that $D(G)$ is naturally in $\mathscr{C}$, and then \cref{restrict} implies that $D(\frg,H)$ is also naturally in $\mathscr{C}$. Let $(R, \alpha, \beta)$ be an initial object of $\mathscr{C}$ (which exists, for instance by \cite[theorem 9.4.14]{BergmanUniversal}) and consider the induced map $\sigma : R \to D(\frg,H)$ in $\mathscr{C}$. We will show that $\sigma$ is an isomorphism.

Since $\beta$ is a homomorphism of Lie algebras and since $\alpha|_{\frh} = \beta|_{\frh}$, the map $\beta \otimes \alpha$ induces a well-defined $U(\frg)$-$D(H)$-bimodule homomorphism $\gamma : U(\frg) \otimes_{U(\frh)} \to R$ such that the following diagram commutes. 
\[ \begin{tikzcd} U(\frg) \otimes_{U(\frh)} D(H) \ar{r}{\gamma} \ar[bend right]{dr}[swap]{\sim} & R \ar{d}{\sigma} \\ & D(\frg,H) \end{tikzcd} \]
We know that $U(\frg) \otimes_{U(\frh)} D(H) \to D(\frg,H)$ is an isomorphism \cite[section 4]{ScSt}, so $\sigma$ is surjective. To show that $\sigma$ is injective, it is sufficient to show that $\gamma$ is surjective. 

Observe that, since $R$ is initial in $\mathscr{C}$, it must be generated as an algebra by the images of $\alpha$ and $\beta$ (cf. \cref{restrict}). Using the fact that $\alpha$ is an algebra  homomorphism, this implies that $R$ is generated as a right $D(H)$-module by expressions of the form 
\begin{equation} \label{element-of-universal} \alpha(\mu_1) \beta(x_1) \alpha(\mu_2) \beta(x_2) \dotsb \alpha(\mu_n) \beta(x_n) \end{equation}
where $n \geq 0$, $\mu_1, \dotsc, \mu_{n} \in D(H)$, and $x_1, \dotsc, x_n \in \frg$. Since $\gamma$ is right $D(H)$-linear, it is sufficient to show that every element of the form \eqref{element-of-universal} is in the image of $\gamma$. To show this, we induct on $n$ and use the fact that the following diagram commutes. 
\[ \begin{tikzcd} D(H) \otimes \frg \ar{r}{\zeta_{\Ad}} \ar[bend right]{dr}[swap]{\alpha \otimes \beta} & \frg \otimes D(H) \ar{d}{\beta \otimes \alpha} \\ & R \end{tikzcd} \]
This tells us that 
\[ \alpha(\mu_1)\beta(x_1) = \sum_i \beta(x_{1,i}) \alpha(\mu_{1,i}) \]
for some $x_{1,i} \in \frg$ and $\mu_{1,i} \in D(H)$, so
\[ \eqref{element-of-universal} = \sum_i \beta(x_{1,i}) \alpha(\mu_{1,i} \mu_2) \beta(x_2) \dotsb \alpha(\mu_n) \beta(x_n), \]
where we have again used the fact that $\alpha$ is a homomorphism of $E$-algebras. By induction, we know that $\alpha(\mu_{1,i} \mu_2) \beta(x_2) \dotsb \alpha(\mu_n) \beta(x_n)$ is in the image of $\gamma$. Since $\gamma$ is left $U(\frg)$-linear, it follows that \eqref{element-of-universal} is also in the image of $\gamma$. 
\end{proof}

\begin{corollary} \label{DgP-module-structure}
The data of a $D(\frg,H)$-module structure on a vector space $M$ is precisely the data of a $D(H)$-module structure and a $\frg$-module structure such that the two $\frh$-module structures agree and such that, for any $\mu \in D(H)$ and $x \in \frg$ and $m \in M$, 
\[ \zeta_{\Ad}(\mu \otimes x) = \sum x_i \otimes \mu_i \implies \mu \bdot (x \bdot m) = \sum_i x_i \bdot (\mu_i \bdot m). \]
Moreover, a map between two $D(\frg,H)$-modules is $D(\frg,H)$-linear if and only if it is both $D(H)$-linear and $\frg$-linear. 
\end{corollary}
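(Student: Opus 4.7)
The plan is to deduce this directly from the universal property in \cref{universal-prop}. The key observation is that a $D(\frg,H)$-module structure on a vector space $M$ is the same as an $E$-algebra homomorphism $\rho : D(\frg,H) \to \End_E(M)$, and the compatibility conditions in the statement are designed precisely to match membership in the category $\mathscr{C}$.

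First, I would unpack the ``only if'' direction. Suppose $\rho : D(\frg,H) \to \End_E(M)$ endows $M$ with a $D(\frg,H)$-module structure. By restricting along the natural maps $D(H) \hookrightarrow D(\frg,H)$ and $\frg \hookrightarrow U(\frg) \hookrightarrow D(\frg,H)$, I obtain an $E$-algebra homomorphism $\alpha : D(H) \to \End_E(M)$ and a Lie algebra homomorphism $\beta : \frg \to \End_E(M)$. Since $(D(\frg,H), \mathrm{incl}_{D(H)}, \mathrm{incl}_\frg)$ lies in $\mathscr{C}$ (by the proof of \cref{universal-prop}, appealing to \cref{is-in-category}), postcomposition with $\rho$ shows that $\alpha|_\frh = \beta|_\frh$ and that $(\beta \otimes \alpha) \circ \zeta_{\Ad} = \alpha \otimes \beta$ as maps $D(H) \otimes \frg \to \End_E(M)$; evaluating at $m \in M$ yields the stated identity.

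Next, I would handle the ``if'' direction. Given compatible $\alpha$ and $\beta$ as in the corollary, let $A \subseteq \End_E(M)$ be the $E$-subalgebra generated by the images of $\alpha$ and $\beta$. The hypotheses say exactly that the compatibility conditions defining $\mathscr{C}$ hold, so $(A, \alpha, \beta)$ is an object of $\mathscr{C}$. By initiality of $D(\frg,H)$ (\cref{universal-prop}), there is a unique morphism $\rho : D(\frg,H) \to A \subseteq \End_E(M)$ in $\mathscr{C}$, which furnishes the desired $D(\frg,H)$-module structure. To check that the two constructions are mutually inverse, one direction is just the uniqueness clause of the universal property; the other follows because any $D(\frg,H)$-module structure is determined by its restrictions to $D(H)$ and $U(\frg)$, since $D(\frg,H)$ is generated as an $E$-algebra by the images of $D(H)$ and $\frg$.

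Finally, for the statement about morphisms: a $D(\frg,H)$-linear map is evidently both $D(H)$-linear and $\frg$-linear by restriction. Conversely, if $f : M \to N$ is both $D(H)$-linear and $\frg$-linear, then the subset of $D(\frg,H)$ consisting of elements $u$ with $f(u \bdot m) = u \bdot f(m)$ for all $m \in M$ is an $E$-subalgebra (straightforward verification) containing $D(H)$ and $\frg$, hence all of $D(\frg,H)$. I do not anticipate a serious obstacle here; the whole argument is a routine application of the universal property, and the only point requiring mild care is the translation between the diagrammatic condition defining $\mathscr{C}$ and the elementwise identity stated in the corollary, which is immediate by evaluating at arbitrary $m \in M$.
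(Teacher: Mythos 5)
Your proposal is correct and is precisely the intended argument: the paper states this as an immediate corollary of \cref{universal-prop} with no separate proof, the point being exactly that a $D(\frg,H)$-module structure is an algebra map $D(\frg,H)\to\End_E(M)$ and that the stated compatibility conditions are the membership conditions for $(\End_E(M),\alpha,\beta)$ (or the subalgebra generated by the images, per \cref{restrict}) in $\mathscr{C}$. Your handling of the morphism statement via the subalgebra of elements commuting with $f$ is also the standard and correct verification.
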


\begin{remark} \label{continuity-DgP-prelim}
The group ring $E[H]$ is dense in $D(H)$, so $E[H] \otimes \frg$ is dense in $D(H) \otimes \frg$. Suppose $(A, \alpha, \beta)$ is a triple consisting of a separately continuous $E$-algebra, a continuous $E$-algebra homomorphism $D(H) \to A$, and a continuous Lie algebra homomorphism $\frg \to A$ such that $\alpha|_{\frh} = \beta|_{\frh}$. Since multiplication on $A$ is separately continuous, the two maps $D(H) \otimes \frg \to A$ and $\frg \otimes D(H) \to A$ are continuous. Thus $(\beta \otimes \alpha) \circ \zeta_{\Ad} = \alpha \otimes \beta$ if and only if this is true when restricted to $E[H] \otimes \frg$. 
\[ \begin{tikzcd} E[H] \otimes \frg \ar[hookrightarrow]{r} & D(H) \otimes \frg \ar{r}{\zeta_{\Ad}} \ar[bend right]{dr}[swap]{\alpha \otimes \beta} & \frg \otimes D(H) \ar{d}{\beta \otimes \alpha} \\ 
& & A \end{tikzcd} \]
In light of \cref{joho-and-adjoint}, this means that $(A, \alpha, \beta) \in \mathscr{C}$ if and only if 
\[ \alpha(\delta_h) \beta(x) = \beta(\Ad(h)(x))\alpha(\delta_h) \]
for all $h \in H$ and $x \in \frg$. 
\end{remark}

\begin{lemma} \label{continuity-DgP}
Suppose $M$ is simultaneously a separately continuous $\frg$-module and a separately continuous $D(H)$-module such that the two actions of $\frh$ agree. Then $M$ is a $D(\frg,H)$-module if and only if 
\[ \delta_h \bdot (x \bdot m) = \Ad(h)(x) \bdot (\delta_h \bdot m) \]
for all $h \in H$, $x \in \frg$, and $m \in M$. 
\end{lemma}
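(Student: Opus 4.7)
The plan is to reduce the statement to the abstract characterization of $D(\frg,H)$-modules given in \cref{DgP-module-structure} combined with the density argument foreshadowed in \cref{continuity-DgP-prelim}. The forward direction is essentially immediate: assuming $M$ is a $D(\frg,H)$-module, \cref{DgP-module-structure} gives us the identity $\mu \bdot (x \bdot m) = \sum_i x_i \bdot (\mu_i \bdot m)$ whenever $\zeta_{\Ad}(\mu \otimes x) = \sum_i x_i \otimes \mu_i$. Specializing $\mu = \delta_h$ and invoking \cref{joho-and-adjoint}, which computes $\zeta_{\Ad}(\delta_h \otimes x) = \Ad(h)(x) \otimes \delta_h$, yields the displayed relation.

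For the reverse direction, the goal is to upgrade the hypothesis, which concerns only delta distributions, into the full compatibility condition of \cref{DgP-module-structure} for arbitrary $\mu \in D(H)$. Fixing $m \in M$, I would introduce two bilinear maps $D(H) \times \frg \to M$, namely $\Phi_1(\mu, x) = \mu \bdot (x \bdot m)$ and $\Phi_2(\mu, x) = \text{``}\sum_i x_i \bdot (\mu_i \bdot m)\text{''}$ (for $\zeta_{\Ad}(\mu \otimes x) = \sum_i x_i \otimes \mu_i$). The plan is to show both are separately continuous, pass to the induced maps on $D(H) \otimes \frg$, and then check that they agree on the dense subspace $E[H] \otimes \frg$, where the hypothesis together with \cref{joho-and-adjoint} is exactly what is needed.

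The main technical step — and the one I expect to be the principal obstacle — is verifying continuity. For $\Phi_1$, separate continuity comes directly from the separate continuity of the $D(H)$- and $\frg$-module structures on $M$: fix $x$ and the map $\mu \mapsto x \bdot (\mu \bdot m)$ is the composition of two continuous maps, while for fixed $\mu$ the map on finite-dimensional $\frg$ is automatically continuous. For $\Phi_2$, I would factor it as the composition of $\zeta_{\Ad} : D(H) \otimes \frg \to \frg \otimes D(H)$, which is continuous by \cref{zeta-definition}, with the bilinear pairing $\frg \times D(H) \to M$ given by $(x, \mu) \mapsto x \bdot (\mu \bdot m)$, which is again separately continuous by the same argument. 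Because $\frg$ is finite dimensional, \cref{tensor-fd} upgrades separate continuity to joint continuity on the tensor products, so both maps factor through genuinely continuous maps $D(H) \otimes \frg \to M$.

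Having secured continuity, I would conclude by observing that on the dense subspace $E[H] \otimes \frg$ the two maps agree: at $\delta_h \otimes x$, the map $\Phi_1$ equals $\delta_h \bdot (x \bdot m)$, while $\Phi_2$ evaluates to $\Ad(h)(x) \bdot (\delta_h \bdot m)$ by \cref{joho-and-adjoint}, and these are equal by hypothesis. Hence $\Phi_1 = \Phi_2$ on all of $D(H) \otimes \frg$, which is precisely the compatibility condition required by \cref{DgP-module-structure}. Since the $\frh$-actions agree by assumption, this same criterion finishes the proof that $M$ is a $D(\frg,H)$-module.
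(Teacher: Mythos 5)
Your argument is correct and is essentially the paper's own: the paper applies \cref{continuity-DgP-prelim} with $A = \End^\cts(M)$ equipped with the topology of pointwise convergence (using \cref{adjoint-continuity} for continuity of $D(H) \to \End^\cts(M)$), which is precisely your fix-$m$ density argument on $E[H] \otimes \frg$ packaged at the level of the endomorphism algebra. The only blemish is the slip writing $\mu \mapsto x \bdot (\mu \bdot m)$ where you mean $\mu \mapsto \mu \bdot (x \bdot m)$ in the continuity check for $\Phi_1$; the underlying claim is right.
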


\begin{proof}
Giving $\End^\cts(M)$ the topology of pointwise convergence, we have a continuous map $D(H) \to \End^\cts(M)$ by \cref{adjoint-continuity}. Thus we can  apply \cref{continuity-DgP-prelim} with $A = \End^\cts(M)$. 
\end{proof}

\begin{lemma} \label{quotients-isomorphic}
Let $D^\infty(H)$ denote the quotient of $D(H)$ by the two-sided ideal generated by $\frh$. There exists a natural surjective homomorphism $D(\frg,H) \to D^\infty(H)$ of $E$-algebras whose kernel is the two-sided ideal of $D(\frg,H)$ generated by $\frg$. 
\end{lemma}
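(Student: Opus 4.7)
The plan is to build the map using the universal property of \Cref{universal-prop}. Take $A = D^\infty(H)$, let $\alpha \colon D(H) \to D^\infty(H)$ be the canonical projection, and let $\beta \colon \frg \to D^\infty(H)$ be the zero map. Both restrict to $0$ on $\frh$, and the diagram involving $\zeta_{\Ad}$ commutes trivially because $\beta = 0$ makes both composites $D(H) \otimes \frg \to D^\infty(H)$ vanish. Thus $(D^\infty(H), \alpha, \beta) \in \mathscr{C}$, and the universal property produces a unique $E$-algebra homomorphism $\pi \colon D(\frg,H) \to D^\infty(H)$ extending $\alpha$ and killing $\frg$; it is surjective because $\alpha$ already is.

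To identify $\ker\pi$ with the two-sided ideal $J \subseteq D(\frg,H)$ generated by $\frg$, I would work with the $(U(\frg),D(H))$-bimodule identification $D(\frg,H) = U(\frg) \otimes_{U(\frh)} D(H)$ of \cite[section 4]{ScSt}. Writing $U(\frg)_+ = \frg \cdot U(\frg) = U(\frg)\cdot\frg$ for the augmentation ideal, the claim is that $J = U(\frg)_+ \otimes_{U(\frh)} D(H)$. The inclusion $\supseteq$ follows from $\frg \otimes 1 \subseteq J$ together with closure of $J$ under left multiplication by $U(\frg)$ and right multiplication by $D(H)$. For the inclusion $\subseteq$, it suffices to show that $U(\frg)_+ \otimes_{U(\frh)} D(H)$ is itself a two-sided ideal of $D(\frg,H)$, for which only closure under left multiplication by $D(H)$ and right multiplication by $\frg$ is nontrivial. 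In both cases one iteratively applies the commutation rule $\nu x = \sum_i x_i \nu_i$ (with $x_i \in \frg$) furnished by the braiding $\zeta_{\Ad}$, rewriting every expression in the standard form $u \otimes \nu$; the key observation is that the leftmost $U(\frg)$-factor obtained always lies in $U(\frg)\cdot\frg = U(\frg)_+$. Passing to the quotient then yields
\[ D(\frg,H)/J \;=\; (U(\frg)/U(\frg)_+) \otimes_{U(\frh)} D(H) \;=\; E \otimes_{U(\frh)} D(H) \;=\; D(H)/(\frh \cdot D(H)). \]

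Finally, it remains to identify $\frh \cdot D(H)$ with the two-sided ideal $I(\frh)$ defining $D^\infty(H)$. Applying \Cref{joho-and-adjoint} in the case $H = G$, for $x \in \frh$ and $h \in H$ one has $\delta_h x = \Ad(h)(x) \delta_h$; since $\Ad(h)$ stabilizes $\frh$, this places $E[H] \cdot \frh$ inside $\frh \cdot D(H)$, and density of $E[H]$ in $D(H)$ together with separate continuity of multiplication upgrades this to $D(H) \cdot \frh \subseteq \frh \cdot D(H)$, whence equality (the reverse inclusion being clear). Thus $I(\frh) = \frh \cdot D(H)$, so $D(\frg,H)/J \cong D^\infty(H)$, and tracing through the identifications one sees that this isomorphism is precisely the map induced by $\pi$, so $\ker\pi = J$. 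The main technical obstacle is the bookkeeping to verify that $U(\frg)_+ \otimes_{U(\frh)} D(H)$ is two-sided, where the braiding $\zeta_{\Ad}$ plays the crucial role of guaranteeing that commuting $D(H)$ past $\frg$ always leaves an element of $\frg$ on the left.
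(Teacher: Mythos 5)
Your construction of the surjection is exactly the paper's: realize $D^\infty(H)$ as an object of $\mathscr{C}$ via the quotient map and the zero map, and invoke \cref{universal-prop}. For the kernel you take a genuinely different, more computational route. The paper's proof simply observes that $\tau : D(H) \to D(\frg,H)/J(\frg)$ kills the ideal of $D(H)$ generated by $\frh$, hence induces $\bar\tau : D^\infty(H) \to D(\frg,H)/J(\frg)$ with $\bar\sigma \circ \bar\tau = \id$, and then reduces injectivity of $\bar\sigma$ to surjectivity of $\tau$, which holds because every word $\mu_1 x_1 \cdots \mu_n x_n \mu_{n+1}$ with $n \geq 1$ already lies in $J(\frg)$. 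You instead compute $J$ explicitly in the bimodule presentation $U(\frg) \otimes_{U(\frh)} D(H)$ as the image of $U(\frg)_+ \otimes_{U(\frh)} D(H)$; your verification that this is a two-sided ideal by iterating $\nu x = \sum_i x_i \nu_i$ is sound (each commutation preserves the number of $\frg$-factors standing on the left), and right-exactness of $- \otimes_{U(\frh)} D(H)$ then gives $D(\frg,H)/J \cong D(H)/\frh \cdot D(H)$. This buys a concrete description of $J$ at the cost of more bookkeeping.

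There is, however, one genuine gap, in the final identification of the right ideal $\frh \cdot D(H)$ with the two-sided ideal $I(\frh)$. Your density argument only shows that $D(H) \cdot \frh$ lands in the \emph{closure} of $\frh \cdot D(H)$: to conclude $D(H)\cdot\frh \subseteq \frh\cdot D(H)$ from $E[H]\cdot\frh \subseteq \frh\cdot D(H)$ you need $\frh \cdot D(H)$ to be closed in $D(H)$, which you do not address. (It is true --- for compact $H$ it is a finitely generated right ideal of the Fr\'echet-Stein algebra $D(H)$, hence closed --- but that is an additional argument.) The cleaner repair is already in the paper: apply \cref{reformulate-joho-lemma} and \cref{is-in-category} to $H$ viewed as a closed subgroup of itself, which give the exact identity $\mu x = \sum_i x_i \mu_{c_i}$ for \emph{every} $\mu \in D(H)$ and $x \in \frh$, with $x_1, \dotsc, x_n$ a basis of $\frh$. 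This yields $D(H)\cdot\frh \subseteq \frh\cdot D(H)$ with no limiting process, and with that substitution your argument is complete.
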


\begin{proof}
Observe that $D^\infty(H)$ becomes an object of $\mathscr{C}$ when equipped with the quotient map $D(H) \to D^\infty(H)$ and the zero map $\frg \to D^\infty(H)$. Thus there exists a natural $E$-algebra homomorphism  $\sigma : D(\frg,H) \to D^\infty(H)$, which is clearly surjective. It is also clear that its kernel contains $\frg$. Letting $J(\frg)$ denote the two-sided ideal generated by $\frg$, we see that there is an induced map $\bar{\sigma} : D(\frg,H)/J(\frg) \to D^\infty(H)$. Showing that $\ker(\sigma) = J(\frg)$ is equivalent to showing that $\bar{\sigma}$ is injective. 

We have a map $\tau : D(H) \to D(\frg,H)/J(\frg)$ which annihilates $\frh$, so it induces a map $\bar{\tau} : D^\infty(H) \to D(\frg,H)/J(\frg)$. It is clear that $\bar{\sigma} \circ \bar{\tau}$ is the identity on $D^\infty(H)$. Thus $\bar{\sigma}$ is injective if and only if $\bar{\tau}$ is surjective. And $\bar{\tau}$ is surjective if and only if $\tau$ is surjective, so we show that $\tau$ is surjective. 

As we noted in the proof of \cref{universal-prop}, every element of $D(\frg,H)$ can be written as a finite sum of expressions of the form 
\[ \mu_1 x_1 \mu_2 x_2 \dotsb \mu_n x_n \mu_{n+1} \]
for $n \geq 0$, $\mu_1, \dotsc, \mu_{n+1} \in D(H)$, and $x_1, \dotsc, x_n \in \frg$. But all such expressions are in $J(\frg)$ if $n \geq 1$, and the above expression is just an element of $D(H)$ when $n = 0$. Thus any element of $D(\frg,H)$ is congruent to an element of $D(H)$ modulo $J(\frg)$. This shows that $\tau$ is surjective. 
\end{proof}

\subsection{Tensor-hom adjunction for \texorpdfstring{$D(\frg, H)$}{D(g,H)}-modules}

The following is a locally analytic analog of a  definition made in \cite[part II, section 1.20]{Jantzen}. 

\begin{definition} \label{gP-module}
A \emph{locally analytic $(\frg,H)$-module} $M$ is a locally analytic representation of $H$ which is also simultaneously a separately continuous $\frg$-module, such that these two actions satisfy two compatibility conditions: 
\begin{enumerate}[(C1)]
\item The two induced actions of $\frh$ agree, and 
\item $h \bdot (x \bdot m) = \Ad(h)(x) \bdot (h \bdot m)$ for all $h \in H, x \in \frg$ and $m \in M$. 
\end{enumerate}
\end{definition}

\begin{lemma} \label{gP-module-DgP-module}
A locally analytic $(\frg,H)$ module is naturally a $D(\frg,H)$-module. 
\end{lemma}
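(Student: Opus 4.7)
The plan is to reduce the statement directly to \cref{continuity-DgP}, which was established precisely to handle this kind of situation. The ingredients are all present in the definition of a locally analytic $(\frg,H)$-module; what remains is to check that each hypothesis of \cref{continuity-DgP} is satisfied.

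First I would note that, because $M$ is a locally analytic representation of $H$, the Schneider--Teitelbaum construction (cf. \cite[proposition 3.2]{ST02b}, recalled at the start of the previous section) furnishes $M$ with a canonical separately continuous $D(H)$-module structure characterized by $\delta_h \bdot m = h \bdot m$ for all $h \in H$ and $m \in M$. The $\frg$-module structure is given as part of the data, and is assumed to be separately continuous. By compatibility condition (C1), the two resulting actions of $\frh$ on $M$ coincide. These observations say exactly that $M$ meets the standing hypotheses of \cref{continuity-DgP}.

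Next I would verify the delta-distribution relation required by that lemma. Using $\delta_h \bdot m = h \bdot m$ together with compatibility condition (C2), we have
\[ \delta_h \bdot (x \bdot m) = h \bdot (x \bdot m) = \Ad(h)(x) \bdot (h \bdot m) = \Ad(h)(x) \bdot (\delta_h \bdot m) \]
for all $h \in H$, $x \in \frg$, and $m \in M$. By \cref{continuity-DgP}, this is enough to conclude that the combined actions of $D(H)$ and $\frg$ assemble into a $D(\frg,H)$-module structure on $M$. Naturality is automatic: given a morphism $f : M \to N$ of locally analytic $(\frg,H)$-modules, i.e.\ a map which is both $H$-equivariant and $\frg$-linear, functoriality of the Schneider--Teitelbaum construction makes $f$ into a $D(H)$-linear map, and the final assertion of \cref{DgP-module-structure} then implies that $f$ is $D(\frg,H)$-linear.

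There is essentially no obstacle here — the content of the lemma is entirely bookkeeping once \cref{continuity-DgP} and \cref{universal-prop} are in hand. The only point that requires a moment's attention is making sure that the separate continuity of the $D(H)$-action obtained from the locally analytic $H$-action is what is needed to apply \cref{continuity-DgP}, but this is precisely what \cite[proposition 3.2]{ST02b} provides.
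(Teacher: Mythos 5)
Your proof is correct and follows exactly the paper's route: invoke \cite[proposition 3.2]{ST02b} to equip $M$ with a separately continuous $D(H)$-module structure and then apply \cref{continuity-DgP}, with (C1) and (C2) supplying its hypotheses. You simply spell out the verification (and the naturality) in more detail than the paper does.
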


\begin{proof}
Any locally analytic representation of $H$ can be given the structure of a separately continuous $D(H)$-module \cite[proposition 3.2]{ST02b}, so this follows from \cref{continuity-DgP}
\end{proof} 

\begin{lemma} \label{tensor-DgP-module-construction}
Suppose $M$ is a locally analytic $(\frg,H)$-module which is locally finite dimensional as a representation of $H$. If $X$ is a $D(\frg,H)$-module, then $M \otimes X$ is also a $D(\frg,H)$-module. This construction is functorial in both $M$ and $X$. 
\end{lemma}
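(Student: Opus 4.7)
The plan is to apply \cref{DgP-module-structure}: it suffices to equip $M \otimes X$ with compatible $D(H)$- and $\frg$-module structures whose restrictions to $\frh$ agree and which satisfy the $\zeta_{\Ad}$-commutation, and then to check naturality.

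First I would take the $D(H)$-module structure on $M \otimes X$ furnished by \cref{construction} (applicable because $M$ is locally finite dimensional as an $H$-representation) together with the $\frg$-module structure defined by the standard coproduct on $U(\frg)$, namely $y \bdot (m \otimes x) = (y \bdot m) \otimes x + m \otimes (y \bdot x)$. Agreement of the two induced $\frh$-actions is immediate from \cref{restrict-tensor-product}, since both arise from the coproduct on $U(\frh)$.

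The main step---and the main obstacle---is verifying the $\zeta_{\Ad}$-compatibility for general $\mu \in D(H)$. I would first handle the case $\mu = \delta_h$: by \cref{joho-and-adjoint} the identity collapses to $\delta_h \bdot (y \bdot v) = \Ad(h)(y) \bdot (\delta_h \bdot v)$, which on $v = m \otimes x$ expands to four terms on each side. Two of these terms match by condition (C2) of \cref{gP-module} applied to $M$, while the other two match by the analogous identity on $X$, which holds since $X$ is already a $D(\frg,H)$-module. To extend from delta distributions to arbitrary $\mu$, I would reduce to $v = m \otimes x$ with $m$ in a finite-dimensional subrepresentation $N \subseteq M$ (enlarged so that $y \bdot N$ and every $x_i \bdot N$ lie in $N$, where $x_1, \ldots, x_n$ is a basis of $\frg$), expand both sides using $\Delta_*(\mu) \in D(H)\,\hat{\otimes}_i\,D(H)$ and the formula from the proof of \cref{construction-fd}, and then invoke the crucial algebraic identity
\[ \Delta_*(\mu_{c_i}) = (\Delta_*\mu)_{c_i \otimes 1} = (\Delta_*\mu)_{1 \otimes c_i}, \]
where $c_i \in C^\an(H)$ are the coordinates of $\Ad(h)(y) = \sum c_i(h) x_i$ (cf. \cref{reformulate-joho-lemma}). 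This identity holds because $\Delta_*(\mu)$ is supported on the diagonal of $H \times H$, and together with the $\zeta_{\Ad}$-identities for $M$ and $X$ separately it lets the expanded terms rearrange and match. As a shortcut, if one equips $X$ with a topology making it a separately continuous $D(H)$-module, the delta-distribution case combined with density of $E[H]$ in $D(H)$ yields the full compatibility through \cref{continuity-DgP}.

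Finally, functoriality in both arguments is formal: the $D(H)$-action is functorial in $M$ and $X$ by \cref{functoriality-M,functoriality-X}, the coproduct $\frg$-action is evidently functorial in both, and \cref{DgP-module-structure} assures that a map is $D(\frg,H)$-linear if and only if it is both $D(H)$- and $\frg$-linear.
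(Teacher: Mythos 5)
Your strategy is essentially the paper's: both proofs verify the $\zeta_{\Ad}$-compatibility by checking it on delta distributions (where, via \cref{joho-and-adjoint}, it reduces to condition (C2) of \cref{gP-module} for $M$ together with the corresponding identity for $X$) and then extending to all of $D(H)$ by density of $E[H]$, with the $\frh$-agreement and functoriality handled exactly as you say. The difference lies in how the extension step is organized, and this is where your write-up has a soft spot. Since $X$ carries no topology, $\End(M\otimes X)$ has no useful topology either, so the density argument cannot be run directly on the two maps $D(H)\otimes\frg\to\End(M\otimes X)$; for the same reason, ``expand both sides using $\Delta_*(\mu)\in D(H)\,\hat{\otimes}_i\,D(H)$ and let the terms rearrange'' is not literally available, because $\Delta_*(\mu)$ is only a \emph{limit} of finite sums of elementary tensors and that limit cannot be passed through the untopologized action on $X$ --- this is precisely the difficulty flagged in the footnote to \cref{construction-fd}, and it is also why your proposed shortcut of topologizing $X$ is unavailable in general. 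The paper's device is to interpose auxiliary completed tensor products $L$ and $R$ built only from $D(H)$, $D(H\times H)$ and $\frg$: the continuity/density argument is carried out for the square landing in $R$, and only afterwards does one apply the purely algebraic bottom triangle $L,R\to\End(M\otimes X)$, which uses the $D(\frg,H)$-module structures of $M$ and $X$ exactly as you intend. Your identity $\Delta_*(\mu_{c_i})=(\Delta_*\mu)_{c_i\otimes 1}=(\Delta_*\mu)_{1\otimes c_i}$ is correct and is morally the content of that commuting square; to make your version rigorous, run the term-matching inside a topological target not involving $X$ (for instance $\End(M)\otimes D(\frg,H)$, using that $\End(M)$ is finite dimensional) before evaluating on $m\otimes x$. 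With that repair the argument goes through and is equivalent to the paper's.
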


\begin{proof}
Suppose that $M$ is finite dimensional. Then $M \otimes X$ is naturally a $\frg$-module, and it is also a $D(H)$-module by \cref{construction}. \Cref{restrict-tensor-product} implies that the two $\frh$-module structures coincide. We want to show that the following commutes. 
\[ \begin{tikzcd} D(H) \otimes \frg \ar[bend right]{dr} \ar{r}{\zeta_{\Ad}} & \frg \otimes D(H) \ar{d} \\ & \End(M \otimes X) \end{tikzcd} \]
For brevity, let us define
\[ \begin{aligned}
L &= \left( \left(D(H) \otimes \frg \right) \,\hat{\otimes}_i \, D(H) \right) \oplus  \left( D(H) \, \hat{\otimes}_i \left( D(H) \otimes \frg \right) \right) \\
R &=\left( \frg \otimes \left(D(H) \right) \,\hat{\otimes}_i \, D(H) \right) \oplus  \left( D(H) \, \hat{\otimes}_i \left( \frg \otimes D(H) \right) \right)
\end{aligned} \]
and observe that there is a map $\zeta : L \to R$ given by \[ \zeta = (\zeta_\Ad \otimes D(H)) \oplus (D(H) \oplus \zeta_{\Ad}).  \]
Note that we can decompose the above diagram as follows. 
\[ \begin{tikzcd} 
D(H) \otimes \frg \ar{r}{\zeta_\Ad} \ar{d} & \frg \otimes D(H) \ar{d} \\
D(H \times H) \otimes (\frg \oplus \frg) \ar{r}{\zeta_{\Ad}} \ar[equals]{d} & (\frg \oplus \frg) \otimes D(H \times H) \ar[equals]{d} \\
\left(D(H) \,\hat{\otimes}_i \, D(H) \right) \otimes (\frg \oplus \frg) \ar[equals]{d}  & (\frg \oplus \frg) \otimes \left(D(H) \,\hat{\otimes}_i \, D(H) \right) \ar[equals]{d} \\ 
L \ar[bend right]{dr} \ar{r}{\zeta} &  R \ar{d} \\
& \End(M \otimes X) \end{tikzcd} \]
It is straightforward to verify that the triangle at the bottom commutes because $X$ and $M$ are both individually $D(\frg,H)$-modules. 

The square at the very top commutes since $\zeta_{\Ad}$ is functorial in the pair $(G, H)$ and we can apply this functoriality to the diagonal map $(G, H) \to (G \times G, H \times H)$, but we won't need to use this commutativity. We only note that the maps $\frg \to (\frg \oplus \frg)$ involved in this square are the diagonal map $x \mapsto (x, x)$. We will show that the following square commutes. 
\[ \begin{tikzcd} 
D(H) \otimes \frg \ar{r}{\zeta_\Ad} \ar{d} & \frg \otimes D(H) \ar{d} \\
L \ar{r}{\zeta} & R \end{tikzcd} \]
All of these maps are continuous, so we can restrict to check commutativity on $E[H] \otimes \frg$. If $h \in H$ and $\delta_h$ is the corresponding delta distribution and $x \in \frg$, note that the map into $L$ carries $\delta_h \otimes x$ to 
\[ \delta_h \otimes x \otimes \delta_h + \delta_h \otimes \delta_h \otimes x, \]
and similarly with the map into $R$. Using the formula for $\zeta_\Ad$ from \cref{joho-and-adjoint}, commutativity follows. 
\end{proof}

\begin{corollary}
Suppose $M$ is a locally analytic $(\frg,H)$-module which is locally finite dimensional as a representation of $H$. Then $\Hom(M, X)$ is naturally a $D(\frg,H)$-module for any $D(\frg,H)$-module $X$. 
\end{corollary}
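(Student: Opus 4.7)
The plan is to reduce to the finite-dimensional case and then pass to a limit. For $M$ finite dimensional, the key observation is that the contragredient $M'$ is itself a locally analytic $(\frg, H)$-module when endowed with the contragredient actions $(h \bdot \lambda)(m) = \lambda(h^{-1} \bdot m)$ of $H$ and $(x \bdot \lambda)(m) = -\lambda(x \bdot m)$ of $\frg$: one verifies the compatibility axioms (C1) and (C2) of \cref{gP-module} for $M'$ by short computations from the corresponding axioms for $M$. \Cref{gP-module-DgP-module} then gives $M'$ the structure of a $D(\frg, H)$-module, and \cref{tensor-DgP-module-construction} endows $M' \otimes X$ with a $D(\frg, H)$-module structure. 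Transporting this through the natural isomorphism $M' \otimes X \cong \Hom(M, X)$ of \cref{hom-construction} yields the desired structure on $\Hom(M, X)$, and a brief check shows that the resulting $\frg$-action takes the familiar form $(x \bdot \sigma)(m) = x \bdot \sigma(m) - \sigma(x \bdot m)$.

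For $M$ only locally finite dimensional as an $H$-representation, we would write $\Hom(M, X) = \lim_N \Hom(N, X)$ where $N$ ranges over the finite-dimensional $H$-subrepresentations of $M$. Each $\Hom(N, X)$ carries a $D(H)$-action by \cref{hom-construction}, and the transition maps are $D(H)$-linear, so $\Hom(M, X)$ inherits a $D(H)$-module structure. On the other hand, since $M$ itself is a $\frg$-module, we may equip $\Hom(M, X)$ with the $\frg$-action $(x \bdot \sigma)(m) = x \bdot \sigma(m) - \sigma(x \bdot m)$ directly. It then remains to verify that these two actions agree on $\frh$ and satisfy the compatibility relation of \cref{DgP-module-structure}.

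The main obstacle is the verification of this compatibility for general $\mu \in D(H)$: although the finite-dimensional case provides it on each finite-dimensional $(\frg, H)$-submodule of $M$, the subspaces $N$ above are only assumed to be $H$-stable and are generally not $\frg$-stable, so the finite-dimensional case does not apply directly. The plan is to handle this by evaluating the compatibility identity at a single element $m \in M$: since $\zeta_{\Ad}(\mu \otimes x) = \sum_i x_i \otimes \mu_i$ is a finite sum, we can choose a finite-dimensional $H$-subrepresentation $N \subseteq M$ large enough to contain $m$, $x \bdot m$, and all $x_i \bdot m$, so that both sides of the desired identity, when evaluated at $m$, depend only on $\sigma|_N$ together with these finitely many vectors in $X$. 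Unwinding the definitions and the formula for the $D(H)$-action on $\Hom(N, X)$ from \cref{hom-construction} then reduces the identity to the analogous compatibility on $X$, which holds because $X$ is a $D(\frg, H)$-module.
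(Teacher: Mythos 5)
Your finite-dimensional argument coincides with the paper's: the proof there is literally ``we proceed just as in \cref{hom-construction}'', i.e., observe that the contragredient $M'$ is again a finite dimensional locally analytic $(\frg,H)$-module, apply \cref{tensor-DgP-module-construction} to $M'\otimes X$, and transport the structure through $M'\otimes X\cong\Hom(M,X)$. Your verifications of (C1) and (C2) for $M'$ and your identification of the resulting $\frg$-action as $(x\bdot\sigma)(m)=x\bdot\sigma(m)-\sigma(x\bdot m)$ are correct.

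For locally finite dimensional $M$ you rightly flag a point the paper elides: the finite dimensional $H$-subrepresentations $N$ need not be $\frg$-stable, so each $\Hom(N,X)$ is only a $D(H)$-module and the limit does not formally hand you a $D(\frg,H)$-structure. However, the last step of your proposed repair has a gap. The ``formula for the $D(H)$-action on $\Hom(N,X)$'' in \cref{hom-construction} is a pointwise formula only for delta distributions; for a general $\mu\in D(H)$ the action on $\Hom(N,X)=N'\otimes X$ is defined via the image of $\Delta_*(\mu)$ in $\End(N')\otimes D(H)$ and admits no such evaluation formula. Your pointwise computation therefore establishes the compatibility of \cref{DgP-module-structure} only for $\mu\in E[H]$ (where it does work, using (C2) on $M$ together with the compatibility on $X$). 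One cannot then pass to all of $D(H)$ by invoking \cref{continuity-DgP}, because $X$ carries no topology and hence neither does $\Hom(M,X)$. The density argument has to be run \emph{upstream}: for fixed $m$ and a sufficiently large $N$, both sides of the identity evaluated at $m$ must be exhibited as images of $\mu\otimes x$ under maps $D(H)\otimes\frg\to X$ that factor continuously through spaces of the form $\End(N')\otimes D(H)$, after which agreement is checked on the dense subspace $E[H]\otimes\frg$ via \cref{joho-and-adjoint}. This is exactly the diagram chase in the proof of \cref{tensor-DgP-module-construction}, and some version of it is required here; as written, the sentence ``unwinding the definitions \dots reduces the identity to the analogous compatibility on $X$'' asserts the conclusion of that argument rather than supplying it.
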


\begin{proof}
We proceed just as in \cref{hom-construction}. 
\end{proof}

\begin{theorem} \label{DgP-adjunction}
Suppose $M$ is a locally analytic $(\frg,H)$-module which is locally finite dimensional as a representation of $H$. Then there is an adjunction of $D(\frg,H)$-module-valued functors $(M \otimes -) \dashv \Hom(M, -)$. 
\end{theorem}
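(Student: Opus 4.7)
The plan is to reduce the claim to Theorem \ref{adjunction} by combining the already-established $D(H)$-linear adjunction with its Lie-algebra counterpart. By Corollary \ref{DgP-module-structure}, a homomorphism between $D(\frg,H)$-modules is $D(\frg,H)$-linear precisely when it is simultaneously $D(H)$-linear and $\frg$-linear, so it suffices to witness the adjunction on each of these two structures separately and then take the intersection.

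First I would reduce to the case that $M$ is finite dimensional. The extension to locally finite dimensional $M$ proceeds by writing $M$ as the colimit of its finite dimensional $H$-subrepresentations $N$ and applying the same $\lim$-$\colim$ argument used at the end of the proof of Theorem \ref{adjunction}; the functoriality of $M \otimes -$ and $\Hom(M,-)$ as $D(\frg,H)$-module-valued functors in $M$, furnished by Lemma \ref{tensor-DgP-module-construction}, makes the passage to the limit go through without change.

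For $M$ finite dimensional, it suffices, as in the proof of Theorem \ref{adjunction}, to check that the counit $\epsilon : M \otimes \Hom(M,X) \to X$ and the unit $\eta : X \to \Hom(M, M \otimes X)$ of the underlying vector-space adjunction are $D(\frg,H)$-linear. Theorem \ref{adjunction} already supplies $D(H)$-linearity of both maps, so by the criterion in Corollary \ref{DgP-module-structure} it only remains to verify their $\frg$-linearity. Under the associativity identifications $M \otimes \Hom(M,X) = (M \otimes M') \otimes X$ and $\Hom(M, M \otimes X) = (M' \otimes M) \otimes X$, the maps $\epsilon$ and $\eta$ become $\mathrm{ev} \otimes X$ and $\iota \otimes X$ respectively, where $\mathrm{ev} : M \otimes M' \to E$ and $\iota : E \to M' \otimes M$ are evaluation and coevaluation. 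These are $\frg$-linear by the classical tensor-hom adjunction for Lie algebra modules, and tensoring on the right with the identity of the $\frg$-module $X$ preserves $\frg$-linearity.

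The main obstacle is really just bookkeeping: one must confirm that the separately continuous $D(H)$-actions on $M \otimes X$ and $\Hom(M,X)$ constructed in \ref{construction} and \ref{hom-construction}, together with the evident $\frg$-actions, genuinely assemble into the $D(\frg,H)$-module structures supplied by Lemma \ref{tensor-DgP-module-construction}, so that the combined $D(H)$- and $\frg$-linearity of $\epsilon$ and $\eta$ indeed promotes them to $D(\frg,H)$-linear maps via Corollary \ref{DgP-module-structure}. This is precisely the compatibility verified in Lemma \ref{tensor-DgP-module-construction} via Lemma \ref{continuity-DgP}, and once it is in hand the naturality of the vector-space adjunction restricts to the desired natural bijection $\Hom_{D(\frg,H)}(M \otimes X, Y) \cong \Hom_{D(\frg,H)}(X, \Hom(M, Y))$.
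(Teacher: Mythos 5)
Your proposal is correct and matches the paper's argument: reduce to showing the unit and counit are $D(\frg,H)$-linear, obtain $D(H)$-linearity from \cref{adjunction} and $\frg$-linearity from the classical Lie-algebra tensor-hom adjunction, and conclude via the criterion of \cref{DgP-module-structure}. The extra detail you supply (the colimit reduction and the evaluation/coevaluation description of $\epsilon$ and $\eta$) is consistent with, and slightly more explicit than, the paper's three-line proof.
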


\begin{proof}
It is sufficient to show that the unit and counit of the adjunction are $D(\frg,H)$-linear. But we know that $(M \otimes -) \dashv \Hom(M, -)$ as $\frg$-module-valued functors and as $D(H)$-module functors (by \cref{adjunction}), so the counit and unit are both $\frg$ and $D(H)$-linear. This implies $D(\frg,H)$-linearity. 
\end{proof}

\subsection{The closure \texorpdfstring{$D(G)_H$}{D(G)H} of \texorpdfstring{$D(\frg,H)$}{D(g,H)}}

Let us now consider Kohlhaase's ring $D(G)_H$ of distributions supported in $H$ \cite[1.2.1--6]{KohlhaaseI}. This is precisely the topological closure of $D(\frg,H)$ inside $D(G)$ \cite[1.2.10]{KohlhaaseI}. In this section, we prove that, if $G$ is compact, then $D(G)_H$ is a Fr\'echet-Stein algebra. Let $D_r(G)_H$ denote the closure of $D(G)_H$ inside $D_r(G)$ \cite[p. 30]{KohlhaaseI}. In other words, $D_r(G)_H$ is the completion of $D(G)_H$ for the topology defined by the $r$-norm. 

\begin{lemma} \label{supported-distributions-noetherian}
If $G$ is compact, then $D_r(G)_H$ is left noetherian. 
\end{lemma}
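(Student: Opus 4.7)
The plan is to realize $D_r(G)_H$ as a complete filtered ring whose associated graded ring is Noetherian, and then invoke the standard fact that a positively filtered complete ring with Noetherian associated graded is itself left Noetherian.

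First, I would set up the right PBW-style coordinate system. Fix an ordered basis $y_1,\dots,y_d$ of $\mathfrak{g}$ such that $y_1,\dots,y_e$ is a basis of $\mathfrak{h}$, and let $y_{e+1},\dots,y_d$ span a vector-space complement. By the identification $D(\mathfrak{g},H)=U(\mathfrak{g})\otimes_{U(\mathfrak{h})}D(H)$ recalled from Schneider--Teitelbaum and used repeatedly in Section~4, the ordered monomials $y^{\beta}:=y_{e+1}^{\beta_{e+1}}\cdots y_d^{\beta_d}$, $\beta\in\mathbb{N}^{d-e}$, form a basis of $D(\mathfrak{g},H)$ as a free left $D(H)$-module (equivalently, as a free right $D(H)$-module, after commuting via $\zeta_{\mathrm{Ad}}$).

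Second, I would show that every element of $D_r(G)_H$ admits a unique convergent expansion
\[
\xi=\sum_{\beta\in\mathbb{N}^{d-e}}\mu_{\beta}\,y^{\beta},\qquad \mu_{\beta}\in D_r(H),
\]
with the $r$-norm on $D_r(G)$ restricting to $\|\xi\|_r=\sup_{\beta}\|\mu_{\beta}\|_r\,r^{|\beta|}$. This is the main technical point. The inclusion $D(\mathfrak{g},H)\hookrightarrow D(G)$ factors through a canonical injection of $D_r(H)$-modules into $D_r(G)$, and one computes the $r$-norm of a generic normally ordered polynomial $\sum_{\beta}\mu_{\beta}y^{\beta}$ using the expansion of $D(G)$ in the exponential coordinates adapted to the decomposition $\mathfrak{g}=\mathfrak{h}\oplus\langle y_{e+1},\dots,y_d\rangle$ furnished by Schneider--Teitelbaum; the cross-terms produced by the noncommutativity of $D(G)$ have strictly smaller $r$-valuation, so the leading-term description of $\|\cdot\|_r$ above is correct. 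Taking the completion gives the desired description of $D_r(G)_H$ as a twisted power series ring $D_r(H)\{\!\{y_{e+1},\dots,y_d\}\!\}_r$.

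Third, I would pass to the associated graded ring with respect to the $r$-adic filtration on $D_r(G)_H$. The leading-term analysis in the previous step shows that $\operatorname{gr}^{\bullet}D_r(G)_H$ is a polynomial ring in the principal symbols of $y_{e+1},\dots,y_d$ over $\operatorname{gr}^{\bullet}D_r(H)$. The Schneider--Teitelbaum theorem for the compact group $H$ gives that $\operatorname{gr}^{\bullet}D_r(H)$ is Noetherian (for rationally chosen $r$ it is, up to completion, a polynomial algebra over the residue field), and therefore $\operatorname{gr}^{\bullet}D_r(G)_H$ is Noetherian as well. A standard completeness-plus-graded-Noetherian argument (e.g.\ \cite[Proposition~1.1]{ST03}) then yields left Noetherianity of $D_r(G)_H$.

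The main obstacle is the second step: one must pin down precisely how the $r$-norm behaves on ordered monomials $\mu\cdot y^{\beta}$ and verify that reordering produces only strictly smaller terms in the filtration. Once this is secured, the rest of the argument is a routine application of filtered-ring techniques analogous to those used for $D_r(G)$ itself.
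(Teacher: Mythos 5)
Your route is genuinely different from the paper's, and much heavier. The paper's proof is two lines: by Kohlhaase's theorem 1.4.2 the ring $D_r(G)_H$ is finite free as a module over $U_r(\mathfrak{g})$, and $U_r(\mathfrak{g})$ is noetherian, so $D_r(G)_H$ is noetherian (a ring that is finitely generated as a module over a noetherian subring is noetherian, since every left ideal is in particular a submodule of a noetherian module over that subring). This is the same structural input the paper uses again in the proof that $D_r(G)$ is finite free over $D_r(G)_H$, so nothing new is needed. Your proposal instead transposes Kohlhaase's description — writing $D_r(G)_H$ as a completed free \emph{right-or-left} $D_r(H)$-module on PBW monomials $y^{\beta}$ in a complement of $\mathfrak{h}$ — and then runs the filtered/graded machine. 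If carried out, this would buy a finer piece of information (a presentation of $\operatorname{gr}D_r(G)_H$ as a polynomial ring over $\operatorname{gr}D_r(H)$, valid for $r\in p^{\mathbb{Q}}$ with $1/p<r<1$), which the citation route does not give; but it is strictly more than the lemma requires.

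The genuine gap is your second step, which you yourself flag as ``the main technical point'' and then do not prove: the existence of the orthogonal expansion $\xi=\sum_{\beta}\mu_{\beta}y^{\beta}$ with $\lVert\xi\rVert_r=\sup_{\beta}\lVert\mu_{\beta}\rVert_r\,r^{|\beta|}$. This is not a routine reordering computation; it is essentially equivalent in content to Kohlhaase's theorem 1.4.2/corollary 1.4.3, whose proof rests on the uniform-group estimates ($[L,L]\subseteq pL$, and the absorption of high powers of the group-element generators $b_i=a_i-1$ of $H$ into $U_r(\mathfrak{h})$ — this absorption is exactly why Kohlhaase's basis over $U_r(\mathfrak{g})$ has \emph{bounded} exponents, and your transposed basis over $D_r(H)$ has unbounded ones). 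Deducing your decomposition from his requires commuting all $D(H)$-factors past the $y^{\beta}$ via $\zeta_{\mathrm{Ad}}$ and controlling the norms of the correction terms, which is again the same estimate. Relatedly, the commutativity of the symbols in $\operatorname{gr}$ in your third step is not automatic: one needs $\lVert[y_i,y_j]\rVert_r<\lVert y_i\rVert_r\lVert y_j\rVert_r$ and $\lVert b_hy_i-y_ib_h\rVert_r<\lVert b_h\rVert_r\lVert y_i\rVert_r$, which hold only because of the factor $p$ gained from $[L,L]\subseteq pL$ and only for $r>1/p$; as written you do not justify this. So either supply these estimates in full, or — better — replace the whole argument by the one-line appeal to finite-freeness over $U_r(\mathfrak{g})$.
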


\begin{proof}
We know that $U_r(\frg)$ is noetherian, and $D_r(G)_H$ is finite free as a module over $U_r(\frg)$ \cite[theorem 1.4.2]{KohlhaaseI}, so $D_r(G)_H$ is also noetherian. 
\end{proof}

\begin{lemma} \label{distributions-finite-free-over-supported}
If $G$ is compact, then $D_r(G)$ is finite free as a right module over $D_r(G)_H$. 
\end{lemma}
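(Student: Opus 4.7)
The strategy is to reduce to the case where $G$ is a uniform pro-$p$ group and then exhibit an explicit finite basis using a PBW-type decomposition adapted to $H$.

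First, since $G$ is compact, it contains an open normal uniform pro-$p$ subgroup $G_0$. Choosing a finite set of coset representatives $g_1, \ldots, g_s$ for $G/G_0$, one obtains a decomposition $D_r(G) = \bigoplus_i D_r(G_0)\delta_{g_i}$ as right $D_r(G_0)$-modules. Intersecting with the ``support in $H$'' condition and noting that the support of $\delta_{g_i}\mu$ lies in $g_i \cdot \mathrm{supp}(\mu)$ for $\mu \in D_r(G_0)$, one obtains a compatible decomposition $D_r(G)_H = \bigoplus_i D_r(G_0)_{H_i}\delta_{g_i}$, where $H_i = g_i^{-1}H g_i \cap G_0$. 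This reduces the claim to the uniform pro-$p$ case.

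Next, assume $G$ is uniform pro-$p$. I would choose an ordered basis $x_1, \ldots, x_n$ of an $\mathfrak{o}_F$-Lie lattice in $\mathfrak{g}$ whose first $m$ entries span an $\mathfrak{o}_F$-Lie lattice in $\mathfrak{h}$, and then shrink $G$ so that its topological generators $g_i = \exp(x_i)$ are compatible in the sense that $\exp(x_1), \ldots, \exp(x_m)$ topologically generate the uniform pro-$p$ group $H \cap G$. By the cited Theorem 1.4.2 of Kohlhaase, $D_r(G)_H$ is finite free over $U_r(\mathfrak{g})$, with an explicit basis that one can trace through the proof: it is built from a PBW basis of $U(\mathfrak{g})$ relative to $U(\mathfrak{h})$ together with topological generators of $H \cap G$. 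On the other hand, $D_r(G)$ is finite free over $U_r(\mathfrak{g})$ by the corresponding classical statement, with basis given by the PBW monomials in $b_1, \ldots, b_n$, where $b_i = g_i - 1$, having index bounded by some fixed $p$-adic parameter.

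Comparing these two descriptions, the ``extra'' generators of $D_r(G)$ relative to $D_r(G)_H$ correspond to a finite set of monomials in the variables $b_{m+1}, \ldots, b_n$, and I claim these form the desired basis of $D_r(G)$ as a right $D_r(G)_H$-module. The main obstacle is verifying that this candidate basis spans $D_r(G)$ without relations when acting on the right by all of $D_r(G)_H$ (not merely $U_r(\mathfrak{g})$): since $D_r(G)_H$ is larger than $U_r(\mathfrak{g})$, a priori one has more room, so one must check that no relations are introduced and that the span still fills out $D_r(G)$. This amounts to exploiting the interaction between the $r$-norm orthogonal decomposition of $D_r(G)$ and the Campbell--Hausdorff-type relations that identify, up to lower-order terms, the ``group'' generators of $D_r(H\cap G) \subseteq D_r(G)_H$ with the ``Lie algebra'' generators $\exp(x_j) - 1$ for $j \le m$. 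The technical heart of the argument is thus a norm-compatible comparison of these two systems of generators, ensuring that the claimed finite set of complementary monomials indeed witnesses the freeness.
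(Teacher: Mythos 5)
Your proposal assembles the right ingredients --- the two freeness statements over $U_r(\frg)$ (Kohlhaase's theorem 1.4.2 for $D_r(G)$ and its corollary 1.4.3 for $D_r(G)_H$) and the correct candidate basis, namely the monomials in the $b_i$ attached to the generators of $G$ complementary to those of $H$ --- but it stops exactly where the proof has to be carried out. What you defer to as the ``technical heart'' (a norm-compatible, Campbell--Hausdorff-type comparison of group generators of $D_r(H\cap G)$ with Lie-algebra generators) is neither performed nor needed. The point you are missing is that the two $U_r(\frg)$-bases are compatible as \emph{ordered} monomials: choosing topological generators $a_1,\dotsc,a_d$ of $G$ so that the \emph{last} $d-k$ of them generate $H$, every basis monomial $b^\alpha=b_1^{\alpha_1}\dotsb b_d^{\alpha_d}$ of $D_r(G)$ over $U_r(\frg)$ factors as $\bigl(b_1^{\alpha_1}\dotsb b_k^{\alpha_k}\bigr)\cdot\bigl(b_{k+1}^{\alpha_{k+1}}\dotsb b_d^{\alpha_d}\bigr)$, and the second factor is a $U_r(\frg)$-basis element of $D_r(G)_H$. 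Spanning then follows by regrouping $\sum_\alpha b^\alpha c_\alpha$, and linear independence over $D_r(G)_H$ follows by expanding each coefficient $s\in D_r(G)_H$ in its $U_r(\frg)$-basis and invoking linear independence of the full monomials $b^\alpha$ over $U_r(\frg)$; this is exactly how the worry that the ``extra room'' in $D_r(G)_H$ might create relations is resolved. Note also that the ordering is not a matter of taste: placing the generators of $H$ \emph{first}, as you do, would naturally produce a left-module basis, whereas the statement concerns $D_r(G)$ as a \emph{right} $D_r(G)_H$-module.

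Separately, your preliminary reduction to the uniform pro-$p$ case is both unnecessary and incorrect as formulated. The claimed decomposition $D_r(G)_H=\bigoplus_i D_r(G_0)_{H_i}\delta_{g_i}$ over \emph{all} coset representatives $g_i$ of $G/G_0$ is false: a summand supported in $G_0g_i$ with $G_0g_i\cap H=\emptyset$ must vanish for a distribution supported in $H$, and the correct decomposition (cf. \cref{compact-direct-sum}) runs only over representatives of $H_0\backslash H$, with the single ring $D(G_0)_{H_0}$ rather than varying conjugates $H_i$. Even if repaired, a direct sum of modules over the different rings $D_r(G_0)_{H_i}$ does not assemble into freeness over the one ring $D_r(G)_H$ without further work. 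The argument should instead be run directly for compact $G$ with a system of topological generators adapted to $H$, which is the setting in which the cited results of Kohlhaase are applied.
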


\begin{proof}
Choose topological generators $a_1, \dotsc, a_d$ of $G$ such that $a_{k+1}, \dotsc, a_d$ are topological generators for $H$. Set $b_i = a_i - 1$. Then there exists an $\ell_i$ such that $D_r(G)$ is finite free as a right module over $U_r(\frg)$ with basis given by monomials $b^\alpha = b_1^{\alpha_1} \dotsb b_d^{\alpha_d}$ where $\alpha_i \leq \ell_i$ for all $i$ \cite[theorem 1.4.2]{KohlhaaseI}. Furthermore, the subring $D_r(G)_H$ is also finite free as a right module over $U_r(\frg)$, with basis given by monomials of the form $b_{k+1}^{\alpha_{k+1}} \dotsb b_d^{\alpha_d}$ where $\alpha_i \leq \ell_i$ for all $i$ \cite[corollary 1.4.3]{KohlhaaseI}. 

This implies that $D_r(G)$ is free over $D_r(G)_H$ with basis given by monomials of the form $b_{1}^{\alpha_{1}} \dotsb b_k^{\alpha_k}$ where $\alpha_{i} \leq \ell_i$ for all $i$. To see this, first observe that any element of $D_r(G)$ can be written in the form $\sum_\alpha b^\alpha c_\alpha$ where $c_{\alpha} \in U_r(\frg)$. But 
\[ \sum_\alpha b^\alpha c_\alpha = \sum_{\alpha} b_1^{\alpha_1} \dotsb b_d^{\alpha_d}c_{\alpha}  = \sum_{\alpha_{1}, \dotsc, \alpha_k} b_1^{\alpha_1} \dotsb b_k^{\alpha_k} \left( \sum_{\alpha_{k+1}, \dotsc, \alpha_d} b_{k+1}^{\alpha_{k+1}} \dotsb b_d^{\alpha_d} c_\alpha \right) \]
and
\[ \sum_{\alpha_{k+1}, \dotsc, \alpha_d} b_{k+1}^{\alpha_{k+1}} \dotsb b_d^{\alpha_d} c_\alpha \]
is an element of $D_r(G)_H$, which shows that $D_r(G)$ is generated as a right module over $D_r(G)_H$ by monomials of the form $b_{1}^{\alpha_{1}} \dotsb b_k^{\alpha_k}$. 
		
Moreover, these monomials are linearly independent over $D_r(G)_H$. Indeed, suppose we have a dependence relation 
\[ 0 = \sum_{\alpha_{1}, \dotsc, \alpha_k} b_1^{\alpha_1} \dotsb b_k^{\alpha_k} s_{\alpha_1, \dotsc, \alpha_k}  \]
where $s_{\alpha_1, \dotsc, \alpha_k} \in D_r(G)_H$. Each $s_{\alpha_1, \dotsc, \alpha_k}$ can be written as 
\[ s_{\alpha_1, \dotsc, \alpha_k} = \sum_{\alpha_{k+1}, \dotsc, \alpha_d} b_{k+1}^{\alpha_{k+1}} \dotsb b_d^{\alpha_d} c_\alpha \]
where $c_\alpha \in U_r(\frg)$. Thus we can rewrite the dependence relation as 
\[ \begin{aligned} 0 &= \sum_{\alpha_{1}, \dotsc, \alpha_k} b_1^{\alpha_1} \dotsb b_k^{\alpha_k} s_{\alpha_1, \dotsc, \alpha_k} \\
&= \sum_{\alpha_1, \dotsc, \alpha_k} b_1^{\alpha_1} \dotsb b_k^{\alpha_k} \left( \sum_{\alpha_{k+1}, \dotsc, \alpha_d} b_{k+1}^{\alpha_{k+1}} \dotsb b_d^{\alpha_d} c_\alpha \right) \\
&= \sum_\alpha b_1^{\alpha_1} \dotsb b_d^{\alpha_d} c_\alpha \\
&= \sum_\alpha b^\alpha c_\alpha. \end{aligned} \]
Since the monomials $b^\alpha$ are linearly independent over $U_r(\frg)$, this means that $c_\alpha = 0$ for all $\alpha$, so $s_{\alpha_1, \dotsc, \alpha_k} = 0$ for all $\alpha_1, \dotsc, \alpha_k$ as well. 
\end{proof}

\begin{proposition} \label{supported-distributions-frechet-stein}
If $G$ is compact, then $D(G)_H$ is a Fr\'echet-Stein algebra. 
\end{proposition}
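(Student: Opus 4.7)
The plan is to verify the two Fr\'echet-Stein axioms directly for the family of subalgebras $\{D_r(G)_H\}_{r \to 1^-}$. Since $D(G)_H$ is a closed subalgebra of the Fr\'echet-Stein algebra $D(G)$ and each $D_r(G)_H$ is defined as its closure inside $D_r(G)$, the identification $D(G)_H = \varprojlim_r D_r(G)_H$ as Fr\'echet algebras is automatic. Noetherianity of each $D_r(G)_H$ is exactly \Cref{supported-distributions-noetherian}, so the heart of the proof will be showing flatness of the transition maps $D_{r'}(G)_H \to D_r(G)_H$ for $r < r'$.

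For the flatness step, the lever is \Cref{distributions-finite-free-over-supported}: $D_r(G)$ is finite free as a right $D_r(G)_H$-module (and similarly at level $r'$), so both inclusions $D_r(G)_H \hookrightarrow D_r(G)$ and $D_{r'}(G)_H \hookrightarrow D_{r'}(G)$ are faithfully flat. The idea is to descend flatness from the Fr\'echet-Stein structure of $D(G)$ itself. Given a short exact sequence of left $D_{r'}(G)_H$-modules $0 \to M' \to M \to M'' \to 0$, I would apply the composite extension of scalars $D_r(G) \otimes_{D_{r'}(G)} \bigl( D_{r'}(G) \otimes_{D_{r'}(G)_H} - \bigr)$. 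The inner functor is exact by the faithful flatness of $D_{r'}(G)$ over $D_{r'}(G)_H$, and the outer functor is exact because $D(G)$ is Fr\'echet-Stein. Hence the resulting sequence $D_r(G) \otimes_{D_{r'}(G)_H} M^\bullet$ is exact. But this is canonically isomorphic to $D_r(G) \otimes_{D_r(G)_H} \bigl( D_r(G)_H \otimes_{D_{r'}(G)_H} M^\bullet \bigr)$, and $D_r(G)$ is faithfully flat over $D_r(G)_H$, so exactness descends to $D_r(G)_H \otimes_{D_{r'}(G)_H} M^\bullet$. This establishes flatness of $D_r(G)_H$ over $D_{r'}(G)_H$, completing the verification.

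I expect the only real content to be the faithful-flatness descent in the second paragraph; the rest is bookkeeping with topologies and filtrations. The main obstacle, such as it is, will be keeping the left/right module conventions straight, since the finite-freeness statement of \Cref{distributions-finite-free-over-supported} is phrased on one particular side while the Schneider-Teitelbaum axiom demands flatness on a specific side. The symmetric nature of the setup (an analogous finite freeness holds on the opposite side by the same monomial-basis argument) should handle this without genuine difficulty.
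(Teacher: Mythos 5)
Your proposal is correct and follows essentially the same route as the paper: reduce to noetherianity (\cref{supported-distributions-noetherian}) plus flatness of the transition maps, and obtain the latter by descending flatness around the commutative square formed with $D_r(G)$ and $D_{r'}(G)$ using the finite (faithfully flat) freeness of \cref{distributions-finite-free-over-supported}. The only difference is that you unwind the descent-of-flatness step by hand where the paper cites it as a standard fact.
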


\begin{proof}
Certainly $D(G)_H$, as a closed subalgebra of $D(G)$, is Fr\'echet, and we already know that $D_r(G)_H$ is noetherian by \cref{supported-distributions-noetherian}, so we only need to check that the transition maps $D_r(G)_H \to D_{r'}(G)_H$ are right flat. Observe that we have a commutative square as follows. 
\[ \begin{tikzcd} D_r(G)_H \ar{r} \ar{d} & D_{r'}(G)_H \ar{d} \\ D_r(G) \ar{r} & D_{r'}(G) \end{tikzcd} \]
Since $D(G)$ is Fr\'echet-Stein, we know that the horizontal map $D_r(G) \to D_{r'}(G)$ is right flat. We also saw above in \cref{distributions-finite-free-over-supported} that both vertical maps are faithfully flat. This means that $D_{r'}(G)$ is right flat as a $D_r(G)_H$-module and faithfully flat as a $D_{r'}(G)_H$-module, so it follows that $D_r(G)_H \to D_{r'}(G)_H$ is right flat \cite[tag 039V]{stacks-project}.
\end{proof}

\subsection{c-Flatness}

Recall that, if $A \to B$ is a continuous homomorphism of Fr\'echet-Stein algebras, there is a right-exact functor $B \arcotimes_A \,-$ from coadmissible left $A$-modules to coadmissible left $B$-modules \cite[section 7]{AWDcapI}. One says that $B$ is \emph{right c-flat} if this functor is exact, and \emph{right faithfully c-flat} if this functor is faithfully exact. If $M$ is a finitely presented $A$-module, then $B \arcotimes_A\, M = B \otimes_A M$. 

\begin{lemma} \label{checking-flatness-after-completion}
Suppose $A \to B$ is a continuous homomorphism of Fr\'echet-Stein algebras, and let $(q_n)_n$ and $(p_n)_n$ be sequences of norms defining the topologies on $A$ and $B$, respectively, such that $(A, q_n) \to (B, p_n)$ is continuous for all $n$ (cf. \cite[proof of lemma 3.8]{ST03}). 
\begin{enumerate}[(a)]
\item If $A_{q_n} \to B_{p_n}$ is right flat for all $n$, then $A \to B$ is right c-flat. 
\item If $A_{q_n} \to B_{p_n}$ is right faithfully flat for all $n$, then $A \to B$ is right faithfully c-flat. 
\end{enumerate}
\end{lemma}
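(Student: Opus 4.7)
The plan is to argue level-by-level on the Fr\'echet-Stein structure and then pass to the inverse limit. First observe that the continuity assumption $(A,q_n) \to (B,p_n)$ ensures that the ring homomorphism $A \to B$ extends to a compatible family of continuous ring homomorphisms $\alpha_n : A_{q_n} \to B_{p_n}$ between the Banach completions, so that the flatness hypothesis in (a) and (b) is even meaningful.

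For part (a), I would start with a short exact sequence $0 \to M' \to M \to M'' \to 0$ of coadmissible left $A$-modules. Writing $M_n = A_{q_n} \otimes_A M$, and analogously for $M'$ and $M''$, the Schneider-Teitelbaum theory tells us that these form coherent (in particular Mittag-Leffler) inverse systems of finitely generated modules with $M = \varprojlim_n M_n$, and that the induced sequence $0 \to M'_n \to M_n \to M''_n \to 0$ is short exact for every $n$. Applying right flatness of $\alpha_n$ at each level yields short exact sequences
\[ 0 \to B_{p_n} \otimes_{A_{q_n}} M'_n \to B_{p_n} \otimes_{A_{q_n}} M_n \to B_{p_n} \otimes_{A_{q_n}} M''_n \to 0. \]
Since $B \,\widehat{\otimes}_A\, (-)$ is defined as $\varprojlim_n (B_{p_n} \otimes_{A_{q_n}} (-)_n)$, it remains to pass the inverse limit through this short exact sequence. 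This is where Mittag-Leffler enters: the transition maps in each of the three tensored inverse systems have dense image (each $B_{p_{n+1}} \to B_{p_n}$ has dense image because $B$ does, and each $M_{n+1} \to M_n$ has dense image by coadmissibility, and these combine through the tensor product), hence the relevant $\varprojlim^1$ terms vanish and the limit sequence is short exact. This establishes c-flatness.

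For part (b), I would combine (a) with the observation that a nonzero coadmissible $A$-module $M$ must have $M_n \neq 0$ for some $n$, whence faithful flatness of $\alpha_n$ gives $B_{p_n} \otimes_{A_{q_n}} M_n \neq 0$. Since $B \,\widehat{\otimes}_A\, M$ surjects onto each of its levels $B_{p_n} \otimes_{A_{q_n}} M_n$ by construction of the completed tensor product, $B \,\widehat{\otimes}_A\, M$ is itself nonzero. Combined with (a) this yields faithful c-flatness.

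The main obstacle is the careful verification of Mittag-Leffler for the tensored systems. The density arguments on the factors are clean enough, but one has to remember that the tensor products $B_{p_n} \otimes_{A_{q_n}} M_n$ carry natural Banach topologies making the tensored transition maps continuous, which is what allows the density statement to translate into vanishing of $\varprojlim^1$ and hence exactness of the inverse limit; this bookkeeping is the only genuine subtlety in an otherwise formal argument.
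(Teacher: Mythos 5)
Your argument is correct, but for part (a) it takes a more laborious route than the paper's. The paper exploits the fact, already built into the setup, that the functor computing the completed tensor product over $A$ is right exact on coadmissible modules; exactness of that functor is therefore equivalent to its preserving injections. Given an injection $M \to N$ of coadmissible $A$-modules, one tensors level by level (using that $A_{q_n}$ is right flat over $A$, so that the flatness hypothesis makes $B_{p_n}\otimes_A M \to B_{p_n}\otimes_A N$ injective) and then simply invokes left exactness of $\varprojlim$; no Mittag--Leffler or $\varprojlim^1$ argument is needed. Your version, which tensors an entire short exact sequence at each level and then controls $\varprojlim^1$ of the subobject system, also works: the system $(B_{p_n}\otimes_{A_{q_n}} M'_n)_n$ is precisely the coherent sheaf attached to the coadmissible $B$-module obtained from $M'$, so its $\varprojlim^1$ vanishes by Schneider--Teitelbaum's Theorem B. But that theorem (or the topological Mittag--Leffler theorem for the Banach module topologies) is what you must cite: for inverse systems of abelian groups, dense transition images alone do not force $\varprojlim^1=0$, and you correctly flag this as the real subtlety. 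For part (b) your argument is the contrapositive of the paper's and is essentially the same; the one imprecision is the claim that the completed tensor product ``surjects onto each of its levels'' --- in general it only has dense image there --- but density is all that is needed to conclude that vanishing of the limit forces vanishing of every level, which is exactly how the paper phrases it.
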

	
\begin{proof}
Suppose $M \to N$ is an injective map of coadmissible left $A$-modules. Since $A_{q_n}$ is right flat over $A$ \cite[remark 3.2]{ST03}, and $B_{p_n}$ is right flat over $A_{q_n}$,we see that $B_{p_n} \otimes_A M \to B_{p_n} \otimes_A N$ is also injective. Since limits are left exact, we conclude that 
\[ \begin{tikzcd} B \arcotimes_A \,M = \lim B_{p_n} \otimes_A M \ar{r} & \lim B_{p_n} \otimes_A N = B \arcotimes_A \,N \end{tikzcd} \]
is injective, proving that $B$ is right c-flat. 

Now suppose $A_{q_n} \to B_{p_n}$ is right faithfully flat for all $n$ and that $M$ is a coadmissible left $A$-module such that $B \arcotimes_A M = 0$. Since the image of $B \arcotimes_A M$ is dense in $B_{p_n} \otimes_A M$ \cite[theorem A and corollary 3.1]{ST03}, this means that \[ B_{p_n} \otimes_{A_{q_n}} (A_{q_n} \otimes_A M) = B_{p_n} \otimes_A M = 0 \]
for all $n$. Since $B_{p_n}$ is faithfully flat over $A_{q_n}$, we see that $A_{q_n} \otimes_A M = 0$ for all $n$, so $M = \lim A_{q_n} \otimes_A M = 0$ as well. 
\end{proof}

\begin{lemma} \label{distributions-flat-over-supported-distributions}
Suppose $G$ is compact. Then $D(G)$ is faithfully c-flat as a right module over $D(G)_{H}$. 
\end{lemma}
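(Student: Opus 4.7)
The plan is a direct application of \cref{checking-flatness-after-completion} combined with the finite-freeness established in \cref{distributions-finite-free-over-supported}. Let $(p_r)_r$ denote the family of norms on $D(G)$ defining its Fr\'echet--Stein structure, and let $(q_r)_r$ denote the restrictions of these norms to $D(G)_H$. As noted in the proof of \cref{supported-distributions-frechet-stein}, the family $(q_r)_r$ defines the Fr\'echet--Stein structure on $D(G)_H$; moreover the completions are exactly $D_r(G)$ and $D_r(G)_H$, and the inclusion $(D(G)_H, q_r) \hookrightarrow (D(G), p_r)$ is by construction continuous for every $r$. Thus the hypothesis on the norms in \cref{checking-flatness-after-completion} is satisfied for the map $D(G)_H \to D(G)$.

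Next I would verify the flatness condition at each level. By \cref{distributions-finite-free-over-supported}, $D_r(G)$ is finite free as a right $D_r(G)_H$-module, on an explicit basis of monomials $b_1^{\alpha_1} \cdots b_k^{\alpha_k}$ indexed by the tuples $(\alpha_1, \dotsc, \alpha_k)$ with $\alpha_i \leq \ell_i$. This basis contains the element $1$ (take all $\alpha_i = 0$), so $D_r(G)$ is a finite free right $D_r(G)_H$-module of positive rank, and in particular it is right faithfully flat over $D_r(G)_H$ for every admissible $r$.

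Applying part (b) of \cref{checking-flatness-after-completion} then yields that $D(G)$ is right faithfully c-flat over $D(G)_H$, which is the desired statement. No step of this argument looks like a serious obstacle: everything reduces to the norm-compatibility already used in \cref{supported-distributions-frechet-stein} and the explicit finite-free decomposition of \cref{distributions-finite-free-over-supported}. The only minor check is the positivity of the rank of the basis in \cref{distributions-finite-free-over-supported}, which is immediate from inspecting the list of monomials.
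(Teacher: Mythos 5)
Your proof is correct and is exactly the paper's argument: the paper also deduces the lemma by combining \cref{checking-flatness-after-completion}, \cref{supported-distributions-frechet-stein}, and \cref{distributions-finite-free-over-supported}, with the level-wise faithful flatness coming from the finite-free (positive rank) structure of $D_r(G)$ over $D_r(G)_H$. You have merely spelled out the details that the paper leaves implicit.
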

	
\begin{proof}
This follows from \cref{checking-flatness-after-completion,supported-distributions-frechet-stein,distributions-finite-free-over-supported}. 
\end{proof}

\begin{remark}
If $D(G)_H$ is a left coherent ring, it would also be true that $D(G)$ is right flat over $D(G)_H$. Indeed, then any finitely generated left ideal $I$ in $D(G)_H$ would also be finitely presented, so 
\[ D(G) \otimes_{D(G)_H} I = D(G) \arcotimes_{D(G)_H} \,I. \]
Furthermore, since finite presentation implies coadmissibility \cite[corollary 3.4(v)]{ST03}, we would also know that $D(G) \otimes_{D(G)_H} I = D(G) \arcotimes_{D(G)_H} \,I \to D(G)$ is injective by \cref{distributions-flat-over-supported-distributions}. This would prove that $D(G)$ is right flat over $D(G)_H$ \cite[4.12]{Lam}. However, we do not know if $D(G)_H$ is a coherent ring. 
\end{remark}

\subsection{Coadmissible modules over \texorpdfstring{$D(G)_H$}{D(G)H}}

\begin{lemma}[{\cite[equation (1.7)]{KohlhaaseI}}] \label{compact-direct-sum}
Suppose $G_0$ is a compact open subgroup of $G$ and $H_0 = H \cap G_0$. Then 
\[ D(G)_H = D(G_0)_{H_0} \otimes D(H_0 \backslash H) \]
as left $D(G_0)_{H_0}$-modules.
\end{lemma}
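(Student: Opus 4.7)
The plan is to deduce the identity from the coset decomposition of $H$ with respect to the open subgroup $H_0$. Since $G_0$ is open in $G$, the intersection $H_0 = H \cap G_0$ is open in $H$, so the quotient $H_0 \backslash H$ is a discrete locally analytic manifold and $H = \bigsqcup_{[h] \in H_0 \backslash H} H_0 h$ is a disjoint union of compact open $H_0$-cosets. Consequently $C^\an(H_0\backslash H)$ is the space of all $E$-valued functions on the discrete set $H_0\backslash H$ with the product topology, and its strong dual is the algebraic direct sum
\[
D(H_0 \backslash H) = \bigoplus_{[h] \in H_0\backslash H} E \cdot \delta_{[h]}.
\]

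Next, I would set up the analogous decomposition on $G$. The open partition $G = \bigsqcup_{g \in G_0 \backslash G} G_0 g$ yields a topological identification $C^\an(G) = \prod_{g} C^\an(G_0 g)$ with the product topology, and a continuous linear functional on such a product is supported on only finitely many factors. Dualizing gives an algebraic direct sum decomposition
\[
D(G) = \bigoplus_{g \in G_0 \backslash G} D(G_0 g) = \bigoplus_{g \in G_0 \backslash G} D(G_0)\,\delta_g,
\]
where the second identification uses right translation by $g$ to carry $D(G_0)$ onto $D(G_0 g)$. Restricting to distributions supported in $H$ yields $D(G)_H = \bigoplus_g D(G_0 g)_{H \cap G_0 g}$, and I would observe that the natural map $H_0 \backslash H \to G_0 \backslash G$ is injective (two elements of $H$ lie in the same $G_0$-coset iff they lie in the same $H_0$-coset, since $H \cap G_0 = H_0$), with image exactly those cosets meeting $H$. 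For $g = h \in H$, the intersection $H \cap G_0 h$ equals $H_0 h$, and right translation by $h$ identifies distributions on $G_0$ supported on $H_0$ with distributions on $G_0 h$ supported on $H_0 h$. Choosing a representative $h$ for each coset $[h] \in H_0\backslash H$ therefore gives
\[
D(G)_H = \bigoplus_{[h] \in H_0 \backslash H} D(G_0)_{H_0} \cdot \delta_h.
\]

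Finally, I would compare this with $D(G_0)_{H_0} \otimes D(H_0 \backslash H) = \bigoplus_{[h]} D(G_0)_{H_0} \otimes E\delta_{[h]}$ via the map $x \otimes \delta_{[h]} \mapsto x\,\delta_h$, which is manifestly a bijection and, because the left action of $D(G_0)_{H_0}$ commutes with the right action of the delta distributions $\delta_h \in D(G)$, is an isomorphism of left $D(G_0)_{H_0}$-modules. The only nonformal step is the direct-sum (as opposed to direct-product) decomposition of $D(G)$ along $G_0 \backslash G$, and this is precisely the point at which the product topology on $C^\an(G) = \prod_g C^\an(G_0 g)$ (combined with the standard fact that continuous linear functionals on a locally convex product factor through a finite subproduct) is used; everything else is bookkeeping with the coset partition of $H$.
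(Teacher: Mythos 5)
Your proof is correct. The paper does not prove this lemma itself --- it quotes it as equation (1.7) of Kohlhaase --- and your argument (restrict the coset decomposition $D(G)=\bigoplus_{g\in G_0\backslash G}D(G_0)\,\delta_g$ to distributions supported in $H$, noting that the $G_0$-cosets meeting $H$ are indexed by $H_0\backslash H$ via the injection $H_0\backslash H\hookrightarrow G_0\backslash G$ and that $H\cap G_0h=H_0h$) is exactly the standard derivation of that equation; the only step you leave implicit is that the support of a finite sum of distributions carried by pairwise disjoint open cosets is the union of the individual supports, which is immediate from the definition of support.
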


\begin{corollary} \label{change-of-compact}
Suppose $G_0 \subseteq G_1$ are two compact open subgroups of $G$, and let $H_i = H \cap G_i$. Then $D(G_1)_{H_1}$ is finite free as a left module over $D(G_0)_{H_0}$. 
\end{corollary}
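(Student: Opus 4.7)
The plan is to apply \cref{compact-direct-sum} directly to $G_1$ (which is compact) with the compact open subgroup $G_0$. Taking $H_0 = H \cap G_0$, note that $H \cap G_0 = (H \cap G_1) \cap G_0 = H_1 \cap G_0$, so the lemma yields an isomorphism of left $D(G_0)_{H_0}$-modules
\[ D(G_1)_{H_1} = D(G_0)_{H_0} \otimes D(H_0 \backslash H_1). \]

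It therefore suffices to check that $D(H_0 \backslash H_1)$ is a finite dimensional $E$-vector space, since then a choice of basis of $D(H_0 \backslash H_1)$ over $E$ gives a $D(G_0)_{H_0}$-basis of $D(G_1)_{H_1}$. Because $H$ is closed in $G$ (our standing assumption on $H$), $H_1 = H \cap G_1$ is closed in the compact group $G_1$ and is thus itself compact. Since $G_0$ is open in $G_1$, the subgroup $H_0 = H_1 \cap G_0$ is open in $H_1$, so $H_0 \backslash H_1$ is a finite discrete set. Hence $D(H_0 \backslash H_1)$ is just the finite dimensional group algebra $E[H_0 \backslash H_1]$.

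Putting the two observations together, we conclude that $D(G_1)_{H_1}$ is finite free as a left $D(G_0)_{H_0}$-module, with rank equal to $[H_1 : H_0]$. The only step requiring any verification is the identification $H_1 \cap G_0 = H_0$, which follows immediately from $G_0 \subseteq G_1$; there is no genuine obstacle here, as the work has already been done in \cref{compact-direct-sum}.
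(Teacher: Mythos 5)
Your argument is correct and matches the paper's proof: both apply \cref{compact-direct-sum} with $G_1$ in the role of $G$ and $H_1$ in the role of $H$, note that $H_1 \cap G_0 = H_0$, and conclude via finiteness of $H_0 \backslash H_1$ (since $H_1$ is compact and $H_0$ is open in it) that $D(H_0 \backslash H_1)$ is finite dimensional. Your extra remarks on why $H_1$ is compact and on the rank $[H_1:H_0]$ are fine but not needed.
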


\begin{proof}
\Cref{compact-direct-sum} tells us that $D(G_1)_{H_1} = D(G_0)_{H_0} \otimes D(H_0 \backslash H_1)$. Since $H_1$ is compact and $H_0$ is open, we see that $H_0 \backslash H_1$ is finite, so $D(H_0 \backslash H_1)$ is finite dimensional. 
\end{proof}

\begin{definition}
A $D(G)_H$-module $M$ is \emph{coadmissible} if there exists a compact open subgroup $G_0$ of $G$ such that $M$ is coadmissible as a module over the Fr\'echet-Stein algebra $D(G_0)_{H_0}$, where $H_0 = H \cap G_0$. 
By \cref{change-of-compact} and \cite[lemma 3.8]{ST03}, it follows that the same is then true for any other compact open subgroup. 
\end{definition}

\subsection{Pre-coadmissibile modules over \texorpdfstring{$D(\frg,H)$}{D(g,H)}}

\begin{lemma}
If $G_0$ is a compact open subgroup of $G$ and $H_0 = H \cap G_0$, then 
\[ D(G_0)_{H_0} \otimes_{D(\frg, H_0)} D(\frg, H) = D(G)_H \]
as $D(G_0)_{H_0}$-$D(\frg,H)$-bimodules.
\end{lemma}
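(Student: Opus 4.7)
The plan is to produce the isomorphism explicitly by decomposing both sides as direct sums indexed by the (discrete) coset space $H_0\backslash H$, and to verify that the natural multiplication map realizes the identification.

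First, I would construct a natural map $\Phi$. Since both $D(G_0)_{H_0}$ and $D(\frg,H)$ are subrings of $D(G)_H$, multiplication in $D(G)_H$ restricts to a bilinear map $D(G_0)_{H_0} \times D(\frg,H) \to D(G)_H$. The subring $D(\frg,H_0) = U(\frg)\otimes_{U(\frh)} D(H_0)$ sits inside both $D(G_0)_{H_0}$ and $D(\frg,H)$, and associativity of multiplication in $D(G)_H$ makes the map $D(\frg,H_0)$-balanced. It therefore descends to a $D(G_0)_{H_0}$-$D(\frg,H)$-bilinear map
\[ \Phi : D(G_0)_{H_0} \otimes_{D(\frg,H_0)} D(\frg,H) \longrightarrow D(G)_H. \]
The $D(G_0)_{H_0}$-linearity on the left and the $D(\frg,H)$-linearity on the right are both immediate from the construction.

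Second, I would decompose both sides using a set of representatives $R \subseteq H$ for $H_0\backslash H$. Since $H_0$ is open in $H$, the coset space is discrete and $D(H)$ decomposes as a left $D(H_0)$-module as $D(H) = \bigoplus_{h \in R} D(H_0)\,\delta_h$. Applying $U(\frg)\otimes_{U(\frh)} -$ and using $D(\frg,H) = U(\frg) \otimes_{U(\frh)} D(H)$ (as in the proof of \cref{universal-prop}), this yields
\[ D(\frg,H) = \bigoplus_{h \in R} D(\frg,H_0)\,\delta_h \]
as left $D(\frg,H_0)$-modules, with the action on the right by $\delta_h$. On the other hand, the discreteness of $H_0\backslash H$ gives $D(H_0\backslash H) = \bigoplus_{h \in R} E\,\delta_h$, so the preceding lemma (\cref{compact-direct-sum}) specializes to
\[ D(G)_H = \bigoplus_{h \in R} D(G_0)_{H_0}\,\delta_h. \]
The crucial point is that both decompositions are indexed by the same set $R$, and both arise by right multiplication by the corresponding $\delta_h$ inside $D(G)_H$.

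Finally, applying $D(G_0)_{H_0}\otimes_{D(\frg,H_0)} -$ to the decomposition of $D(\frg,H)$ and using that tensor product commutes with direct sums, I obtain
\[ D(G_0)_{H_0} \otimes_{D(\frg,H_0)} D(\frg,H) = \bigoplus_{h \in R} D(G_0)_{H_0} \otimes_{D(\frg,H_0)} D(\frg,H_0)\,\delta_h = \bigoplus_{h \in R} D(G_0)_{H_0}\,\delta_h, \]
and under $\Phi$ the $h$-th summand $D(G_0)_{H_0}\otimes_{D(\frg,H_0)} D(\frg,H_0)\delta_h$ maps isomorphically onto $D(G_0)_{H_0}\,\delta_h$ via $a \otimes \delta_h \mapsto a\delta_h$. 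Comparing with the decomposition of $D(G)_H$ shows that $\Phi$ is a bijection. The main obstacle, if any, is bookkeeping: one must confirm that the direct-sum decomposition of $D(H)$ over the open subgroup $H_0$ is really a locally convex direct sum (so that $U(\frg)\otimes_{U(\frh)} -$ preserves it) and that the indexing by $R$ is compatible with the decomposition furnished by \cref{compact-direct-sum}. Both facts are straightforward consequences of the discreteness of $H_0\backslash H$ together with the explicit description of $D(H_0\backslash H)$.
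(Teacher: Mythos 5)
Your proof is correct and follows essentially the same route as the paper: decompose $D(\frg,H)$ as a free left $D(\frg,H_0)$-module via the coset decomposition of $H$ over the open subgroup $H_0$, tensor up, and identify the result with $D(G)_H$ using \cref{compact-direct-sum}. The only difference is that you make explicit the multiplication map realizing the isomorphism and the indexing by coset representatives, which the paper leaves implicit.
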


\begin{proof}
Observe that 
\[ \begin{aligned} D(\frg, H) &= U(\frg) \otimes_{U(\frh)} D(H) \\
&= U(\frg) \otimes_{U(\frh)} \left( D(H_0) \otimes D(H_0 \backslash H) \right) \\
&= \left( U(\frg) \otimes_{U(\frh)} D(H_0) \right) \otimes D(H_0 \backslash H) \\
&= D(\frg, H) \otimes D(H_0 \backslash H). \end{aligned} \]
so
\[ D(G_0)_{H_0} \otimes_{D(\frg, H_0)} D(\frg, H) = D(G_0)_{H_0} \otimes D(H_0 \backslash H) = D(G)_H, \]
using \cref{compact-direct-sum} for the last step. 
\end{proof}

\begin{corollary} \label{base-change-compact}
Suppose $M$ is a $D(\frg,H)$-module. For any compact open subgroup $G_0$ of $G$, \[ D(G)_H \otimes_{D(\frg,H)} M = D(G_0)_{H_0} \otimes_{D(\frg,H_0)} M \] as $D(G_0)_{H_0}$-modules. \qed
\end{corollary}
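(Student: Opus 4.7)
The plan is to derive this corollary as a direct consequence of the preceding lemma, which identifies $D(G)_H$ with $D(G_0)_{H_0} \otimes_{D(\frg,H_0)} D(\frg,H)$ as a $D(G_0)_{H_0}$-$D(\frg,H)$-bimodule. Given this identification, the corollary should reduce to associativity of the tensor product.

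More explicitly, I would substitute the bimodule isomorphism of the previous lemma into $D(G)_H \otimes_{D(\frg,H)} M$, obtaining
\[ D(G)_H \otimes_{D(\frg,H)} M = \bigl( D(G_0)_{H_0} \otimes_{D(\frg,H_0)} D(\frg,H) \bigr) \otimes_{D(\frg,H)} M. \]
Then I would invoke the standard associativity isomorphism for tensor products over the chain of ring maps $D(\frg,H_0) \hookrightarrow D(\frg,H)$ with $D(G_0)_{H_0}$ acting on the left and $M$ on the right. This collapses the right factor since $D(\frg,H) \otimes_{D(\frg,H)} M = M$, yielding $D(G_0)_{H_0} \otimes_{D(\frg,H_0)} M$. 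Finally, one checks that the $D(G_0)_{H_0}$-module structures on both sides agree, which is immediate from tracing through the associativity isomorphism and the fact that the left action on the bimodule comes from its left $D(G_0)_{H_0}$-factor.

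There is essentially no obstacle here: the corollary is a purely formal consequence of the lemma together with associativity of tensor products. The only step requiring any care is verifying that the bimodule structure in the lemma is compatible with the module-theoretic identifications on both sides so that the final isomorphism is truly one of $D(G_0)_{H_0}$-modules, but this follows at once from how the ring maps $D(\frg,H_0) \to D(\frg,H) \to D(G)_H$ and $D(\frg,H_0) \to D(G_0)_{H_0} \to D(G)_H$ fit together within $D(G)$.
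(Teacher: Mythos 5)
Your proposal is correct and is exactly the argument the paper intends: the corollary carries a \qed in its statement precisely because it follows immediately from the preceding lemma by associativity of the tensor product, $\bigl(D(G_0)_{H_0} \otimes_{D(\frg,H_0)} D(\frg,H)\bigr) \otimes_{D(\frg,H)} M = D(G_0)_{H_0} \otimes_{D(\frg,H_0)} M$. Your extra remark on checking the compatibility of the $D(G_0)_{H_0}$-module structures is the right (and only) point of care.
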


\begin{definition}
A $D(\frg,H)$-module $M$ is \emph{pre-coadmissible} if $D(G)_H \otimes_{D(\frg,H)} M$ is coadmissible. 
\end{definition}

Suppose $M$ is a $D(\frg,H)$-module. If $G_0$ is a compact open subgroup of $G$, $H_0 = H \cap G_0$, and $M$ is finitely presented over $D(\frg,H_0)$, then $M$ is pre-coadmissible. Also, if $M$ is finitely generated over $U(\frg)$, it is pre-coadmissible (by the same proof as in \cite[lemma 4.3]{ScSt}, mutatis mutandis). 

\begin{lemma}[{\cite[lemma 4.6]{ScSt}}] \label{two-different-base-changes}
For a $D(\frg,H)$-module $M$, the natural map
\[ \begin{tikzcd} U_r(\frg) \otimes_{U(\frg)} M \ar{r} & D_r(G)_H \otimes_{D(\frg,H)} M \end{tikzcd} \]
is an isomorphism of left $U_r(\frg)$-modules. 
\end{lemma}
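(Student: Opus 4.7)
The strategy follows the blueprint of \cite[lemma 4.6]{ScSt}. Both $M \mapsto U_r(\frg) \otimes_{U(\frg)} M$ and $M \mapsto D_r(G)_H \otimes_{D(\frg, H)} M$ are right-exact, covariant, coproduct-preserving functors from $D(\frg, H)$-modules to $U_r(\frg)$-modules, and the natural map in the statement is a natural transformation between them. Choosing a free presentation
\[ D(\frg,H)^{\oplus I} \longrightarrow D(\frg, H)^{\oplus J} \longrightarrow M \longrightarrow 0 \]
and chasing the resulting commutative diagram with exact rows reduces the claim to the case of a single free module $M = D(\frg, H)$.

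For $M = D(\frg, H)$ the left-hand side is $U_r(\frg) \otimes_{U(\frg)} D(\frg, H)$ and the right-hand side is $D_r(G)_H$, and the map in question is multiplication. Using $D(\frg, H) = U(\frg) \otimes_{U(\frh)} D(H)$ from \cite[sentence following lemma 4.1]{ScSt}, the task reduces to establishing the key structural identification
\[ U_r(\frg) \otimes_{U(\frh)} D(H) \xrightarrow{\ \sim\ } D_r(G)_H \tag{$\star$} \]
given by multiplication, as left $U_r(\frg)$-modules and right $D(H)$-modules.

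To prove $(\star)$, first reduce to the case $G = G_0$ compact: the factorization $D(H) = D(H_0) \otimes D(H_0 \backslash H)$ (induced by a choice of section of $H \to H_0 \backslash H$) is compatible with \cref{compact-direct-sum}, so both sides of $(\star)$ decompose into a tensor product with the common factor $D(H_0 \backslash H)$. In the compact case, invoke the Kohlhaase PBW-style decomposition \cite[theorem 1.4.2 and corollary 1.4.3]{KohlhaaseI} as recalled in the proof of \cref{distributions-finite-free-over-supported}: with ordered topological generators $a_1,\ldots,a_d$ of $G_0$ chosen so that $a_{k+1},\ldots,a_d$ generate $H_0$, and $b_i = a_i - 1$, the monomials $b^\alpha = b_{k+1}^{\alpha_{k+1}} \cdots b_d^{\alpha_d}$ with $\alpha_i \leq \ell_i$ form a right $U_r(\frg)$-basis of $D_r(G_0)_{H_0}$, and these lie in $E[H_0] \subseteq D(H)$. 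One then checks that the same monomials furnish a left $U_r(\frg)$-basis of $D_r(G_0)_{H_0}$, which is precisely what matches the left-module structure of $U_r(\frg) \otimes_{U(\frh)} D(H)$, yielding $(\star)$.

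The main obstacle is upgrading Kohlhaase's right-module basis to a left-module basis with representatives still in $D(H)$. This can be handled by an inductive rearrangement argument: for $u \in U_r(\frg)$ and $b = b_i$ with $i > k$, the commutator $[u, b]$ lives in a strictly lower piece of the filtration on $D_r(G_0)_{H_0}$ by total degree in $b_{k+1}, \ldots, b_d$, which allows one to convert a right-expansion $\sum b^\alpha u_\alpha$ into a left-expansion $\sum u'_\alpha b^\alpha$ by descending induction on the total degree. Alternatively, one may invoke the anti-involution of $D_r(G_0)$ induced by inversion on $G_0$, which preserves the subalgebra $D_r(G_0)_{H_0}$ and exchanges its left and right $U_r(\frg)$-module structures, directly transporting Kohlhaase's right basis to a left basis consisting of elements of $D(H)$.
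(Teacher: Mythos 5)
The paper itself offers no argument for this lemma --- it is quoted verbatim from \cite[lemma 4.6]{ScSt} --- so your write-up has to stand on its own. Your reductions are the right ones: passing to a free presentation to reduce to $M = D(\frg,H)$, and then using $D(\frg,H) = U(\frg) \otimes_{U(\frh)} D(H)$ to identify the crux as the bijectivity of the multiplication map $(\star) : U_r(\frg) \otimes_{U(\frh)} D(H) \to D_r(G)_H$. The problem is the last step. Producing a left $U_r(\frg)$-basis $\{b^\alpha\}$ of $D_r(G)_H$ consisting of elements of $E[H]$ proves only the \emph{surjectivity} of $(\star)$. It does not ``match the left-module structure of $U_r(\frg) \otimes_{U(\frh)} D(H)$'': for that you would need to know that the source also decomposes as $\bigoplus_\alpha U_r(\frg) \otimes b^\alpha$, which would require $D(H)$ to be free over $U(\frh)$ on the finite set $\{b^\alpha\}$. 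It is not --- $D(H)$ is not even finitely generated over $U(\frh)$ (already for $H = \Z_p$ the Amice description shows this). So injectivity of $(\star)$, which is the real content, is untouched. The difficulty is visible in the extreme case $G = H$: the lemma then asserts that $U_r(\frh) \otimes_{U(\frh)} D(H) \to D_r(H)$ is bijective, i.e.\ that the enormous, infinitely presented $U(\frh)$-module $D(H)$ collapses to $D_r(H)$ after base change along $U(\frh) \to U_r(\frh)$; no amount of information about bases of the target addresses this. Any correct proof has to control the kernel of the multiplication map, e.g.\ by constructing an explicit inverse using the convergent-series description of $D_r(G)_H$ and verifying that the identity $\sum_j u_j \otimes \nu_j = \sum_{j,\alpha} u_{j,\alpha} \otimes b^\alpha$ (where $u_j \nu_j = \sum_\alpha u_{j,\alpha} b^\alpha$) already holds in the tensor product over $U(\frg)$ --- and this is exactly where the work lies, since the coefficients appearing in the expansion of $\nu_j \in D(H)$ live only in a completion of $U(\frh)$ and cannot be slid across $\otimes_{U(\frh)}$ naively.

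Two smaller points. First, in the rearrangement argument the assertion that $[u, b_i]$ lies in a strictly lower piece of the $b$-degree filtration is not right as stated: one has $b_i u = \Ad(a_i)(u)\, b_i + \bigl(\Ad(a_i)(u) - u\bigr)$, so the swap produces a top-degree term with an $\Ad$-twisted coefficient plus a lower-degree term; the induction still terminates, but via the twist, not via a vanishing commutator (and one must also justify that $\Ad(a_i)$ preserves $U_r(\frg)$ isometrically). The anti-involution route is cleaner and does transport Kohlhaase's right basis to a left basis inside $E[H]$. Second, the reduction to compact $G$ requires saying what $D_r(G)$ and $D_r(G)_H$ mean for non-compact $G$ and checking compatibility with \cref{compact-direct-sum}; this is routine but should be spelled out. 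Neither of these, however, is the essential issue: the missing ingredient is the injectivity of $(\star)$.
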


\begin{proposition} \label{exactness-DgP-DGP}
If 
\begin{equation} \label{exact-DgP} \begin{tikzcd} 0 \ar{r} & M' \ar{r} & M \ar{r} & M'' \ar{r} & 0 \end{tikzcd} \end{equation}
is an exact sequence of pre-coadmissible $D(\frg,H)$-modules, then \[ D(G)_H \otimes_{D(\frg,H)} \eqref{exact-DgP} \] is an exact sequence of coadmissible $D(G)_H$-modules. 
\end{proposition}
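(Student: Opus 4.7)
The plan is to reduce the exactness in question, via completion at each radius $r$, to the much more elementary statement that $U_r(\frg) \otimes_{U(\frg)} -$ is an exact functor on $\frg$-modules. Right exactness of $D(G)_H \otimes_{D(\frg,H)} -$ is automatic, and the fact that each of $N' := D(G)_H \otimes_{D(\frg,H)} M'$, $N := D(G)_H \otimes_{D(\frg,H)} M$, and $N'' := D(G)_H \otimes_{D(\frg,H)} M''$ is coadmissible over $D(G)_H$ follows directly from the pre-coadmissibility hypotheses on $M'$, $M$, and $M''$. So only the injectivity $N' \hookrightarrow N$ remains to be checked.

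Next, I would argue that injectivity in a sequence of coadmissible $D(G)_H$-modules can be detected after passing to each $D_r(G)_H \otimes_{D(G)_H} -$. By \cref{supported-distributions-frechet-stein} the ring $D(G)_H$ is Fr\'echet-Stein, so each $D_r(G)_H$ is right flat over $D(G)_H$ by \cite[remark 3.2]{ST03}; applying $D_r(G)_H \otimes_{D(G)_H} -$ therefore preserves any given exactness. Conversely, any coadmissible $D(G)_H$-module is the inverse limit of its Banach completions, and the transition maps in these systems have dense image, so the topological Mittag-Leffler theorem ($\lim^1=0$ as used in the proof of Theorem B of \cite{ST03}) shows that exactness at every finite level implies exactness in the inverse limit. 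Hence injectivity of $N' \to N$ reduces to injectivity of $D_r(G)_H \otimes_{D(G)_H} N' \to D_r(G)_H \otimes_{D(G)_H} N$ for every admissible $r$.

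At this point I would use associativity of the tensor product together with \cref{two-different-base-changes} to identify
\[ D_r(G)_H \otimes_{D(G)_H} N \;=\; D_r(G)_H \otimes_{D(\frg,H)} M \;=\; U_r(\frg) \otimes_{U(\frg)} M, \]
and analogously with $M$ replaced by $M'$ or $M''$. The problem is thereby reduced to showing that $0 \to U_r(\frg) \otimes_{U(\frg)} M' \to U_r(\frg) \otimes_{U(\frg)} M \to U_r(\frg) \otimes_{U(\frg)} M'' \to 0$ is exact for every $r$, i.e., to the right flatness of $U_r(\frg)$ over $U(\frg)$. This flatness is the main (and essentially only) substantive point of the argument; it is a classical fact used throughout the Schneider-Teitelbaum framework, proved by exhibiting a Poincar\'e-Birkhoff-Witt style basis of ordered monomials for $U_r(\frg)$ over $U(\frg)$ with appropriately bounded norms. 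Granting it, the sequence stays short exact after applying $U_r(\frg) \otimes_{U(\frg)} -$, and combining the reductions above yields the claim. The hardest ingredient to locate in the literature in the form needed here is probably this flatness statement, which is why I expect the authors will either cite it or treat it as a preliminary lemma elsewhere in Part~II.
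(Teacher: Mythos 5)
Your proposal is correct and follows essentially the same route as the paper: reduce (after passing to a compact open subgroup, which you should make explicit since $D(G)_H$ is only shown to be Fr\'echet--Stein for $G$ compact) to checking exactness of the completions $D_r(G)_H \otimes_{D(\frg,H)} (-)$, identify these with $U_r(\frg) \otimes_{U(\frg)} (-)$ via \cref{two-different-base-changes}, and conclude by flatness of $U_r(\frg)$ over $U(\frg)$. The one discrepancy is your justification of that final flatness: it does not come from a Poincar\'e--Birkhoff--Witt basis ($U_r(\frg)$ is a completion of $U(\frg)$, not a free module over it), and the paper instead deduces it from flatness of the Arens--Michael envelope $\hat{U}(\frg)$ over $U(\frg)$ \cite[theorem 3.13]{ScSt} combined with flatness of $U_r(\frg)$ over $\hat{U}(\frg)$ \cite[remark 3.2]{ST03}.
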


\begin{proof}
Using \cref{base-change-compact}, we can assume that $G$ is compact. Then \[ D(G)_{H} \otimes_{D(g,H)} \eqref{exact-DgP} \]
is a sequence of coadmissible modules over the Fr\'echet-Stein algebra $D(G)_{H}$, so it is exact if and only if
\[ D_r(G)_{H} \otimes_{D(G)_{H}} \left(D(G)_{H} \otimes_{D(\frg,H)} \eqref{exact-DgP} \right) = D_r(G)_{H} \otimes_{D(\frg,H)} \eqref{exact-DgP} \]
is exact for a cofinal set of values of $r$. But 
\[ D_r(G)_{H} \otimes_{D(\frg,H)} \eqref{exact-DgP} = U_r(\frg) \otimes_{U(\frg)} \eqref{exact-DgP} \]
by \cref{two-different-base-changes}. Furthermore, we know that $U_r(\frg)$ is flat over $U(\frg)$, since the Arens-Michael envelope $\hat{U}(\frg) = \lim_r U_r(\frg)$ is flat over $U(\frg)$ \cite[theorem 3.13]{ScSt} and $U_r(\frg)$ is flat over $\hat{U}(\frg)$ \cite[remark 3.2]{ST03}. Thus $U_r(\frg) \otimes_{U(\frg)} \eqref{exact-DgP}$ is in fact exact for all $r$. 
\end{proof}

\begin{corollary} \label{exactness-fp}
Suppose $G$ is compact. If \eqref{exact-DgP} is an exact sequence of pre-coadmissible $D(\frg,H)$-modules. Then 
\[ D(G) \arcotimes_{D(G)_H} \left( D(G)_H \otimes_{D(\frg,H)} \eqref{exact-DgP} \right) \]
is an exact sequence of coadmissible $D(G)$-modules. In particular, if \eqref{exact-DgP} is an exact sequence of finitely presented $D(\frg,H)$-modules, then \[ D(G) \otimes_{D(\frg,H)} \eqref{exact-DgP} \] is an exact sequence of coadmissible $D(G)$-modules. 
\end{corollary}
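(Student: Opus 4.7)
The proof assembles the ingredients already established in Part~II. The strategy is first to pass from $D(\frg,H)$ to $D(G)_H$ using \cref{exactness-DgP-DGP}, and then to pass from $D(G)_H$ to $D(G)$ using the c-flatness result \cref{distributions-flat-over-supported-distributions}.

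More precisely, first apply \cref{exactness-DgP-DGP} to obtain that
\[ \begin{tikzcd} 0 \ar{r} & D(G)_H \otimes_{D(\frg,H)} M' \ar{r} & D(G)_H \otimes_{D(\frg,H)} M \ar{r} & D(G)_H \otimes_{D(\frg,H)} M'' \ar{r} & 0 \end{tikzcd} \]
is an exact sequence of coadmissible $D(G)_H$-modules. Now apply $D(G) \arcotimes_{D(G)_H} -$. By \cref{distributions-flat-over-supported-distributions}, $D(G)$ is (faithfully) c-flat as a right $D(G)_H$-module (using compactness of $G$), so this functor is exact on coadmissible modules and produces coadmissible $D(G)$-modules as output. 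This gives the first claim.

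For the ``in particular'' statement, assume that each term of $\eqref{exact-DgP}$ is finitely presented over $D(\frg,H)$. Right exactness of $D(G)_H \otimes_{D(\frg,H)} -$ preserves finite presentations, so $D(G)_H \otimes_{D(\frg,H)} M$ (and similarly $M'$, $M''$) is finitely presented as a $D(G)_H$-module, and in particular coadmissible. For finitely presented coadmissible modules over a Fr\'echet-Stein algebra $A$, the completed tensor product $B \arcotimes_A -$ coincides with the ordinary tensor product $B \otimes_A -$ (as recalled at the beginning of the c-flatness subsection). Combined with associativity of the algebraic tensor product, this gives
\[ D(G) \arcotimes_{D(G)_H} \bigl( D(G)_H \otimes_{D(\frg,H)} M \bigr) = D(G) \otimes_{D(G)_H} D(G)_H \otimes_{D(\frg,H)} M = D(G) \otimes_{D(\frg,H)} M, \]
and similarly for $M'$ and $M''$; under these identifications the exact sequence from the first part of the corollary becomes $D(G) \otimes_{D(\frg,H)} \eqref{exact-DgP}$, which is therefore exact and has coadmissible terms.

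The only place requiring any care is the reduction from completed to ordinary tensor products in the second statement, but this is immediate from finite presentability; the main structural work (c-flatness of $D(G)$ over $D(G)_H$ and preservation of exactness by the intermediate base change) has already been done in the preceding propositions. Thus I expect no real obstacle; the proof is essentially a two-line reference to \cref{exactness-DgP-DGP} and \cref{distributions-flat-over-supported-distributions}, followed by the observation about finite presentability.
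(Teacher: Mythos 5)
Your proposal is correct and follows exactly the paper's own argument: the first claim is obtained by combining \cref{exactness-DgP-DGP} with the c-flatness statement \cref{distributions-flat-over-supported-distributions}, and the second by observing that finite presentation is preserved under the base change to $D(G)_H$, where $\arcotimes$ and $\otimes$ coincide. No gaps.
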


\begin{proof}
The first part follows from \cref{distributions-flat-over-supported-distributions,exactness-DgP-DGP}. For the second part, suppose $M$ is a finitely presented $D(\frg,H)$-module. Then \[ D(G)_H \otimes_{D(\frg,H)} M \] is also finitely presented, and $D(G) \arcotimes_{D(G)_H}\, -$ agrees with $D(G) \otimes_{D(G)_H} -$ for finitely presented $D(G)_H$-modules. 
\end{proof}

\bibliographystyle{alphaurl}
\bibliography{mybib}

\end{document}